\documentclass{article}
\usepackage[english]{babel} 		
\usepackage{fullpage}				
\usepackage[utf8]{inputenc}			
\usepackage{amsmath, amssymb,amsthm}
\usepackage{graphicx}				
\usepackage{hyperref}				
\usepackage{xcolor}
\usepackage{multirow}
\usepackage{tikz}
\usepackage{ytableau}
\usepackage{mathrsfs}
\usepackage{thmtools}
\usepackage{thm-restate}
\usepackage{varwidth}
\usepackage{comment}
\usepackage{mathtools}
\usepackage[mathscr]{euscript}
\usepackage{bbm}
\usepackage{multicol}
\usepackage{enumerate}
\usepackage{graphicx} 
\theoremstyle{definition}
\newtheorem{thm}{Theorem}[section]
\newtheorem{lem}[thm]{Lemma}
\newtheorem*{lem*}{Lemma}
\newtheorem*{thm*}{Theorem}
\newtheorem{prop}[thm]{Proposition}

\newtheorem{cor}[thm]{Corollary}
\newtheorem{conj}[thm]{Conjecture}
\newtheorem{defn}[thm]{Definition}
\newtheorem*{remark*}{Remark}
\newtheorem{remark}{Remark}
\newtheorem{example}{Example}

\newtheorem{cor/defn}[thm]{Corollary/Definition}
\DeclareMathOperator{\Par}{\mathbb{Y}}
\DeclareMathOperator{\fX}{\mathfrak{X}}
\DeclareMathOperator{\fY}{\mathfrak{Y}}
\DeclareMathOperator{\fp}{\mathfrak{p}}
\DeclareMathOperator{\fq}{\mathfrak{q}}
\DeclareMathOperator{\sO}{\mathscr{O}}
\DeclareMathOperator{\cJ}{\mathcal{J}}
\DeclareMathOperator{\cA}{\mathcal{A}}
\DeclareMathOperator{\bP}{\mathbb{P}}
\DeclareMathOperator{\SSYT}{\mathrm{SSYT}}
\DeclareMathOperator{\Exp}{\mathrm{Exp}}

\DeclareMathOperator{\Gal}{\mathrm{Gal}}
\DeclareMathOperator{\GL}{\mathrm{GL}}
\DeclareMathOperator{\Tr}{\mathrm{Tr}}

\DeclareMathOperator{\bA}{\mathbb{A}}
\DeclareMathOperator{\sL}{\mathscr{L}}
\DeclareMathOperator{\Id}{\mathrm{Id}}
\DeclareMathOperator{\Ind}{\mathrm{Ind}}
\DeclareMathOperator{\type}{\mathrm{type}}
\DeclareMathOperator{\sh}{\mathrm{sh}}
\DeclareMathOperator{\Sym}{\mathrm{Sym}}

\usepackage[
    backend=bibtex,
    maxnames=5,
    style=alphabetic
  ]{biblatex}
\addbibresource{main.bib}

\title{Artin Symmetric Functions}
\author{Milo Bechtloff Weising}
\date{\today}

\begin{document}

\maketitle

\abstract{In this paper we construct an algebraic invariant attached to Galois representations over number fields. This invariant, which we call an Artin symmetric function, lives in a certain ring we introduce called the ring of arithmetic symmetric functions. This ring is built from a family of symmetric functions rings indexed by prime ideals of the base field. We prove many necessary basic results for the ring of arithmetic symmetric functions as well as introduce the analogues of some standard number-theoretic objects in this setting. We prove that the Artin symmetric functions satisfy the same algebraic properties that the Artin L-functions do with respect to induction, inflation, and direct summation of representations. The expansion coefficients of these symmetric functions in different natural bases are shown to be character values of representations of a compact group related to the original Galois group. In the most interesting case, the expansion coefficients into a specialized Hall-Littlewood basis come from new representations built from the original Galois representation using polynomial functors corresponding to modified Hall-Littlewood polynomials. Using a special case of the Satake isomorphism in type GL, as formulated by Macdonald, we show that the Artin symmetric functions yield families of functions in the (finite) global spherical Hecke algebras in type GL which exhibit natural stability properties. We compute the Mellin transforms of these functions and relate them to infinite products of shifted Artin L-functions. We then prove some analytic properties of these Dirichlet series and give an explicit expansion of these series using the Hall-Littlewood polynomial functors.}
\tableofcontents

\section{Introduction}

The \textit{\textbf{Artin L-functions}}, $L(s,\rho;L/K)$, were first introduced by Artin \cite{Artin_23} as invariants attached to finite dimensional representations $\rho: \Gal(L/K) \rightarrow \GL(V)$ of a finite Galois extension of number fields $L/K.$ One of the main motivations for the introduction of the functions $L(s,\rho;L/K)$ was to provide a factorization of the \textit{\textbf{Dedekind zeta function}} $\zeta_{L}(s)$ for an arbitrary Galois extension of number fields. The resulting factorization

\begin{equation}\label{Artin product for L}
    \zeta_{L}(s) = \zeta_{K}(s) \prod_{\rho \neq \mathbbm{1}} L(s,\rho;L/K)^{\dim(\rho)}
\end{equation}
 includes representation/number-theoretic data about each non-trivial irreducible representation 
$\rho$ of the Galois group $\Gal(L/K).$ The functions $L(s,\rho;L/K)$ are \textit{defined} as certain \textbf{\textit{Euler products}} with factors coming from the characteristic polynomials of $\rho(\sigma_{\fq})$ where $\sigma_{\fq}$ is any choice of \textbf{\textit{Frobenius element}} at the prime $\fq$. This element is only well-defined up to conjugacy and special care must be taken when the prime $\fq$ of $K$ is \textbf{\textit{ramified}} in L as in this case there is not a well-defined conjugacy class of $\Gal(L/K)$ which lifts the Frobenius at $\fp|\fq.$ However, as Artin shows, this problem is alleviated by passing through the invariants $V^{I(\fq)}$ of $V$ for an \textbf{\textit{inertial subgroup}} $I(\fq)$ of $\fq.$ Artin shows that these functions $L(s,\rho;L/K)$ satisfy many exceptional algebraic properties relating to \textbf{\textit{induction/inflation}} and direct sum operations on the representations $\rho.$ These ensure in particular, that any continuous complex representation $\rho: \Gal(\overline{K}/K) \rightarrow \GL(V)$ yields a well-defined function $L(s;\rho;L/K)$ where we may choose $L:= \mathrm{Fix}(\mathrm{Ker}(\rho)).$

For an abelian extension $L/K$ the situation becomes simpler because all of the irreducible representations $\rho$ of $\Gal(L/K)$ are $1$-dimensional corresponding to group characters. In this case, the Artin L-functions align with the Dirichlet L-functions $L(s,\chi)$ which enjoy many exceptional analytic properties. Using now-standard techniques (Mellin transforms, Poisson summation) one may show that Dirichlet L-functions define meromorphic complex analytic functions with argument $s$ which satisfy functional equations similar to the famous functional equation of the (completed) Riemann zeta function: $\zeta^{*}(s)=\zeta^{*}(1-s).$ The abelian case was dealt with by Weber and Hecke (See Cogdell's paper \cite{Cogdell_07} for a more complete history) before Artin's work extended to the non-abelian situation. In the non-abelian case, the $L(s,\rho;L/K)$ in general satisfy a functional equation defining meromorphic functions on $\mathbb{C}$ but the constants appearing in these functional equations remain somewhat mysterious in many cases. Importantly, for any non-trivial abelian character $\chi \neq 1$, $L(s,\chi)$ defines, by analytic continuation, an \textbf{\textit{entire function}} on $\mathbb{C}$. That is to say $L(s,\chi)$ has no poles. In light of \eqref{Artin product for L}, this shows that for any \textit{abelian} extension $L/K$ of number fields the ratio of Dedekind zeta functions $\zeta_{L}(s)/\zeta_{K}(s)$ defines an entire function by analytic continuation. As a direct generalization Artin made the following conjecture:

\begin{conj}[Artin's Conjecture]\label{Artin's Conjecture}
    For any \textbf{non-trivial} irreducible Galois representation $\rho$ the function $L(s,\rho;L/K)$ may be extended by analytic continuation to an entire function on $\mathbb{C}.$
\end{conj}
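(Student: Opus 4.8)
The plan is to confront the fact that this statement is \emph{not} a theorem but rather Artin's celebrated conjecture, open since 1923; accordingly, what follows is the strategy by which one attacks it, together with the precise point at which every known approach currently stalls. The starting point is Brauer's induction theorem: the character of any finite-dimensional complex representation $\rho$ of $\Gal(L/K)$ can be written as a $\mathbb{Z}$-linear combination $\chi_\rho = \sum_i n_i \Ind_{H_i}^{\Gal(L/K)} \psi_i$ of characters induced from one-dimensional characters $\psi_i$ of subgroups $H_i \leq \Gal(L/K)$. Combining this with the inductivity of Artin L-functions under induction of representations (one of the classical algebraic properties the arithmetic-symmetric-function formalism of this paper is designed to mirror) reduces the analytic question to the one-dimensional case. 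There Artin reciprocity identifies $L(s,\psi_i; L/L^{H_i})$ with a Hecke L-function attached to a ray-class character of $L^{H_i}$, and these are known by the work of Hecke and Weber to extend to entire functions on $\mathbb{C}$ away from the trivial character. One therefore obtains \emph{unconditionally} that $L(s,\rho;L/K)$ extends to a meromorphic function on $\mathbb{C}$.

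The obstruction to upgrading ``meromorphic'' to ``entire'' is precisely that the integers $n_i$ in the Brauer decomposition need not be nonnegative: a negative $n_i$ contributes the reciprocal of an entire L-function, which may introduce poles, and there is no elementary mechanism forcing these poles to cancel. In the cases where cancellation can be proved one recovers the known results: applying the argument to the regular-minus-trivial representation of $\Gal(L/K)$ yields the Aramata–Brauer theorem that $\zeta_L(s)/\zeta_K(s)$ is entire, and more generally Artin's conjecture holds whenever $\rho$ is \emph{monomial}, i.e.\ induced from a single one-dimensional character, since then no inversion occurs at all. This already settles the conjecture for all representations of supersolvable groups, and more generally of monomial (M-)groups.

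For a general irreducible $\rho$ the only known route is through the Langlands program: if one can show that $\rho$ is \emph{automorphic}, meaning $L(s,\rho;L/K)$ coincides with the standard L-function $L(s,\pi)$ of a cuspidal automorphic representation $\pi$ of $\GL_{\dim\rho}$ over $K$, then the analytic theory of automorphic L-functions (Godement–Jacquet, Jacquet–Shalika) guarantees $L(s,\pi)$, hence $L(s,\rho;L/K)$, is entire for nontrivial irreducible $\rho$. This succeeds for two-dimensional $\rho$ with solvable image (Langlands–Tunnell, via cyclic base change and converse theorems), for odd two-dimensional $\rho$ over $\mathbb{Q}$ (through the proof of Serre's modularity conjecture by Khare–Wintenberger), and in various icosahedral cases, with potential-automorphy results of the Taylor school yielding meromorphic continuation and functional equations far more generally. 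The main obstacle, and the reason the conjecture is still open, is exactly the ingredient the symmetric-function machinery developed here cannot supply on its own: a proof of Langlands reciprocity attaching an automorphic representation to an arbitrary Artin representation. I would not expect the arithmetic-symmetric-function formalism to break this barrier; its role is organizational, packaging the inductive, inflational, and additive behavior that makes the Brauer-reduction step transparent and that governs the structure of $L(s,\rho;L/K)$ once continuation is granted.
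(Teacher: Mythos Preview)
Your assessment is correct: the statement is labeled in the paper as a \emph{conjecture}, not a theorem, and the paper supplies no proof whatsoever. Immediately after stating it the paper remarks that ``Artin's conjecture is still open in most cases'' and only uses it conditionally later (in the corollary following Theorem~\ref{Analytic Continuation of Shifted Artin L-Function Product}). So there is nothing to compare your attempt against; your decision to explain the Brauer-induction reduction to meromorphy, the sign obstruction to holomorphy, and the Langlands-program route to the known cases is exactly the right response to being asked to ``prove'' an open problem.
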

An immediate consequence of Artin's conjecture would be that for any finite extension of number fields $L/K$, the ratio $\zeta_{L}(s)/\zeta_{K}(s)$ defines an entire function by analytic continuation. 

Artin's conjecture is still open in most cases. However, a lot of progress had been made towards Artin's conjecture through the \textbf{\textit{Langlands Program}} (for $\GL$) which roughly predicts a correspondence between Galois representations $\Gal(\overline{K}/K) \rightarrow \GL_n(\mathbb{C})$ and \textbf{\textit{automorphic forms}} $f: \GL_n(\mathbb{A}_{K}) \rightarrow \mathbb{C}$ such that the Artin L-function of $\rho$ equals the automorphic L-function of $f.$ One benefit of such a correspondence is that many analytic properties are easier to prove for automorphic L-functions. For $1$-dimensional Galois representations the Langlands correspondence exists and goes by the name \textbf{\textit{class field theory}}.

In this paper, we will introduce a family of invariants attached to Galois representations called the \textit{\textbf{Artin symmetric functions}} $\sL(\fX_{K},\rho;L/K).$ These are algebraic-combinatorial objects which live in the \textit{\textbf{ring of arithmetic symmetric functions}} associated to the base field $K.$ This ring is a graded completion $\mathcal{A}_K$ of a ring, which we write as $A_K$, defined as an infinite tensor product of rings of symmetric functions in variables $\fX^{(\fq)}$ labelled by prime ideals $\fq \in \mathbb{P}_{K}$ of the ring of integers $\mathcal{O}_{K}$. On its face, the rings $A_K \subset \mathcal{A}_K$ are not very interesting. However, as is common in algebraic combinatorics with the study of ordinary symmetric functions $\Lambda$, we find interesting structure inside the rings $A_K \subset \mathcal{A}_K$ by looking at special bases defined combinatorially. Specifically, the ring $A_K$, has a basis $\{P_{\lambda^{\bullet}}[\fX_{K}]\}_{\lambda^{\bullet}\in \mathbb{Y}_K}$ indexed by almost everywhere empty sequences of partitions $\lambda^{\bullet} = (\lambda^{(\fq)})_{\fq \in \mathbb{Y}_{K}}$ called the \textbf{\textit{arithmetic Hall-Littlewood functions}} which are defined as products of ordinary Hall-Littlewood $P$-functions $P_{\lambda^{(\fq)}}[\fX^{(\fq)};N(\fq)^{-1}].$ This construction is highly inspired by Chapter IV of Macdonald's book \cite{Macdonald} where a similar collection of functions is constructed in the setting of $ \mathbb{F}_{q}(t)$ to describe the characters of $\GL_{n}(\mathbb{F}_{q}).$ We develop a basic theory for $\mathcal{A}_K$ with respect to the basis $P_{\lambda^{\bullet}}[\fX_{K}]$ and relate this theory to the finite global spherical Hecke algebras $\mathcal{H}_{n}(K)$ for the locally compact groups $ \GL_n(\widehat{\mathcal{O}}_{K}) \backslash \GL_n(\mathbb{A}_{K}^{f})/ \GL_n(\widehat{\mathcal{O}}_{K})$ for $n \geq 1.$ 

This paper's main results are as follows. We begin the paper by developing a basic theory for the ring of arithmetic symmetric functions. We define and study objects in this theory which are analogous to objects in number theory including the Dedekind zeta function (Definition \ref{Dedekind sym defn}), the ideal norm map (Proposition \ref{norm map prop}), and the ideal class group (Propositions \ref{prop of class ring} and \ref{class ring is fin gen}). Next we turn our focus to the main object of this paper. We define the Artin symmetric functions in Definition \ref{main def for Artin sym} as infinite products similar to the original definition of the Artin L-functions. In Propositions \ref{Artin sym function properties} and \ref{Artin sym functions factorization} we prove that the Artin symmetric functions $\sL(\fX_{K},\rho;L/K) \in \mathcal{A}_K$ satisfy analogous representation-theoretic properties to the Artin L-functions $L(s,\rho;L/K).$ This includes induction and inflation. The proofs of these statements will follow mostly from the Artin's original proofs of these properties of $L(s,\rho;L/K).$ We compute the expansion of $\sL(\fX_{K},\rho;L/K)$ into the monomial and Schur function bases of $A_K$. Next, we compute the following expansion of $\sL(\fX_{K},\rho;L/K)$ into the arithmetic Hall-Littlewood basis:
\begin{equation}\label{HL expansion for intro}
    \sL(\fX_K,\rho;L/K) = \sum_{\lambda^{\bullet} \in \mathbb{Y}_K}  \Tr_{\widetilde{\mathbb{H}}_{\lambda^{\bullet}}(W_{\rho})}(\sigma_{K})  g(\lambda^{\bullet})^{-1} P_{\lambda^{\bullet}}[\mathfrak{X}_K]
\end{equation}
 Here the coefficients $\Tr_{\widetilde{\mathbb{H}}_{\lambda^{\bullet}}(W_{\rho})}(\sigma_{K})$ are trace values for a compact group $G_{L/K}$ for a special element $\sigma_{K} \in G_{L/K}$ and the representations $\widetilde{\mathbb{H}}_{\lambda^{\bullet}}(W_{\rho})$ are certain finite dimensional representations built from $\rho$ using what we call \textbf{\textit{modified Hall-Littlewood functors}}. These functors are essentially built by applying the classical Schur-Weyl duality to the modified Hall-Littlewood functions $\widetilde{H}_{\lambda}[\fX;t].$ We use the Lascoux-Sch\"{u}tzenberger combinatorial formula for the modified Kostka-Foulkes polynomials $\widetilde{K}_{\mu,\lambda}(t)$ \cite{LS_78} involving the \textbf{\textit{cocharge}} statistic on semi-standard Young tableaux to give the arithmetic Hall-Littlewood expansion coefficients in terms of simpler Schur functor trace values.
 
 In the last section of the paper, we use the correspondence Theorem \ref{extension of Macdonald map prop} to construct continuous $\GL_n(\widehat{\mathcal{O}}_K)$-biinvariant functions $F^{(n)}_{\rho,L/K}$ on $\GL_n(\mathbb{A}_{K}^{f})$ coming from the Artin symmetric functions $\sL(\fX_{K},\rho;L/K)$. We prove some elementary properties of the $F^{(n)}_{\rho,L/K}$ (Corollary \ref{properties of sph hecke elements}). From the functions $F^{(n)}_{\rho,L/K}$ we compute their corresponding Dirichlet series $L^{(n)}(s,\rho;L/K)$ via the \textbf{\textit{Mellin transform}} which we show in Theorem \ref{shifted product of L functions expansion} are in fact given by 

 \begin{equation}\label{shifted zeta intro}
     L^{(n)}(s,\rho;L/K) = \prod_{i=1}^{n} L(s-i+1,\rho;L/K) = \sum_{\lambda^{\bullet}\in \mathbb{Y}_{K}^{(n)}} \frac{\kappa^{(n)}(\lambda^{\bullet})}{||\lambda^{\bullet}||^{s}} \Tr_{\widetilde{\mathbb{H}}_{\lambda^{\bullet}}(W_{\rho})}(\sigma_{K}).
 \end{equation}
  Here the coefficients $\kappa^{(n)}(\lambda^{\bullet})$ do not depend on $\rho$ and $||\lambda^{\bullet}|| := \prod_{\fq \in \mathbb{P}_K}N(\fq)^{|\lambda^{(\fq)}|}.$ After some standard analytic arguments, we show that by taking a re-normalized limit of \eqref{shifted zeta intro} we obtain an equality (Theorem \ref{main thm 2}) of analytic functions in the half plane $\Re(s) > 1:$

  \begin{equation}\label{main thm 2 into}
     \widetilde{L}(s,\rho;L/K):= \prod_{j \geq 0} L(s+j, \rho ;L/K) = \sum_{\lambda^{\bullet} \in \Par_K} \frac{\gamma(\lambda^{\bullet})}{||\lambda^{\bullet}||^{s}} \Tr_{\widetilde{\mathbb{H}}_{\lambda^{\bullet}}(W_{\rho})}(\sigma_{K}).
  \end{equation}

Here the coefficients $0 < \gamma(\lambda^{\bullet}) \leq 1$ are explicit rational numbers not depending on $\rho.$ The analytic functions $\widetilde{L}(s,\rho;L/K)$ on the half plane $\Re(s)> 1$ satisfy the functional equations 
$$L(s,\rho;L/K)\widetilde{L}(s+1,\rho;L/K) = \widetilde{L}(s,\rho;L/K)$$ similar to the functional equation for the reciprocal gamma function: $\frac{s}{\Gamma(s+1)} = \frac{1}{\Gamma(s)}.$ The $\widetilde{L}(s,\rho;L/K)$ generalize the higher zeta functions of Momotani \cite{momotani_06} associated to Dirichlet characters. Following this terminology we may refer to the $\widetilde{L}(s,\rho;L/K)$ as higher Artin L-functions.

Lastly, in Theorem \ref{Analytic Continuation of Shifted Artin L-Function Product} we use these functional equations and the known analytic continuations of $L(s, \rho ;L/K)$ to show that the functions 
$\widetilde{L}(s,\rho;L/K)$ may be extended to a meromorphic functions on $\mathbb{C}.$ Under the assumption that Artin's conjecture is true, we conclude that if $\rho$ is nontrivial and irreducible then $\widetilde{L}(s,\rho;L/K)$ is an entire function on $\mathbb{C}.$ A more complete picture for the Artin symmetric functions would include data about the infinite places of $K$ however, we will leave these consideration to later work.

\subsection{Acknowledgments}
The author would like to thank Daniel Orr and Monica Vazirani for helpful discussions about Hall-Littlewood polynomials and Hecke algebras. The author would also like to thank Sebastian Bozlee and Leo Herr for interesting conversations about Picard groups and for pointing out an error in earlier versions of Propositions \ref{prop of class ring} \ref{class ring is fin gen} as well as thank Gergely Harcos for a correction to the introduction of a previous version of this paper.

\section{Ring of Arithmetic Symmetric Functions}

\subsection{Symmetric Functions}

We start with a review of the basics of symmetric functions theory.

\begin{defn}
    A \textbf{\textit{partition}} is a (possibly empty) sequence $\lambda = (\lambda_1,\ldots, \lambda_{\ell})$ of weakly decreasing non-zero non-negative integers $\lambda_1 \geq \ldots \geq \lambda_{\ell} > 0.$ We write $\emptyset$ for the unique empty partition. The \textbf{\textit{length}} of $\lambda$ is $\ell(\lambda):= \ell$ and the \textbf{\textit{size}} of $\lambda$ is $|\lambda|:= \lambda_1+\ldots + \lambda_{\ell}.$ Whenever $ \ell(\lambda) \leq n$ we may consider $\lambda \in \mathbb{Z}^n$ by identifying $\lambda$ with the vector $(\lambda_1,\ldots, \lambda_{\ell},0,\ldots,0) \in \mathbb{Z}^n.$ Let $m_{i}(\lambda)$ denotes the number of copies of $i$ occurring in $\lambda.$ We will write $\Par$ for the set of all partitions.  
\end{defn}

To match with the conventions for the rest of this paper we will consider symmetric functions with coefficients in $\mathbb{Q}^{\textrm{ab}}$. 

\begin{defn}
For $n \geq 1$ let $\mathfrak{S}_n$ denote the symmetric group of permutations on the set $\{1,\ldots, n\}.$ Define the ring of symmetric functions $\Lambda$ as the \textit{graded} inverse limit of the symmetric polynomial rings 
$$\Lambda \cong \varprojlim \mathbb{Q}^{\text{ab}}[x_1,\ldots, x_n]^{\mathfrak{S}_n}.$$ Given a partition $\lambda = (\lambda_1,\ldots,\lambda_r)$ we define:
\begin{itemize}
    \item $m_{\lambda} := \sum_{\alpha \sim \lambda } x^{\alpha}$ ranging over permutations $\alpha$ of $\lambda$
    \item $h_{\lambda}:= h_{\lambda_1}\cdots h_{\lambda_r}$ where 
    $h_m:= \sum_{i_1 \leq \ldots \leq i_m} x_{i_1}\cdots x_{i_m}$
    \item $p_{\lambda}:= p_{\lambda_1}\cdots p_{\lambda_r}$ where $p_{m}:= \sum_{i} x_i^{m}$
    \item $s_{\lambda}:= \sum_{T \in \SSYT(\lambda)} x^{T}$ where $\SSYT(\lambda)$ is the set of \textit{\textbf{semi-standard Young tableaux}} with shape $\lambda.$
\end{itemize}
The $m_{\lambda}$, $h_{\lambda},$ $p_{\lambda},$ and $s_{\lambda}$ are called the \textbf{\textit{monomial}}, \textbf{\textit{homogeneous}}, \textbf{\textit{power sum}} and \textbf{\textit{Schur}} symmetric functions respectively.

\end{defn}

\begin{remark}\label{Dedekind remark}
We will occasionally use \textbf{\textit{plethystic}} notation in this paper. We refer the reader to Macdonald's book \cite{Macdonald} for a proper introduction to plethysm. Roughly, however, we may describe the plethystic evaluation $F[g]$ of a symmetric function $F$ at some element $g = \sum_{i} c_i x^{\alpha_i}$ of a ring of formal power series with some coefficients $c_i \in \mathbb{Q}^{\mathrm{ab}}$ monomials $x^{\alpha_i}$ by first defining 
$$p_r[g]:= \sum_{i} c_i (x^{\alpha_i})^r$$ and then requiring that the map $F \rightarrow F[g]$ be a ring homomorphism. The \textbf{\textit{plethystic exponential}} is the special symmetric function given by 
$$\Exp[\fX]:= \prod_{i \geq 1}(1-x_i)^{-1} = \sum_{n \geq 0} h_n[\fX].$$ The plethystic exponential satisfies the important properties that $\Exp[0] = 1$ and $\Exp[\fX+ \fY] = \Exp[\fX]\Exp[\fY].$
\end{remark}

\begin{defn}
    The Hall inner product $\langle-,-\rangle_{H}$ on symmetric functions $\Lambda$ is defined as 
$$\langle h_{\mu}, m_{\lambda} \rangle = \delta_{\mu,\lambda}.$$
\end{defn}

The Hall inner product may also be characterized by the following.

\begin{prop}
    $$\langle p_{\lambda}, p_{\mu} \rangle_{H} := \delta_{\lambda,\mu} \prod_{i \geq 1}(m_{i}(\lambda) i^{m_{i}(\lambda)})$$
    $$\langle s_{\mu}, s_{\lambda} \rangle_{H} = \delta_{\mu, \lambda}$$
\end{prop}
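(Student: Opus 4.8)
The plan is to reduce both identities to a single ``reproducing kernel'' criterion, applied to the Cauchy identities. First I would record the criterion: if $\{a_{\lambda}\}$ and $\{b_{\lambda}\}$ are two homogeneous $\mathbb{Q}^{\mathrm{ab}}$-bases of $\Lambda$ indexed by partitions in a degree-preserving way, then $\langle a_{\lambda}, b_{\mu}\rangle_{H} = \delta_{\lambda,\mu}$ for all $\lambda,\mu$ if and only if
\[
\sum_{\lambda} a_{\lambda}[\fX]\, b_{\lambda}[\fY] \;=\; \sum_{\lambda} h_{\lambda}[\fX]\, m_{\lambda}[\fY].
\]
This is verified degree by degree, so only finite linear algebra is involved: writing $a_{\lambda} = \sum_{\rho} M_{\lambda\rho} h_{\rho}$ and $b_{\mu} = \sum_{\sigma} N_{\mu\sigma} m_{\sigma}$ with $M, N$ invertible and block-diagonal by degree, the defining relation $\langle h_{\rho}, m_{\sigma}\rangle_{H} = \delta_{\rho,\sigma}$ gives $\langle a_{\lambda}, b_{\mu}\rangle_{H} = (M N^{t})_{\lambda\mu}$, while comparing coefficients of the linearly independent elements $h_{\rho}[\fX]\, m_{\sigma}[\fY]$ on the two sides of the display gives $(M^{t} N)_{\rho\sigma} = \delta_{\rho,\sigma}$; and $M^{t}N = I \iff M N^{t} = I$. (Taking $\{a_{\lambda}\}=\{b_{\lambda}\}=\{s_{\lambda}\}$ below also shows the form is symmetric, which is not evident from the definition.)

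Next I would establish the three relevant Cauchy-type identities. Expanding $\prod_{i}(1-x_i y_j)^{-1} = \sum_{m\ge 0} h_{m}[\fX]\, y_j^{m}$ and multiplying over $j$, the coefficient of $y^{\alpha}$ for $\alpha$ a rearrangement of a partition $\mu$ is $h_{\mu}[\fX]$, whence
\[
\prod_{i,j}(1-x_i y_j)^{-1} \;=\; \sum_{\mu} h_{\mu}[\fX]\, m_{\mu}[\fY].
\]
Taking $\log$ of the left-hand side and using $-\log(1-u) = \sum_{r\ge1} u^{r}/r$ gives $\sum_{r\ge1} \tfrac{1}{r}\, p_{r}[\fX]\, p_{r}[\fY]$; exponentiating and collecting terms yields
\[
\prod_{i,j}(1-x_i y_j)^{-1} \;=\; \sum_{\lambda} z_{\lambda}^{-1}\, p_{\lambda}[\fX]\, p_{\lambda}[\fY], \qquad z_{\lambda} := \prod_{i\ge1} i^{m_i(\lambda)}\, m_i(\lambda)!\,.
\]
Finally, the Schur Cauchy identity $\prod_{i,j}(1-x_i y_j)^{-1} = \sum_{\lambda} s_{\lambda}[\fX]\, s_{\lambda}[\fY]$ I would obtain from the RSK correspondence between $\mathbb{N}$-valued matrices and pairs of semistandard Young tableaux of common shape (equivalently, from the Jacobi--Trudi formula together with the Lindström--Gessel--Viennot lemma); this is the one genuinely combinatorial input, and it is the step I expect to be the main obstacle to a fully self-contained account, though it is entirely standard and can simply be cited from Macdonald \cite{Macdonald}.

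Combining these, I would apply the criterion with $a_{\lambda} = z_{\lambda}^{-1} p_{\lambda}$ and $b_{\lambda} = p_{\lambda}$ (legitimate since $z_{\lambda}^{-1}\in\mathbb{Q}\subset\mathbb{Q}^{\mathrm{ab}}$ and both families are bases), obtaining $z_{\lambda}^{-1}\langle p_{\lambda}, p_{\mu}\rangle_{H} = \delta_{\lambda,\mu}$, i.e. $\langle p_{\lambda}, p_{\mu}\rangle_{H} = \delta_{\lambda,\mu}\, z_{\lambda}$; and with $a_{\lambda} = b_{\lambda} = s_{\lambda}$, obtaining $\langle s_{\lambda}, s_{\mu}\rangle_{H} = \delta_{\lambda,\mu}$. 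This is exactly the proposition, with the right-hand side of the power-sum formula being $z_{\lambda} = \prod_{i\ge1} i^{m_i(\lambda)}\, m_i(\lambda)!$ (so the displayed expression should be read with $m_i(\lambda)!$ in place of $m_i(\lambda)$). All sums are finite in each degree, so passing to the graded completion causes no difficulty.
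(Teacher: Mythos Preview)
Your argument is correct and is essentially the standard proof from Macdonald's Chapter~I, \S4: the paper's own ``proof'' consists solely of citations to [(4.7) and (4.8), p.~64, \cite{Macdonald}], and those references prove the proposition exactly via the reproducing-kernel criterion you state (Macdonald's (4.6)) together with the three Cauchy expansions you derive. So you have effectively unpacked the citation; there is nothing genuinely different in approach. Your observation that the displayed formula should read $m_i(\lambda)!$ rather than $m_i(\lambda)$ is also correct and matches both Macdonald and the paper's later usage (e.g.\ the definition of $z(\lambda^{\bullet})$).
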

\begin{proof}~
    [(4.7), pg. 64, \cite{Macdonald}] and [(4.8), pg. 64, \cite{Macdonald}].
\end{proof}

\begin{defn}
    For a partition $\lambda$ with $ \ell(\lambda) \leq n$ define 
    $$P_{\lambda}(x_1, \ldots, x_n;t):= \sum_{w \in \mathfrak{S}_n/\mathfrak{S}_{n}^{\lambda}} w \left( x_1^{\lambda_1}\cdots x_n^{\lambda_n} \prod_{\lambda_i > \lambda_j} \frac{x_i-tx_j}{x_i-x_j}\right) \in \mathbb{Q}[t][x_1,\ldots,x_n]^{\mathfrak{S}_n}$$ where $\mathfrak{S}_{n}^{\lambda}$ is the subgroup of $\mathfrak{S}_n$ stabilizing $\lambda \in \mathbb{Z}^n.$
\end{defn}

\begin{lem}
    For $\ell(\lambda) \leq n$ 
    $$P_{\lambda}(x_1,\ldots,x_n;t) = P_{\lambda}(x_1,\ldots,x_n, 0;t).$$
\end{lem}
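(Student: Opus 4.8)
The plan is to add one variable and track, term by term in the coset sum defining $P_\lambda$, what survives the substitution $x_{n+1}=0$. Set $m:=n+1$ and regard $\lambda=(\lambda_1,\ldots,\lambda_n,0)\in\mathbb{Z}^{m}$, so that $\lambda_{m}=0$ since $\ell(\lambda)\le n$. Write $\Phi_\lambda:=x_1^{\lambda_1}\cdots x_{m}^{\lambda_{m}}\prod_{\lambda_i>\lambda_j}\frac{x_i-tx_j}{x_i-x_j}$, an element of the localization of $\mathbb{Q}[t][x_1,\ldots,x_{m}]$ obtained by inverting all differences $x_i-x_j$ with $i\ne j$, so that $P_\lambda(x_1,\ldots,x_{m};t)=\sum_{w\in\mathfrak{S}_{m}/\mathfrak{S}_{m}^{\lambda}}w(\Phi_\lambda)$. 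Every denominator appearing in $w(\Phi_\lambda)$ is a difference of two distinct variables and is therefore never a scalar multiple of $x_{m}$ alone; hence each $w(\Phi_\lambda)$ is regular at $x_{m}=0$ and we may specialize $x_{m}=0$ summand by summand.

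Next I would split the index set according to the value placed in the last coordinate. For a coset $w\mathfrak{S}_m^\lambda$ the quantity $(w\lambda)_{m}=\lambda_{w^{-1}(m)}$ is well defined; call the coset of \emph{type A} if $(w\lambda)_{m}=0$ and of \emph{type B} otherwise. If the coset is of type B, then $w(x^\lambda)$ carries a strictly positive power of $x_{m}$, while the rational part is regular at $x_m=0$, so $w(\Phi_\lambda)|_{x_{m}=0}=0$. Thus only the type A cosets survive, and it remains to identify their contribution.

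For the type A cosets I would argue as follows. Because $\lambda_k=0$ for every $k>\ell(\lambda)$ and $\lambda_m=0$, a type A coset always has a representative $w$ fixing $m$ (right-multiply an arbitrary representative by the transposition exchanging $w^{-1}(m)$ with $m$, which lies in $\mathfrak{S}_m^\lambda$), i.e. $w\in\mathfrak{S}_n\subseteq\mathfrak{S}_m$; and since $\mathfrak{S}_m^\lambda\cap\mathfrak{S}_n=\mathfrak{S}_n^\lambda$, this sets up a bijection between the type A cosets and $\mathfrak{S}_n/\mathfrak{S}_n^\lambda$. For such a $w$, the only factors of $w(\Phi_\lambda)$ that involve $x_m$ come from the pairs $(i,m)$ with $\lambda_i>\lambda_m=0$; each such factor is $\frac{x_{w(i)}-tx_{m}}{x_{w(i)}-x_{m}}$, which specializes at $x_m=0$ to $\frac{x_{w(i)}}{x_{w(i)}}=1$, while all remaining factors and the monomial $x_1^{\lambda_1}\cdots x_n^{\lambda_n}$ are unaffected. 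Hence $w(\Phi_\lambda)|_{x_m=0}$ is exactly the summand of $P_\lambda(x_1,\ldots,x_n;t)$ indexed by $w\in\mathfrak{S}_n/\mathfrak{S}_n^\lambda$. Summing, $P_\lambda(x_1,\ldots,x_n,0;t)=\sum_{\text{type A}}w(\Phi_\lambda)|_{x_m=0}=P_\lambda(x_1,\ldots,x_n;t)$.

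The only mildly delicate point is the one flagged in the first paragraph: individual summands $w(\Phi_\lambda)$ are genuine rational functions rather than polynomials, so one must check that the specialization $x_{m}=0$ is defined on each of them before splitting the sum; once that is in place the argument is pure bookkeeping. Alternatively one could run the same computation using the normalized expression $P_\lambda=v_\lambda(t)^{-1}\sum_{w\in\mathfrak{S}_m}w(x^\lambda\prod_{i<j}\tfrac{x_i-tx_j}{x_i-x_j})$ and then verify that the factors $v_\lambda(t)$ for $n$ and for $n+1$ variables coincide because the multiplicity of $0$ as a part is irrelevant, but the coset-sum computation above sidesteps that comparison entirely.
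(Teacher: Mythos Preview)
Your main argument is correct and complete. The paper itself gives no proof at all---it simply cites Macdonald [(2.5), pg.~208]---so you have supplied an actual elementary argument where the paper defers to the literature. Your coset-by-coset analysis, splitting into type~A and type~B according to whether the last coordinate of $w\lambda$ vanishes, is the natural direct approach; the bijection between type~A cosets and $\mathfrak{S}_n/\mathfrak{S}_n^\lambda$ via right-multiplication by the transposition $(w^{-1}(m),m)\in\mathfrak{S}_m^\lambda$ is clean, and the verification that the extra factors $\frac{x_{w(i)}-tx_m}{x_{w(i)}-x_m}$ specialize to $1$ at $x_m=0$ is correct.

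One minor quibble with your closing parenthetical: in Macdonald's unnormalized formula $P_\lambda = v_\lambda(t)^{-1}\sum_{w\in\mathfrak{S}_m} w\bigl(x^\lambda\prod_{i<j}\frac{x_i-tx_j}{x_i-x_j}\bigr)$, the factor $v_\lambda(t)$ \emph{does} include a contribution from the multiplicity $m_0(\lambda)$ of $0$ as a part, and this \emph{does} change when passing from $n$ to $n+1$ variables. The point in that alternative route is rather that the change in $v_\lambda(t)$ is exactly compensated by the change in the symmetrized sum, not that the two $v_\lambda(t)$'s coincide. Since you correctly observe that your coset-sum argument sidesteps this comparison entirely, the main proof is unaffected.
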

\begin{proof}~
    [(2.5), pg. 208, \cite{Macdonald}]
\end{proof}

Due to the above stability, the following is well-defined:

\begin{defn}
    For a partition $\lambda$ define the \textbf{\textit{Hall-Littlewood symmetric function}} 
    $$P_{\lambda}[\fX;t] := \lim_{n} P_{\lambda}(x_1,\ldots,x_n;t) \in \mathbb{Z}[t]\otimes \Lambda.$$ Define the polynomial 
    $$b_{\lambda}(t):= \prod_{i \geq 1}(1-t)\cdots (1-t^{m_{i}(\lambda)}).$$ The \textbf{\textit{dual Hall-Littlewood symmetric functions}} are given by 
    $$Q_{\lambda}[\fX;t]:= b_{\lambda}(t) P_{\lambda}[\fX;t].$$
\end{defn}

The next result will be used repeatedly throughout this paper.

\begin{prop}\label{HL are basis in classic case}
    The functions $\{ P_{\lambda}[\fX;t] \}_{\lambda \in \Par}$ constitute a $\mathbb{Q}(t)$-basis for $\mathbb{Q}(t) \otimes \Lambda.$
\end{prop}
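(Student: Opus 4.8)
The plan is to reduce the claim to elementary linear algebra in each graded degree. Since $\mathbb{Q}(t)\otimes\Lambda = \bigoplus_{n\ge 0}\bigl(\mathbb{Q}(t)\otimes\Lambda_n\bigr)$ and $\{m_{\mu}\}_{\mu\in\Par}$ is a $\mathbb{Q}(t)$-basis compatible with this grading, it suffices to show that for each fixed $n$ the (finite) transition matrix expressing $\{P_{\lambda}[\fX;t]\}_{|\lambda|=n}$ in terms of $\{m_{\mu}\}_{|\mu|=n}$ is invertible over $\mathbb{Q}(t)$; a block-diagonal change of basis, one block per degree, then converts the monomial basis into the Hall-Littlewood functions.

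First I would record that $P_{\lambda}[\fX;t]$ is homogeneous of degree $|\lambda|$ with coefficients in $\mathbb{Z}[t]$ (this is built into the definition quoted above), so for each $n$ we may write $P_{\lambda}[\fX;t] = \sum_{\mu\vdash n} c_{\lambda\mu}(t)\, m_{\mu}$ with $c_{\lambda\mu}(t)\in\mathbb{Z}[t]$, yielding a matrix $C_n(t) = (c_{\lambda\mu}(t))$ over $\mathbb{Z}[t]$. The main input is then the classical triangularity property [cf. III.(2.6), \cite{Macdonald}]: with respect to the dominance order on partitions of $n$ one has $c_{\lambda\lambda}(t)=1$ and $c_{\lambda\mu}(t)=0$ unless $\mu\le\lambda$. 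Hence $C_n(t)$ is unitriangular, $\det C_n(t)=1$, and $C_n(t)$ is invertible already over $\mathbb{Z}[t]$, a fortiori over $\mathbb{Q}(t)$; this proves the proposition.

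If one prefers a self-contained argument avoiding the triangularity statement, I would instead specialize at $t=1$: the product $\prod_{\lambda_i>\lambda_j}\frac{x_i-tx_j}{x_i-x_j}$ collapses to $1$, so directly from the defining formula $P_{\lambda}[\fX;1] = m_{\lambda}$, i.e. $C_n(1)$ is the identity matrix. Then $\det C_n(t)\in\mathbb{Z}[t]$ is a polynomial with $\det C_n(1)=1\ne 0$, hence a nonzero element of $\mathbb{Q}(t)$, so $C_n(t)$ is invertible over $\mathbb{Q}(t)$ and we conclude as before. The one point I would be careful about — and the only real subtlety — is the justification that the coefficients $c_{\lambda\mu}(t)$ genuinely lie in $\mathbb{Z}[t]$, equivalently that the symmetrized sum of rational functions defining $P_{\lambda}$ is a polynomial in $t$ and the $x_i$ jointly, since the individual terms $w\!\left(x^{\lambda}\prod\frac{x_i-tx_j}{x_i-x_j}\right)$ have apparent poles. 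This is exactly the content of the stability lemma and the surrounding discussion cited above from Chapter III of \cite{Macdonald}, so I would simply invoke it; everything else is finite-dimensional linear algebra.
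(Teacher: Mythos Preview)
Your argument is correct. The paper itself does not give a proof but simply cites [(2.7), pg.~209, \cite{Macdonald}]; the unitriangularity of $P_{\lambda}$ against $m_{\mu}$ in dominance order that you invoke is precisely the content of Macdonald's (2.6)--(2.7), so your first argument is exactly the one being cited, spelled out. Your alternative via the specialization $P_{\lambda}[\fX;1]=m_{\lambda}$ is also valid and a nice shortcut, and you correctly flag that the only nontrivial input in either route is the polynomiality of the $c_{\lambda\mu}(t)$, which is established in the same place in Macdonald.
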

\begin{proof}~
    [(2.7), pg. 209, \cite{Macdonald}]
\end{proof}

The following bilinear form generalizes the Hall inner product and interacts well with the Hall-Littlewood symmetric functions.

\begin{defn}
    Define the non-degenerate bilinear pairing $\langle-,-\rangle_t$ on $\mathbb{Q}(t)\otimes \Lambda$ by 
    $$\langle p_{\mu}[\fX],p_{\lambda}[\fX]\rangle_t:= \delta_{\mu,\lambda} \prod_{i \geq 1}\left(\frac{m_{i}(\lambda) i^{m_{i}(\lambda)}}{1-t^{m_i(\lambda)}} \right).$$
\end{defn}

We require the next result later in this paper.

\begin{lem}[t-Cauchy Identity]\label{t cauchy identity}
    $$\Exp[(1-t)\fX\fY] = \sum_{\lambda} P_{\lambda}[\fX;t] Q_{\lambda}[\fY;t]$$
\end{lem}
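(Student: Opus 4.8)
The plan is to reduce the identity to the classical Hall–Littlewood Cauchy identity in finitely many variables, where it appears in Macdonald's book, and then pass to the inverse limit. First I would recall that both sides lie in $\mathbb{Q}(t)\otimes\Lambda\,\widehat{\otimes}\,\Lambda$ (a suitable completion allowing the infinite sum over all partitions $\lambda$, graded by $|\lambda|$), so it suffices to check equality after specializing $\fX \mapsto (x_1,\dots,x_n)$ and $\fY \mapsto (y_1,\dots,y_m)$ for all $n,m$, since the $P_\lambda[\fX;t]$ form a basis (Proposition \ref{HL are basis in classic case}) and the stability lemma guarantees compatibility of these specializations. On the left side, the plethystic exponential specializes as
$$\Exp[(1-t)\fX\fY] \;=\; \prod_{i,j}\frac{1 - t x_i y_j}{1 - x_i y_j},$$
using $\Exp[\fX+\fY]=\Exp[\fX]\Exp[\fY]$, the definition $p_r[(1-t)\fX\fY] = (1-t^r)\,p_r[\fX]p_r[\fY]$, and the product expansion $\Exp[\fZ] = \prod_k (1-z_k)^{-1}$ applied to the alphabet whose power sums are $(1-t^r)\sum_{i,j}(x_iy_j)^r$.

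Next I would invoke the finite-variable Hall–Littlewood Cauchy identity, which is exactly [(4.4), Ch.~III, \cite{Macdonald}]:
$$\prod_{i,j}\frac{1 - t x_i y_j}{1 - x_i y_j} \;=\; \sum_{\lambda} P_\lambda(x_1,\dots,x_n;t)\,Q_\lambda(y_1,\dots,y_m;t),$$
where the sum is over partitions $\lambda$ with $\ell(\lambda)\le\min(n,m)$ (for $\ell(\lambda)$ larger, $P_\lambda$ or $Q_\lambda$ in the relevant number of variables vanishes, so including all $\lambda$ is harmless). Taking the limit $n,m\to\infty$ and using the definition $P_\lambda[\fX;t] = \lim_n P_\lambda(x_1,\dots,x_n;t)$ together with $Q_\lambda = b_\lambda(t)P_\lambda$, the right side converges coefficient-wise in each bidegree to $\sum_\lambda P_\lambda[\fX;t]Q_\lambda[\fY;t]$, matching the claimed formula. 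Alternatively, one can avoid citing the finite identity directly and instead verify the expansion coefficient by coefficient using the pairing $\langle-,-\rangle_t$: the identity is equivalent to the statement that $\{P_\lambda\}$ and $\{Q_\lambda\}$ are dual bases with respect to $\langle-,-\rangle_t$, i.e. $\langle P_\lambda[\fX;t], Q_\mu[\fX;t]\rangle_t = \delta_{\lambda\mu}$, which is [(5.8') and (4.9), Ch.~III, \cite{Macdonald}].

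The main obstacle is bookkeeping rather than mathematics: one must be careful about the ambient ring in which the infinite sum $\sum_\lambda P_\lambda[\fX;t]Q_\lambda[\fY;t]$ converges and in which $\Exp[(1-t)\fX\fY]$ lives, and about the fact that the plethystic substitution $(1-t)\fX\fY$ should be interpreted as "the alphabet with power sums $(1-t^r)p_r[\fX]p_r[\fY]$" so that $p_r$ is not applied naively to a difference. Once the plethystic computation of the left-hand side is nailed down and the grading-by-$|\lambda|$ completion is fixed, the result is an immediate consequence of the classical finite-variable identity together with the stability built into the definition of $P_\lambda[\fX;t]$. Since the paper only states this lemma for later use, a one- or two-line proof citing [Ch.~III, \cite{Macdonald}] and the plethystic identity above is entirely appropriate.
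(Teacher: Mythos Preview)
Your proposal is correct and takes essentially the same approach as the paper: the paper's proof is simply a citation to [(4.4), pg.~224, \cite{Macdonald}], and your argument reduces to exactly that identity after unwinding the plethystic exponential on the left-hand side. Your additional exposition of the plethystic computation and the passage to the inverse limit is accurate and helpful, but more detailed than what the paper provides.
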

\begin{proof}~
   [(4.4), pg. 224, \cite{Macdonald}]
\end{proof}

\subsection{Arithmetic Symmetric Functions}

We begin by introducing a generalization to the ordinary ring of symmetric functions $\Lambda$ in an infinite set of variables which has instead one infinite variable set for every prime ideal of the ring of integers of a number field. For the remainder of this paper $\mathbb{N}:= \{1,2,3,\ldots\}.$ We assume the reader has a familiarity with the basic objects in algebraic number theory.

\begin{defn}
    Let $K$ be a number field. We will use the following notation:
    \begin{itemize}
        \item $\sO_{K}:=$ ring of integers of $K$
        \item $\bP_K:=$  set of prime ideals of $K$
        \item $\cJ_K:=$ group of fractional ideals of $\sO_{K}$
        \item $\cJ_K^{+}:=$ monoid of integral ideals of $\sO_{K}$
        \item $\mathrm{Cl}_K:= \cJ_K/(K^{*}\mathcal{O}_K)$ is the ideal class group of $K$ 
        \item $K_{\fq}$ is the $\fq$-adic completion of $K$ at the prime ideal $\fq \in \mathbb{P}_K$
        \item $\sO_{K_{\fq}}:=$ the ring of integers of $K_{\fq}$
        \item $\bA_{K}^{f}:=$ the ring of finite adeles of $K$
        \item $\widehat{\sO}_K:= \prod_{\fq \in \bP_K} \sO_{K_{\fq}} \subset \bA_{K}^{f}.$ 
    \end{itemize}
\end{defn}

\begin{defn}\label{ring of arithmetic sym polys defn}
     For a number field $K$ define $\Par_{K}$ to be the set of all tuples $\lambda^{\bullet} = (\lambda^{(\fq)})_{\fq \in \bP_K}$ of partitions $\lambda^{(\fq)} \in \Par$ such that $\lambda^{(\fq)} = \emptyset$ for all but finitely many $\fq.$ For $\fq \in \bP_{K}$ define $\fX^{(\fq)}:= x_1^{(\fq)}+x_2^{(\fq)}+\ldots$ to be a set of commuting free variables. We will write $\fX_K:= (\fX^{(\fq)})_{\fq \in \bP_K}$ and assume that for distinct $\fq,\fp \in \bP_K$ the variable sets $\fX^{(\fq)}$ and $\fX^{(\fp)}$ mutually commute. Define $A_{K}$ to be the ring 
    $$A_{K}:= \bigotimes_{\fq \in \bP_K} \Lambda[\fX^{(\fq)}].$$ For the remainder of this paper we will always interpret the above infinite tensor product as 
    $$ \bigotimes_{\fq \in \bP_K}:= \lim_{\substack{S\subset \bP_K\\ |S| <\infty}} \bigotimes_{\fq \in S}$$ whenever it appears. We will write $F \in A_{K}$ as 
    $$F:= F[\fX_K].$$ We say that $F \in A_{K}$ is \textbf{\textit{homogeneous}} if $F$ is simultaneously homogeneous in each variable set $\fX^{(\fq)}$ for $\fq \in \bP_K.$ Given any homogeneous $F$ we define the norm of $F$ by
    $$||F||:= \prod_{\fq \in \bP_K} \fq^{\deg_{\fq}(F)} \in \cJ_K^{+}$$
    where $\deg_{\fq}(F)$ denotes the ordinary $\mathbb{Z}_{\geq 0}$ degree of $F$ in the variables $\fX^{(\fq)}.$ The map $F \rightarrow ||F||$ defines a $\cJ_{K}^{+}$-valued grading on $A_{K}$ where $\cJ_{K}^{+}$ is naturally ordered by divisibility. 
\end{defn}

\begin{defn}
    Let $\{u_{\lambda}\}_{\lambda \in \Par}$ be any of the standard homogeneous bases $m_{\lambda},h_{\lambda},s_{\lambda}$ for $\Lambda.$ For $\lambda^{\bullet} \in \Par_K$ define 
    $$u_{\lambda^{\bullet}}[\fX_K]:= \prod_{\fq \in \bP_K}u_{\lambda^{(\fq)}}[\fX^{(\fq)}] \in A_{K}.$$
\end{defn}

\begin{defn}\label{ring of arith sym functions defn}
    Given a number field $K$ we define the \textbf{\textit{ring of arithmetic symmetric functions of K}} as the completion of $A_{K}:$
    $$\cA_{K}:= \widehat{A_{K}}.$$ To be explicit, the ring $\cA_{K}$ consists of all formal sums of the form 
    $$\sum_{\mathfrak{a} \in \cJ_K^{+}}  F_{\mathfrak{a}}[\fX_K]$$ with $||F_{\mathfrak{a}}|| = \mathfrak{a}$ with multiplication given by 
    $$\left(\sum_{\mathfrak{a} \in \cJ_K^{+}}  F_{\mathfrak{a}}[\fX_K] \right)\left(\sum_{\mathfrak{b} \in \cJ_K^{+}}  G_{\mathfrak{b}}[\fX_K] \right) = \sum_{\mathfrak{d}\in \cJ_K^{+}} \left( \sum_{\mathfrak{a}\mathfrak{b} = \mathfrak{d}} F_{\mathfrak{a}}[\fX_K]G_{\mathfrak{b}}[\fX_K] \right).$$
\end{defn}

\begin{remark}
    For the majority of this paper we will work over $\mathbb{Q}^{\text{ab}}$ except for a few sections and we will make the change clear. The choice of $\mathbb{Q}^{\text{ab}}$ over, the possibly more natural choice of, $\mathbb{C}$ is to emphasize that the arithmetic symmetric functions are not analytic objects (on their own) and so we may take coefficients over any commutative ring. Most relevant for studying Galois representations, one could consider arithmetic symmetric functions over an $\ell$-adic field $E/ \mathbb{Q}_{\ell}$ and talk about $\ell$-adic Galois representations. We will not do so in this paper however.
\end{remark}

We may consider many constructions in the ring of arithmetic symmetric functions which are analogous to usual constructions in algebraic/analytic number theory. 

\begin{defn}\label{Dedekind sym defn}
    Define the \textbf{\textit{Dedekind symmetric function}} $\zeta_K[\fX_K] \in \cA_K$ as 
    $$\zeta_K[\fX_K]:= \prod_{\fq \in \bP_K}\prod_{i \geq 1}\left(1-x_i^{(\fq)} \right)^{-1}.$$ For a finite extension $L/K$ of number fields define the \textit{\textbf{relative Dedekind symmetric function}} $\zeta_{L/K}[\fX_K] \in \cA_K$ as 
    $$\zeta_{L/K}[\fX_K]:= \prod_{\fq \in \bP_K}\prod_{\substack{\fp \in \bP_L \\ \fp|\fq}}\prod_{i \geq 1}\left(1-(x_i^{(\fq)})^{f(\fp|\fq)} \right)^{-1}$$
    where $f(\fp|\fq):= \dim_{\sO_K/\fq}(\sO_L/\fp).$
\end{defn}

\begin{defn}
    Let $L/K$ be a finite extension of number fields. Define the \textbf{\textit{norm map}} $\mathcal{N}_{L/K}: \cA_{L} \rightarrow \cA_K$ by the substitution 
    $$\mathfrak{X}^{(\fp)} \rightarrow p_{f(\fp|\fq)}[\fX^{(\fq)}]$$
     whenever $\fp|\fq$ and extended as a graded algebra homomorphism to all of $\cA_L$ accordingly.
\end{defn}

The following proposition verifies that the norm map $\mathcal{N}_{L/K}$ is well-defined.

\begin{prop}
    If $F$ is homogeneous then 
    $||\mathcal{N}_{L/K}(F)|| = N_{L/K}(||F||)$ where $N_{L/K}: \cJ_{L}\rightarrow \cJ_{K}$ is the usual ideal norm map. 
\end{prop}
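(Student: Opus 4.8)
The plan is to reduce the statement to a computation on a single homogeneous generator of $\cA_L$ and then invoke multiplicativity of both sides. Since $\mathcal{N}_{L/K}$ is defined to be a graded algebra homomorphism and $F \mapsto ||F||$ is a $\cJ^{+}$-valued grading (i.e. $||FG|| = ||F||\cdot||G||$), while $N_{L/K}$ is multiplicative on ideals, it suffices to check the identity $||\mathcal{N}_{L/K}(F)|| = N_{L/K}(||F||)$ when $F$ is homogeneous of the simplest possible type: a power sum (or monomial) in a single variable set $\fX^{(\fp)}$ for one prime $\fp \in \bP_L$. Indeed, every homogeneous element of $\cA_L$ is a $\mathbb{Q}^{\mathrm{ab}}$-linear combination of products $\prod_{\fp} u_{\lambda^{(\fp)}}[\fX^{(\fp)}]$, and within a fixed $\cJ^{+}_L$-graded piece all summands have the same norm, so homogeneity is preserved under taking individual terms; hence it is enough to treat $F = u[\fX^{(\fp)}]$ homogeneous of degree $d$ in $\fX^{(\fp)}$.

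First I would record the two relevant degree bookkeeping facts. On the one hand, $||F|| = \fp^{d}$ when $F$ is homogeneous of degree $d$ in $\fX^{(\fp)}$ and of degree $0$ in all other variable sets, directly from Definition \ref{ring of arithmetic sym polys defn}; and $N_{L/K}(\fp) = \fq^{f(\fp|\fq)}$ where $\fq = \fp \cap \sO_K$, by the standard definition of the ideal norm, so $N_{L/K}(||F||) = \fq^{d\, f(\fp|\fq)}$. On the other hand, the substitution $\fX^{(\fp)} \mapsto p_{f(\fp|\fq)}[\fX^{(\fq)}]$ sends a degree-$d$ homogeneous symmetric function in $\fX^{(\fp)}$ to a symmetric function in $\fX^{(\fq)}$ that is homogeneous of degree $d\cdot f(\fp|\fq)$: this is because the plethystic substitution $G \mapsto G[p_r[\fX]]$ multiplies degree by $r$ (it sends $p_k \mapsto p_{kr}$, hence $p_\mu \mapsto p_{r\mu}$, and the $p_\mu$ of weight $d$ span the degree-$d$ part), applied here with $r = f(\fp|\fq)$. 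Therefore $||\mathcal{N}_{L/K}(F)|| = \fq^{d\, f(\fp|\fq)}$, matching $N_{L/K}(||F||)$.

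Combining, for a general homogeneous $F = \prod_{\fp \in S} F_{\fp}$ with $F_{\fp}$ homogeneous of degree $d_{\fp}$ in $\fX^{(\fp)}$ (and $S$ finite), we get
\[
||\mathcal{N}_{L/K}(F)|| = \prod_{\fp \in S} ||\mathcal{N}_{L/K}(F_{\fp})|| = \prod_{\fp \in S} \fq_{\fp}^{d_{\fp} f(\fp|\fq_{\fp})} = \prod_{\fp \in S} N_{L/K}(\fp^{d_{\fp}}) = N_{L/K}\!\left(\prod_{\fp \in S}\fp^{d_{\fp}}\right) = N_{L/K}(||F||),
\]
where $\fq_{\fp} = \fp\cap\sO_K$, using multiplicativity of the norm grading on the left, multiplicativity of $N_{L/K}$ on the right, and the single-variable-set case in the middle.

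The only genuine content is the middle step — verifying that plethystic substitution by $p_r$ scales the symmetric-function degree by exactly $r$ — and this is routine once one writes everything in the power-sum basis; the rest is purely formal reduction via multiplicativity. So I do not anticipate a real obstacle, only the need to be careful that "homogeneous" in $\cA_L$ means simultaneously homogeneous in each variable set, which guarantees that passing to individual basis terms and to individual prime factors is legitimate. One subtlety worth a remark: the norm map is only asserted to be well-defined as a graded algebra homomorphism \emph{because} of this proposition, so strictly the argument should be phrased as: define $\mathcal{N}_{L/K}$ first on the polynomial subring $A_L$ by the substitution (where it is manifestly an algebra map into $A_K$), check the degree identity there as above, conclude it respects the $\cJ^{+}$-grading, and then extend by continuity to the completion $\cA_L \to \cA_K$.
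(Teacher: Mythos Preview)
Your proposal is correct and follows essentially the same approach as the paper: both reduce to the fact that the substitution $\fX^{(\fp)} \mapsto p_{f(\fp|\fq)}[\fX^{(\fq)}]$ scales the $\fX^{(\fq)}$-degree by $f(\fp|\fq)$, and then use multiplicativity of $N_{L/K}$. The paper simply packages this as a single chain of equalities on $\prod_{\fq} \fq^{\deg_{\fq}(\mathcal{N}_{L/K}(F))}$ with $\deg_{\fq}(\mathcal{N}_{L/K}(F)) = \sum_{\fp|\fq} f(\fp|\fq)\deg_{\fp}(F)$, rather than first factoring into single-prime pieces, but the content is identical.
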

\begin{proof}
    \begin{align*}
        ||\mathcal{N}_{L/K}(F)|| &= \prod_{\fq \in \mathbb{P}_{K}} \fq^{\deg_{\fq}(\mathcal{N}_{L/K}(F))}\\
        &= \prod_{\fq \in \mathbb{P}_{K}} \fq^{\sum_{\fp|\fq} f(\fp|\fq) \deg_{\fp}(F)} \\
        &= \prod_{\fp \in \mathbb{P}_L} N_{L/K}(\fp)^{\deg_{\fp}(F)}\\
        &= N_{L/K}\left(\prod_{\fp \in \mathbb{P}_L} \fp^{\deg_{\fp}(F)}\right)\\
        &= N_{L/K}(||F||).\\
    \end{align*}
\end{proof}

\begin{remark}
Using plethystic notation 
$$\zeta_K[\mathfrak{X}_K] = \prod_{\fq \in \bP_K}\Exp[\fX^{(\fq)}] = \Exp \left[\sum_{\fq \in \bP_K} \fX^{(\fq)} \right]$$ and 
$$\zeta_{L/K}[\fX_K]= \prod_{\fq \in \bP_K} \Exp\left[\sum_{\substack{\fp \in \bP_L \\ \fp|\fq}} p_{f(\fp|\fq)}[\fX^{(\fq)}]  \right] = \Exp\left[ \sum_{\fq \in \bP_K}\sum_{\substack{\fp \in \bP_L \\ \fp|\fq}} p_{f(\fp|\fq)}[\fX^{(\fq)}]  \right].$$
Therefore, 
$$\zeta_{L/K}[\fX_{K}] = \mathcal{N}_{L/K}(\zeta_{L}[\fX_L]).$$
\end{remark}

Like the usual ideal norm map $N_{L/K}$, we show that $\mathcal{N}_{L/K}$ respects towers of extensions.

\begin{prop}\label{norm map prop}
    If $E/L/K$ is any tower of finite extensions of number fields then 
    $$\mathcal{N}_{E/K} = \mathcal{N}_{L/K}\mathcal{N}_{E/L}.$$
\end{prop}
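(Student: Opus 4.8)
The plan is to verify the identity $\mathcal{N}_{E/K} = \mathcal{N}_{L/K}\mathcal{N}_{E/L}$ by checking it on generators, exploiting that all three maps are graded algebra homomorphisms by construction. Since $\cA_E$ is (a completion of) the tensor product of the $\Lambda[\fX^{(\fP)}]$ over primes $\fP \in \bP_E$, and each $\Lambda[\fX^{(\fP)}]$ is generated as an algebra by the power sums $p_r[\fX^{(\fP)}]$, it suffices to check that both composites agree on each $p_r[\fX^{(\fP)}]$ for $\fP \in \bP_E$ and $r \geq 1$. (One should remark briefly that an algebra homomorphism out of the completion $\cA_E$ is determined by its values on these topological generators, since everything is continuous for the $\cJ^+$-grading topology.)

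Next I would trace through the definition. Fix $\fP \in \bP_E$ lying over $\fp \in \bP_L$ lying over $\fq \in \bP_K$. The map $\mathcal{N}_{E/L}$ sends $\fX^{(\fP)} \mapsto p_{f(\fP|\fp)}[\fX^{(\fp)}]$, hence $p_r[\fX^{(\fP)}] \mapsto p_r\big[p_{f(\fP|\fp)}[\fX^{(\fp)}]\big] = p_{r f(\fP|\fp)}[\fX^{(\fp)}]$, using the elementary plethystic fact $p_a[p_b[\fX]] = p_{ab}[\fX]$. Then applying $\mathcal{N}_{L/K}$ sends $p_{r f(\fP|\fp)}[\fX^{(\fp)}] \mapsto p_{r f(\fP|\fp) \cdot f(\fp|\fq)}[\fX^{(\fq)}]$ by the same reasoning. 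On the other hand, $\mathcal{N}_{E/K}$ sends $p_r[\fX^{(\fP)}] \mapsto p_{r f(\fP|\fq)}[\fX^{(\fq)}]$. So the whole claim reduces to the multiplicativity of residue degrees in a tower: $f(\fP|\fq) = f(\fP|\fp)\, f(\fp|\fq)$, which is standard algebraic number theory (it follows from the tower law for the field extensions of residue fields $\sO_E/\fP \supseteq \sO_L/\fp \supseteq \sO_K/\fq$, i.e. $\dim_{\sO_K/\fq}(\sO_E/\fP) = \dim_{\sO_L/\fp}(\sO_E/\fP) \cdot \dim_{\sO_K/\fq}(\sO_L/\fp)$).

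One bookkeeping point deserves care: when $\mathcal{N}_{L/K}$ is applied after $\mathcal{N}_{E/L}$, each prime $\fp \in \bP_L$ receives contributions from all $\fP \in \bP_E$ above it, and then $\fp$ is sent to the prime $\fq$ below it; I should make sure the indexing of the iterated products matches the single product over $\fP \in \bP_E$ grouped by the prime $\fq \in \bP_K$ beneath, i.e. that $\{\fP \in \bP_E : \fP | \fq\} = \bigsqcup_{\fp | \fq}\{\fP \in \bP_E : \fP | \fp\}$, which is immediate since every $\fP$ above $\fq$ lies above a unique $\fp$ above $\fq$.

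The proof is essentially routine; there is no real obstacle. The only thing one must be slightly attentive to is the plethystic composition law $p_a[p_b[\fX]] = p_{ab}[\fX]$ (so that $\mathcal{N}_{L/K}\mathcal{N}_{E/L}$ really is computed by composing substitutions coordinate-by-coordinate rather than by some more subtle plethystic interaction), together with the reduction to topological generators so that checking on power sums genuinely determines the maps on all of $\cA_E$.

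\begin{proof}
    All three maps $\mathcal{N}_{E/K}$, $\mathcal{N}_{E/L}$, $\mathcal{N}_{L/K}$ are, by definition, continuous graded algebra homomorphisms between the respective rings of arithmetic symmetric functions. Since $\cA_E$ is the completion of $\bigotimes_{\fP \in \bP_E}\Lambda[\fX^{(\fP)}]$ and each factor $\Lambda[\fX^{(\fP)}]$ is generated as an algebra by the power sums $\{p_r[\fX^{(\fP)}]\}_{r \geq 1}$, it suffices to check that $\mathcal{N}_{E/K}$ and $\mathcal{N}_{L/K}\mathcal{N}_{E/L}$ agree on each $p_r[\fX^{(\fP)}]$ for $\fP \in \bP_E$ and $r \geq 1$.

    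Fix such a $\fP$, let $\fp \in \bP_L$ be the prime below it, and let $\fq \in \bP_K$ be the prime below $\fp$; then $\fq$ is also the prime of $K$ below $\fP$. By definition $\mathcal{N}_{E/L}$ performs the substitution $\fX^{(\fP)} \mapsto p_{f(\fP|\fp)}[\fX^{(\fp)}]$, so using the plethystic composition identity $p_a[p_b[\fX]] = p_{ab}[\fX]$ we get
    $$\mathcal{N}_{E/L}\big(p_r[\fX^{(\fP)}]\big) = p_r\big[p_{f(\fP|\fp)}[\fX^{(\fp)}]\big] = p_{r\, f(\fP|\fp)}[\fX^{(\fp)}].$$
    Applying $\mathcal{N}_{L/K}$, which substitutes $\fX^{(\fp)} \mapsto p_{f(\fp|\fq)}[\fX^{(\fq)}]$, and using the same identity again yields
    $$\mathcal{N}_{L/K}\mathcal{N}_{E/L}\big(p_r[\fX^{(\fP)}]\big) = p_{r\, f(\fP|\fp)}\big[p_{f(\fp|\fq)}[\fX^{(\fq)}]\big] = p_{r\, f(\fP|\fp)\, f(\fp|\fq)}[\fX^{(\fq)}].$$
    On the other hand, directly from the definition of $\mathcal{N}_{E/K}$,
    $$\mathcal{N}_{E/K}\big(p_r[\fX^{(\fP)}]\big) = p_{r\, f(\fP|\fq)}[\fX^{(\fq)}].$$
    By the multiplicativity of residue degrees in a tower, $f(\fP|\fq) = f(\fP|\fp)\, f(\fp|\fq)$, since $\dim_{\sO_K/\fq}(\sO_E/\fP) = \dim_{\sO_L/\fp}(\sO_E/\fP)\cdot\dim_{\sO_K/\fq}(\sO_L/\fp)$ by the tower law for the residue field extensions $\sO_K/\fq \subseteq \sO_L/\fp \subseteq \sO_E/\fP$. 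Hence the two expressions coincide on every topological generator, and therefore $\mathcal{N}_{E/K} = \mathcal{N}_{L/K}\mathcal{N}_{E/L}$ as maps $\cA_E \to \cA_K$.
\end{proof}
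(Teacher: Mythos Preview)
Your proof is correct and follows essentially the same approach as the paper: reduce to generators, use the plethystic identity $p_a[p_b[\fX]] = p_{ab}[\fX]$, and invoke multiplicativity of residue degrees in a tower. The paper's version is slightly terser, checking only on the variable sets $\fX^{(\fq_1)}$ themselves (i.e.\ $r=1$) since the norm maps are defined as substitutions extended to algebra homomorphisms, but your more explicit treatment of all $p_r$ and the bookkeeping about primes in a tower is fine and arguably clearer.
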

\begin{proof}
    If $\fq_1|\fq_2|\fq_3$ then 
    $$f(\fq_1|\fq_3) = f(\fq_1|\fq_2)f(\fq_2|\fq_3).$$ Therefore, 
    $$\mathcal{N}_{E/K}(\fX^{(\fq_1)}) = p_{f(\fq_1|\fq_3)}[\fX^{(\fq_3)}] = p_{f(\fq_2|\fq_3)}[ p_{f(\fq_1|\fq_2)} [\fX^{(\fq_3)}] ] = \mathcal{N}_{L/K}\mathcal{N}_{E/L}(\fX^{(\fq_1)}).$$
\end{proof}

Using the above basic result regarding the norm map we show the following:

\begin{cor}
    If $E/L/K$ is any tower of finite extensions of number fields 
    $$\mathcal{N}_{L/K}(\zeta_{E/L}) = \zeta_{E/K}.$$
\end{cor}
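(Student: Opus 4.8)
The plan is to combine the two previously established facts: the tower law for the arithmetic norm map (Proposition \ref{norm map prop}) and the identity $\zeta_{E/L}[\fX_L] = \mathcal{N}_{E/L}(\zeta_E[\fX_E])$ noted in the remark immediately after the well-definedness proposition. Applying $\mathcal{N}_{L/K}$ to that identity gives
$$\mathcal{N}_{L/K}(\zeta_{E/L}) = \mathcal{N}_{L/K}(\mathcal{N}_{E/L}(\zeta_E)) = \mathcal{N}_{E/K}(\zeta_E),$$
where the last step is exactly Proposition \ref{norm map prop}. So it remains only to identify $\mathcal{N}_{E/K}(\zeta_E)$ with $\zeta_{E/K}[\fX_K]$; but this is the same remark applied to the extension $E/K$ in place of $E/L$. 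That finishes it.

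First I would state the tower-law for residue degrees $f(\fq_1|\fq_3) = f(\fq_1|\fq_2)f(\fq_2|\fq_3)$ for $\fq_1 \in \bP_E$, $\fq_2 \in \bP_L$, $\fq_3 \in \bP_K$ with $\fq_1|\fq_2|\fq_3$ — this is the classical multiplicativity of inertia degrees in a tower and is implicitly what Proposition \ref{norm map prop} already uses. Then I would write $\zeta_E[\fX_E] = \Exp\bigl[\sum_{\mathfrak r \in \bP_E}\fX^{(\mathfrak r)}\bigr]$ in plethystic form and push it through $\mathcal{N}_{E/K}$ directly: since $\mathcal{N}_{E/K}$ is a graded algebra homomorphism sending $\fX^{(\mathfrak r)} \mapsto p_{f(\mathfrak r|\fq)}[\fX^{(\fq)}]$ for $\mathfrak r|\fq$, and since $p_r$ acts plethystically, we get
$$\mathcal{N}_{E/K}(\zeta_E) = \Exp\Bigl[\sum_{\fq \in \bP_K}\sum_{\substack{\mathfrak r \in \bP_E \\ \mathfrak r|\fq}} p_{f(\mathfrak r|\fq)}[\fX^{(\fq)}]\Bigr],$$
which is precisely the plethystic expression for $\zeta_{E/K}[\fX_K]$ given in the remark. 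Alternatively one can avoid plethysm and just match Euler factors prime-by-prime, which amounts to the same residue-degree bookkeeping.

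I do not expect any genuine obstacle here; the statement is essentially a formal consequence of functoriality of $\mathcal{N}$ (already proven) together with the already-noted relation $\zeta_{?/?} = \mathcal{N}_{?/?}(\zeta_?)$. The only point requiring a modicum of care is that the sum indexing primes $\mathfrak r$ of $E$ lying over a fixed $\fq$ of $K$ can be reorganized as: first group the $\mathfrak r$ by the prime $\mathfrak p$ of $L$ they lie over, then group those $\mathfrak p$ by $\fq$ — this is exactly where $f(\mathfrak r|\fq) = f(\mathfrak r|\mathfrak p)f(\mathfrak p|\fq)$ and the composition $p_{f(\mathfrak p|\fq)}\circ p_{f(\mathfrak r|\mathfrak p)} = p_{f(\mathfrak r|\fq)}$ (as plethystic operators) come in, and it is already encapsulated in Proposition \ref{norm map prop}. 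So the proof is genuinely a two-line deduction, and I would present it as such rather than re-deriving the ingredients.
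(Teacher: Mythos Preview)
Your proposal is correct and follows exactly the same route as the paper: write $\zeta_{E/L} = \mathcal{N}_{E/L}(\zeta_E)$, apply $\mathcal{N}_{L/K}$, invoke the tower law $\mathcal{N}_{L/K}\mathcal{N}_{E/L} = \mathcal{N}_{E/K}$ from Proposition~\ref{norm map prop}, and conclude with $\mathcal{N}_{E/K}(\zeta_E) = \zeta_{E/K}$. The paper's proof is literally this one-line chain of equalities (modulo a harmless typo there), so your additional plethystic unpacking is optional commentary rather than a different argument.
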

\begin{proof}
$$\mathcal{N}_{L/K}(\zeta_{E/L}) = \mathcal{N}_{L/K}\mathcal{N}_{E/K}(\zeta_E) = \mathcal{N}_{E/K}(\zeta_E) = \zeta_{E/K}.$$
\end{proof}

Although we will not require the following results for the remainder of this paper, we include them to emphasize the analogy between classical objects in algebraic/analytic number theory and arithmetic symmetric functions. Readers interested only in the Artin symmetric functions may skip this part and go directly to subsection \ref{subsection 2.3}. 

\begin{defn}
Write $A_{K}(\mathbb{Z})$ and $\mathcal{A}_K(\mathbb{Z})$ for the rings defined as in Definitions \ref{ring of arithmetic sym polys defn} \ref{ring of arith sym functions defn} analogously but with coefficients in $\mathbb{Z}$ instead of $\mathbb{Q}^{\text{ab}}.$ We will similarly write $\Lambda_{\mathbb{Z}}$ for the ring of symmetric functions over $\mathbb{Z}.$
For $\mathfrak{a} \in \cJ_{K}$ define $\nu_{\fq}(\mathfrak{a}) \in \mathbb{Z}$ using unique factorization by 
  $$\mathfrak{a} = \prod_{\fq \in \mathbb{P}_K} \fq^{\nu_{\fq}(\mathfrak{a})}.$$ For $m \geq 0$ and $\mathfrak{a} \in \cJ_{K}$ define 
  $$H^{(m)}_{\mathfrak{a}}[\fX_K]:= h_{m}\left[ \sum_{\fq \in \mathbb{P}_K} \nu_{\fq}(\mathfrak{a}) \fX^{(\fq)}  \right] \in A_{K}(\mathbb{Z}).$$ Define 
  $$\Exp_{\mathfrak{a}}[\fX_K]:= \Exp\left[ \sum_{\fq \in \mathbb{P}_K} \nu_{\fq}(\mathfrak{a}) \fX^{(\fq)}  \right] = \sum_{m \geq 0} H^{(m)}_{\mathfrak{a}}[\fX_K] \in \mathcal{A}_K(\mathbb{Z}).$$
\end{defn}

Note that the $H^{(m)}_{\mathfrak{a}}[\fX_K]$ are \textbf{not} always homogeneous with respect to the grading $||\cdot||.$ However, if we define the simpler grading $\deg'$ on $A_K(\mathbb{Z})$ given by the degree in the $x_i^{(\fq)}$ variables (without distinguishing between distinct $\fq$), then the $H^{(m)}_{\mathfrak{a}}[\fX_K]$ \textbf{are} homogeneous with $\deg'(H^{(m)}_{\mathfrak{a}}[\fX_K]) = m.$ The following properties are straightforward to check.

\begin{lem}\label{exp lemma}~
    \begin{itemize}
        \item $\Exp_{(1)} = 1$
        \item $\Exp_{\mathfrak{a}\mathfrak{b}} = \Exp_{\mathfrak{a}} \Exp_{\mathfrak{b}} $
    \end{itemize}
\end{lem}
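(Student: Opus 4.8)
The statement to prove is Lemma~\ref{exp lemma}, asserting that $\Exp_{(1)} = 1$ and $\Exp_{\mathfrak{a}\mathfrak{b}} = \Exp_{\mathfrak{a}}\Exp_{\mathfrak{b}}$. Both assertions follow quickly from unwinding the plethystic definitions together with the two basic properties of the plethystic exponential recalled in Remark~\ref{Dedekind remark}, namely $\Exp[0] = 1$ and $\Exp[\fX + \fY] = \Exp[\fX]\Exp[\fY]$. So this is really just a matter of checking that the relevant "arguments" behave additively under multiplication of ideals.

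For the first bullet, the plan is to observe that the unit ideal $(1) = \sO_K$ has $\nu_{\fq}((1)) = 0$ for every prime $\fq \in \bP_K$, since $(1)$ contains no prime factors. Hence the formal sum $\sum_{\fq \in \bP_K}\nu_{\fq}((1))\fX^{(\fq)}$ is identically $0$, and therefore $\Exp_{(1)} = \Exp[0] = 1$ by the normalization property of the plethystic exponential.

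For the second bullet, the key point is the additivity of the valuations: for integral (indeed fractional) ideals $\mathfrak{a},\mathfrak{b}$ and each prime $\fq$, unique factorization gives $\nu_{\fq}(\mathfrak{a}\mathfrak{b}) = \nu_{\fq}(\mathfrak{a}) + \nu_{\fq}(\mathfrak{b})$. Summing against the variable sets $\fX^{(\fq)}$ yields the identity of formal expressions
$$\sum_{\fq \in \bP_K}\nu_{\fq}(\mathfrak{a}\mathfrak{b})\,\fX^{(\fq)} = \sum_{\fq \in \bP_K}\nu_{\fq}(\mathfrak{a})\,\fX^{(\fq)} + \sum_{\fq \in \bP_K}\nu_{\fq}(\mathfrak{b})\,\fX^{(\fq)}.$$
Applying $\Exp$ to both sides and using $\Exp[\fX+\fY] = \Exp[\fX]\Exp[\fY]$ then gives $\Exp_{\mathfrak{a}\mathfrak{b}} = \Exp_{\mathfrak{a}}\Exp_{\mathfrak{b}}$, as claimed.

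I do not anticipate any genuine obstacle here; the only thing to be slightly careful about is that these are identities in the completed ring $\mathcal{A}_K(\mathbb{Z})$ rather than in $A_K(\mathbb{Z})$, so one should note that $\Exp$ applied to a (possibly infinite) formal sum of variable sets is a well-defined element of the completion, and that the product $\Exp_{\mathfrak{a}}\Exp_{\mathfrak{b}}$ is taken in $\mathcal{A}_K(\mathbb{Z})$ using the multiplication of Definition~\ref{ring of arith sym functions defn}; the additivity property $\Exp[\fX+\fY] = \Exp[\fX]\Exp[\fY]$ is compatible with this completed multiplication since it already holds at each finite truncation. With that remark in place the proof is a two-line computation.
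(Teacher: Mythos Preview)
Your proof is correct and is exactly the kind of verification the paper has in mind: the paper does not actually prove this lemma, merely remarking that ``the following properties are straightforward to check,'' and your argument via $\nu_{\fq}((1))=0$ and the additivity $\nu_{\fq}(\mathfrak{a}\mathfrak{b}) = \nu_{\fq}(\mathfrak{a}) + \nu_{\fq}(\mathfrak{b})$ together with the two basic properties of $\Exp$ from Remark~\ref{Dedekind remark} is precisely that check. One small simplification: since $\mathfrak{a},\mathfrak{b}\in\cJ_K$ are fractional ideals, the sums $\sum_{\fq}\nu_{\fq}(\mathfrak{a})\fX^{(\fq)}$ are actually finite (only finitely many $\nu_{\fq}$ are nonzero), so your caveat about ``possibly infinite'' formal sums is unnecessary here---the passage to the completion is needed only because $\Exp$ itself produces infinitely many graded pieces, not because the argument of $\Exp$ is infinite.
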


The following ring is an analogue of the ideal class group $\mathrm{Cl}_K$ of K in the setting of arithmetic symmetric functions. 

\begin{defn}
    Define the \textbf{\textit{class ring}} of K, $\mathscr{CL}_{K}$, as the quotient 
    $$\mathscr{CL}_{K}:= A_{K}(\mathbb{Z})/\mathscr{P}_{K}$$ 
    where 
    $$\mathscr{P}_{K}:= \langle H^{(m)}_{(\alpha)}[\fX_{K}] ~|~ m \geq 1, \alpha \in K^{*} \rangle.$$ Since $\mathscr{P}_K$ is a homogeneous ideal with respect to the grading $\deg'$ we may define the \textbf{\textit{completed class ring}} of K, $\widehat{\mathscr{CL}}_{K}$, as the $\deg'$-graded completion of the ring $\mathscr{CL}_{K}.$
\end{defn}

The following are some basic results regarding the class ring of number fields.
\pagebreak

\begin{prop}\label{prop of class ring}~
    \begin{itemize}
        \item The ring $\mathcal{O}_{K}$ is a unique factorization domain if and only if $\mathscr{CL}_{K} = \mathbb{Z}$.
        \item The map $\mathfrak{a} \rightarrow \Exp_{\mathfrak{a}}$ defines an injective group homomorphism $\mathrm{Cl}_K \rightarrow \widehat{\mathscr{CL}}_{K}^{*}.$
    \end{itemize}
\end{prop}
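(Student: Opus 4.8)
The statement has two parts. For the first part, I would translate the condition $\mathscr{CL}_K = \mathbb{Z}$ into a statement about the generators of $\mathscr{P}_K$. Recall $\mathscr{CL}_K = A_K(\mathbb{Z})/\mathscr{P}_K$ where $\mathscr{P}_K$ is generated by the $H^{(m)}_{(\alpha)}[\fX_K]$ for $m \geq 1$ and $\alpha \in K^*$. Note $A_K(\mathbb{Z})$ is a polynomial-type ring freely generated (as a $\deg'$-graded ring) by the $h_m[\fX^{(\fq)}]$, i.e. in degree one by the elements $h_1[\fX^{(\fq)}] = x_1^{(\fq)} + x_2^{(\fq)} + \cdots$, one for each $\fq \in \bP_K$. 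In $\deg'$-degree one, the generator $H^{(1)}_{(\alpha)} = h_1\big[\sum_\fq \nu_\fq((\alpha))\fX^{(\fq)}\big] = \sum_\fq \nu_\fq((\alpha)) \, h_1[\fX^{(\fq)}]$. So the degree-one part of $\mathscr{P}_K$ is exactly the $\mathbb{Z}$-span of the vectors $(\nu_\fq((\alpha)))_\fq$ as $\alpha$ ranges over $K^*$ — which is precisely the subgroup of principal ideals inside $\bigoplus_\fq \mathbb{Z} = \cJ_K$. Thus $\mathscr{CL}_K$ has trivial degree-one part iff every ideal is principal iff $\sO_K$ is a PID iff (for Dedekind domains) $\sO_K$ is a UFD. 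The remaining point is to check that when the degree-one part of $\mathscr{P}_K$ is everything (i.e.\ all $h_1[\fX^{(\fq)}]$ lie in $\mathscr{P}_K$), the whole ideal $\mathscr{P}_K$ is the augmentation ideal, so the quotient collapses to $\mathbb{Z}$; conversely if some ideal is non-principal, the degree-one piece already witnesses $\mathscr{CL}_K \neq \mathbb{Z}$. I would phrase this via the observation that $A_K(\mathbb{Z})$ is generated in degree one, so $\mathscr{P}_K$ contains all of $A_K(\mathbb{Z})_{\geq 1}$ as soon as it contains $A_K(\mathbb{Z})_1$.

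For the second part, I would first verify that $\mathfrak{a} \mapsto \Exp_\mathfrak{a}$ lands in the units $\widehat{\mathscr{CL}}_K^*$: by Lemma \ref{exp lemma}, $\Exp_{(1)} = 1$ and $\Exp_{\mathfrak{a}\mathfrak{b}} = \Exp_\mathfrak{a}\Exp_\mathfrak{b}$, so $\Exp_\mathfrak{a}\Exp_{\mathfrak{a}^{-1}} = 1$ and each $\Exp_\mathfrak{a}$ is a unit; moreover the same lemma shows $\mathfrak{a} \mapsto \Exp_\mathfrak{a}$ is a homomorphism from $\cJ_K$ to $\widehat{\mathscr{CL}}_K^*$. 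To get a well-defined map on $\mathrm{Cl}_K = \cJ_K/(K^* \sO_K)$, I need $\Exp_{(\alpha)} \equiv 1 \pmod{\mathscr{P}_K}$ for principal $\mathfrak{a} = (\alpha)$: indeed $\Exp_{(\alpha)} = \sum_{m \geq 0} H^{(m)}_{(\alpha)}[\fX_K]$ and for $m \geq 1$ each $H^{(m)}_{(\alpha)}$ is by definition a generator of $\mathscr{P}_K$, so $\Exp_{(\alpha)} \equiv H^{(0)}_{(\alpha)} = 1$ in $\widehat{\mathscr{CL}}_K$. (Note this requires passing to the \emph{completed} class ring, since $\Exp_{(\alpha)}$ is an infinite sum — this is exactly why the completion is introduced.) Hence the map descends to a homomorphism $\mathrm{Cl}_K \to \widehat{\mathscr{CL}}_K^*$.

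For injectivity, suppose $\Exp_\mathfrak{a} \equiv 1 \pmod{\mathscr{P}_K}$, i.e.\ $\Exp_\mathfrak{a} - 1 \in \widehat{\mathscr{P}_K}$. Looking at the $\deg'$-degree-one component: $(\Exp_\mathfrak{a} - 1)_1 = H^{(1)}_\mathfrak{a} = \sum_\fq \nu_\fq(\mathfrak{a}) \, h_1[\fX^{(\fq)}]$, and this must lie in the degree-one part of $\mathscr{P}_K$, which as identified above is the span of $\{\sum_\fq \nu_\fq((\alpha)) h_1[\fX^{(\fq)}] : \alpha \in K^*\}$. Since the $h_1[\fX^{(\fq)}]$ are $\mathbb{Z}$-linearly independent in $A_K(\mathbb{Z})_1$, this forces the integer vector $(\nu_\fq(\mathfrak{a}))_\fq$ to be a $\mathbb{Z}$-linear combination of the vectors $(\nu_\fq((\alpha)))_\fq$, i.e.\ $\mathfrak{a}$ differs from a principal ideal by... well, $\mathfrak{a}$ itself is principal (it's in the subgroup generated by principal ideals, which is the group of principal ideals). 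Hence $\mathfrak{a}$ is trivial in $\mathrm{Cl}_K$.

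The main obstacle I anticipate is the bookkeeping around the \emph{completion} and the grading $\deg'$ versus the ideal-grading $\|\cdot\|$: one must be careful that $\widehat{\mathscr{P}_K}$ is genuinely homogeneous for $\deg'$ so that "extract the degree-one component" is a legitimate operation, and that $\Exp_\mathfrak{a}$ makes sense in $\widehat{\mathscr{CL}}_K$ — i.e.\ that the $\deg'$-graded completion is the right ambient ring (the remark in the excerpt after the definition of $H^{(m)}_\mathfrak{a}$ flags precisely that the $H^{(m)}_\mathfrak{a}$ are $\deg'$-homogeneous though not $\|\cdot\|$-homogeneous, which is the crucial point that makes this work). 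A secondary subtlety in part one is making the "Dedekind domain UFD $\iff$ PID" step explicit, though this is standard. Everything else reduces to the freeness of $A_K(\mathbb{Z})$ on the $h_m[\fX^{(\fq)}]$ and linear independence of the $h_1[\fX^{(\fq)}]$, which is routine.
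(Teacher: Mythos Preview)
Your treatment of the second bullet is essentially identical to the paper's: both use Lemma~\ref{exp lemma} to get a homomorphism $\cJ_K \to \widehat{\mathscr{CL}}_K^*$, both check $\Exp_{(\alpha)} \equiv 1$ by observing each $H^{(m)}_{(\alpha)}$ with $m\ge 1$ lies in $\mathscr{P}_K$, and both prove injectivity by projecting to $\deg'$-degree one and identifying $(\mathscr{P}_K)_1$ with the image of the principal ideals inside $\bigoplus_{\fq}\mathbb{Z}\,h_1[\fX^{(\fq)}]\cong \cJ_K$. Your remark that the completion is needed precisely because $\Exp_\mathfrak{a}$ is an infinite sum is also the reason $\widehat{\mathscr{CL}}_K$ (rather than $\mathscr{CL}_K$) appears in the statement.

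There is, however, a genuine gap in your argument for the first bullet. In the forward direction (UFD $\Rightarrow$ $\mathscr{CL}_K=\mathbb{Z}$) you assert that ``$A_K(\mathbb{Z})$ is generated in degree one, so $\mathscr{P}_K$ contains all of $A_K(\mathbb{Z})_{\ge 1}$ as soon as it contains $A_K(\mathbb{Z})_1$.'' This is false: $\Lambda_{\mathbb{Z}}=\mathbb{Z}[h_1,h_2,h_3,\ldots]$ is freely generated by elements $h_m$ of every degree $m\ge 1$, so $A_K(\mathbb{Z})$ is \emph{not} generated in degree one (for instance $h_2[\fX^{(\fq)}]$ is not a polynomial in the $h_1[\fX^{(\fp)}]$). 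Knowing only that all $h_1[\fX^{(\fq)}]$ lie in $\mathscr{P}_K$ does not by itself force $\mathscr{P}_K$ to be the augmentation ideal.

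The paper repairs this by using the higher-degree generators of $\mathscr{P}_K$ directly: if $\sO_K$ is a UFD then each prime $\fq$ is principal, say $\fq=(\pi_\fq)$, and then $\nu_{\fp}((\pi_\fq))=\delta_{\fp,\fq}$ gives
\[
H^{(m)}_{(\pi_\fq)} = h_m\Big[\sum_{\fp}\nu_{\fp}((\pi_\fq))\,\fX^{(\fp)}\Big] = h_m[\fX^{(\fq)}]
\]
for \emph{every} $m\ge 1$. Thus all the polynomial generators $h_m[\fX^{(\fq)}]$ of $A_K(\mathbb{Z})$ lie in $\mathscr{P}_K$, and the quotient collapses to $\mathbb{Z}$. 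Your degree-one analysis still handles the converse direction correctly, and indeed matches the paper's argument there.
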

\begin{proof}
    For the first statement, suppose that $\mathcal{O}_K$ is a UFD. Then for each integral prime ideal $\fq \in \mathbb{P}_K$ and $ m\geq 1$, $h_{m}[\fX^{(\fq)}] = H^{(m)}_{\fq} \in \mathscr{P}_{K}.$ The set $\{h_{m}[\fX^{(\fq)}]~|~ m \geq 1, \fq \in \mathbb{P}_K \} $ generates $A_K(\mathbb{Z})$ algebraically so $\mathscr{CL}_{K} = A_K(\mathbb{Z})/\mathscr{P}_K = \mathbb{Z}.$ Conversely, if $\mathscr{CL}_{K} = \mathbb{Z}$ then by the algebraic independence of the homogeneous symmetric functions $h_{m}[\fX] \in \Lambda$ for $m \geq 1$, we know that $H^{(m)}_{\fq} = h_{m}[\fX^{(\fq)}] \in \mathscr{P}_K$ for all $m \geq 1$ and $\fq \in \mathbb{P}_K.$ Therefore, by the definition of $\mathscr{P}_K$ we must have in particular that $h_1[\fX^{(\fq)}]$ is an $A_K(\mathbb{Z})$ linear combination of elements of the form $H^{(m)}_{(\alpha)}$ for $m \geq 1$ and $\alpha \in K^*.$ By considering the degree of each $\fX^{(\fp)}$ variable set separately the only way this is possible is if $h_1[\fX^{(\fq)}] = H^{(1)}_{(\alpha)}$ for some $\alpha \in K^*$ which means that $\fq = (\alpha).$ Thus $\mathcal{O}_K$ is a UFD.

    For the second statement we first consider the map $E:\cJ_{K} \rightarrow \widehat{\mathscr{CL}}_{K}^{*}$ given by $E(\mathfrak{a}):= \Exp_{\mathfrak{a}}[\fX_K].$ This is a group homomorphism by Lemma \ref{exp lemma}. For any $\alpha \in K^*$ 
    \begin{align*}
        E(~(\alpha)~) &= \Exp_{(\alpha)}[\fX_K] \\
                      &= \Exp\left[ \sum_{\fq \in \mathbb{P}_K} \nu_{\fq}(~(\alpha)~) \fX^{(\fq)}  \right] \\
                      &= \sum_{m \geq 0} h_{m}\left[ \sum_{\fq \in \mathbb{P}_K} \nu_{\fq}(~(\alpha)~
                      ) \fX^{(\fq)}  \right] \\
                      &= \sum_{m \geq 0} H^{(m)}_{(\alpha)}\\
                      &= 1 \\
    \end{align*}
    and so $(\alpha) \in \mathrm{Ker}(E).$ Therefore, $E$ descends to a map $\mathrm{Cl}_K \rightarrow \widehat{\mathscr{CL}}_{K}^{*}.$ Suppose that $\mathfrak{a} \in \mathrm{Cl}_K$ is a \textbf{non-principal} ideal. Now suppose that for some $[\mathfrak{a}] \in \mathrm{Cl}_K,$ $E(\mathfrak{a}) = 1.$ Then for all $m \geq 1,$ 
    $$h_{m}\left[ \sum_{\fq \in \mathbb{P}_K} \nu_{\fq}(\mathfrak{a}) \fX^{(\fq)}  \right] \in \mathscr{P}_{K}.$$ In particular, for $m = 1$ we see that by considering the degree of  $h_{1}\left[ \sum_{\fq \in \mathbb{P}_K} \nu_{\fq}(\mathfrak{a}) \fX^{(\fq)}  \right] = \sum_{\fq \in \mathbb{P}_K} \nu_{\fq}(\mathfrak{a}) \fX^{(\fq)}$ in each variable set $\fX^{(\fq)}$ separately, that $\sum_{\fq \in \mathbb{P}_K} \nu_{\fq}(\mathfrak{a}) \fX^{(\fq)} = H^{(1)}_{(\alpha)}$ for some $\alpha \in K^{*}.$ Therefore, $\mathfrak{a} = (\alpha)$ and so $[\mathfrak{a}] = [(1)] \in \mathrm{Cl}_K.$ Thus the map $E:\mathrm{Cl}_K \rightarrow \widehat{\mathscr{CL}}_{K}^{*}$ is injective.
\end{proof}

It is an important result in classical algebraic number theory that the ideal class group $\mathrm{Cl}_K$ of any number field K is a \textbf{finite} group. We may show an analogue of this result for the ring $\mathscr{CL}_{K}$. Note that the rings $A_K(\mathbb{Z}), \mathcal{A}_K(\mathbb{Z})$ are $\lambda$-rings with the ordinary plethysm operators $h_r$ and extended accordingly. Equivalently, we may consider the power sum plethysm operators $p_r: x_i^{(\mathfrak{q})} \rightarrow (x_i^{(\mathfrak{q})})^r$ for all $ r \geq 1$ and expand each $h_r$ into the $p_{\lambda}$ accordingly. However, these expansions require coefficients in $\mathbb{Q}$ (for example $2h_2 = p_{1,1}+p_2$) but luckily the effect of computing $h_r[F]$ for any $F \in \mathcal{A}_K(\mathbb{Z})$ will still have integral coefficients.

\begin{prop}\label{class ring is fin gen}
    For any number field K the ring $\mathscr{CL}_{K}$ is a finitely generated $\lambda$-ring.
\end{prop}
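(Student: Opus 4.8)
The plan is to realize $\mathscr{CL}_K$ as a quotient $\lambda$-ring of $A_K(\mathbb{Z})$ whose degree-$1$ part, for the auxiliary grading $\deg'$, is the finite group $\mathrm{Cl}_K$, and then to use that $A_K(\mathbb{Z})$ is generated as a $\lambda$-ring by its degree-$1$ elements. Write $t_{\fq} := h_1[\fX^{(\fq)}] = e_1[\fX^{(\fq)}]$ for $\fq \in \bP_K$. Each tensor factor of $A_K(\mathbb{Z})$ is a copy of $\Lambda_{\mathbb{Z}} = \mathbb{Z}[e_1,e_2,\dots]$ with $e_n = \lambda^n(e_1)$, so it is generated as a $\lambda$-ring by its $t_{\fq}$; since every element of $A_K(\mathbb{Z})$ lies in a finite sub-tensor-product, $A_K(\mathbb{Z})$ is generated as a $\lambda$-ring by $\{t_{\fq}\}_{\fq \in \bP_K}$.

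First I would verify that $\mathscr{P}_K$ is a $\lambda$-ideal, so that $\mathscr{CL}_K$ acquires an induced $\lambda$-ring structure and the projection $\pi : A_K(\mathbb{Z}) \to \mathscr{CL}_K$ is a $\lambda$-ring map. Putting $v_{\alpha} := \sum_{\fq} \nu_{\fq}((\alpha))\,\fX^{(\fq)} = H^{(1)}_{(\alpha)}$, we have $H^{(m)}_{(\alpha)} = h_m[v_{\alpha}]$, and by the Jacobi--Trudi relations the ideal generated by $\{h_m[v_{\alpha}] : m \ge 1\}$ equals $J_{\alpha} := (\lambda^n(v_{\alpha}) : n \ge 1)$, so $\mathscr{P}_K = \sum_{\alpha \in K^{*}} J_{\alpha}$. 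Now an ideal of the form $(\lambda^n(v) : n \ge 1)$ is always a $\lambda$-ideal: the universal polynomials for $\lambda^n(\lambda^m(v))$ and $\lambda^n(r\,\lambda^m(v))$ (with $m,n \ge 1$) have no constant term in the $\lambda^{\bullet}(v)$-variables — specialize $v = 0$ — so every one of their monomials contains a factor lying in $J_v$; the case of a general element of $J_v$ follows by induction on its number of summands using $\lambda^n(a+b) = \sum_{i} \lambda^i(a)\lambda^{n-i}(b)$. The same induction shows a sum of $\lambda$-ideals is a $\lambda$-ideal, so $\mathscr{P}_K$ is a $\lambda$-ideal.

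Next I would identify the degree-$1$ part. Since $\mathscr{P}_K$ is generated by the $\deg'$-homogeneous elements $H^{(m)}_{(\alpha)}$ (of $\deg'$ equal to $m$), it is $\deg'$-homogeneous and $\mathscr{CL}_K$ is $\deg'$-graded. The degree-$1$ component of $A_K(\mathbb{Z})$ is the free abelian group $\bigoplus_{\fq}\mathbb{Z}\,t_{\fq}$, which we identify with $\cJ_K$ via $t_{\fq} \leftrightarrow \fq$; under this identification the degree-$1$ component of $\mathscr{P}_K$ is exactly the $\mathbb{Z}$-span of $\{v_{\alpha} : \alpha \in K^{*}\}$, namely the subgroup of principal fractional ideals — this is the same degree-by-degree bookkeeping already used in the proof of Proposition \ref{prop of class ring}. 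Hence $(\mathscr{CL}_K)_1$ is the quotient of $\cJ_K$ by the principal fractional ideals, i.e.\ $\mathrm{Cl}_K$, a finite group.

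Finally, $\mathscr{CL}_K$ is generated as a $\lambda$-ring by $\{\pi(t_{\fq})\}_{\fq \in \bP_K}$, being the image of a $\lambda$-generating set under a surjective $\lambda$-ring map, and all of these elements lie in the finite abelian group $(\mathscr{CL}_K)_1$. Choosing finitely many of them, say $\pi(t_{\fq_1}), \dots, \pi(t_{\fq_k})$, that generate $(\mathscr{CL}_K)_1$ as an abelian group, every $\pi(t_{\fq})$ becomes a $\mathbb{Z}$-linear combination of these and hence lies in the (ordinary, a fortiori $\lambda$-) subring they generate; so $\mathscr{CL}_K$ is generated as a $\lambda$-ring by $\pi(t_{\fq_1}), \dots, \pi(t_{\fq_k})$. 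I expect the main obstacle to be the verification that $\mathscr{P}_K$ is a $\lambda$-ideal — the remaining ingredients are the classical finiteness of $\mathrm{Cl}_K$ and elementary grading bookkeeping — although even that verification is routine once one tracks the universal polynomials for the $\lambda$-operations carefully.
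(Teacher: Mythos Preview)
Your proof is correct and follows essentially the same strategy as the paper's: first verify that $\mathscr{P}_K$ is a $\lambda$-ideal, then use finiteness of $\mathrm{Cl}_K$ to see that the degree-$1$ $\lambda$-generators $t_{\fq}$ of $A_K(\mathbb{Z})$ collapse to finitely many classes in $\mathscr{CL}_K$. The paper carries out the $\lambda$-ideal check via explicit plethysm by $h_r$ and picks one generator $B_i = \sum_{\fq}\nu_{\fq}(\mathfrak{a}_i)\fX^{(\fq)}$ per ideal class, whereas you package the same content more conceptually by identifying $(\mathscr{CL}_K)_1 \cong \mathrm{Cl}_K$ and choosing abelian-group generators; this is a cosmetic difference only, and your identification of the degree-$1$ piece makes the role of the class group a bit more transparent.
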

\begin{proof}
    First, we argue that the $\lambda$-ring structure on $A_K(\mathbb{Z})$ descends to $\mathscr{CL}_{K}.$ Note that over $\mathbb{Z},$ the complete homogeneous symmetric functions $h_r$ for $ r \geq 1$ generate $\Lambda_{\mathbb{Z}}$ so it suffices to show that for $r \geq 1$ and any arbitrary element $F = \sum_{i=1}^{k} g_i H_{(\alpha_i)}^{(m_i)} \in \mathscr{P}_{K}$ we have that 
    $h_r[F] \in \mathscr{P}_{K}.$ To this end we have that 
    $$h_r[F] = h_r\left[ \sum_{i=1}^{k} g_i H_{(\alpha_i)}^{(m_i)} \right] = \sum_{a_1+\ldots + a_k = r} \prod_{i=1}^{k}h_{a_i}\left[ g_i H_{(\alpha_i)}^{(m_i)} \right] = \sum_{a_1+\ldots + a_k = r} \prod_{i=1}^{k} \left( \sum_{|\lambda| = a_i} s_{\lambda}[g_i]s_{\lambda}\left[ H_{(\alpha_i)}^{(m_i)} \right] \right).$$ By the Jacobi-Trudi identity, each $s_{\lambda}$ expands as a $\mathbb{Z}$-linear combination of the $h_{\mu}$ functions. Thus it suffices to show that for all $\ell \geq 1$ and $1\leq i \leq k$, $h_{\ell}\left[ H_{(\alpha_i)}^{(m_i)} \right] \in \mathscr{P}_{K}.$ We see that 
    $$h_{\ell}\left[ H_{(\alpha_i)}^{(m_i)} \right] = h_{\ell}\left[ h_{m_i}\left[\sum_{\mathfrak{q} \in \mathbb{P}_K} \nu_{\mathfrak{q}}(\alpha_i) \fX^{(\mathfrak{q})} \right] \right] = \left( h_{\ell}[h_{m_i}] \right) \left[\sum_{\mathfrak{q} \in \mathbb{P}_K} \nu_{\mathfrak{q}}(\alpha_i) \fX^{(\mathfrak{q})} \right] $$ which now certainly lives in the ideal $\mathscr{P}_{K}$. Therefore, $\mathscr{P}_K$ is a $\lambda$-ideal of $A_K(\mathbb{Z})$ and so $\mathscr{CL}_{K}$ is a $\lambda$-ring.

    Next we appeal to the classical result that $\mathrm{Cl}_K$ is a finite group. Let $\mathfrak{a}_1,\ldots, \mathfrak{a}_N$ be a finite set of representatives for the distinct (fractional) ideal classes in $\mathrm{Cl}_K.$ We claim that the elements $B_i:= \sum_{\fq \in \mathbb{P}_K} \nu_{\fq}(\mathfrak{a}_i) \fX^{(\fq)}$ generate $\mathscr{CL}_{K}$ as a $\lambda$-ring. Since $A_K(\mathbb{Z})$ is generated by the elements $h_r[\fX^{(\fq)}]$ for $\fq \in \mathbb{P}_K$, it suffices to show that $h_r[\fX^{(\fq)}]$ is contained in the $\lambda$-ring generated by $B_1,\ldots, B_N.$ Let $\fq \in \mathbb{P}_K$. By assumption $\fq = (\alpha) \mathfrak{a}_i$ for some $1\leq i \leq N$ and $\alpha \in K^{*}.$ Then 
    $$\fX^{(\fq)} = \sum_{\fp \in \mathbb{P}_K} \nu_{\fp}((\alpha) \mathfrak{a}_i) \fX^{(\fp)} =  \sum_{\fp \in \mathbb{P}_K} ( \nu_{\fp}((\alpha)) + \nu_{\fp}( \mathfrak{a}_i) ) \fX^{(\fp)} = \sum_{\fp \in \mathbb{P}_K}\nu_{\fp}((\alpha))\fX^{(\fp)} + \sum_{\fp \in \mathbb{P}_K} \nu_{\fp}( \mathfrak{a}_i)\fX^{(\fp)}.$$ Thus in the ring $\mathscr{CL}_{K}$
    $$\fX^{(\fq)} = \sum_{\fp \in \mathbb{P}_K} \nu_{\fp}( \mathfrak{a}_i)\fX^{(\fp)} = B_i.$$ But now we see that 
    $h_r[\fX^{(\fq)}] = h_r[B_i]$ in $\mathscr{CL}_{K}$ and so $h_r[\fX^{(\fq)}]$ is contained in the $\lambda$-ring generated by $B_1,\ldots, B_r$. Therefore, $\mathscr{CL}_{K}$ is finitely generated as a $\lambda$-ring.
\end{proof}

\subsection{Arithmetic Hall-Littlewood Functions}\label{subsection 2.3}
The most interesting basis we will be considering for the ring $A_K$ are built from the Hall-Littlewood symmetric functions $P_{\lambda}[\fX;t].$

\begin{defn}
    For $\lambda^{\bullet} \in \Par_{K}$ define $g(\lambda^{\bullet}) \in \mathbb{N}$ as 
    $$g(\lambda^{\bullet}):= \prod_{\fq \in \bP_K} N(\fq)^{n(\lambda^{(\fq)})}$$ where
    $$n(\lambda):= \sum_{i \geq 1}(i-1)\lambda_i.$$
\end{defn}

\begin{defn}
 For $\lambda^{\bullet} \in \Par_K$ define $P_{\lambda^{\bullet}}[\fX_K] \in A_K$ by 

$$P_{\lambda^{\bullet}}[\fX_K]:= \prod_{\mathfrak{q} \in \bP_{K}} P_{\lambda^{(\mathfrak{q})}}[\fX^{(\fq)};N(\mathfrak{q})^{-1}]$$
where $N(\mathfrak{a}):= |\sO_K/\mathfrak{a}|$ for $\mathfrak{a} \in \cJ_{K}^{+}.$ We will refer to the $P_{\lambda^{\bullet}}[\fX_K]$ as the \textbf{\textit{arithmetic Hall-Littlewood functions}} for $K.$ For $\lambda^{\bullet}\in \Par_K$ define 
$$b(\lambda^{\bullet}):= \prod_{\fq \in \bP_K} b_{\lambda^{(\fq)}}(N(\fq)^{-1}) = \prod_{\fq \in \bP_K}\prod_{i\geq 1} (1-N(\mathfrak{q})^{-1})\cdots (1-N(\mathfrak{q})^{-m_{i}(\lambda^{(\fq)})}).$$ We define the \textbf{\textit{dual Hall-Littlewood functions for}} $K$ as 
$$Q_{\lambda^{\bullet}}[\fX_K]:= b(\lambda^{\bullet})P_{\lambda^{\bullet}}[\fX_K].$$
\end{defn}

\begin{lem}
    The set $\{P_{\lambda^{\bullet}} \}_{\lambda^{\bullet} \in \Par_K}$  is a $\mathbb{Q}^{\text{ab}}$-basis for $A_K.$
\end{lem}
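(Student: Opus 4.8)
The plan is to reduce the statement to the case of a single variable set and then lift it to the infinite tensor product through the direct limit used to define $A_K$.

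First I would recall the standard triangularity of Hall--Littlewood functions: within each fixed degree one has $P_{\lambda}[\fX;t] = m_{\lambda} + \sum_{\mu < \lambda} v_{\lambda\mu}(t)\, m_{\mu}$ with $v_{\lambda\mu}(t) \in \mathbb{Z}[t]$ (Macdonald, Chapter III; this unitriangularity is in fact what underlies Proposition \ref{HL are basis in classic case}). Since the dominance order restricted to partitions of a fixed size is finite, the transition matrix between $\{P_{\lambda}[\fX;t]\}$ and $\{m_{\lambda}\}$ is block upper-unitriangular with entries in $\mathbb{Q}^{\text{ab}}[t]$, so its determinant is identically $1$; hence specializing $t$ to any scalar in $\mathbb{Q}^{\text{ab}}$ leaves it invertible. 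Applying this with $t = N(\fq)^{-1}$ for each $\fq \in \bP_K$ shows that $\{P_{\lambda}[\fX^{(\fq)};N(\fq)^{-1}]\}_{\lambda \in \Par}$ is a $\mathbb{Q}^{\text{ab}}$-basis of $\Lambda[\fX^{(\fq)}]$.

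Next I would handle finite tensor products: for a finite $S \subset \bP_K$, a $\mathbb{Q}^{\text{ab}}$-basis of $\bigotimes_{\fq \in S}\Lambda[\fX^{(\fq)}]$ is obtained by choosing in each tensor factor the basis from the previous step, so that $\{\prod_{\fq \in S} P_{\lambda^{(\fq)}}[\fX^{(\fq)};N(\fq)^{-1}]\}$, indexed by tuples in $\Par^{S}$, is a basis of $\bigotimes_{\fq\in S}\Lambda[\fX^{(\fq)}]$. Finally I would pass to the colimit: by Definition \ref{ring of arithmetic sym polys defn}, $A_K = \varinjlim_{S}\bigotimes_{\fq\in S}\Lambda[\fX^{(\fq)}]$, where for $S \subseteq S'$ the (injective) structure map inserts $1$ in the new factors. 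Because $P_{\emptyset}[\fX;t] = 1$, this map carries $\prod_{\fq\in S}P_{\lambda^{(\fq)}}[\fX^{(\fq)};N(\fq)^{-1}]$ to $\prod_{\fq\in S'}P_{\lambda^{(\fq)}}[\fX^{(\fq)};N(\fq)^{-1}]$ with $\lambda^{(\fq)} = \emptyset$ for $\fq \in S'\setminus S$, which is exactly how $\Par_K = \varinjlim_S \Par^{S}$ is realized. Since a filtered colimit along injections of a compatible family of bases is a basis of the colimit, $\{P_{\lambda^{\bullet}}[\fX_K]\}_{\lambda^{\bullet}\in \Par_K}$ is a $\mathbb{Q}^{\text{ab}}$-basis of $A_K$.

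The only genuine subtlety, and the step I expect to require the most care, is the first one: Proposition \ref{HL are basis in classic case} is stated over the field $\mathbb{Q}(t)$, and a naive specialization of $t$ could in principle destroy linear independence. What rescues the argument is that the base change to $\{m_{\lambda}\}$ is not merely invertible but unitriangular with polynomial entries, so there are no exceptional specializations; once this is recorded, the remaining steps are routine bookkeeping with tensor products and direct limits.
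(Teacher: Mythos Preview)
Your proof is correct and follows essentially the same approach as the paper's: establish that the specialized Hall--Littlewood functions form a basis of each local factor $\Lambda[\fX^{(\fq)}]$, then take products over primes. You are more careful than the paper in justifying the specialization step via unitriangularity (the paper simply asserts ``for any parameter $t$'' without addressing that Proposition~\ref{HL are basis in classic case} is stated over $\mathbb{Q}(t)$) and in spelling out the passage to the direct limit, but the underlying argument is the same.
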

\begin{proof}
    For any parameter $t$ the set $\{P_{\lambda}[X;t] \}_{\lambda}$ is a basis for $\Lambda$ (Proposition \ref{HL are basis in classic case}). Therefore, for each $\fq \in \mathbb{P}_K$ the functions $\{P_{\lambda}[\fX^{(\fq)};N(\fq)^{-1}] \}_{\lambda}$ give a basis for $\Lambda[\fX^{(\fq)}]$ and so by taking products over all $\fq$ we obtain the result.
\end{proof}

We define an inner product on the space $A_K$ which is analogous to the $t$-Hall inner product $\langle -, - \rangle_t$ on $\mathbb{Q}(t) \otimes \Lambda.$

\begin{defn}
    Define the bilinear form $\langle-,-\rangle: A_K \times A_K \rightarrow \mathbb{Q}^{\text{ab}}$ by 
    $$\langle P_{\lambda^{\bullet}}, Q_{\mu^{\bullet}} \rangle = \delta_{\lambda^{\bullet}, \mu^{\bullet}}.$$ We will refer to $\langle-,-\rangle$ as the \textbf{\textit{arithmetic Hall inner product}} for $K.$
\end{defn}

Below are some basic properties of the pairing $\langle-,-\rangle.$

\begin{prop}
    The pairing $\langle-,-\rangle: A_K \times A_K \rightarrow \mathbb{Q}^{\text{ab}}$ is non-degenerate. Further, 
    $$\langle p_{\lambda^{\bullet}}, p_{\mu^{\bullet}} \rangle = z(\lambda^{\bullet}) \delta_{\lambda^{\bullet}, \mu^{\bullet}} $$
    where 
    $$z(\lambda^{\bullet}):= \prod_{\fq \in \bP_K} \prod_{i \geq 1} \left( \frac{m_i(\lambda^{(\fq)})!~i^{m_i(\lambda^{(\fq)})}}{1-N(\fq)^{-\lambda^{(\fq)}_i}} \right).$$
\end{prop}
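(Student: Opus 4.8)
The plan is to deduce both assertions by factoring everything over the primes $\fq \in \bP_K$ and reducing to the corresponding classical statement for the $t$-deformed Hall pairing on $\mathbb{Q}(t)\otimes \Lambda$, specialized at $t = N(\fq)^{-1}$. First I would establish the product formula for $\langle p_{\lambda^\bullet}, p_{\mu^\bullet}\rangle$. Since $p_{\lambda^\bullet}[\fX_K] = \prod_{\fq} p_{\lambda^{(\fq)}}[\fX^{(\fq)}]$ and the defining relation $\langle P_{\lambda^\bullet}, Q_{\mu^\bullet}\rangle = \delta_{\lambda^\bullet,\mu^\bullet}$ factors as a product of the classical relations $\langle P_\lambda[\fX;t], Q_\mu[\fX;t]\rangle_t = \delta_{\lambda,\mu}$ over each $\fq$ (with $t = N(\fq)^{-1}$), the arithmetic Hall inner product is, on the relevant finite tensor factors, literally the (restricted) tensor product of the forms $\langle-,-\rangle_{N(\fq)^{-1}}$. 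The classical pairing satisfies
$$\langle p_\mu[\fX], p_\lambda[\fX]\rangle_t = \delta_{\mu,\lambda} \prod_{i\geq 1}\left(\frac{m_i(\lambda)\, i^{m_i(\lambda)}}{1-t^{m_i(\lambda)}}\right)$$
by definition. Wait — I must be careful: the classical definition reads $\tfrac{m_i(\lambda)\,i^{m_i(\lambda)}}{1-t^{m_i(\lambda)}}$, whereas the proposition's $z(\lambda^\bullet)$ has $m_i(\lambda^{(\fq)})!$ in the numerator and $1-N(\fq)^{-\lambda^{(\fq)}_i}$ in the denominator. So the real content of the computation is reconciling these: one must pass through $\langle p_\lambda, p_\lambda\rangle_H = z_\lambda = \prod_i m_i(\lambda)!\, i^{m_i(\lambda)}$ (the classical Hall norm, with the factorial) and track how the $t$-correction factors $\prod_i(1-t^{m_i(\lambda)})^{-1}$ versus $\prod_i (1-t^{\lambda_i})^{-1}$ enter. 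I would verify the identity $\prod_{i\geq 1}(1-t^{m_i(\lambda)}) $ rewrites appropriately, or — more likely — recognize that the correct reading of $z(\lambda^\bullet)$ must match $\prod_\fq z_{\lambda^{(\fq)}} \cdot \prod_\fq \prod_i (1 - N(\fq)^{-\lambda^{(\fq)}_i})^{-1}$, and confirm the paper's displayed formula is internally consistent with the $t$-pairing convention chosen in the earlier definition. This bookkeeping — matching the exact shape of the denominator and numerator against Macdonald's conventions — is the one genuinely fiddly point.

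Granting the product formula, non-degeneracy is then immediate: for each $\fq$ the scalar $z(\lambda^{(\fq)})$ appearing is a nonzero element of $\mathbb{Q}^{\mathrm{ab}}$ (indeed of $\mathbb{Q}$), since $N(\fq) > 1$ forces $1 - N(\fq)^{-\lambda^{(\fq)}_i} \neq 0$ for every part $\lambda^{(\fq)}_i \geq 1$, and the numerator $m_i!\, i^{m_i}$ is a positive integer. Hence the Gram matrix of $\langle-,-\rangle$ in the power-sum basis $\{p_{\lambda^\bullet}\}_{\lambda^\bullet \in \Par_K}$ is diagonal with nonzero diagonal entries, so the form is non-degenerate on $A_K$. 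Alternatively, and perhaps more cleanly, non-degeneracy follows directly from the definition: $\{P_{\lambda^\bullet}\}$ and $\{Q_{\mu^\bullet}\}$ are each bases of $A_K$ (the latter because $b(\lambda^\bullet) \neq 0$, again using $N(\fq) > 1$), and $\langle P_{\lambda^\bullet}, Q_{\mu^\bullet}\rangle = \delta_{\lambda^\bullet,\mu^\bullet}$ exhibits these as dual bases, which is equivalent to non-degeneracy of a bilinear form on a vector space with a countable basis (every pairing having dual bases is perfect).

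The steps, in order: (1) record that the $\{P_{\lambda^\bullet}\}$ and $\{Q_{\lambda^\bullet}\}$ are dual bases by definition and deduce non-degeneracy; (2) observe that on each finite set $S$ of primes, $\langle-,-\rangle$ restricted to $\bigotimes_{\fq\in S}\Lambda[\fX^{(\fq)}]$ is the tensor product of the classical pairings $\langle-,-\rangle_{N(\fq)^{-1}}$, by comparing the defining relations; (3) invoke the classical change-of-basis formula from $\{P_\lambda[\fX;t]\}$ to $\{p_\lambda[\fX]\}$ and the known value of $\langle p_\mu, p_\lambda\rangle_t$ to compute $\langle p_{\lambda^{(\fq)}}, p_{\mu^{(\fq)}}\rangle$ for each $\fq$; (4) multiply over $\fq$ and simplify to the stated $z(\lambda^\bullet)$. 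I expect step (3)–(4) — specifically matching the precise form of $z(\lambda^\bullet)$ against the product of classical contributions, including correctly handling the interplay between $\prod_i(1-t^{m_i(\lambda)})$ and $\prod_i(1-t^{\lambda_i})$ — to be the main obstacle; everything else is formal.
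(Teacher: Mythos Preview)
Your proposal is correct and follows essentially the same approach as the paper, which simply cites Macdonald's formula (III.4.11) applied to each local factor with $t = N(\fq)^{-1}$ and takes the product over all primes. Your flagged discrepancy between the paper's earlier definition of $\langle-,-\rangle_t$ (with $m_i(\lambda)$ and $1-t^{m_i(\lambda)}$) and the proposition's $z(\lambda^\bullet)$ (with $m_i(\lambda)!$ and $1-N(\fq)^{-\lambda_i}$) is well-observed: the earlier display contains typos, and the proposition's formula is the one that actually matches Macdonald's convention and follows from the $P/Q$ duality, so your step (3)--(4) bookkeeping resolves cleanly without any nontrivial identity relating $\prod_i(1-t^{m_i(\lambda)})$ to $\prod_i(1-t^{\lambda_i})$.
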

\begin{proof}
We use [(4.11), pg.225, \cite{Macdonald}] applied to each local factor with $t= N(\fq)^{-1}$ and take a product over all prime ideals to obtain this result directly.
\end{proof}

Now we look at the analogue of the $t$-Cauchy Identity \ref{t cauchy identity} in the ring $\mathcal{A}_K.$

\begin{prop}
    $$ \prod_{\fq \in \bP_K}\prod_{i,j \geq 1} \left( \frac{1-N(\fq)^{-1}x_i^{(\fq)}y_j^{(\fq)}}{1-x_i^{(\fq)}y_j^{(\fq)}} \right) =  \sum_{\lambda^{\bullet} \in \Par_K} P_{\lambda^{\bullet}}[\fX_K]Q_{\lambda^{\bullet}}[\mathfrak{Y}_K]$$
\end{prop}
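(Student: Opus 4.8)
The plan is to reduce the identity to the classical $t$-Cauchy identity (Lemma \ref{t cauchy identity}) applied prime-by-prime. First I would rewrite the left-hand side in plethystic form: for a fixed $\fq \in \bP_K$, the local factor
$$\prod_{i,j \geq 1}\frac{1-N(\fq)^{-1}x_i^{(\fq)}y_j^{(\fq)}}{1-x_i^{(\fq)}y_j^{(\fq)}}$$
is exactly $\Exp\!\left[(1-N(\fq)^{-1})\,\fX^{(\fq)}\fY^{(\fq)}\right]$, since $\Exp[\fX\fY]=\prod_{i,j}(1-x_iy_j)^{-1}$ and $\Exp[-tuv\text{-type terms}]$ contributes the numerator. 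This is the specialization $t = N(\fq)^{-1}$ of the left-hand side of Lemma \ref{t cauchy identity} with the variable sets $\fX^{(\fq)}, \fY^{(\fq)}$.

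Next I would invoke Lemma \ref{t cauchy identity} with $t = N(\fq)^{-1}$, which gives
$$\Exp\!\left[(1-N(\fq)^{-1})\fX^{(\fq)}\fY^{(\fq)}\right] = \sum_{\lambda}P_{\lambda}[\fX^{(\fq)};N(\fq)^{-1}]\,Q_{\lambda}[\fY^{(\fq)};N(\fq)^{-1}]$$
for each prime $\fq$ separately. Then I would take the product of these identities over all $\fq \in \bP_K$. On the left this reproduces the full left-hand side of the proposition. On the right, expanding the product of sums and using that a tuple $\lambda^{\bullet} \in \Par_K$ is precisely a choice of partition $\lambda^{(\fq)}$ for each $\fq$ with all but finitely many empty (and $P_{\emptyset} = Q_{\emptyset} = 1$, so only finitely many factors are nontrivial), the product becomes $\sum_{\lambda^{\bullet}\in\Par_K}\prod_{\fq}P_{\lambda^{(\fq)}}[\fX^{(\fq)};N(\fq)^{-1}]Q_{\lambda^{(\fq)}}[\fY^{(\fq)};N(\fq)^{-1}]$, which by the definitions of $P_{\lambda^{\bullet}}$ and $Q_{\lambda^{\bullet}}$ (recall $Q_{\lambda^{\bullet}} = b(\lambda^{\bullet})P_{\lambda^{\bullet}}$ and $b(\lambda^{\bullet}) = \prod_\fq b_{\lambda^{(\fq)}}(N(\fq)^{-1})$ matches the local $Q$'s) equals $\sum_{\lambda^{\bullet}}P_{\lambda^{\bullet}}[\fX_K]Q_{\lambda^{\bullet}}[\fY_K]$.

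The main obstacle is purely bookkeeping: justifying that the infinite product of the local identities converges and can be rearranged into the stated sum in the completed ring. I would address this by working in $\cA_K$ (in fact in $\cA_K \hat\otimes \cA_K$, or equivalently in arithmetic symmetric functions in the doubled variable set) where the $||\cdot||$-grading makes the product of the local expansions a well-defined limit over finite $S \subset \bP_K$: for each integral ideal $\mathfrak{d}$ there are only finitely many $\lambda^{\bullet}$ with $||P_{\lambda^{\bullet}}|| \mid \mathfrak{d}$-type bound, so the coefficient of each graded piece stabilizes. Once this convergence is noted, the identity follows termwise from the prime-local case with no further computation.
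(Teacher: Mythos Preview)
Your proposal is correct and follows exactly the same approach as the paper: apply Lemma \ref{t cauchy identity} with $t = N(\fq)^{-1}$ at each prime and then take the product over all $\fq \in \bP_K$. The paper's proof is a single sentence to this effect; your added remarks on plethystic rewriting and on convergence in the graded completion are fine elaborations but not required by the paper.
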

\begin{proof}
    This result follows by applying Lemma \ref{t cauchy identity} with  $t = N(\fq)^{-1}$ at each local part and then taking a product over all prime ideals. 
\end{proof}

\begin{remark}
   Although we will not emphasize this point throughout this paper, the algebra $A_K$ is isomorphic to the \textbf{\textit{Hall algebra}} of \textit{finite} $\widehat{\mathcal{O}}_K$ modules, i.e. those modules with a finite composition series. This is analogous to [(3.4), pg. 275, \cite{Macdonald}].
\end{remark}

\section{Artin Symmetric Functions}

\subsection{Definitions and Basic Properties}

In this section we introduce the Artin symmetric function attached to a given Galois representation over a number field. We will prove that they satisfy many of the same algebraic properties as the Artin L-functions. Using Brauer's theorem \ref{Brauer's thm} we obtain a factorization of the relative Dedekind symmetric functions $\zeta_{L/K}$ in terms of the Artin symmetric functions of irreducible representations of the Galois group of the extension.

\begin{defn}\label{main def for Artin sym}
    Suppose $L/K$ is a finite Galois extension of number fields. For each $\mathfrak{q}\in \bP_{K}$ arbitrarily pick $\mathfrak{p} \in \bP_{L}$ with $\mathfrak{p}|\mathfrak{q}$ i.e. $\mathfrak{p}\cap \sO_{K} = \mathfrak{q}.$ Let $I(\mathfrak{q}) \subset D(\mathfrak{q}) \subset \Gal(L/K)$ denote the inertia and decomposition groups of $\mathfrak{p}$ respectively. Let $\sigma_{\mathfrak{q}} \in D(\mathfrak{q})/I(\mathfrak{q})$ denote the corresponding choice of Frobenius element for $\mathfrak{q}.$ Suppose $\rho: \Gal(L/K) \rightarrow \GL(V)$ is a finite dimensional complex representation. Define the \textbf{\textit{Artin symmetric function}} for $\rho$ by 
    $$\sL(\fX_K,\rho;L/K):= \prod_{\mathfrak{q}\in \bP_K}\prod_{i \geq 1} \det(\Id_{V^{I(\mathfrak{q})}}- x_i^{(\mathfrak{q})}\rho(\sigma_{\mathfrak{q}})|_{V^{I(\mathfrak{q})}})^{-1} \in \cA_{K}.$$ Here $V^{I(\mathfrak{q})}$ denotes the subspace of $I(\mathfrak{q})$-fixed points of $V.$
\end{defn}

Note that the definition of $\sL(\fX_K,\rho;L/K)$ is independent of the choices of $\fp|\fq.$ In order to prove that the arithmetic symmetric functions $\sL(\fX_K,\rho;L/K)$ satisfy the same algebraic properties as the Artin L-functions we need to following notation.

\begin{defn}
    For $\fq \in \mathbb{P}_K$ define $f_{\fq}(x,\rho;L/K)$ to be the characteristic polynomial of $\rho(\sigma_{\mathfrak{q}})|_{V^{I(\mathfrak{q})}} $ in the variable x: 
    $$f_{\fq}(x,\rho;L/K):= \det(\Id_{V^{I(\mathfrak{q})}}- x\rho(\sigma_{\mathfrak{q}})|_{V^{I(\mathfrak{q})}}).$$
\end{defn}

\begin{lem}[Artin]\label{Artin's lemma}
    Suppose $E/L/K$ is a tower of finite Galois extensions,  $\rho,\gamma$ are two representations of $\Gal(L/K)$, and $\rho_0$ is a representation of $\Gal(E/L).$ Then for any $\fq \in \mathbb{P}_K$ the following properties hold:
    \begin{itemize}
        \item $$f_{\fq}(x,\rho;L/K)f_{\fq}(x,\gamma;L/K) = f_{\fq}(x,\rho \oplus \gamma;L/K)$$
        \item $$f_{\fq}(x,\rho;L/K) = f_{\fq}(x,\iota_{\Gal(L/K)}^{\Gal(E/K)}(\rho);E/K)$$ 
        \item $$ \prod_{\fp|\fq} f_{\fp}(x^{f(\fp|\fq)},\rho_0;E/L) = f_{\fq}(x,\Ind_{\Gal(E/L)}^{\Gal(E/K)}(\rho_0); E/K).$$ 
    \end{itemize}
\end{lem}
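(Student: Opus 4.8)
The plan is to treat the three identities in order: the first two are essentially formal, and the third carries all the weight. I will use freely the standard compatibilities in a tower $E/L/K$ with a compatible chain of primes $\mathfrak P \mid \fp \mid \fq$: the restriction $q\colon \Gal(E/K)\twoheadrightarrow\Gal(L/K)$ sends $D_{E/K}(\mathfrak P)$ onto $D_{L/K}(\fp)$, sends $I_{E/K}(\mathfrak P)$ onto $I_{L/K}(\fp)$, and sends a Frobenius lift at $\mathfrak P$ to one at $\fp$; also $\Gal(E/K)$ permutes the primes of $E$ over $\fq$ transitively, with the stabilizer of $\mathfrak P$ equal to $D_{E/K}(\mathfrak P)$. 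For the direct-sum identity, writing $V_\rho,V_\gamma$ for the representation spaces and using the \emph{same} prime $\fp\mid\fq$ to define both local factors, one has $(V_\rho\oplus V_\gamma)^{I(\fq)}=V_\rho^{I(\fq)}\oplus V_\gamma^{I(\fq)}$ with $(\rho\oplus\gamma)(\sigma_\fq)$ block-diagonal, so its characteristic polynomial on the fixed subspace factors as claimed. For the inflation identity, $\iota_{\Gal(L/K)}^{\Gal(E/K)}(\rho)=\rho\circ q$ acts on the same space; since $I_{E/K}(\mathfrak P)$ acts through its image $I_{L/K}(\fp)$, the $I_{E/K}(\mathfrak P)$-fixed space of $\rho\circ q$ equals $V_\rho^{I_{L/K}(\fp)}$, and a Frobenius lift at $\mathfrak P$ acts there exactly as $\rho(\sigma_\fq)$; hence the two characteristic polynomials agree.

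The induction identity is the Euler-factor form of $L(s,\rho_0;E/L)=L\big(s,\Ind_{\Gal(E/L)}^{\Gal(E/K)}\rho_0;E/K\big)$. Set $G=\Gal(E/K)$, $H=\Gal(E/L)$, let $W$ be the space of $\rho_0$, and $V=\Ind_H^G W=\bigoplus_{gH\in G/H}gW$; fix $\fq$, a prime $\mathfrak P$ of $E$ over $\fq$, $D:=D_{E/K}(\mathfrak P)$, $I:=I_{E/K}(\mathfrak P)$, and a Frobenius lift $\Phi\in D$, so that $f_\fq(x,V;E/K)=\det(\Id_{V^I}-x\,\Phi|_{V^I})$ depends only on $\mathrm{Res}_D V$ and the coset $\Phi I$. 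Since the primes of $E$ over $\fq$ form the $G$-set $G/D$, the primes $\fp$ of $L$ over $\fq$ correspond to the $H$-orbits on $G/D$, equivalently to the double cosets $D\backslash G/H$, which also index the $D$-orbits on $G/H$. Accordingly $\mathrm{Res}_D V=\bigoplus_{\fp\mid\fq}V_\fp$, where $V_\fp$ is the $D$-stable span of the summands $gW$ lying in the $D$-orbit attached to $\fp$; by Mackey's formula $V_\fp\cong\Ind_{D_\fp}^D U_\fp$ as a $D$-representation, where $D_\fp=D\cap H=D_{E/L}(\mathfrak P)$ and $U_\fp$ is $W$ with the $D\cap H$-action twisted by the inner automorphism attached to the coset, this twist identifying $D\cap H$ together with its action with the decomposition group of a prime $\mathfrak P_\fp$ of $E$ over $\fp$ acting on $W$ — a conjugation that leaves every characteristic polynomial below unchanged. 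As each $V_\fp$ is $I$- and $\Phi$-stable,
\[
f_\fq(x,V;E/K)=\prod_{\fp\mid\fq}\det\!\big(\Id_{V_\fp^I}-x\,\Phi|_{V_\fp^I}\big),
\]
so it remains to show the $\fp$-th factor equals $f_\fp\big(x^{f(\fp\mid\fq)},\rho_0;E/L\big)$.

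This is a purely group-theoretic computation. Put $I_\fp:=I\cap D_\fp=I_{E/L}(\mathfrak P)$ and $f:=[D:I\,D_\fp]$; one checks $f=f(\fp\mid\fq)$ (all primes of $L$ over $\fq$ are $\Gal(L/K)$-conjugate, hence share a residue degree). From $D=\bigsqcup_{j=0}^{f-1}\Phi^j I D_\fp$ one sees $D/D_\fp$ splits into $f$ copies of $I/I_\fp$ as $I$-sets, cyclically permuted by $\Phi$; hence $V_\fp=\bigoplus_{j=0}^{f-1}M_j$ with $\Phi(M_j)=M_{j+1\bmod f}$, $M_0\cong\Ind_{I_\fp}^I(\mathrm{Res}_{I_\fp}U_\fp)$ as $I$-representations, and thus $M_0^I\cong U_\fp^{I_\fp}$ by Frobenius reciprocity. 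The block-companion determinant identity — if an invertible operator cyclically permutes subspaces $M_0,\dots,M_{f-1}$ by isomorphisms, then $\det(\Id-xA)$ on $\bigoplus_j M_j$ equals $\det(\Id-x^fA')$ for the induced isomorphism $A'\colon M_0\to M_0$ — then gives $\det(\Id_{V_\fp^I}-x\,\Phi|_{V_\fp^I})=\det(\Id_{M_0^I}-x^f\,\Phi^f|_{M_0^I})$. Writing $\Phi^f=\iota\,d$ with $\iota\in I$, $d\in D_\fp$ (possible since $\Phi^f\in I D_\fp$), the element $d$ normalizes $I_\fp$, and as $\Phi$ induces $x\mapsto x^{N(\fq)}$ on residue fields, $\Phi^f$ induces $x\mapsto x^{N(\fp)}$, so the class of $d$ in $D_\fp/I_\fp$ is the Frobenius at $\fp$ for $E/L$. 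Tracing through the isomorphism $M_0^I\cong U_\fp^{I_\fp}$ (and the harmless twist defining $U_\fp$), $\Phi^f|_{M_0^I}$ becomes $\rho_0(\sigma_\fp)|_{W^{I_{E/L}(\mathfrak P_\fp)}}$, so the $\fp$-th factor equals $\det\!\big(\Id-x^f\rho_0(\sigma_\fp)|_{W^{I_{E/L}(\mathfrak P_\fp)}}\big)=f_\fp\big(x^{f(\fp\mid\fq)},\rho_0;E/L\big)$. Taking the product over $\fp\mid\fq$ finishes the proof.

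The hard part is the bookkeeping in the induction identity: making precise the dictionary between ($G$-orbits of) primes of $E$ and double cosets, checking that the Mackey summand $V_\fp$ is induced from (a conjugate of) the genuine decomposition group $D_{E/L}(\mathfrak P_\fp)$ of a prime of $E$ over $\fp$, with $I\cap D_\fp=I_{E/L}(\mathfrak P_\fp)$ and $[D:I D_\fp]=f(\fp\mid\fq)$, and confirming that $\Phi^f$ corresponds under all the identifications to $\sigma_\fp$ itself rather than to a conjugate or a unit twist. By comparison the direct-sum and inflation identities, and the block-companion determinant step, are routine. For the tower compatibilities of decomposition and inertia groups used silently above, I would follow Artin's original arguments (or a standard reference in algebraic number theory).
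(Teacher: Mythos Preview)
Your argument is correct and is essentially the standard proof of Artin's local Euler-factor identities via Mackey decomposition and the block-companion determinant trick. The paper itself does not prove this lemma at all: it simply cites Artin's original paper and Cogdell's survey, so your write-up in fact supplies considerably more detail than the paper's ``proof'' does. One small notational point worth tightening: you write $D_\fp = D\cap H$ uniformly for every $\fp\mid\fq$, which is literally true here only because $H=\Gal(E/L)$ is normal in $G=\Gal(E/K)$ (the hypothesis that $L/K$ is Galois); it would help the reader to flag this explicitly, since in the general Mackey formula the subgroups $D\cap gHg^{-1}$ genuinely vary with the double coset.
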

\begin{proof}
   This result is due to Artin \cite{Artin_30} however, we refer the reader to Cogdell's paper \cite{Cogdell_07} for a modern presentation.
\end{proof}

\begin{prop}\label{Artin sym function properties}
Suppose $E/L/K$ is a tower of finite Galois extensions,  $\rho,\gamma$ are two representations of $\Gal(L/K)$, and $\rho_0$ is a representation of $\Gal(E/L).$ The Artin symmetric functions satisfy the following properties:
    \begin{itemize}
        \item $$\sL(\fX_K,\rho;L/K)\sL(\fX_K,\gamma;L/K) = \sL(\fX_K,\rho \oplus \gamma;L/K)$$
        \item $$\sL(\fX_K,\rho;L/K) = \sL(\fX_K,\iota_{\Gal(L/K)}^{\Gal(E/K)}(\rho);E/K)$$ 
        \item $$\mathcal{N}_{L/K}\left(\sL(\fX_L,\rho_0;E/L)\right) = \sL(\fX_{K},\Ind_{\Gal(E/L)}^{\Gal(E/K)}(\rho_0); E/K).$$ 
    \end{itemize}
\end{prop}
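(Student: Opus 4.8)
The plan is to write each Artin symmetric function as an Euler product over $\bP_K$ and reduce the three identities to the corresponding local statements in Artin's Lemma~\ref{Artin's lemma}, matching factors prime by prime. By Definition~\ref{main def for Artin sym},
\[
\sL(\fX_K,\rho;L/K)=\prod_{\fq\in\bP_K}\prod_{i\geq1}f_\fq\bigl(x_i^{(\fq)},\rho;L/K\bigr)^{-1},
\]
so every manipulation of $\sL$ is a manipulation of the reciprocal characteristic polynomials $f_\fq$. I would first remark that all three identities are independent of the auxiliary choices of primes $\fp\mid\fq$ (and of primes of $E$ above them) made in Definition~\ref{main def for Artin sym}, since each individual Euler factor is.

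The additivity and inflation properties are then immediate. For the direct sum, multiply the Euler products of $\rho$ and $\gamma$ in $\cA_K$ and apply the first bullet of Lemma~\ref{Artin's lemma}, $f_\fq(x,\rho;L/K)f_\fq(x,\gamma;L/K)=f_\fq(x,\rho\oplus\gamma;L/K)$, with $x=x_i^{(\fq)}$, recognizing the result as $\sL(\fX_K,\rho\oplus\gamma;L/K)$. For inflation, note that $\sL(\fX_K,\rho;L/K)$ and $\sL(\fX_K,\iota_{\Gal(L/K)}^{\Gal(E/K)}(\rho);E/K)$ are Euler products over the same index set $\bP_K$ in the same variables, and the second bullet of Lemma~\ref{Artin's lemma} equates the local factors $f_\fq(x,\rho;L/K)=f_\fq(x,\iota_{\Gal(L/K)}^{\Gal(E/K)}(\rho);E/K)$ for every $\fq$, so the products coincide.

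The substantive case is induction, where the two sides live in different rings and are linked by the norm map $\mathcal{N}_{L/K}\colon\cA_L\to\cA_K$. Starting from $\sL(\fX_L,\rho_0;E/L)=\prod_{\fp\in\bP_L}\prod_{i\geq1}f_\fp(x_i^{(\fp)},\rho_0;E/L)^{-1}$, I would push $\mathcal{N}_{L/K}$ inside both products (it is a continuous graded algebra homomorphism). The key local computation is that $\mathcal{N}_{L/K}$ turns the Euler factor at $\fp$ in the variables $x_i^{(\fp)}$ into the same factor evaluated at $(x_i^{(\fq)})^{f(\fp\mid\fq)}$, where $\fq=\fp\cap\sO_K$: writing $M=\rho_0(\sigma_\fp)$ restricted to the inertia invariants, one has $\prod_i\det(\Id-x_i^{(\fp)}M)^{-1}=\exp\!\bigl(\sum_{r\geq1}r^{-1}\Tr(M^r)\,p_r[\fX^{(\fp)}]\bigr)$, and since $\mathcal{N}_{L/K}$ sends $p_r[\fX^{(\fp)}]\mapsto p_{rf(\fp\mid\fq)}[\fX^{(\fq)}]$ this becomes $\exp\!\bigl(\sum_{r\geq1}r^{-1}\Tr(M^r)\,p_{rf(\fp\mid\fq)}[\fX^{(\fq)}]\bigr)=\prod_i\det(\Id-(x_i^{(\fq)})^{f(\fp\mid\fq)}M)^{-1}$; equivalently, over $\mathbb{Q}^{\mathrm{ab}}$ one may factor $\det(\Id-xM)^{-1}=\prod_j(1-x\alpha_j)^{-1}$ over eigenvalues (roots of unity) and use $\mathcal{N}_{L/K}\bigl(\prod_i(1-x_i^{(\fp)}\alpha_j)^{-1}\bigr)=\Exp[\alpha_j\,p_{f(\fp\mid\fq)}[\fX^{(\fq)}]]$. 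Applying this for every $\fp$, regrouping the product over $\bP_L$ according to the prime $\fq$ below, and invoking the third bullet of Lemma~\ref{Artin's lemma}, $\prod_{\fp\mid\fq}f_\fp(x^{f(\fp\mid\fq)},\rho_0;E/L)=f_\fq(x,\Ind_{\Gal(E/L)}^{\Gal(E/K)}(\rho_0);E/K)$, collapses the inner product over $\fp\mid\fq$ into $f_\fq(x_i^{(\fq)},\Ind_{\Gal(E/L)}^{\Gal(E/K)}(\rho_0);E/K)$, which reassembles to $\sL(\fX_K,\Ind_{\Gal(E/L)}^{\Gal(E/K)}(\rho_0);E/K)$.

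The main obstacle I anticipate is the bookkeeping in this last step: justifying the interchange of $\mathcal{N}_{L/K}$ with the infinite Euler product in the graded completion, and pinning down the plethystic identity that $\mathcal{N}_{L/K}$ replaces the Euler variable $x_i^{(\fp)}$ by $(x_i^{(\fq)})^{f(\fp\mid\fq)}$. Once this is in hand, the argument is a faithful transcription of Artin's original proofs for the L-functions.
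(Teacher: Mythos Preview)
Your proposal is correct and follows essentially the same route as the paper: reduce each identity to the corresponding local statement in Lemma~\ref{Artin's lemma} via the Euler product factorization. The only cosmetic difference is in the induction step, where the paper runs the chain in the opposite direction (starting from $\sL(\fX_K,\Ind\rho_0;E/K)$ and unwinding to $\mathcal{N}_{L/K}(\sL(\fX_L,\rho_0;E/L))$) and treats the effect of $\mathcal{N}_{L/K}$ on a local factor as immediate from its definition as the plethystic substitution $\fX^{(\fp)}\mapsto p_{f(\fp\mid\fq)}[\fX^{(\fq)}]$, whereas you supply an explicit exponential/trace computation to justify that $\prod_i f_\fp(x_i^{(\fp)},\rho_0;E/L)^{-1}\mapsto\prod_i f_\fp\bigl((x_i^{(\fq)})^{f(\fp\mid\fq)},\rho_0;E/L\bigr)^{-1}$; both are equivalent and your concern about commuting $\mathcal{N}_{L/K}$ with the infinite product is handled automatically since $\mathcal{N}_{L/K}$ is a graded algebra homomorphism on the completion.
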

\begin{proof}
    We may use Lemma \ref{Artin's lemma} directly. First, 
    \begin{align*}
        \sL(\fX_K,\rho;L/K)\sL(\fX_K,\gamma;L/K) &= \prod_{\fq \in \mathbb{P}_K}\prod_{i \geq 1} \det(\Id_{V^{I(\mathfrak{q})}}- x_i^{(\fq)}\rho(\sigma_{\mathfrak{q}})|_{V^{I(\mathfrak{q})}})^{-1} \prod_{\fq \in \mathbb{P}_K}\prod_{i \geq 1} \det(\Id_{V^{I(\mathfrak{q})}}- x_i^{(\fq)}\gamma(\sigma_{\mathfrak{q}})|_{V^{I(\mathfrak{q})}})^{-1}\\
        &= \prod_{\fq \in \mathbb{P}_K}\prod_{i \geq 1} \det(\Id_{V^{I(\mathfrak{q})}}- x_i^{(\fq)}\rho(\sigma_{\mathfrak{q}})|_{V^{I(\mathfrak{q})}})^{-1} \det(\Id_{V^{I(\mathfrak{q})}}- x_i^{(\fq)}\gamma(\sigma_{\mathfrak{q}})|_{V^{I(\mathfrak{q})}})^{-1}\\
        &= \prod_{\fq \in \mathbb{P}_K}\prod_{i \geq 1} f_{\fq}(x_i^{(\fq)},\rho;L/K)^{-1}f_{\fq}(x_i^{(\fq)},\gamma;L/K)^{-1}\\
        &=  \prod_{\fq \in \mathbb{P}_K}\prod_{i \geq 1} f_{\fq}(x_i^{(\fq)},\rho\oplus \gamma;L/K)^{-1}\\
        &= \sL(\fX_K,\rho \oplus \gamma;L/K).\\
    \end{align*}
    Next, 
    \begin{align*}
    \sL(\fX_K,\rho;L/K) &= \prod_{\fq \in \mathbb{P}_K}\prod_{i \geq 1} \det(\Id_{V^{I(\mathfrak{q})}}- x_i^{(\fq)}\rho(\sigma_{\mathfrak{q}})|_{V^{I(\mathfrak{q})}})^{-1} \\
    &= \prod_{\fq \in \mathbb{P}_K}\prod_{i \geq 1} f_{\fq}(x_i^{(\fq)},\rho;L/K)^{-1}\\
    &= \prod_{\fq \in \mathbb{P}_K}\prod_{i \geq 1}f_{\fq}(x_i^{(\fq)},\iota_{\Gal(L/K)}^{\Gal(E/K)}(\rho);E/K)^{-1}\\
    &= \sL(\fX_K,\iota_{\Gal(L/K)}^{\Gal(E/K)}(\rho);E/K).\\
    \end{align*}
Lastly, 
    \begin{align*}
        \sL(\fX_{K},\Ind_{\Gal(E/L)}^{\Gal(E/K)}(\rho_0); E/K) &= \prod_{\fq \in \mathbb{P}_K}\prod_{i \geq 1} f_{\fq}(x_i^{(\fq)},\Ind_{\Gal(E/L)}^{\Gal(E/K)}(\rho_0);E/K)^{-1} \\
        &= \prod_{\fq \in \mathbb{P}_K}\prod_{i \geq 1} \prod_{\fp|\fq} f_{\fp}((x_i^{(\fq)})^{f(\fp|\fq)},\rho_0;E/L)^{-1} \\
        &= \mathcal{N}_{L/K}\left( \prod_{\fq \in \mathbb{P}_K}\prod_{i \geq 1} \prod_{\fp|\fq} f_{\fp}(x_i^{(\fp)},\rho_0;E/L)^{-1} \right)\\
        &= \mathcal{N}_{L/K}\left( \prod_{\fp \in \mathbb{P}_L}\prod_{i \geq 1} f_{\fp}(x_i^{(\fp)},\rho_0;E/L)^{-1} \right)\\
        &= \mathcal{N}_{L/K}\left( \sL(\fX_L,\rho_0;E/L) \right).\\
    \end{align*}

\end{proof}

In this algebraic setting we can realize the relative Dedekind symmetric functions as the Artin symmetric functions for the regular representation of $\Gal(L/K).$

\begin{prop}
    For any finite Galois extension $L/K$ of number fields 
    $$\sL(\fX_K,\Ind_{\Gal(L/L)}^{\Gal(L/K)}(\mathbbm{1});L/K) = \zeta_{L/K}[\fX_K].$$
\end{prop}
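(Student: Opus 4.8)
The plan is to identify $\mathrm{Ind}_{\Gal(L/L)}^{\Gal(L/K)}(\mathbbm{1})$ as the regular representation of $\Gal(L/K)$ and then compute the local Euler factors of its Artin symmetric function directly, showing they match the local factors in the definition of $\zeta_{L/K}[\fX_K]$. First I would recall that for the trivial subgroup $\Gal(L/L) = \{1\}$, inducing the trivial representation gives the regular representation $\mathbb{C}[\Gal(L/K)]$. Fixing a prime $\fq \in \bP_K$ with a choice of $\fp | \fq$, with inertia group $I(\fq)$ and decomposition group $D(\fq)$, I would then use Mackey theory (or the direct double-coset description) to understand the restriction of the regular representation to $D(\fq)/I(\fq)$-modules after passing to $I(\fq)$-invariants. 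Specifically, $\mathbb{C}[\Gal(L/K)]^{I(\fq)}$ is a module over $D(\fq)/I(\fq)$, and the key combinatorial fact is that as a $D(\fq)/I(\fq)$-module it decomposes as a direct sum indexed by the primes $\fp' | \fq$ (equivalently, the $D(\fq)$-orbits on $\Gal(L/K)/I(\fq)$), where each summand is isomorphic to the regular representation of the cyclic group $D(\fq)/I(\fq)$, whose order is precisely $f(\fp'|\fq)$.

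The computational heart is then: for the regular representation $W$ of a cyclic group $C$ of order $f$ generated by $\sigma$, one has $\det(\Id_W - x\,\sigma|_W) = 1 - x^f$. This is because $\sigma$ acts on $\mathbb{C}[C]$ by a permutation matrix that is a single $f$-cycle, whose characteristic polynomial is $t^f - 1$, i.e. $\det(\Id - x\sigma) = 1 - x^f$. Applying this to each summand indexed by $\fp' | \fq$ with $f = f(\fp'|\fq)$, I get
$$f_{\fq}(x, \mathrm{Ind}(\mathbbm{1}); L/K) = \det(\Id_{W^{I(\fq)}} - x\,\rho(\sigma_{\fq})|_{W^{I(\fq)}}) = \prod_{\fp' | \fq} \left(1 - x^{f(\fp'|\fq)}\right).$$
Taking the product over all $\fq \in \bP_K$ and all variables $x_i^{(\fq)}$, and inverting, yields exactly the defining product for $\zeta_{L/K}[\fX_K]$ from Definition \ref{Dedekind sym defn}.

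Alternatively, and perhaps more cleanly, I would leverage the machinery already built: by the third part of Proposition \ref{Artin sym function properties} applied to the tower $L/L/K$ (so $E = L$, the middle field is $L$, and $\rho_0 = \mathbbm{1}$ is the trivial representation of $\Gal(L/L)$), we have $\sL(\fX_K, \mathrm{Ind}_{\Gal(L/L)}^{\Gal(L/K)}(\mathbbm{1}); L/K) = \mathcal{N}_{L/K}(\sL(\fX_L, \mathbbm{1}; L/L))$. Since the extension $L/L$ is trivial, $\sL(\fX_L, \mathbbm{1}; L/L) = \prod_{\fp \in \bP_L} \prod_{i \geq 1}(1 - x_i^{(\fp)})^{-1} = \zeta_L[\fX_L]$, and then the Remark following the norm map proposition gives $\mathcal{N}_{L/K}(\zeta_L[\fX_L]) = \zeta_{L/K}[\fX_K]$, completing the proof. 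The only mild subtlety — and the step I expect to be the main obstacle in the first approach — is correctly bookkeeping the $I(\fq)$-invariants and confirming the $D(\fq)/I(\fq)$-module decomposition indexed by primes above $\fq$; this is standard but requires care because of the interplay of inertia with the double-coset decomposition. The second approach sidesteps this entirely, so I would present that as the main argument, perhaps remarking on the direct Euler-factor computation as a sanity check.
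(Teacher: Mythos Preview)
Your proposal is correct, and the second approach you single out as the main argument---applying the induction property from Proposition \ref{Artin sym function properties} to the tower $L/L/K$, identifying $\sL(\fX_L,\mathbbm{1};L/L)=\zeta_L[\fX_L]$, and then invoking $\mathcal{N}_{L/K}(\zeta_L[\fX_L])=\zeta_{L/K}[\fX_K]$---is exactly the paper's proof. Your first (direct Euler-factor) approach is a valid alternative the paper does not pursue.
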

\begin{proof}
    \begin{align*}
        &\sL(\fX_K,\Ind_{\Gal(L/L)}^{\Gal(L/K)}(\mathbbm{1});L/K)\\
        &= \mathcal{N}_{L/K}\left(\sL(\fX_L,\mathbbm{1};L/L)\right) \\
        &= \mathcal{N}_{L/K}\left(\zeta_{L}[\fX_L]\right)\\
        &= \zeta_{L/K}[\fX_K].\\
    \end{align*}
\end{proof}

Using the properties in Proposition \ref{Artin sym function properties} we derive a factorization of the relative Dedekind symmetric function $\zeta_{L/K}$ in terms of the Artin symmetric functions of the irreducible representations of $\Gal(L/K).$

\begin{cor}\label{Artin sym functions factorization}
    For any finite Galois extension $L/K$ of number fields
    $$\zeta_{L/K}[\fX_K] = \zeta_{K}[\fX_K] \prod_{\rho \neq \mathbbm{1}} \sL(\fX_K,\rho;L/K)^{\dim(\rho)}$$
    where $\rho \neq \mathbbm{1}$ ranges over all equivalence classes of nontrivial irreducible complex representations of $\Gal(L/K).$
\end{cor}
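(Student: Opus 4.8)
The plan is to mimic the classical Artin factorization of the Dedekind zeta function, translating each step through the dictionary provided by Proposition \ref{Artin sym function properties}. The key observation is that $\zeta_{L/K}[\fX_K]$ is the Artin symmetric function attached to the induced representation $\Ind_{\Gal(L/L)}^{\Gal(L/K)}(\mathbbm{1})$, which is exactly the regular representation $\rho_{\mathrm{reg}}$ of $G:=\Gal(L/K)$; this identification was just established in the preceding proposition. So the entire statement reduces to decomposing $\rho_{\mathrm{reg}}$ into irreducibles and pushing this decomposition through the multiplicativity property $\sL(\fX_K,\rho\oplus\gamma;L/K)=\sL(\fX_K,\rho;L/K)\sL(\fX_K,\gamma;L/K)$.

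First I would recall the standard fact from representation theory of finite groups that the regular representation decomposes as $\rho_{\mathrm{reg}}\cong\bigoplus_{\rho}\rho^{\oplus\dim(\rho)}$, the sum ranging over all equivalence classes of irreducible complex representations of $G$, with the trivial representation $\mathbbm{1}$ occurring exactly once (since $\dim(\mathbbm{1})=1$). Next I would apply the first bullet of Proposition \ref{Artin sym function properties} iteratively — it is multiplicative over direct sums — to obtain
\begin{equation*}
\sL(\fX_K,\rho_{\mathrm{reg}};L/K)=\prod_{\rho}\sL(\fX_K,\rho;L/K)^{\dim(\rho)},
\end{equation*}
where the product is over all irreducible $\rho$ including the trivial one. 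Then I would split off the $\rho=\mathbbm{1}$ factor: note $\sL(\fX_K,\mathbbm{1};L/K)$ is the Artin symmetric function of the trivial one-dimensional representation of $\Gal(L/K)$, which has trivial inertia action and Frobenius acting as the identity on the full one-dimensional space, so its Euler product collapses to $\prod_{\fq}\prod_i(1-x_i^{(\fq)})^{-1}=\zeta_K[\fX_K]$. (Alternatively one can see this as $\sL(\fX_K,\iota(\mathbbm{1}_{\Gal(K/K)});L/K)=\sL(\fX_K,\mathbbm{1}_{\Gal(K/K)};K/K)=\zeta_K[\fX_K]$ via inflation invariance, the second bullet.) Combining with the previous proposition's identity $\sL(\fX_K,\rho_{\mathrm{reg}};L/K)=\zeta_{L/K}[\fX_K]$ yields exactly the claimed factorization.

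I do not expect any genuine obstacle here; the proof is essentially bookkeeping once the three ingredients — the regular representation decomposition, multiplicativity of $\sL$ over direct sums, and the evaluation $\sL(\fX_K,\mathbbm{1};L/K)=\zeta_K[\fX_K]$ — are in place. The one point requiring a small amount of care is justifying that infinite products in $\cA_K$ behave well: the equality $\sL(\fX_K,\rho_1\oplus\cdots\oplus\rho_r;L/K)=\prod_j\sL(\fX_K,\rho_j;L/K)$ for a finite direct sum follows from the two-term case by induction, and since $G$ is finite there are only finitely many irreducibles, so no convergence subtleties arise beyond those already handled in the definition of the completed ring $\cA_K$. I would therefore present the argument compactly in three short displayed lines, citing Proposition \ref{Artin sym function properties} and the preceding proposition.
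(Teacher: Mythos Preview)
Your proposal is correct and follows essentially the same approach as the paper's proof: identify $\zeta_{L/K}[\fX_K]$ with the Artin symmetric function of the regular representation via the preceding proposition, decompose the regular representation into irreducibles, apply the direct-sum multiplicativity from Proposition~\ref{Artin sym function properties}, and split off the trivial factor $\sL(\fX_K,\mathbbm{1};L/K)=\zeta_K[\fX_K]$. The paper's argument is exactly this chain of equalities with no additional ingredients.
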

\begin{proof}
    \begin{align*}
        \zeta_{L/K}[\fX_K] &= \sL(\fX_K,\Ind_{\Gal(L/L)}^{\Gal(L/K)}(\mathbbm{1});L/K)\\
        &= \sL(\fX_K,\Ind_{\Gal(L/L)}^{\Gal(L/K)}(\mathbbm{1});L/K) \\
        &= \sL(\fX_K,\bigoplus_{\rho} \rho^{\dim(\rho)};L/K)\\
        &= \prod_{\rho} \sL(\fX_K,\rho ;L/K)^{\dim(\rho)} \\
        &= \sL(\fX_K,\mathbbm{1};L/K) \prod_{\rho \neq \mathbbm{1}} \sL(\fX_K,\rho;L/K)^{\dim(\rho)}\\
        &= \zeta_{K}[\fX_K] \prod_{\rho \neq \mathbbm{1}} \sL(\fX_K,\rho;L/K)^{\dim(\rho)}.\\
    \end{align*}
\end{proof}

Using a classical theorem of Brauer we are able to express the Artin symmetric functions $\sL(\fX_K,\rho;L/K)$ for a general Galois representation $\rho$ in terms of $1$-dimensional characters of subgroups $\Gal(L/K_i)$ for intermediate extensions $K \subseteq K_i \subseteq L.$

\begin{thm}[Brauer]\label{Brauer's thm}
    Let $G$ be a finite group and $\rho$ a finite dimensional complex representation of $G.$ Then there exist (not necessarily distinct) subgroups $H_1,\ldots,H_r \leq G$ , integers $a_1,\ldots , a_r \in \mathbb{Z}$, and characters $\chi_i:H_i\rightarrow \mathbb{C}^{*}$ such that in the representation ring of $G,$
    $$\rho = \sum_{i=1}^{r} a_i \Ind_{H_i}^{G} \chi_i.$$ 
\end{thm}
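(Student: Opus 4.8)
The statement is Brauer's induction theorem, and the plan is the classical two-step proof. Write $R(G)$ for the representation ring of $G$ (the $\mathbb{Z}$-span of the irreducible characters), identify $\rho$ with its character, and for $H \leq G$ let $\Ind_H^G$ and $\chi \mapsto \chi|_H$ denote induction and restriction of class functions. Call a finite group \emph{$p$-elementary} if it is a direct product $C \times P$ with $C$ cyclic of order prime to $p$ and $P$ a $p$-group, and \emph{elementary} if it is $p$-elementary for some prime $p$. The first reduction is that it suffices to prove the trivial character $\mathbbm{1}_G$ lies in the subgroup $A \subseteq R(G)$ generated by the classes $\Ind_H^G \eta$ with $H \leq G$ elementary and $\eta$ an arbitrary character of $H$. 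Indeed, the projection formula $\Ind_H^G\bigl(\eta\cdot(\chi|_H)\bigr) = \bigl(\Ind_H^G\eta\bigr)\cdot\chi$ shows $A$ is an ideal of the ring $R(G)$, so $\mathbbm{1}_G \in A$ forces $A = R(G)$, whence $\rho = \rho\cdot\mathbbm{1}_G \in A$: every $\rho$ is a $\mathbb{Z}$-linear combination of characters induced from elementary subgroups.

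To prove $\mathbbm{1}_G \in A$ one argues one prime at a time. Fix a prime $p$ and let $A_p \subseteq A$ be the subgroup generated by characters induced from $p$-elementary subgroups. A counting/congruence argument — essentially the Brauer--Tate method, comparing modulo $p$ the character $\mathbbm{1}_G$ with class functions obtained by inducing characters of the $p$-elementary subgroups $\langle s\rangle\times Q$, where $s$ runs over $p$-regular elements of $G$ and $Q$ is a Sylow $p$-subgroup of $C_G(s)$ — produces a positive integer $a_p$ with $p\nmid a_p$, every prime divisor of which divides $|G|$, and with $a_p\,\mathbbm{1}_G \in A_p$. Since each such $a_p$ has only prime factors dividing $|G|$ yet is coprime to $p$, the integers $\{a_p : p \mid |G|\}$ have greatest common divisor $1$; choosing $c_p \in \mathbb{Z}$ with $\sum_{p\mid|G|} c_p a_p = 1$ gives $\mathbbm{1}_G = \sum_p c_p(a_p\mathbbm{1}_G) \in \sum_p A_p = A$. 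I expect this congruence step — writing down the correct class function and verifying the mod-$p$ identity — to be the principal obstacle; the rest is formal manipulation.

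It remains to replace ``induced from elementary subgroups'' by ``induced from one-dimensional characters of subgroups''. Every $p$-elementary group $C \times P$ is nilpotent, hence supersolvable, hence an M-group: each of its irreducible characters equals $\Ind_U^H\psi$ for some subgroup $U$ and some linear character $\psi$ of $U$ (standard induction on the group order via Clifford theory, using that a nilpotent nonabelian group has a noncentral normal abelian subgroup). Write $\rho = \sum_j b_j\,\Ind_{H_j}^G\eta_j$ with $b_j\in\mathbb{Z}$, each $H_j$ elementary and each $\eta_j$ a genuine character of $H_j$, as furnished by the first two paragraphs; expanding each $\eta_j$ as a $\mathbb{Z}_{\ge 0}$-combination of characters $\Ind_{U}^{H_j}\psi$ with $\psi$ linear and using transitivity of induction, $\Ind_{H_j}^G\Ind_{U}^{H_j}\psi = \Ind_{U}^G\psi$, we obtain $\rho = \sum_i a_i\,\Ind_{H_i}^G\chi_i$ with $a_i \in \mathbb{Z}$ and each $\chi_i\colon H_i\to\mathbb{C}^*$ one-dimensional, which is exactly the assertion. (In practice, of course, one would simply cite a standard reference for this theorem.)
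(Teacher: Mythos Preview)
Your outline is the standard Brauer--Tate argument and is essentially correct as a proof sketch; the reduction to $\mathbbm{1}_G \in A$ via the projection formula, the prime-by-prime congruence argument producing integers $a_p$ coprime to $p$, and the final passage to linear characters via the M-group property of nilpotent groups are all the right steps. You even anticipated the point in your final parenthetical: the paper does not prove this theorem at all. It is stated as a classical result attributed to Brauer and invoked as a black box, both here and again later (Theorem~\ref{mero thm for artin L}, where the reader is referred to Cogdell~\cite{Cogdell_07}). So there is nothing to compare your argument against; your sketch simply supplies what the paper deliberately omits.
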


\begin{prop}
    Let $\rho: \Gal(L/K) \rightarrow \GL(V)$ be a finite dimensional complex Galois representation for number fields $L/K.$ Then there exist (not necessarily distinct) intermediate extensions $K \subseteq  K_1,\ldots,K_r \subseteq L ,$  integers $a_1,\ldots , a_r \in \mathbb{Z}$, and $1$-dimensional characters $\chi_i:\Gal(L/K_{i}) \rightarrow \mathbb{C}^{*}$ such that
    $$\sL(\fX_K,\rho;L/K) = \prod_{i =1}^{r} \mathcal{N}_{K_i/K}\left( \sL(\fX_{K_i},\chi_i;L/K_i)\right)^{a_i}.$$
\end{prop}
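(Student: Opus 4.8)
The plan is to combine Brauer's theorem (Theorem \ref{Brauer's thm}) with the Galois correspondence and the algebraic properties of $\sL$ already established in Proposition \ref{Artin sym function properties}. First I would apply Brauer's theorem to $G := \Gal(L/K)$ and the representation $\rho$: this produces subgroups $H_1,\dots,H_r \le G$, integers $a_i \in \mathbb{Z}$, and $1$-dimensional characters $\chi_i : H_i \to \mathbb{C}^{*}$ with $\rho = \sum_{i=1}^r a_i \Ind_{H_i}^{G} \chi_i$ in the representation ring $R(G)$. Via the Galois correspondence I set $K_i := \mathrm{Fix}(H_i)$, so that $K \subseteq K_i \subseteq L$, the extension $L/K_i$ is Galois with $\Gal(L/K_i) = H_i$, and each $\chi_i$ is a $1$-dimensional character of $\Gal(L/K_i)$; hence $\Ind_{H_i}^{G} \chi_i = \Ind_{\Gal(L/K_i)}^{\Gal(L/K)} \chi_i$.

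Second, I would upgrade the direct-sum multiplicativity of $\sL$ (first bullet of Proposition \ref{Artin sym function properties}) together with $\sL(\fX_K, 0 ;L/K) = 1$ to a group homomorphism $R(G) \to \cA_K^{*}$. The key point is that $\sL(\fX_K,\rho;L/K)$ is always a unit in $\cA_K$: it is a (convergent, infinite) product of factors $\det(\Id_{V^{I(\fq)}} - x_i^{(\fq)}\rho(\sigma_{\fq})|_{V^{I(\fq)}})^{-1}$, each of which is $1$ plus terms of nontrivial $||\cdot||$-degree, and any $F \in \cA_K$ of the form $1 + (\text{terms of nontrivial } ||\cdot||\text{-degree})$ is invertible via $F^{-1} = \sum_{n \ge 0}(1-F)^n$, a sum that converges in the $\cJ_K^{+}$-graded completion because for each fixed $\mathfrak{a} \in \cJ_K^{+}$ only finitely many $n$ contribute. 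Consequently $\rho \mapsto \sL(\fX_K,\rho;L/K)$ descends to $R(G)$, giving
$$\sL(\fX_K,\rho;L/K) = \prod_{i=1}^{r} \sL\!\left(\fX_K, \Ind_{\Gal(L/K_i)}^{\Gal(L/K)}\chi_i;L/K\right)^{a_i}.$$

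Finally, I would apply the induction identity (third bullet of Proposition \ref{Artin sym function properties}) to each factor, taking the tower to be $L \supseteq K_i \supseteq K$: since $L/K$ and $L/K_i$ are Galois, the induced representation $\Ind_{\Gal(L/K_i)}^{\Gal(L/K)}\chi_i$ and the norm map $\mathcal{N}_{K_i/K}$ are defined, and that identity reads $\mathcal{N}_{K_i/K}\big(\sL(\fX_{K_i},\chi_i;L/K_i)\big) = \sL\big(\fX_K, \Ind_{\Gal(L/K_i)}^{\Gal(L/K)}\chi_i;L/K\big)$. Substituting into the displayed product yields the claimed factorization.

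The step requiring the most care is the second one: I must confirm that $\sL$ genuinely takes values in the unit group $\cA_K^{*}$, so that $\sL(\fX_K,\cdot\,;L/K)$ is well-defined on virtual representations (the $a_i$ may be negative), and I should double-check that the third bullet of Proposition \ref{Artin sym function properties} is applicable when $K_i/K$ is not itself Galois — which it is, since only $L/K$ and $L/K_i$ are used in its statement and proof. Once these points are in place, the remainder is a formal substitution of the two identities.
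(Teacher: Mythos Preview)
Your proposal is correct and follows essentially the same approach as the paper, which simply says ``Use Brauer's theorem \ref{Brauer's thm} and the induction property from Proposition \ref{Artin sym function properties}.'' You have supplied considerably more detail than the paper does, in particular the invertibility argument for $\sL$ in $\cA_K$ and the observation that the third bullet of Proposition \ref{Artin sym function properties} only requires $L/K$ and $L/K_i$ to be Galois (not $K_i/K$); both points are implicit in the paper's one-line proof but worth making explicit.
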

\begin{proof}
    Use Brauer's theorem \ref{Brauer's thm} and the induction property from Proposition \ref{Artin sym function properties}.
\end{proof}

\subsection{Monomial and Schur Expansions}

For the remainder of the next two sections assume the notation from Definition \ref{main def for Artin sym}. Importantly, we have defined the groups $D(\fq),I(\fq)$ by arbitrarily \textbf{\textit{choosing}} primes $\fp | \fq$ for each $\fq.$ These choices will affect the presentation of the objects in the remainder of this paper but will not fundamentally change the results. In this section we will compute the expansion coefficients of the Artin symmetric functions $\sL(\fX_K,\rho;L/K)$ with respect to the monomial $m_{\lambda^{\bullet}}$ and Schur $s_{\lambda^{\bullet}}$ bases of $A_{K}.$

\begin{defn}
    Define the Hall inner product on $A_{K}$ as
$$\langle p_{\lambda^{\bullet}}, p_{\mu^{\bullet}} \rangle_{H} := \delta_{\lambda^{\bullet},\mu^{\bullet}} \prod_{\fq \in \mathbb{P}_{K}}\prod_{i \geq 1}(m_{i}(\lambda^{(\fq)}) i^{m_{i}(\lambda^{(\fq)})}).$$
\end{defn}

The next lemma will be important later.

\begin{lem}\label{Cauchy Identity Lemma}
    Let $n \geq 1.$ For each $\fq \in \mathbb{P}_K$ consider a set $\{y_1^{(\fq)},\ldots, y_{n}^{(\fq)}\}$ of commuting free variables which mutually commute between the variable sets $y_i^{(\fq)},y_{j}^{(\fp)}$ and commute with $\mathfrak{X}_K.$  Consider any pair $\{ u_{\lambda}, v_{\lambda} \}_{\lambda}$ of dual bases of $\Lambda$ with respect to $\langle -,-\rangle_{H}.$ Define $v_{\lambda^{\bullet}}[\mathfrak{X}_{K}]:= \prod_{\fq \in \mathbb{P}_K}v_{\lambda^{(\fq)}}[\mathfrak{X}_{\fq}]$. Then 
    $$\prod_{\fq \in \mathbb{P}_K}\prod_{i=1}^{n}\prod_{j} (1-y_{i}^{(\fq)}x_{j}^{(\fq)})^{-1} = \sum_{\lambda^{\bullet}} \prod_{\fq \in \mathbb{P}_K} \left(u_{\lambda^{(\fq)}}(y_1^{(\fq)},\ldots,y_{n}^{(\fq)}) \right) v_{\lambda^{\bullet}}[\mathfrak{X}_{\fq}].$$
\end{lem}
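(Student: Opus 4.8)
The statement is just the classical Cauchy identity for dual bases of $\Lambda$ with respect to the Hall inner product, applied separately in each variable set $\fX^{(\fq)}$ with the specialization $x_j \to x_j^{(\fq)}$ and $y_i \to y_i^{(\fq)}$ (with only finitely many $y$-variables), and then multiplied over all $\fq \in \bP_K$ inside the completed tensor product $\cA_K$. So the plan is to reduce to the single-$\fq$ case and invoke the standard result. First I would recall the classical Cauchy identity: for dual bases $\{u_\lambda\}, \{v_\lambda\}$ of $\Lambda$ with respect to $\langle -,-\rangle_H$, one has the formal power series identity
$$\prod_{i,j}(1 - y_i x_j)^{-1} = \sum_{\lambda} u_\lambda(y) v_\lambda(x),$$
which is [(4.1), Ch. I, \cite{Macdonald}] together with the characterization of duality via the Cauchy kernel (the kernel $\prod_{i,j}(1-y_ix_j)^{-1} = \sum_\lambda h_\lambda(y) m_\lambda(x)$ pairs dual bases, [(4.6), Ch. I, \cite{Macdonald}]). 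Specializing to finitely many $y$-variables $y_1^{(\fq)},\dots,y_n^{(\fq)}$ is harmless since $u_\lambda(y_1,\dots,y_n) = 0$ whenever $\ell(\lambda) > n$, so the sum remains well-defined, and restricting the $y$-alphabet only restricts which terms survive — the identity still holds as an identity of formal power series in the $y_i^{(\fq)}$ and $x_j^{(\fq)}$.

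Next I would take the product over all $\fq \in \bP_K$. For each fixed $\fq$ we have, as an identity in $\Lambda[\fX^{(\fq)}]$ with coefficients in $\mathbb{Q}^{\mathrm{ab}}[y_1^{(\fq)},\dots,y_n^{(\fq)}]$,
$$\prod_{i=1}^{n}\prod_{j}(1 - y_i^{(\fq)} x_j^{(\fq)})^{-1} = \sum_{\lambda^{(\fq)} \in \Par} u_{\lambda^{(\fq)}}(y_1^{(\fq)},\dots,y_n^{(\fq)})\, v_{\lambda^{(\fq)}}[\fX^{(\fq)}].$$
Multiplying these identities over $\fq$ and expanding the product of sums, the coefficient of $\prod_\fq v_{\lambda^{(\fq)}}[\fX^{(\fq)}] = v_{\lambda^\bullet}[\fX_K]$ (using the defining notation $v_{\lambda^\bullet}[\fX_K] := \prod_\fq v_{\lambda^{(\fq)}}[\fX^{(\fq)}]$) is exactly $\prod_\fq u_{\lambda^{(\fq)}}(y_1^{(\fq)},\dots,y_n^{(\fq)})$. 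Since each local factor $\prod_{i=1}^n\prod_j(1-y_i^{(\fq)}x_j^{(\fq)})^{-1}$ equals $1$ plus terms of strictly positive degree in $\fX^{(\fq)}$, and since a given monomial of fixed total degree involves only finitely many $\fX^{(\fq)}$, the infinite product converges in the completion $\widehat{A_K} = \cA_K$ (after extending scalars by the $y$-variables), and the interchange of product and sum is legitimate termwise. This yields precisely the claimed identity.

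The only point requiring a word of care — and I expect it to be the main (very mild) obstacle — is bookkeeping the convergence and the reindexing: one must observe that only $\lambda^\bullet \in \Par_K$ (i.e. $\lambda^{(\fq)} = \emptyset$ for all but finitely many $\fq$) contribute, because $u_\emptyset = 1$ and for any surviving term all but finitely many local factors must contribute their constant term $1 = u_\emptyset$. Once this is noted, the sum on the right is indexed by $\Par_K$ as written, and the identity holds in $\cA_K$ extended by the polynomial ring in the $y_i^{(\fq)}$. Everything else is a direct transcription of the classical Cauchy identity factor by factor.
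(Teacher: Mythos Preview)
Your proof is correct and is exactly the intended argument: apply the classical Cauchy identity from \cite{Macdonald} at each prime $\fq$ and take the product over $\bP_K$ in the completion. The paper in fact states this lemma without proof, treating it as immediate, so your write-up supplies the details the author left implicit.
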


\begin{defn}
 Define the multi-set of elements of $\mathbb{Q}^{\text{ab}}$, $A^{(\rho,\fq)}:= \{\alpha_{1}^{(\rho,\fq)},\ldots, \alpha_{d^{(\rho,\fq)}}^{(\rho,\fq)}\} $, to be the eigenvalues of $\rho(\sigma_{\fq})|_{V^{I(\fq)}}$ (with multiplicity) where $d^{(\rho,\fq)}:= \dim V^{I(\fq)}.$ Given a symmetric function $f \in \Lambda$ we write $f|_{A^{(\rho,\fq)}}$ for the evaluation $f(\alpha_{1}^{(\rho,\fq)},\ldots, \alpha_{d^{(\rho,\fq)}}^{(\rho,\fq)})$ of $f$ at the multi-set $A^{(\rho,\fq)}.$
\end{defn}

We now see that we have the following:

\begin{prop}\label{Cauchy identity for Artin}
    Let $\{u_{\lambda},v_{\lambda}\}_{\lambda}$ be any pair of dual bases for $\Lambda$ with respect to $\langle-,-\rangle_{H}.$ Then 
    $$\sL(\fX_K,\rho;L/K) = \sum_{\lambda^{\bullet}} \prod_{\fq \in \mathbb{P}_K}\left( u_{\lambda^{(\fq)}}|_{A^{(\rho,\fq)}} \right) v_{\lambda^{\bullet}}[\mathfrak{X}_K].$$
\end{prop}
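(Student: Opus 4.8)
The plan is to derive the claimed expansion directly from the Euler-product definition of $\sL(\fX_K,\rho;L/K)$ by recognizing each local determinant factor as the generating function side of the classical Cauchy identity, and then applying Lemma \ref{Cauchy Identity Lemma}. First I would observe that for a single prime $\fq$, if $A^{(\rho,\fq)} = \{\alpha_1^{(\rho,\fq)},\ldots,\alpha_{d^{(\rho,\fq)}}^{(\rho,\fq)}\}$ are the eigenvalues of $\rho(\sigma_{\fq})|_{V^{I(\fq)}}$, then
$$\prod_{i \geq 1}\det\bigl(\Id_{V^{I(\fq)}} - x_i^{(\fq)}\rho(\sigma_{\fq})|_{V^{I(\fq)}}\bigr)^{-1} = \prod_{i \geq 1}\prod_{k=1}^{d^{(\rho,\fq)}}\bigl(1 - \alpha_k^{(\rho,\fq)} x_i^{(\fq)}\bigr)^{-1},$$
since the determinant is the product of $(1 - x_i^{(\fq)}\alpha_k^{(\rho,\fq)})$ over the eigenvalues. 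Taking the product over all $\fq \in \bP_K$ gives
$$\sL(\fX_K,\rho;L/K) = \prod_{\fq \in \bP_K}\prod_{k=1}^{d^{(\rho,\fq)}}\prod_{j \geq 1}\bigl(1 - \alpha_k^{(\rho,\fq)} x_j^{(\fq)}\bigr)^{-1}.$$

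Next I would match this against Lemma \ref{Cauchy Identity Lemma} by setting $n := \max_{\fq} d^{(\rho,\fq)}$ (or just specializing: the lemma is stated for a fixed $n$, and one can pad with zeros since $d^{(\rho,\fq)}$ is bounded by $\dim V$) and substituting $y_k^{(\fq)} := \alpha_k^{(\rho,\fq)}$ for $1 \le k \le d^{(\rho,\fq)}$ and $y_k^{(\fq)} := 0$ otherwise. Under this specialization the left-hand side of the lemma becomes exactly the displayed product for $\sL(\fX_K,\rho;L/K)$, because $(1 - 0 \cdot x_j^{(\fq)})^{-1} = 1$. The right-hand side becomes $\sum_{\lambda^{\bullet}}\prod_{\fq}\bigl(u_{\lambda^{(\fq)}}(\alpha_1^{(\rho,\fq)},\ldots,\alpha_{d^{(\rho,\fq)}}^{(\rho,\fq)},0,\ldots,0)\bigr)v_{\lambda^{\bullet}}[\fX_{\fq}]$, and by the stability of symmetric functions under adjoining zero variables this is precisely $\sum_{\lambda^{\bullet}}\prod_{\fq}\bigl(u_{\lambda^{(\fq)}}|_{A^{(\rho,\fq)}}\bigr)v_{\lambda^{\bullet}}[\fX_K]$, which is the asserted formula.

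The one genuine point requiring care — and the main (mild) obstacle — is the justification of the infinite-product manipulation and the convergence/well-definedness of the resulting sum in the completed ring $\cA_K$: Lemma \ref{Cauchy Identity Lemma} is stated as an identity of formal power series in finitely many $y$-variables per prime tensored with $A_K$, so one must check that after the specialization $y \mapsto \alpha$ the right-hand side still defines an element of $\cA_K$, i.e. that for each integral ideal $\mathfrak{a}$ only finitely many $\lambda^{\bullet}$ contribute to the $\mathfrak{a}$-graded piece. This is immediate from the $\cJ_K^+$-grading: the term indexed by $\lambda^{\bullet}$ has norm $\prod_{\fq}\fq^{|\lambda^{(\fq)}|}$, and for fixed $\mathfrak{a}$ there are only finitely many such $\lambda^{\bullet}$, exactly as in Definition \ref{ring of arith sym functions defn}. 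With that remark in place the proof is just the substitution described above; I would also note that the specific choice of dual bases $\{u_\lambda, v_\lambda\}$ is irrelevant to the argument, so the statement holds simultaneously for $(h_\lambda, m_\lambda)$, $(m_\lambda, h_\lambda)$, $(s_\lambda, s_\lambda)$, and the power-sum pair, recovering in particular the monomial and Schur expansions promised in the section title.
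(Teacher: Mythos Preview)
Your proof is correct and follows essentially the same approach as the paper: factor each local determinant into its eigenvalue contributions, pad the eigenvalue sets with zeros up to a uniform bound (the paper simply takes $n = \dim V$), and apply Lemma \ref{Cauchy Identity Lemma}. Your additional remark about well-definedness of the sum in $\cA_K$ is a helpful clarification that the paper omits.
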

\begin{proof}
    Each characteristic polynomial factor $\det(\Id_{V^{I(\mathfrak{q})}}- x_i^{(\mathfrak{q})}\rho(\sigma_{\mathfrak{q}})|_{V^{I(\mathfrak{q})}})$ of $\sL(\fX_K,\rho;L/K)$ factors as 
    $$\det(\Id_{V^{I(\mathfrak{q})}}- x_i^{(\mathfrak{q})}\rho(\sigma_{\mathfrak{q}})|_{V^{I(\mathfrak{q})}}) = \prod_{j=1}^{d^{(\rho,\fq)}}(1-x_i^{(\fq)}\alpha_{j}^{(\rho,\fq)}).$$ Therefore, 
    $$\sL(\fX_K,\rho;L/K) = \prod_{\fq \in \mathbb{P}_{K}} \prod_{i \geq 1}\det(\Id_{V^{I(\mathfrak{q})}}- x_i^{(\mathfrak{q})}\rho(\sigma_{\mathfrak{q}})|_{V^{I(\mathfrak{q})}})^{-1} = \prod_{\fq \in \mathbb{P}_{K}} \prod_{i \geq 1} \prod_{j = 1}^{d^{(\rho,\fq)}}(1-x_i^{(\fq)}\alpha_{j}^{(\rho,\fq)})^{-1}.$$ Since $d^{(\rho,\fq)} \leq \dim V$ we may set $n:= \dim V$ and define $y_j^{(\fq)}:= \alpha_{j}^{(\rho,\fq)}$ for $j \geq d^{(\rho,\fq)}$ and $y_j^{(\fq)} = 0$ for $d^{(\rho,\fq)}< j \leq n. $ Lemma \ref{Cauchy Identity Lemma} now reads as 
    $$\sL(\fX_K,\rho;L/K) = \prod_{\fq \in \mathbb{P}_{K}} \prod_{i \geq 1} \prod_{j = 1}^{d^{(\rho,\fq)}}(1-x_i^{(\fq)}\alpha_{j}^{(\rho,\fq)})^{-1} = \sum_{\lambda^{\bullet}} \prod_{\fq \in \mathbb{P}_K}\left( u_{\lambda^{(\fq)}}|_{A^{(\rho,\fq)}} \right) v_{\lambda^{\bullet}}[\mathfrak{X}_K].$$
\end{proof}

We will use the above proposition repeatedly in order to obtain expansions for the Artin symmetric functions into different bases of $A_K.$ First, we will need some representation-theoretic definitions.

\begin{defn}
    If $H \leq G$ is a subgroup of a group $G$ we write $N_{G}(H)$ for the \textbf{\textit{normalizer}} of $H$ in $G.$ Define the profinite group $G_{L/K}$ as 
    $$G_{L/K}:= \prod_{\fq \in \mathbb{P}_K} N_{\Gal(L/K)}(I(\fq))/I(\fq).$$ For any group $G$, a representation $W$ of $G$, and a partition $\lambda,$ we write $\mathbb{S}_{\lambda}(W)$ for the representation of $G$ obtained by applying the \textbf{\textit{Schur functor}} $\mathbb{S}_{\lambda}$ to $W.$ Explicitly, 
    $$\mathbb{S}_{\lambda}(W):= W^{\otimes |\lambda|} \otimes_{\mathfrak{S}_{|\lambda|}} S^{\lambda}$$ where $S^{\lambda}$ is the Specht module associated to $\lambda$. We will write for $\lambda = (\lambda_1,\ldots,\lambda_r)$
    $$\eta_{\lambda}(W):= \mathbb{S}_{(\lambda_1)}(W)\otimes \ldots \otimes \mathbb{S}_{(\lambda_r)}(W).$$ Note that $\mathbb{S}_{(m)}(W)$ is simply the $m$-th symmetric power of $W.$
    
    For any factored representation $W = \bigotimes_{\fq \in \mathbb{P}_K} W_{\fq}$ of $G_{L/K}$ and $\lambda^{\bullet} \in \mathbb{Y}_{K}$ define the representation $\mathbb{S}_{\lambda^{\bullet}}(W)$ as 
    $$\mathbb{S}_{\lambda^{\bullet}}(W):= \bigotimes_{\fq \in \mathbb{P}_K} \mathbb{S}_{\lambda^{(\fq)}}(W_{\fq}).$$ Similarly, we define 
    $$\eta_{\lambda^{\bullet}}(W):= \bigotimes_{\fq \in \mathbb{P}_K} \eta_{\lambda^{(\fq)}}(W_{\fq}).$$ Note that we are following the usual conventions for infinite tensor products:
    $$\bigotimes_{\fq \in \bP_K}:= \lim_{\substack{S\subset \bP_K\\ |S| <\infty}} \bigotimes_{\fq \in S}.$$
    
\end{defn}

\begin{remark}
    The group $G_{L/K}$ can be decomposed as 
    $$\prod_{\fq \nmid \mathscr{D}_{L/K}} \Gal(L/K) \times \prod_{\fq \mid \mathscr{D}_{L/K}} N_{\Gal(L/K)}(I(\fq))/I(\fq).$$ Therefore, the presentation of the group $G_{L/K}$ as $\prod_{\fq \in \mathbb{P}_K} N_{\Gal(L/K)}(I(\fq))/I(\fq)$ depends on our choice of primes $\fp \in \mathbb{P}_{L}$ laying above $\fq \mid \mathscr{D}_{L/K}$ but up to isomorphism $G_{L/K}$ is independent of this choice. Note that $I(\fq) $ is a normal subgroup of $D(\fq)$ so that $D(\fq) \leq N_{\Gal(L/K)}(I(\fq)).$
\end{remark}

\begin{defn}
     Define the element $$\sigma_{K}:= (\sigma_{\fq})_{\fq\in \mathbb{P}_K} \in G_{L/K}.$$ Define the representation $V_{\fq}$ of $G_{L/K}$ to be the action of $G_{L/K}$ which is trivial in every component except for the action of $N_{\Gal(L/K)}(I(\fq))/I(\fq)$ on $V^{I(\fq)}$ in the $\fq$-component. Define the infinite dimensional representation of $G_{L/K}$
    $$W_{\rho}:= \bigotimes_{\fq \in \mathbb{P}_K} V_{\fq}$$ in which we interpret the infinite tensor in the usual way.
\end{defn}

\begin{remark}
    Note that since
    $$\mathbb{S}_{\lambda^{\bullet}}(W_{\rho}) = \bigotimes_{\fq \in \mathbb{P}_K} \mathbb{S}_{\lambda^{(\fq)}}(V_{\fq})$$ and $\lambda^{(\fq)} = \emptyset$ for almost all $\fq,$ the space $\mathbb{S}_{\lambda^{\bullet}}(W_{\rho})$ is actually a finite dimensional representation of $G_{L/K}$ for all $\lambda^{\bullet}.$ The same holds for $\eta_{\lambda^{\bullet}}(W_{\rho}).$ Therefore, computing the trace of $\sigma_{K}$ acting on any of these representations is well-defined.
\end{remark}

We are now able to find the monomial expansion of the Artin symmetric functions.

\begin{prop}\label{monomial expansion}
    $$\sL(\fX_K,\rho;L/K) = \sum_{\lambda^{\bullet}} \Tr_{\eta_{\lambda^{\bullet}}(W_{\rho})}(\sigma_{K}) m_{\lambda^{\bullet}}[\mathfrak{X}_K]$$
\end{prop}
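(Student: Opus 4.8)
The plan is to apply Proposition \ref{Cauchy identity for Artin} with a judicious choice of dual bases, and then to identify the resulting symmetric-function evaluations with traces on the Schur-functor representations. Recall that the monomial basis $\{m_\lambda\}$ and the complete homogeneous basis $\{h_\lambda\}$ are dual to each other with respect to the Hall inner product $\langle-,-\rangle_H$ on $\Lambda$, and hence $\{m_{\lambda^\bullet}\}$ and $\{h_{\lambda^\bullet}\}$ are dual bases of $A_K$ with respect to the arithmetic Hall inner product $\langle-,-\rangle_H$ defined above. So taking $u_\lambda = h_\lambda$ and $v_\lambda = m_\lambda$ in Proposition \ref{Cauchy identity for Artin} gives immediately
\begin{equation*}
\sL(\fX_K,\rho;L/K) = \sum_{\lambda^\bullet} \prod_{\fq \in \mathbb{P}_K}\left( h_{\lambda^{(\fq)}}\big|_{A^{(\rho,\fq)}} \right) m_{\lambda^\bullet}[\fX_K].
\end{equation*}
Thus the entire content of the proposition reduces to the identity
\begin{equation*}
\prod_{\fq \in \mathbb{P}_K} h_{\lambda^{(\fq)}}\big|_{A^{(\rho,\fq)}} = \Tr_{\eta_{\lambda^\bullet}(W_\rho)}(\sigma_K)
\end{equation*}
for each $\lambda^\bullet \in \mathbb{Y}_K$.

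To prove this identity I would first reduce to the local factors. Since $\eta_{\lambda^\bullet}(W_\rho) = \bigotimes_{\fq} \eta_{\lambda^{(\fq)}}(V_\fq)$ and $\sigma_K = (\sigma_\fq)_\fq$ acts componentwise, the trace factors as $\Tr_{\eta_{\lambda^\bullet}(W_\rho)}(\sigma_K) = \prod_{\fq} \Tr_{\eta_{\lambda^{(\fq)}}(V_\fq)}(\sigma_\fq)$; the product is finite because $\lambda^{(\fq)} = \emptyset$ for almost all $\fq$ and $\eta_\emptyset$ is the trivial one-dimensional representation. So it suffices to show, for a single prime $\fq$ and a single partition $\mu = \lambda^{(\fq)} = (\mu_1,\ldots,\mu_r)$, that $h_\mu|_{A^{(\rho,\fq)}} = \Tr_{\eta_\mu(V_\fq)}(\sigma_\fq)$. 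By the definition of $\eta_\mu$ as a tensor product $\mathbb{S}_{(\mu_1)}(V_\fq) \otimes \cdots \otimes \mathbb{S}_{(\mu_r)}(V_\fq)$ and multiplicativity of the trace over tensor products, this in turn reduces to the single-row case: $h_m|_{A^{(\rho,\fq)}} = \Tr_{\mathbb{S}_{(m)}(V_\fq)}(\sigma_\fq)$, i.e. the trace of $\sigma_\fq$ on the $m$-th symmetric power of $V^{I(\fq)}$.

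The single-row case is then the classical fact that the trace of an endomorphism $g$ on $\mathrm{Sym}^m(U)$ equals $h_m$ evaluated at the eigenvalues of $g$ on $U$. Concretely, if $g = \rho(\sigma_\fq)|_{V^{I(\fq)}}$ has eigenvalues $\alpha_1^{(\rho,\fq)}, \ldots, \alpha_{d}^{(\rho,\fq)}$ (with $d = d^{(\rho,\fq)}$), then its action on $\mathrm{Sym}^m(V^{I(\fq)})$ has eigenvalues $\alpha_{i_1}^{(\rho,\fq)}\cdots \alpha_{i_m}^{(\rho,\fq)}$ over all $i_1 \le \cdots \le i_m$, whose sum is precisely $h_m(\alpha_1^{(\rho,\fq)},\ldots,\alpha_d^{(\rho,\fq)}) = h_m|_{A^{(\rho,\fq)}}$. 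This can be seen either directly by passing to a basis in which $g$ is upper-triangular, or generating-function-style via $\sum_m \Tr_{\mathrm{Sym}^m(U)}(g)\, z^m = \det(\Id_U - zg)^{-1} = \prod_i (1 - z\alpha_i)^{-1} = \sum_m h_m(\alpha)\, z^m$ — which also ties this back cleanly to the original defining Euler product of $\sL(\fX_K,\rho;L/K)$. Multiplying the local identities back together over all $\fq$ and then summing over $\lambda^\bullet$ recovers the claimed monomial expansion.

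I do not anticipate a serious obstacle here: the only points requiring care are the bookkeeping of the infinite tensor product (ensuring all the products involved are genuinely finite, which holds since almost all $\lambda^{(\fq)}$ are empty and $\eta_\emptyset, \mathbb{S}_\emptyset$ are trivial) and making sure the action of $\sigma_\fq$ on $\eta_\mu(V_\fq)$ really is the "external tensor" action so that the trace is multiplicative across the factors $\mathbb{S}_{(\mu_i)}(V_\fq)$. Everything else is either a direct appeal to Proposition \ref{Cauchy identity for Artin}, the standard $m_\lambda/h_\lambda$ duality, or the elementary symmetric-power trace identity.
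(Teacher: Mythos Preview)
Your proof is correct and follows essentially the same route as the paper: apply Proposition~\ref{Cauchy identity for Artin} with the dual pair $(h_\lambda, m_\lambda)$, then factor the trace $\Tr_{\eta_{\lambda^\bullet}(W_\rho)}(\sigma_K)$ over primes and over the tensor factors $\mathbb{S}_{(\lambda_i^{(\fq)})}(V_\fq)$ to reduce to the symmetric-power trace identity $\Tr_{\mathrm{Sym}^m(U)}(g) = h_m(\text{eigenvalues of } g)$. Your write-up is slightly more explicit about the single-row case (including the generating-function justification), but the argument is the same.
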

\begin{proof}
By applying Proposition \ref{Cauchy identity for Artin} to the monomial symmetric functions we find:

$$\sL(\fX_K,\rho;L/K) = \sum_{\lambda^{\bullet}} \prod_{\fq \in \mathbb{P}_K}\left( h_{\lambda^{(\fq)}}|_{A^{(\rho,\fq)}} \right) m_{\lambda^{\bullet}}[\mathfrak{X}_K].$$
We will now show that the values $\prod_{\fq \in \mathbb{P}_K}\left( h_{\lambda^{(\fq)}}|_{A^{(\rho,\fq)}} \right)$ are actually explicit character values associated to the representation $\eta_{\lambda^{\bullet}}.$

Firstly,
    \begin{align*}
        \Tr_{\eta_{\lambda^{\bullet}}(W_{\rho})}(\sigma_{K}) &= \Tr_{\bigotimes_{\fq \in \mathbb{P}_K} \eta_{\lambda^{(\fq)}}(V_{\fq})}(\sigma_{K})\\
        &= \prod_{\fq \in \mathbb{P}_K} \Tr_{\eta_{\lambda^{(\fq)}}(V_{\fq})}(\sigma_{\fq}).\\
    \end{align*}
Now we have 
$$\eta_{\lambda^{(\fq)}}(V_{\fq}) = \bigotimes_{i} \mathbb{S}_{(\lambda_{i}^{(\fq)})}(V)$$ so 
$$\Tr_{\eta_{\lambda^{(\fq)}}(V_{\fq})}(\sigma_{\fq}) = \prod_{i} h_{\lambda_{i}^{(\fq)}}|_{A^{(\rho,\fq)}} = h_{\lambda^{(\fq)}}|_{A^{(\rho,\fq)}}.$$
Therefore, 
$$\Tr_{\eta_{\lambda^{\bullet}}(V)}(\sigma_{K}) = \prod_{\fq \in \mathbb{P}_K}\left( h_{\lambda^{(\fq)}}|_{A^{(\rho,\fq)}} \right).$$

\end{proof}

We may also compute the Schur expansion readily:

\begin{lem}\label{schur functor lemma}
    If $g\in G$ acts on a representation $W$ of $G$ with eigenvalues $\{\alpha_1,\ldots, \alpha_n\}$ (with multiplicity) then 
    $$\Tr_{\mathbb{S}_{\lambda}(W)}(g) = s_{\lambda}(\alpha_1,\ldots, \alpha_n).$$
\end{lem}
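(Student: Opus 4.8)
The plan is to prove this statement, which is the classical identity expressing the trace of a group element on a Schur functor as the corresponding Schur polynomial evaluated at the eigenvalues, by reducing to the case where $G$ is a torus and $g$ is a diagonal element. First I would recall that the Schur functor is defined by $\mathbb{S}_{\lambda}(W) = W^{\otimes |\lambda|} \otimes_{\mathfrak{S}_{|\lambda|}} S^{\lambda}$, so the trace of $g$ on $\mathbb{S}_{\lambda}(W)$ can be computed via the action of $g$ on $W^{\otimes |\lambda|}$ together with the symmetric group action that cuts out the $S^{\lambda}$-isotypic piece.

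The key steps, in order, are as follows. First, fix a basis of $W$ in which $g$ acts upper-triangularly (or diagonally after passing to the algebraic closure); since trace is insensitive to the unipotent part, we may assume $g = \mathrm{diag}(\alpha_1,\ldots,\alpha_n)$ acts diagonally. Second, invoke Schur--Weyl duality: as a $\mathrm{GL}(W) \times \mathfrak{S}_{|\lambda|}$-module, $W^{\otimes |\lambda|}$ decomposes as $\bigoplus_{\mu \vdash |\lambda|} \mathbb{S}_{\mu}(W) \otimes S^{\mu}$, and this decomposition is natural enough that the class function $w \mapsto \Tr_{W^{\otimes |\lambda|}}(g \cdot w)$ for $w \in \mathfrak{S}_{|\lambda|}$ equals $\sum_{\mu} \Tr_{\mathbb{S}_{\mu}(W)}(g)\, \chi^{\mu}(w)$, where $\chi^{\mu}$ is the irreducible $\mathfrak{S}_{|\lambda|}$-character. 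Third, compute the left-hand side directly: for a permutation $w$ of cycle type $\nu$, the trace of $g \otimes w$ on $W^{\otimes |\lambda|}$ is $\prod_j p_{\nu_j}(\alpha_1,\ldots,\alpha_n) = p_{\nu}(\alpha_1,\ldots,\alpha_n)$, since a $k$-cycle contributes $\sum_i \alpha_i^k$. Fourth, extract $\Tr_{\mathbb{S}_{\lambda}(W)}(g)$ by orthogonality of $\mathfrak{S}_{|\lambda|}$-characters: it equals $\langle p_{\nu(\cdot)}(\alpha), \chi^{\lambda} \rangle$ averaged over the group, which by the Frobenius character formula is precisely $s_{\lambda}(\alpha_1,\ldots,\alpha_n)$. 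Alternatively, and perhaps more cleanly for the writeup, one can cite that the character of $\mathrm{GL}(W)$ on $\mathbb{S}_{\lambda}(W)$ is by construction the Schur polynomial, e.g. [I.3, \cite{Macdonald}] or a standard reference on polynomial functors, and then observe that $\rho(\sigma_{\mathfrak{q}})|_{V^{I(\mathfrak{q})}}$ factors through $\mathrm{GL}(W)$.

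I expect the main obstacle to be purely expository rather than mathematical: the identity is completely standard, so the real decision is how much to prove versus cite. If the paper wants a self-contained argument, the delicate point is justifying that the Schur--Weyl decomposition is compatible with the external $g$-action — i.e. that one may take traces "block by block" — which is immediate once one notes the $\mathrm{GL}(W)$ and $\mathfrak{S}_{|\lambda|}$ actions on $W^{\otimes |\lambda|}$ commute, so $g \otimes w$ acts on each summand $\mathbb{S}_{\mu}(W) \otimes S^{\mu}$ as $\Tr_{\mathbb{S}_{\mu}(W)}(g) \cdot \Tr_{S^{\mu}}(w)$. The cleanest route is therefore: reduce to $g$ semisimple, observe that $\mathbb{S}_{\lambda}$ is a polynomial functor so $\Tr_{\mathbb{S}_{\lambda}(W)}(g)$ depends only on the eigenvalue multiset of $g$, and then identify this with the Schur polynomial by the defining property of Schur functors on $\mathrm{GL}_n$, citing Macdonald or Fulton--Harris. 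I would write it in two or three sentences with the citation, since belaboring it would be out of proportion with its role as a lemma.
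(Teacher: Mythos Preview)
Your proposal is correct and aligns with the paper's own proof, which consists of the single phrase ``Schur-Weyl duality.'' Your instinct in the final paragraph---to write it in two or three sentences with a citation rather than spell out the full character computation---is exactly what the paper does (indeed, the paper is even terser), so there is no meaningful difference in approach.
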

\begin{proof}
   Schur-Weyl duality.
\end{proof}

\begin{prop}\label{schur expansion prop}
    $$\sL(\fX_K,\rho;L/K) = \sum_{\lambda^{\bullet}} \Tr_{\mathbb{S}_{\lambda^{\bullet}}(W_{\rho})}(\sigma_{K}) s_{\lambda^{\bullet}}[\mathfrak{X}_{K}].$$
\end{prop}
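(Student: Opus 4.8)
The plan is to mimic exactly the argument of Proposition \ref{monomial expansion}, replacing the role of the homogeneous symmetric functions by the Schur functions. Recall that the Schur basis $\{s_\lambda\}_\lambda$ is self-dual with respect to the Hall inner product $\langle-,-\rangle_H$, i.e. $\langle s_\mu, s_\lambda\rangle_H = \delta_{\mu,\lambda}$, and hence the products $\{s_{\lambda^\bullet}\}_{\lambda^\bullet \in \Par_K}$ form a self-dual basis of $A_K$ with respect to the Hall inner product on $A_K$. Therefore we may apply Proposition \ref{Cauchy identity for Artin} with the dual pair $\{u_\lambda, v_\lambda\} = \{s_\lambda, s_\lambda\}$ to obtain
\begin{equation*}
    \sL(\fX_K,\rho;L/K) = \sum_{\lambda^{\bullet}} \prod_{\fq \in \mathbb{P}_K}\left( s_{\lambda^{(\fq)}}|_{A^{(\rho,\fq)}} \right) s_{\lambda^{\bullet}}[\mathfrak{X}_K].
\end{equation*}

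The remaining work is to identify the scalar $\prod_{\fq \in \mathbb{P}_K}\bigl( s_{\lambda^{(\fq)}}|_{A^{(\rho,\fq)}} \bigr)$ with the trace $\Tr_{\mathbb{S}_{\lambda^\bullet}(W_\rho)}(\sigma_K)$. For this I would argue as in Proposition \ref{monomial expansion}: since $\mathbb{S}_{\lambda^\bullet}(W_\rho) = \bigotimes_{\fq} \mathbb{S}_{\lambda^{(\fq)}}(V_\fq)$ and $\sigma_K = (\sigma_\fq)_\fq$ acts componentwise (trivially on all but finitely many factors), the trace factors as
\begin{equation*}
    \Tr_{\mathbb{S}_{\lambda^\bullet}(W_\rho)}(\sigma_K) = \prod_{\fq \in \mathbb{P}_K} \Tr_{\mathbb{S}_{\lambda^{(\fq)}}(V_\fq)}(\sigma_\fq).
\end{equation*}
Then for each $\fq$, the element $\sigma_\fq \in N_{\Gal(L/K)}(I(\fq))/I(\fq)$ acts on $V_\fq = V^{I(\fq)}$ (and trivially elsewhere) with eigenvalues exactly the multiset $A^{(\rho,\fq)} = \{\alpha_1^{(\rho,\fq)},\ldots,\alpha_{d^{(\rho,\fq)}}^{(\rho,\fq)}\}$. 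Applying Lemma \ref{schur functor lemma} to $g = \sigma_\fq$ acting on $W = V_\fq$ gives $\Tr_{\mathbb{S}_{\lambda^{(\fq)}}(V_\fq)}(\sigma_\fq) = s_{\lambda^{(\fq)}}(\alpha_1^{(\rho,\fq)},\ldots,\alpha_{d^{(\rho,\fq)}}^{(\rho,\fq)}) = s_{\lambda^{(\fq)}}|_{A^{(\rho,\fq)}}$. Taking the product over all $\fq$ yields the desired identification, and substituting back completes the proof.

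I do not anticipate a serious obstacle here; the only point requiring a little care is the justification that all the infinite products and tensor products are well-defined — but this is handled by the finiteness observations already made in the excerpt: $\lambda^{(\fq)} = \emptyset$ for all but finitely many $\fq$, so $\mathbb{S}_{\lambda^\bullet}(W_\rho)$ is finite dimensional and the factorization of the trace is a finite product, while on the symmetric-function side the convergence is exactly the convergence already used in Proposition \ref{Cauchy identity for Artin}. One could alternatively note that the Schur expansion follows from the monomial expansion (Proposition \ref{monomial expansion}) together with the transition matrix between $m_{\lambda^\bullet}$ and $s_{\lambda^\bullet}$ and the branching/Pieri rules relating $\eta_{\lambda^\bullet}$ to $\mathbb{S}_{\lambda^\bullet}$, but the direct route via Proposition \ref{Cauchy identity for Artin} and Lemma \ref{schur functor lemma} is cleaner and is the one I would write up.
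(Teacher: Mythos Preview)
Your proposal is correct and follows essentially the same approach as the paper's proof: apply Proposition \ref{Cauchy identity for Artin} with the self-dual Schur basis, then identify the coefficient $\prod_{\fq} s_{\lambda^{(\fq)}}|_{A^{(\rho,\fq)}}$ with $\Tr_{\mathbb{S}_{\lambda^{\bullet}}(W_\rho)}(\sigma_K)$ via Lemma \ref{schur functor lemma} and the factorization of the trace over the tensor product. The paper's writeup is slightly terser but the logic is identical.
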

\begin{proof}
    First we apply Lemma \ref{Cauchy Identity Lemma} to the Schur functions, which are self dual with respect to $\langle -, -\rangle_{H}$, to find 
    $$\sL(\fX_K,\rho;L/K) = \sum_{\lambda^{\bullet}} \prod_{\fq \in \mathbb{P}_K}\left( s_{\lambda^{(\fq)}}|_{A^{(\rho,\fq)}} \right) s_{\lambda^{\bullet}}[\mathfrak{X}_K].$$
    Now we compute using Lemma \ref{schur functor lemma}:
    \begin{align*}
        \Tr_{\mathbb{S}_{\lambda^{\bullet}}(W_{\rho})}(\sigma_{K}) &= \Tr_{\bigotimes_{\fq \in \mathbb{P}_K} \mathbb{S}_{\lambda^{(\fq)}}(V_{\fq})}(\sigma_{K})\\
        &= \prod_{\fq \in \mathbb{P}_K} \Tr_{\mathbb{S}_{\lambda^{(\fq)}}(V_{\fq})}(\sigma_{K})\\
        &= \prod_{\fq \in \mathbb{P}_K} \Tr_{\mathbb{S}_{\lambda^{(\fq)}}(V)}(\sigma_{\fq})\\
        &= \prod_{\fq \in \mathbb{P}_K} s_{\lambda^{(\fq)}}|_{A^{(\rho,\fq)}}.\\
    \end{align*}
\end{proof}

\begin{remark}
    A similar computation may be performed to expand $\sL(\fX_K,\rho;L/K)$ into the forgotten symmetric function basis $f_{\lambda^{\bullet}}$ which yields an analogous result to Proposition \ref{monomial expansion} except that the Schur functors $\mathbb{S}_{(m)}$ are replaced with $\mathbb{S}_{(1^m)}$ in the definition of $\eta_{\lambda^{\bullet}}(W_{\rho}).$ The functors $\mathbb{S}_{(1^m)}$ are the exterior power functors $V \rightarrow \Lambda^{m}(V).$ 
\end{remark}

\subsection{Hall-Littlewood Expansion}

In this section we compute the Hall-Littlewood expansions of the Artin symmetric functions. This will require significantly more work and rely fundamentally on the Schur positivity of the modified Hall-Littlewood functions. The coefficients we will obtain in the expansion will be the character values of certain finite dimensional representations of the compact group $G_{L/K}$ built using a form of Schur-Weyl duality for the modified Hall-Littlewood polynomials. Importantly, the Lascoux-Sch\"{u}tzenberger cocharge formula for the modified Kostka-Foulkes polynomials will give us a direct way to express these trace values in terms of the more familiar Schur functor trace values seen in Proposition \ref{schur expansion prop}.

\begin{defn}\label{modified HL def}
    For a partition $\lambda$ and a variable $t$ define the \textbf{\textit{modified Hall-Littlewood function}} as 
    $$\widetilde{H}_{\lambda}[\fX;t]:= t^{n(\lambda)} Q_{\lambda}\left[\frac{\fX}{1-t^{-1}};t^{-1} \right].$$
\end{defn}

Here the plethystic evaluation $F\left[\frac{\fX}{1-t^{-1}} \right]$ is given explicitly by 
$$p_r\left[\frac{\fX}{1-t^{-1}} \right]:= \frac{p_r[\fX]}{1-t^{-r}}$$ and extended to $\mathbb{Q}(t) \otimes \Lambda$ accordingly.

\begin{remark}
    The modified Hall-Littlewood functions are \textit{Schur positive}:
    $$\widetilde{H}_{\lambda}[\fX;t] = \sum_{\mu} \widetilde{K}_{\mu, \lambda}(t) s_{\mu}[\fX]$$ where $ \widetilde{K}_{\mu, \lambda}(t) \in \mathbb{Z}_{\geq 0}[t] $ are the \textbf{\textit{modified Kostka-Foulkes polynomials}}. These polynomials are known \cite{LS_78} to have the following combinatorial interpretation:
\begin{equation}\label{cocharge formula}
    \widetilde{K}_{\mu, \lambda}(t) = \sum_{T \in \text{SSYT}(\mu, \lambda)} t^{\text{cocharge}(T)} 
\end{equation}
 where $ \text{SSYT}(\mu, \lambda)$ denotes the set of semi-standard Young tableaux of shape $\lambda$ and content $\mu$ and $\text{cocharge}(T)$ is a certain non-negative combinatorial statistic associated to $T.$
\end{remark}

\begin{defn}
    Define the \textbf{\textit{modified Hall-Littlewood functors}} as functors $$\widetilde{\mathbb{H}}_{\lambda}(-;m): \text{Vect.} \rightarrow \text{Vect.}$$ with $\lambda$ a partition and $m \in \mathbb{N}$ a parameter as 
    $$\widetilde{\mathbb{H}}_{\lambda}(W;m):=  \bigoplus_{\textrm{content}(T) = \lambda} \mathbb{S}_{\sh(T)}(W)^{\oplus m^{\textrm{cocharge}(T)}}$$ 
    where $T$ ranges over all semi-standard Young tableaux of \textit{any} \textbf{\textit{shape}} $\sh(T)$ and \textbf{\textit{content partition}} $\textrm{content}(T) = \lambda.$ For any group $G$ we may consider $\widetilde{\mathbb{H}}_{\lambda}(-;m)$ as a functor 
    $$\widetilde{\mathbb{H}}_{\lambda}(-;m): G-\text{Mod} \rightarrow G-\text{Mod}.$$ 
\end{defn}

The following simple lemma is an application of Schur-Weyl duality applied to the polynomial functor corresponding to the symmetric polynomial $\widetilde{H}_{\lambda}(x_1,\ldots, x_d;m).$

\begin{lem}\label{modified HL lemma}
    If $g \in G$ and $W$ is a finite-dimensional representation of $G$ such that $g$ acting on $W$ has eigenvalues $\{\alpha_1,\ldots , \alpha_d\}$ (with multiplicity) then for any partition $\lambda$ and $m \in \mathbb{N}$ 
    $$\Tr_{\widetilde{\mathbb{H}}_{\lambda}(W;m)} (g)  = \widetilde{H}_{\lambda}(\alpha_1,\ldots, \alpha_d;m).$$
\end{lem}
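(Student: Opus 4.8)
The plan is to reduce the claimed trace identity to a statement about symmetric polynomials evaluated at the eigenvalues of $g$, and then verify that statement by matching the combinatorial definition of $\widetilde{\mathbb{H}}_{\lambda}(-;m)$ against the Schur expansion of $\widetilde{H}_{\lambda}[\fX;t]$. First I would recall the Schur-positive expansion $\widetilde{H}_{\lambda}[\fX;t] = \sum_{\mu} \widetilde{K}_{\mu,\lambda}(t)\, s_{\mu}[\fX]$ together with the Lascoux--Sch\"utzenberger cocharge formula \eqref{cocharge formula}, which rewrites this as
$$\widetilde{H}_{\lambda}[\fX;t] = \sum_{\mu}\left(\sum_{T \in \SSYT(\mu,\lambda)} t^{\mathrm{cocharge}(T)}\right) s_{\mu}[\fX] = \sum_{\substack{T \text{ SSYT}\\ \mathrm{content}(T) = \lambda}} t^{\mathrm{cocharge}(T)}\, s_{\sh(T)}[\fX].$$
Specializing $t \mapsto m$ and the variable set to the finite alphabet $\{\alpha_1,\ldots,\alpha_d\}$ gives $\widetilde{H}_{\lambda}(\alpha_1,\ldots,\alpha_d;m) = \sum_{T} m^{\mathrm{cocharge}(T)}\, s_{\sh(T)}(\alpha_1,\ldots,\alpha_d)$, the sum over SSYT of content $\lambda$.

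Next I would compute the left-hand side by appealing to the behavior of trace under direct sums and to Lemma \ref{schur functor lemma}. By the definition of the modified Hall-Littlewood functor,
$$\Tr_{\widetilde{\mathbb{H}}_{\lambda}(W;m)}(g) = \Tr_{\bigoplus_{\mathrm{content}(T) = \lambda} \mathbb{S}_{\sh(T)}(W)^{\oplus m^{\mathrm{cocharge}(T)}}}(g) = \sum_{\substack{T \text{ SSYT}\\ \mathrm{content}(T) = \lambda}} m^{\mathrm{cocharge}(T)}\, \Tr_{\mathbb{S}_{\sh(T)}(W)}(g).$$
Here I must note that although the direct sum ranges over infinitely many tableaux $T$, only finitely many shapes $\sh(T)$ have at most $d = \dim W$ rows, and for shapes with more than $d$ rows the Schur functor $\mathbb{S}_{\sh(T)}(W)$ vanishes (equivalently $s_{\sh(T)}(\alpha_1,\ldots,\alpha_d) = 0$); so the sum is effectively finite and the trace is well-defined. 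Applying Lemma \ref{schur functor lemma} to each surviving term gives $\Tr_{\mathbb{S}_{\sh(T)}(W)}(g) = s_{\sh(T)}(\alpha_1,\ldots,\alpha_d)$, and the two expressions now coincide term by term, completing the proof.

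The only genuine subtlety — and the step I would be most careful about — is the bookkeeping around infinite direct sums and the vanishing of Schur functors on small-dimensional spaces: one wants to be sure that $\widetilde{\mathbb{H}}_{\lambda}(W;m)$ really is finite-dimensional when $W$ is, so that the trace makes sense, and that this finiteness matches exactly the truncation $s_{\mu}(\alpha_1,\ldots,\alpha_d) = 0$ for $\ell(\mu) > d$ on the symmetric-function side. Everything else is a formal consequence of additivity of traces, Lemma \ref{schur functor lemma}, and the already-quoted cocharge formula; no new representation-theoretic input is needed beyond what the excerpt has set up.
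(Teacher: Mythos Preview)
Your proof is correct and is exactly the argument the paper has in mind: it simply says ``Immediate from the cocharge formula \eqref{cocharge formula} and Lemma \ref{schur functor lemma},'' which is precisely your expansion-and-match argument spelled out. Your one worry is unfounded, though: the direct sum is already finite because a semistandard tableau of content $\lambda$ necessarily has $|\lambda|$ boxes, so there are only finitely many such $T$ to begin with and no vanishing argument for $\ell(\mu)>d$ is needed to make sense of the trace.
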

\begin{proof}
    Immediate from the cocharge formula \eqref{cocharge formula} and Lemma \ref{schur functor lemma}. 
\end{proof}

\begin{defn}
    Given any factored representation $U =  \bigotimes_{\fq \in \mathbb{P}_{K}} U_{\fq}$ for $G_{L/K}$ and any $\lambda^{(\bullet)} \in \mathbb{Y}_K$ we define the representation of $G_{L/K}$
    $$\widetilde{\mathbb{H}}_{\lambda^{\bullet}}(U):= \bigotimes_{\fq \in \mathbb{P}_{K}} \widetilde{\mathbb{H}}_{\lambda^{(\fq)}}(U_{\fq};N(\fq)).$$
\end{defn}

We require the following lemma:

\begin{lem}
For any parameter $t$ which is not a root of unity
    $$\Exp[\fX \fY] = \sum_{\lambda} t^{-n(\lambda)} P_{\lambda}[\fX;t^{-1}] \widetilde{H}_{\lambda}[\fY;t].$$ As such 
    $$\langle t^{-n(\lambda)} P_{\lambda}[\fX;t^{-1}], \widetilde{H}_{\mu}[\fX;t] \rangle_{H} = \delta_{\lambda,\mu}.$$
\end{lem}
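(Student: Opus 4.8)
The plan is to derive the identity $\Exp[\fX\fY] = \sum_{\lambda} t^{-n(\lambda)} P_{\lambda}[\fX;t^{-1}] \widetilde{H}_{\lambda}[\fY;t]$ directly from the $t$-Cauchy identity (Lemma \ref{t cauchy identity}) by a plethystic substitution, and then to read off the duality statement as an immediate consequence.

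First I would start from Lemma \ref{t cauchy identity} with parameter $t^{-1}$ in place of $t$:
$$\Exp[(1-t^{-1})\fX\fZ] = \sum_{\lambda} P_{\lambda}[\fX;t^{-1}] Q_{\lambda}[\fZ;t^{-1}].$$
Now make the plethystic substitution $\fZ \mapsto \dfrac{\fY}{1-t^{-1}}$. On the left-hand side, since $\Exp$ is multiplicative on sums and $(1-t^{-1})\fX \cdot \dfrac{\fY}{1-t^{-1}} = \fX\fY$ at the level of power-sums (because $p_r\left[\frac{\fY}{1-t^{-1}}\right] = \frac{p_r[\fY]}{1-t^{-r}}$ and the factor $(1-t^{-r})$ coming from $p_r[(1-t^{-1})\fX]$ cancels it), the left side becomes $\Exp[\fX\fY]$. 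On the right-hand side, $Q_{\lambda}\left[\frac{\fY}{1-t^{-1}};t^{-1}\right]$ is, by Definition \ref{modified HL def}, exactly $t^{-n(\lambda)}\widetilde{H}_{\lambda}[\fY;t]$ — one just has to match the definition $\widetilde{H}_{\lambda}[\fX;t] = t^{n(\lambda)}Q_{\lambda}\left[\frac{\fX}{1-t^{-1}};t^{-1}\right]$ (here playing the role with $t$ and $t^{-1}$ swapped in the defining formula, which is why the exponent of $t$ flips sign). This yields the claimed expansion.

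For the second assertion, recall that $\Exp[\fX\fY] = \sum_{\lambda} h_{\lambda}[\fX] m_{\lambda}[\fY]$ is the generating identity characterizing dual bases: a general fact is that $\Exp[\fX\fY] = \sum_{\lambda} u_{\lambda}[\fX] v_{\lambda}[\fY]$ if and only if $\{u_{\lambda}\}$ and $\{v_{\lambda}\}$ are dual bases with respect to $\langle -,-\rangle_H$ (this is the standard reproducing-kernel criterion, e.g. the analogue of the Cauchy identity underlying Lemma \ref{Cauchy Identity Lemma}). Applying this with $u_{\lambda}[\fX] = t^{-n(\lambda)}P_{\lambda}[\fX;t^{-1}]$ and $v_{\lambda}[\fY] = \widetilde{H}_{\lambda}[\fY;t]$ — both of which are genuine $\mathbb{Q}(t)$-bases of $\mathbb{Q}(t)\otimes\Lambda$, the first by Proposition \ref{HL are basis in classic case} and a degree-preserving rescaling, the second by Schur positivity and triangularity of the modified Kostka-Foulkes matrix — immediately gives $\langle t^{-n(\lambda)}P_{\lambda}[\fX;t^{-1}], \widetilde{H}_{\mu}[\fX;t]\rangle_H = \delta_{\lambda,\mu}$.

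The main obstacle I anticipate is purely bookkeeping: carefully tracking the inversion $t \leftrightarrow t^{-1}$ through the plethystic substitution and confirming that the power of $t$ in $Q_{\lambda}\left[\frac{\fY}{1-t^{-1}};t^{-1}\right]$ comes out to $t^{-n(\lambda)}$ rather than $t^{+n(\lambda)}$, together with checking that the substitution $\fZ\mapsto\frac{\fY}{1-t^{-1}}$ is legitimate on the infinite sum (it is, since the identity is an identity of formal power series graded by degree, and the substitution preserves the grading). One should also note the hypothesis that $t$ is not a root of unity is exactly what is needed so that $\frac{1}{1-t^{-r}}$ makes sense for all $r\geq 1$, i.e. so the plethysm $\frac{\fY}{1-t^{-1}}$ is well-defined, and so that the $P_\lambda[\fX;t^{-1}]$ remain a basis. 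No deep input is required beyond the $t$-Cauchy identity already stated.
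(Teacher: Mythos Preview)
Your proof is correct and follows essentially the same approach as the paper: apply the $t$-Cauchy identity (Lemma~\ref{t cauchy identity}) with parameter $t^{-1}$, make the plethystic substitution $\fZ \mapsto \fY/(1-t^{-1})$, and then invoke Definition~\ref{modified HL def} to identify $Q_{\lambda}\left[\frac{\fY}{1-t^{-1}};t^{-1}\right] = t^{-n(\lambda)}\widetilde{H}_{\lambda}[\fY;t]$. One small remark: your parenthetical about ``$t$ and $t^{-1}$ swapped'' is unnecessary and slightly misleading---no swap is needed, since Definition~\ref{modified HL def} reads $\widetilde{H}_{\lambda}[\fY;t] = t^{n(\lambda)} Q_{\lambda}\left[\frac{\fY}{1-t^{-1}};t^{-1}\right]$ directly, giving the factor $t^{-n(\lambda)}$ immediately upon solving for $Q_\lambda$.
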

\begin{proof}
    First, substitute $\fX \fY = \frac{(1-t^{-1})\fX\fY}{1-t^{-1}}$ into the plethystic exponential $\Exp[\fX]$ and apply the $t$-Hall Cauchy Identity \ref{t cauchy identity}:
    $$\Exp[\fX \fY] = \sum_{\lambda}  P_{\lambda}[\fX;t^{-1}] Q_{\lambda}\left[ \frac{\fY}{1-t^{-1}};t^{-1}\right]$$
    Lastly, use Definition \ref{modified HL def} to write 
    $$Q_{\lambda}\left[ \frac{\fY}{1-t^{-1}};t^{-1}\right] = t^{-n(\lambda)} \widetilde{H}_{\lambda}[\fY;t].$$
\end{proof}

We now give the arithmetic Hall-Littlewood expansions of the $\sL(\fX_K,\rho;L/K)$.

\begin{thm}\label{main thm HL expansion}
    $$\sL(\fX_K,\rho;L/K) = \sum_{\lambda^{\bullet} \in \mathbb{Y}_K}  \Tr_{\widetilde{\mathbb{H}}_{\lambda^{\bullet}}(W_{\rho})}(\sigma_{K})  g(\lambda^{\bullet})^{-1} P_{\lambda^{\bullet}}[\mathfrak{X}_K] $$
\end{thm}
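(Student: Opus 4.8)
The plan is to run the same ``evaluate the Euler product at a multiset of eigenvalues and match with a Cauchy-type identity'' strategy that produced the monomial and Schur expansions in Propositions \ref{monomial expansion} and \ref{schur expansion prop}, but now using the modified Hall--Littlewood functions as the expansion basis on the symmetric-function side. First I would fix a prime $\fq$ and recall from the proof of Proposition \ref{Cauchy identity for Artin} that the local Euler factor is $\prod_{i \geq 1}\prod_{j=1}^{d^{(\rho,\fq)}}(1-x_i^{(\fq)}\alpha_j^{(\rho,\fq)})^{-1} = \Exp[\fX^{(\fq)} A^{(\rho,\fq)}]$, where I write $A^{(\rho,\fq)}$ plethystically for $\sum_j \alpha_j^{(\rho,\fq)}$. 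Then I apply the lemma just stated (the $\Exp[\fX\fY]$ expansion) with $t = N(\fq)$, $\fX \rightsquigarrow \fX^{(\fq)}$, and $\fY \rightsquigarrow A^{(\rho,\fq)}$, which gives
\begin{equation*}
\Exp[\fX^{(\fq)} A^{(\rho,\fq)}] = \sum_{\lambda} N(\fq)^{-n(\lambda)} P_{\lambda}[\fX^{(\fq)};N(\fq)^{-1}] \, \widetilde{H}_{\lambda}[A^{(\rho,\fq)};N(\fq)].
\end{equation*}
Here I must check that the lemma applies: $t = N(\fq) \geq 2$ is never a root of unity, so this is fine, and the plethystic evaluation $\widetilde{H}_{\lambda}[A^{(\rho,\fq)};N(\fq)]$ is literally $\widetilde{H}_{\lambda}(\alpha_1^{(\rho,\fq)},\ldots,\alpha_{d^{(\rho,\fq)}}^{(\rho,\fq)};N(\fq))$ by the definition of plethystic substitution at a finite alphabet.

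Next I take the product over all $\fq \in \bP_K$. Since $\sL(\fX_K,\rho;L/K) = \prod_{\fq} \Exp[\fX^{(\fq)} A^{(\rho,\fq)}]$ and the arithmetic Hall--Littlewood function is by definition $P_{\lambda^{\bullet}}[\fX_K] = \prod_{\fq} P_{\lambda^{(\fq)}}[\fX^{(\fq)};N(\fq)^{-1}]$, while $g(\lambda^{\bullet})^{-1} = \prod_{\fq} N(\fq)^{-n(\lambda^{(\fq)})}$, expanding the product of sums over a choice of $\lambda^{(\fq)}$ for each $\fq$ (all but finitely many being $\emptyset$, so the product converges in the completion $\cA_K$) yields
\begin{equation*}
\sL(\fX_K,\rho;L/K) = \sum_{\lambda^{\bullet} \in \Par_K} \left( \prod_{\fq \in \bP_K} \widetilde{H}_{\lambda^{(\fq)}}[A^{(\rho,\fq)};N(\fq)] \right) g(\lambda^{\bullet})^{-1} P_{\lambda^{\bullet}}[\fX_K].
\end{equation*}
It then remains to identify the scalar coefficient $\prod_{\fq} \widetilde{H}_{\lambda^{(\fq)}}[A^{(\rho,\fq)};N(\fq)]$ with the trace $\Tr_{\widetilde{\mathbb{H}}_{\lambda^{\bullet}}(W_{\rho})}(\sigma_K)$. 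This is the exact analogue of the identification done at the end of the proofs of Propositions \ref{monomial expansion} and \ref{schur expansion prop}: by definition $\widetilde{\mathbb{H}}_{\lambda^{\bullet}}(W_{\rho}) = \bigotimes_{\fq} \widetilde{\mathbb{H}}_{\lambda^{(\fq)}}(V_{\fq};N(\fq))$, so the trace of $\sigma_K = (\sigma_{\fq})_{\fq}$ factors as $\prod_{\fq} \Tr_{\widetilde{\mathbb{H}}_{\lambda^{(\fq)}}(V_{\fq};N(\fq))}(\sigma_{\fq})$; and since $\sigma_{\fq}$ acts on $V_{\fq}$ (that is, on $V^{I(\fq)}$) with eigenvalue multiset $A^{(\rho,\fq)}$, Lemma \ref{modified HL lemma} with $g = \sigma_{\fq}$, $W = V^{I(\fq)}$, $m = N(\fq)$ gives $\Tr_{\widetilde{\mathbb{H}}_{\lambda^{(\fq)}}(V_{\fq};N(\fq))}(\sigma_{\fq}) = \widetilde{H}_{\lambda^{(\fq)}}(\alpha_1^{(\rho,\fq)},\ldots;N(\fq))$, which is precisely the factor above. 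Taking the product over $\fq$ finishes the identification and hence the theorem.

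The main obstacle is not any single deep step — each ingredient is already in place — but rather making the infinite-product/infinite-tensor bookkeeping rigorous: one should note that for all but finitely many $\fq$ the representation $V^{I(\fq)}$ is all of $V$ and $\sigma_{\fq}$ is an honest Frobenius conjugacy class, that in the expansion only finitely many $\lambda^{(\fq)}$ are nonempty for each term so each coefficient is a finite product, that the grading by $\cJ_K^{+}$ guarantees the resulting sum is a well-defined element of $\cA_K$ with the stated coefficients (i.e. the rearrangement of $\prod_{\fq}\sum_{\lambda^{(\fq)}}$ into $\sum_{\lambda^{\bullet}}\prod_{\fq}$ is legitimate in the completion), and that $N(\fq)$ is never a root of unity so the auxiliary lemma genuinely applies at every prime. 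Once these convergence/indexing points are dispatched, the proof is a one-line consequence of the local identity plus Lemma \ref{modified HL lemma}.
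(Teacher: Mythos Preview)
Your proposal is correct and follows essentially the same approach as the paper's proof: both apply the Cauchy-type lemma expressing $\Exp[\fX\fY]$ in terms of $t^{-n(\lambda)}P_{\lambda}[\fX;t^{-1}]$ and $\widetilde{H}_{\lambda}[\fY;t]$ at each prime (the paper packages this step via Proposition~\ref{Cauchy identity for Artin}), then invoke Lemma~\ref{modified HL lemma} and the factored tensor structure of $\widetilde{\mathbb{H}}_{\lambda^{\bullet}}(W_{\rho})$ to identify the coefficients as $\Tr_{\widetilde{\mathbb{H}}_{\lambda^{\bullet}}(W_{\rho})}(\sigma_K)$. Your additional remarks on the infinite-product bookkeeping are accurate and make explicit what the paper leaves implicit.
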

\begin{proof}
    First, we use Proposition \ref{Cauchy identity for Artin} to see that 
    $$\sL(\fX_K,\rho;L/K) = \sum_{\lambda^{\bullet} \in \mathbb{Y}_{K}} \prod_{\fq \in \mathbb{P}_K} \left( \widetilde{H}_{\lambda^{(\fq)}}[\fX^{(\fq)};N(\fq)]|_{A^{(\rho,\fq)}} \right) g(\lambda^{\bullet})^{-1}P_{\lambda^{(\fq)}}[\fX^{(\fq)};N(\fq)^{-1}].$$ Next using Lemma \ref{modified HL lemma} we get 
    $$\sum_{\lambda^{\bullet}} \prod_{\fq \in \mathbb{P}_K} \left( \widetilde{H}_{\lambda^{(\fq)}}[\fX^{(\fq)};N(\fq)]|_{A^{(\rho,\fq)}} \right) g(\lambda^{\bullet})^{-1}P_{\lambda^{(\fq)}}[\fX^{(\fq)};N(\fq)^{-1}] = \sum_{\lambda^{\bullet} \in \mathbb{Y}_K} \prod_{\fq \in \mathbb{P}_K} \Tr_{\widetilde{\mathbb{H}}_{\lambda^{(\fq)}}(V_{\fq};N(\fq))}(\sigma_{\fq}) g(\lambda^{\bullet})^{-1} P_{\lambda^{\bullet}}[\fX_K].$$ Since 
    $$\widetilde{\mathbb{H}}_{\lambda^{\bullet}}(W_{\rho}) = \bigotimes_{\fq \in \mathbb{P}_K}\widetilde{\mathbb{H}}_{\lambda^{(\fq)}}(V_{\fq};N(\fq))$$ we know that 
    $$\prod_{\fq \in \mathbb{P}_K}\Tr_{\widetilde{\mathbb{H}}_{\lambda^{(\fq)}}(V_{\fq};N(\fq))}(\sigma_{\fq}) = \prod_{\fq \in \mathbb{P}_K}\Tr_{\widetilde{\mathbb{H}}_{\lambda^{(\fq)}}(V_{\fq};N(\fq))}(\sigma_{K}) = \Tr_{\widetilde{\mathbb{H}}_{\lambda^{\bullet}}(W_{\rho})}(\sigma_K).$$ Therefore, 
    $$\sL(\fX_K,\rho;L/K) = \sum_{\lambda^{\bullet} \in \mathbb{Y}_K}  \Tr_{\widetilde{\mathbb{H}}_{\lambda^{\bullet}}(W_{\rho})}(\sigma_{K})  g(\lambda^{\bullet})^{-1} P_{\lambda^{\bullet}}[\mathfrak{X}_K]$$
\end{proof}

To finish this section we give a description of the $G_{L/K}$ modules $\widetilde{\mathbb{H}}_{\lambda^{\bullet}}(W_{\rho})$ in terms of the Schur functors.

\begin{defn}
    A \textbf{\textit{semi-standard arithmetic Young tableau}} is a sequence $T^{\bullet} = (T^{(\fq)})_{\fq \in \mathbb{P}_{K}}$ of semi-standard Young tableaux $T^{(\fq)}$ such that for all but finitely many $\fq \in \mathbb{P}_{K}$, $\mathrm{sh}(T^{(\fq)}) = \emptyset.$ The shape of $T^{\bullet}$ is given by $\mathrm{sh}(T^{\bullet}):= (\mathrm{sh}(T^{(\fq)}))_{\fq \in \mathbb{P}_K} \in \mathbb{Y}_{K}$ and the content of $T^{\bullet}$ is $\mathrm{content}(T^{\bullet}) := (\mathrm{content}(T^{(\fq)}))_{\fq \in \mathbb{P}_{K}}.$ We define the \textbf{\textit{arithmetic cocharge}} as 
    $$\mathrm{cocharge}(T^{\bullet}):= \prod_{\fq \in \mathbb{P}_K} N(\fq)^{\mathrm{cocharge}(T^{(\fq)})} \in \mathbb{Y}_{K}.$$
\end{defn}

\begin{lem}\label{HL into Schur functor lemma}
Given any factored representation $U = \bigotimes_{\fq \in \mathbb{P}_K} U_{\fq}$ of $G_{L/K}$ 
 $$\widetilde{\mathbb{H}}_{\lambda^{\bullet}}(U) = \bigoplus_{\mathrm{content}(T^{\bullet}) = \lambda^{\bullet}} \mathbb{S}_{\mathrm{sh}(T^{\bullet})}(U)^{\oplus \mathrm{cocharge}(T^{\bullet})}.$$
\end{lem}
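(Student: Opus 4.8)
The plan is to reduce the statement, which concerns an infinite tensor product over $\mathbb{P}_K$, to the single-prime case and then apply the definition of the modified Hall-Littlewood functor $\widetilde{\mathbb{H}}_{\lambda}(-;m)$ directly. First I would unwind the left-hand side using the definition
$$\widetilde{\mathbb{H}}_{\lambda^{\bullet}}(U) = \bigotimes_{\fq \in \mathbb{P}_K} \widetilde{\mathbb{H}}_{\lambda^{(\fq)}}(U_{\fq};N(\fq)) = \bigotimes_{\fq \in \mathbb{P}_K} \left( \bigoplus_{\mathrm{content}(T^{(\fq)}) = \lambda^{(\fq)}} \mathbb{S}_{\sh(T^{(\fq)})}(U_{\fq})^{\oplus N(\fq)^{\mathrm{cocharge}(T^{(\fq)})}} \right),$$
where for all but finitely many $\fq$ we have $\lambda^{(\fq)} = \emptyset$, so the only tableau contributing is the empty one, giving the trivial factor $\mathbb{S}_{\emptyset}(U_{\fq})$ (a one-dimensional trivial representation), and the tensor product is well-defined in the sense of the stated convention.

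Next I would distribute the tensor product over the direct sums. Since tensor product commutes with (finite, hence here effectively finite) direct sums, the right side becomes a direct sum indexed by choices, one for each $\fq$, of a semi-standard Young tableau $T^{(\fq)}$ with $\mathrm{content}(T^{(\fq)}) = \lambda^{(\fq)}$ — and since almost all $\lambda^{(\fq)}$ are empty, almost all $T^{(\fq)}$ are forced to be empty, so such a system of choices is exactly a semi-standard arithmetic Young tableau $T^{\bullet}$ with $\mathrm{content}(T^{\bullet}) = \lambda^{\bullet}$. The summand attached to $T^{\bullet}$ is
$$\bigotimes_{\fq \in \mathbb{P}_K} \mathbb{S}_{\sh(T^{(\fq)})}(U_{\fq})^{\oplus N(\fq)^{\mathrm{cocharge}(T^{(\fq)})}},$$
and I would then identify the multiplicity: the tensor product of $\mathbb{S}_{\sh(T^{(\fq)})}(U_{\fq})$ with multiplicity $N(\fq)^{\mathrm{cocharge}(T^{(\fq)})}$ in each slot yields $\bigotimes_{\fq} \mathbb{S}_{\sh(T^{(\fq)})}(U_{\fq}) = \mathbb{S}_{\sh(T^{\bullet})}(U)$ with total multiplicity $\prod_{\fq} N(\fq)^{\mathrm{cocharge}(T^{(\fq)})}$, which is precisely $\mathrm{cocharge}(T^{\bullet})$ as defined just above the lemma. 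Assembling these over all $T^{\bullet}$ gives exactly the claimed formula.

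The only genuine subtlety — and what I expect to be the main (mild) obstacle — is bookkeeping with the infinite tensor/direct-sum: one must check that distributing $\bigotimes$ over $\bigoplus$ is legitimate under the colimit convention $\bigotimes_{\fq} := \varinjlim_{S} \bigotimes_{\fq \in S}$. This is fine because outside a finite set $S_0$ (the support of $\lambda^{\bullet}$) every factor $\widetilde{\mathbb{H}}_{\lambda^{(\fq)}}(U_{\fq};N(\fq))$ equals the trivial one-dimensional representation with its distinguished basis vector, so the colimit stabilizes and the distribution law reduces to the finite case over $S_0$; every arithmetic tableau $T^{\bullet}$ of content $\lambda^{\bullet}$ is supported on $S_0$, matching the index set on the right. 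I would also note in passing that this lemma is the structural counterpart, summed over $\lambda^{\bullet}$, of Lemma \ref{modified HL lemma} and is consistent with Theorem \ref{main thm HL expansion}: taking traces of $\sigma_K$ on both sides recovers the cocharge expansion of $\widetilde{H}_{\lambda^{(\fq)}}$ in Schur functions prime-by-prime.
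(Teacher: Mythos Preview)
Your proposal is correct and follows essentially the same argument as the paper's proof: unfold the definition of $\widetilde{\mathbb{H}}_{\lambda^{\bullet}}(U)$ as a tensor product over primes, apply the local definition of $\widetilde{\mathbb{H}}_{\lambda}(-;m)$, distribute the tensor product over the direct sums to index by arithmetic tableaux $T^{\bullet}$, and identify the resulting multiplicity as $\mathrm{cocharge}(T^{\bullet})$. Your additional remarks on why the infinite tensor/direct sum manipulation is legitimate (everything is supported on the finite set where $\lambda^{(\fq)}\neq\emptyset$) are more careful than the paper, which simply writes the chain of equalities without comment.
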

\begin{proof}
    \begin{align*}
    \widetilde{\mathbb{H}}_{\lambda^{\bullet}}(U) &= \bigotimes_{\fq \in \mathbb{P}_K} \widetilde{\mathbb{H}}_{\lambda^{(\fq)}}(U_{\fq};N(\fq))\\
    &= \bigotimes_{\fq \in \mathbb{P}_K} \bigoplus_{\mathrm{content}(T^{(\fq)}) = \lambda^{(\fq)}} \mathbb{S}_{\mathrm{sh}(T^{(\fq)})}(U_{\fq})^{\oplus N(\fq)^{\mathrm{cocharge}(T^{(\fq)})}}\\
    &= \bigoplus_{\mathrm{content(T^{\bullet})}=\lambda^{\bullet}} \mathbb{S}_{\mathrm{sh}(T^{\bullet})}(U)^{\oplus \prod_{\fq \in \mathbb{P}_K} N(\fq)^{\mathrm{cocharge}(T^{(\fq)})}}\\
    &= \bigoplus_{\mathrm{content(T^{\bullet})}=\lambda^{\bullet}} \mathbb{S}_{\mathrm{sh}(T^{\bullet})}(U)^{\oplus \mathrm{cocharge}(T^{\bullet})}.\\
    \end{align*}
\end{proof}

By expanding the trace $\Tr_{\widetilde{\mathbb{H}}_{\lambda^{\bullet}}(W_{\rho})}(\sigma_{K})$ according to Lemma \ref{HL into Schur functor lemma} in the case when $U = W_{\rho}$ we find the following:

\begin{prop}
   $$ \Tr_{\widetilde{\mathbb{H}}_{\lambda^{\bullet}}(W_{\rho})}(\sigma_{K}) = \sum_{\mathrm{content}(T^{\bullet}) = \lambda^{\bullet}} \mathrm{cocharge}(T^{\bullet}) \Tr_{\mathbb{S}_{\mathrm{sh}(T^{\bullet})}(W_{\rho})}(\sigma_{K})$$
\end{prop}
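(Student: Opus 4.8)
The plan is to deduce this directly from Lemma \ref{HL into Schur functor lemma} by taking traces. Applying that lemma with $U = W_{\rho}$ gives the isomorphism of $G_{L/K}$-representations
$$\widetilde{\mathbb{H}}_{\lambda^{\bullet}}(W_{\rho}) \;=\; \bigoplus_{\mathrm{content}(T^{\bullet}) = \lambda^{\bullet}} \mathbb{S}_{\mathrm{sh}(T^{\bullet})}(W_{\rho})^{\oplus \mathrm{cocharge}(T^{\bullet})},$$
where here $\mathrm{cocharge}(T^{\bullet}) = \prod_{\fq} N(\fq)^{\mathrm{cocharge}(T^{(\fq)})}$ is a positive integer playing the role of a multiplicity. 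Since the trace of a group element is additive over direct sums and scales by multiplicities, applying $\Tr_{(-)}(\sigma_K)$ to both sides yields the claimed identity.

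Before taking traces I would note why everything is finite and well-defined. Because $\lambda^{(\fq)} = \emptyset$ for all but finitely many $\fq$, any $T^{\bullet}$ with $\mathrm{content}(T^{\bullet}) = \lambda^{\bullet}$ has $\mathrm{sh}(T^{(\fq)}) = \emptyset$ for all but those finitely many $\fq$; and for each of the finitely many $\fq$ with $\lambda^{(\fq)} \neq \emptyset$ there are only finitely many semistandard tableaux of content $\lambda^{(\fq)}$. Hence the direct sum over $T^{\bullet}$ has only finitely many nonzero summands, each $\mathbb{S}_{\mathrm{sh}(T^{\bullet})}(W_{\rho})$ is finite-dimensional (as already observed in the remark following the definition of $W_{\rho}$), so both traces appearing in the statement are finite numbers and the sum on the right is a finite sum.

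There is essentially no obstacle here: the content of the proposition is entirely packaged into Lemma \ref{HL into Schur functor lemma}, which in turn rests on the cocharge formula \eqref{cocharge formula} for the modified Kostka–Foulkes polynomials and Schur–Weyl duality (Lemma \ref{schur functor lemma}). The only point requiring a word of care is the bookkeeping that the \emph{arithmetic} cocharge multiplicity factors as the product of the local cocharge multiplicities, which is exactly how $\widetilde{\mathbb{H}}_{\lambda^{\bullet}}$ and $\mathrm{cocharge}(T^{\bullet})$ were defined, so it is immediate. I would therefore present the proof as a one-line consequence: expand $\Tr_{\widetilde{\mathbb{H}}_{\lambda^{\bullet}}(W_{\rho})}(\sigma_K)$ using the decomposition from Lemma \ref{HL into Schur functor lemma} and collect terms.
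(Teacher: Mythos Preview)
Your proposal is correct and matches the paper's own argument exactly: the proposition is stated as an immediate consequence of Lemma~\ref{HL into Schur functor lemma} applied to $U = W_{\rho}$, obtained by taking the trace of $\sigma_K$ on both sides of the direct-sum decomposition. Your added remarks on finiteness are a welcome clarification but do not change the approach.
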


\section{Dirichlet Series from Artin Symmetric Functions}

In this section we will construct Dirichlet series from the Artin symmetric functions. We will first construct functions on $\GL_n(\widehat{\sO}_K) \backslash \GL_n(\mathbb{A}_{K}^{f}) / \GL_n(\widehat{\sO}_K)$ by using the Satake map $\ref{Satake iso}.$ Then we compute the Mellin transforms of these functions and study their analytic properties. It will turn out that the resulting Dirichlet series are products of shifts of Artin L-functions. We show that after taking a re-normalized limit we obtain an expansion for the infinite product of shifted Artin L-functions in terms of the trace values seen before.

\subsection{Spherical Hecke Algebra}

We refer the reader to Bump's lectures \cite{Bump} for a detailed introduction to Hecke algebras.

\begin{defn}
    We will denote by $H_n(K)$ and $G_n(K)$ the locally compact topological groups $H_n(K):= \GL_n(\widehat{\sO}_K)$ and $G_n(K):= \GL_n(\mathbb{A}_{K}^{f}).$ Note that $H_n(K)$ is compact. Define $G_n^{+}(K)$ to be the closed sub-monoid of $G_n(K)$ consisting of all $g \in G_n(K)$ whose entries are in $\widehat{\sO}_K.$ Let $\mu_{K,n}$ denote the unique Haar measure on $G_n(K)$ such that $\mu_{K,n}(H_n(K)) = 1.$ This measure is both left and right invariant. We will denote by $\mathcal{H}_n(K)$ the \textit{\textbf{spherical Hecke algebra}} over $\mathbb{Q}^{\text{ab}}$ of functions $f: G_n^{+}(K) \rightarrow \mathbb{Q}^{\text{ab}}$ which satisfy the following properties:
    \begin{itemize}
        \item $f$ is continuous where $\mathbb{Q}^{\text{ab}}$ is given the discrete topology
        \item $f$ has compact support
        \item $f$ is $H_n(K)$-biinvariant i.e. 
        $f(axb) = f(x)$ for all $a,b \in H_n(K)$ and $x \in G_n^{+}(K).$
    \end{itemize}
    The multiplication on $\mathcal{H}_n(K)$ is given by convolution:
    \begin{equation}\label{convolution eq}
        f*g(x):= \int_{G_n^{+}(K)} f(xy^{-1})g(y) d\mu_{K,n}.
    \end{equation}
     Define the pairing $\langle-,-\rangle_n:\mathcal{H}_n(K) \times \mathcal{H}_n(K) \rightarrow \mathbb{Q}^{\text{ab}} $ by 
    $$\langle f, g \rangle_n:= \int_{G_n^{+}(K)} f(x)g(x)^{*} d\mu_{K,n}.$$
\end{defn}

\begin{remark}
    The groups $(G_{n}(K),H_n(K))$ form a \textbf{\textit{Gelfand pair}} and as such the spherical Hecke algebra $\mathcal{H}_n(K)$ is \textit{commutative}. The standard proof of this fact in the $\GL$ case involves using the matrix transpose map $A \rightarrow A^{t}$ as an anti-involution of $G_{n}(K)$ on each double coset $H_{n}(K)gH_{n}(K)$ which has the effect of essentially swapping the role of $x$ and $y$ in the integral \eqref{convolution eq}.
\end{remark}

 \begin{defn}
    We will fix a set of uniformizers $\pi_{\fq} \in \sO_{K_{\fq}}$ for all $\fq \in \bP_K.$ The explicit choices of uniformizers will not matter ever in this paper so will we use this notation without ambiguity. 
     Given $\lambda^{\bullet} \in \Par_K$ define $\ell(\lambda^{\bullet}):= \max_{\fq \in \bP_K} \ell(\lambda^{(\fq)})$ and let $\mathbb{Y}_{K}^{(n)}$ denote the set of all $\lambda^{\bullet} \in \mathbb{Y}_{K}$ with $\ell(\lambda^{\bullet}) \leq n$. For $\lambda^{\bullet} \in \Par_K^{(n)}$ define 
    $\pi_n^{\lambda^{\bullet}}:= \prod_{\fq \in \bP_K} \pi_{\fq}^{\lambda^{(\fq)}} \in G_{+}^{(n)}(K)$
where $$\pi_{\fq}^{\lambda^{(\fq)}}:= \begin{pmatrix}
\pi_{\fq}^{\lambda^{(\fq)}_1} & 0 & \ldots & 0\\
0 & \pi_{\fq}^{\lambda^{(\fq)}_2} & \ldots &  0\\
\vdots & \vdots &\ddots &  \vdots\\
 0     &    0    & \ldots  &  \pi_{\fq}^{\lambda^{(\fq)}_n} \\
\end{pmatrix}$$ and we extend each $\lambda^{(\fq)}$ with additional $0$'s on the right when needed.

We will write $$\mathbbm{1}_{\pi_n^{\lambda^{\bullet}}} \in \mathcal{H}_{n}(K)$$ for the indicator function on the double coset 
$$H_n(K)\pi_n^{\lambda^{\bullet}} H_{n}(K) \subset G_{n}^{+}(K).$$
 \end{defn}

\begin{remark}
The group $\GL_{n}(\mathbb{A}_{K}^{f})$ is isomorphic as a topological group to the \textbf{\textit{restricted direct product}} 
$$\GL_{n}(\mathbb{A}_{K}^{f}) \cong \prod_{\fq \in \mathbb{P}_K} (\GL_{n}(K_{\fq});\GL_{n}(\mathcal{O}_{K_{\fq}})) $$ of the locally compact groups $\GL_{n}(K_{\fq})$ with respect to the compact groups $\GL_{n}(\mathcal{O}_{K_{\fq}}).$ This means that every element $x \in \GL_{n}(\mathbb{A}_{K}^{f})$ may be thought of as a sequence $(x_{\fq})_{\fq \in \mathbb{P}_K}$ where each $x_{\fq} \in \GL_{n}(K_{\fq})$ and for all but finitely many $\fq,$ $x_{\fq} \in \GL_{n}(\mathcal{O}_{K_{\fq}}).$ Therefore, the double cosets of $H_{n}(K) \backslash G_{n}^{+}(K) / H_{n}(K)$ are exactly given by the set of $\pi_n^{\lambda^{\bullet}}$ for $\lambda^{\bullet} \in \mathbb{Y}_{K}^{(n)}$ and so the set 
$\{\mathbbm{1}_{\pi_n^{\lambda^{\bullet}}} ~|~ \lambda^{\bullet} \in \Par_{K}^{(n)} \}$ is a $\mathbb{Q}^{\text{ab}}$ basis for $\mathcal{H}_{n}(K).$
\end{remark}

\begin{defn}
    For $n \geq 1$ define $\mathcal{A}_{K,n}$ to be the specialization of the ring $\mathcal{A}_K$ to the variable sets $\fX^{(\fq, n)}:= x_1^{(q)}+\ldots + x_n^{(q)}.$ We will write $\fX_K^{(n)}$ for the collection of variable sets $(\fX^{(\fq,n)})_{\fq \in \mathbb{P}_K}$. We will write 
    $$\Xi_{K,n}:\mathcal{A}_K \rightarrow \mathcal{A}_{K,n}$$ for the natural quotient map. Similarly, we analogously define $A_{K,n}:= \Xi_{K,n}(A_{K})$. If $L/K$ is a finite extension then we will write 
    $$\mathcal{N}_{L/K}^{(n)}: \mathcal{A}_{L,n}\rightarrow \mathcal{A}_{K,n}$$ for the norm map defined by the substitution
    $$\fX^{(\fp,n)} \rightarrow p_{f(\fp|\fq)}[\fX^{(\fq,n)}]$$ whenever 
    $\fp|\fq.$
\end{defn}

The following result is a special case of the \textbf{\textit{Satake isomorphism}} applied to the $\GL_{n}(\mathbb{A}_{K}^{f})$. To be more specific, the Satake isomorphism applies to each local factor $\GL_{n}(K_{\fq})$ of $\GL_{n}(\mathbb{A}_{K}^{f})$ and we obtain the following result by taking a (restricted) product of all local factors. We use the combinatorial formulation of the Satake map in the case of $\GL_n$ given by Macdonald using Hall-Littlewood polynomials. 

\begin{thm}[Macdonald]\label{Satake iso}
    The map $\Psi_{K,n}: A_K \rightarrow \mathcal{H}_{n}(K)$
    $$\Psi_{K,n}\left( g(\lambda^{\bullet})^{-1}P_{\lambda^{\bullet}}[\fX_{K}]\right) := \begin{cases}
        \mathbbm{1}_{\pi_{n}^{\lambda^{\bullet}}} & \ell(\lambda^{\bullet}) \leq n \\
        0 & \ell(\lambda^{\bullet}) > n \\
    \end{cases} $$ is an algebra homomorphism and $\Psi_{K,n}$ induces an algebra isomorphism $A_{K,n} \cong \mathcal{H}_{n}(K).$ 
\end{thm}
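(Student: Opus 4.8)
The plan is to deduce the theorem from the local Satake isomorphism for $\GL_n$ over a $\fq$-adic field, in the Hall--Littlewood form due to Macdonald [Ch.~V, \cite{Macdonald}], by assembling the local statements over all $\fq \in \bP_K$ into a restricted tensor product. First I would recall that the topological isomorphism $\GL_n(\bA_K^f)\cong\prod_{\fq\in\bP_K}(\GL_n(K_\fq);\GL_n(\sO_{K_\fq}))$ of restricted products, together with the fact that $\mu_{K,n}$ is the product of the local Haar measures normalized so each $\GL_n(\sO_{K_\fq})$ has volume $1$, identifies $\mathcal{H}_n(K)$ with a restricted tensor product $\widehat{\bigotimes}_\fq\mathcal{H}_{n,\fq}$, where $\mathcal{H}_{n,\fq}$ is the ``positive'' local spherical Hecke algebra of compactly supported $\GL_n(\sO_{K_\fq})$-biinvariant functions on the monoid $\GL_n(K_\fq)\cap M_n(\sO_{K_\fq})$, the restriction being taken with respect to the units $\mathbbm{1}_{\GL_n(\sO_{K_\fq})}$. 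Under this identification $\mathbbm{1}_{\pi_n^{\lambda^\bullet}}$ is the pure tensor $\bigotimes_\fq\mathbbm{1}_{\GL_n(\sO_{K_\fq})\pi_\fq^{\lambda^{(\fq)}}\GL_n(\sO_{K_\fq})}$, which is legitimate since $\lambda^{(\fq)}=\emptyset$ for all but finitely many $\fq$, and convolution is computed place by place: the structure constants in $\mathbbm{1}_{\pi_n^{\lambda^\bullet}}\ast\mathbbm{1}_{\pi_n^{\mu^\bullet}}=\sum_{\nu^\bullet}c_{\lambda^\bullet\mu^\bullet}^{\nu^\bullet}\mathbbm{1}_{\pi_n^{\nu^\bullet}}$ factor as $c_{\lambda^\bullet\mu^\bullet}^{\nu^\bullet}=\prod_\fq c_{\lambda^{(\fq)}\mu^{(\fq)}}^{\nu^{(\fq)}}$, each local factor being $1$ when $\lambda^{(\fq)}=\mu^{(\fq)}=\emptyset$.

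Next I would invoke the local input. For a fixed $\fq$ with $q=N(\fq)$, Macdonald's formulation of the Satake isomorphism gives an algebra isomorphism $\Psi_{K,n}^{\fq}\colon\mathbb{Q}^{\mathrm{ab}}[x_1,\ldots,x_n]^{\mathfrak{S}_n}\xrightarrow{\ \sim\ }\mathcal{H}_{n,\fq}$ sending $g(\lambda)^{-1}P_\lambda(x_1,\ldots,x_n;q^{-1})\mapsto\mathbbm{1}_{\GL_n(\sO_{K_\fq})\pi_\fq^{\lambda}\GL_n(\sO_{K_\fq})}$ for every partition $\lambda$ with $\ell(\lambda)\le n$, where $g(\lambda)=q^{n(\lambda)}$; here one uses that $\{P_\lambda(x_1,\ldots,x_n;q^{-1}):\ell(\lambda)\le n\}$ is a basis of the polynomial ring (being unitriangular against the monomials with leading term $x^\lambda$) and that $\mathcal{H}_{n,\fq}$ is exactly the subalgebra of the full local spherical Hecke algebra $\cong\mathbb{Q}^{\mathrm{ab}}[x_1^{\pm},\ldots,x_n^{\pm}]^{\mathfrak{S}_n}$ corresponding to honest polynomials. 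Reconciling Macdonald's normalization with the prefactor $g(\lambda)^{-1}$ is the one delicate bookkeeping point: his Satake transform of $\mathbbm{1}_{K\pi^\lambda K}$ carries a power-of-$q$ prefactor which one rewrites via the identity $n(\lambda)=\tfrac{n-1}{2}|\lambda|-\langle\rho_n,\lambda\rangle$, with $\rho_n$ the half-sum of the positive roots of $\GL_n$, together with the behaviour of $P_\lambda$ under $\lambda\mapsto\lambda+(1^n)$ (multiplication by $e_n=x_1\cdots x_n$), so that no half-integral power of $q$ survives and the formula lands in $\mathbb{Q}^{\mathrm{ab}}$. Crucially, $\Psi_{K,n}^{\fq}$ sends $1=P_\emptyset$ to the unit $\mathbbm{1}_{\GL_n(\sO_{K_\fq})}$ of $\mathcal{H}_{n,\fq}$.

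Finally I would assemble. Forming the restricted tensor product $\overline{\Psi}_{K,n}:=\widehat{\bigotimes}_\fq\Psi_{K,n}^{\fq}$ with respect to the units on both sides yields an algebra isomorphism $\overline{\Psi}_{K,n}\colon A_{K,n}\xrightarrow{\ \sim\ }\mathcal{H}_n(K)$ carrying $g(\lambda^\bullet)^{-1}P_{\lambda^\bullet}[\fX_K^{(n)}]\mapsto\mathbbm{1}_{\pi_n^{\lambda^\bullet}}$ for $\lambda^\bullet\in\Par_K^{(n)}$, since $A_{K,n}=\Xi_{K,n}(A_K)$ is precisely the restricted tensor product of the rings $\mathbb{Q}^{\mathrm{ab}}[x_1^{(\fq)},\ldots,x_n^{(\fq)}]^{\mathfrak{S}_n}$ and $g(\lambda^\bullet)^{-1}P_{\lambda^\bullet}$ is the corresponding pure tensor. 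It then remains to observe that $\Psi_{K,n}=\overline{\Psi}_{K,n}\circ\Xi_{K,n}$: the specialization $\Xi_{K,n}\colon A_K\to A_{K,n}$ is a surjective algebra homomorphism whose kernel is spanned by the $g(\mu^\bullet)^{-1}P_{\mu^\bullet}[\fX_K]$ with $\ell(\mu^\bullet)>n$ (because $P_\mu(x_1,\ldots,x_n;t)=0$ when $\ell(\mu)>n$), while it carries $g(\lambda^\bullet)^{-1}P_{\lambda^\bullet}[\fX_K]$ with $\ell(\lambda^\bullet)\le n$ to the basis element $g(\lambda^\bullet)^{-1}P_{\lambda^\bullet}[\fX_K^{(n)}]$ of $A_{K,n}$; composing with $\overline{\Psi}_{K,n}$ reproduces exactly the two-case definition of $\Psi_{K,n}$. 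Hence $\Psi_{K,n}$ is an algebra homomorphism and, passing to the quotient by $\ker\Xi_{K,n}$, induces the claimed isomorphism $A_{K,n}\cong\mathcal{H}_n(K)$. I expect the main obstacle to be organizational rather than conceptual: pinning down the restricted-tensor-product description of $\mathcal{H}_n(K)$ with its convolution (so that $\Psi_{K,n}$ is visibly multiplicative on pure tensors) and carefully matching the power-of-$q$ normalization in Macdonald's local Satake formula with the constant $g(\lambda)^{-1}$ used here — both routine but error-prone.
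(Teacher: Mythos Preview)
Your proposal is correct and follows exactly the approach the paper takes: the paper's proof consists of a single sentence invoking Macdonald's local Satake isomorphism [(2.7), pg.~296, \cite{Macdonald}] at each $\fq$ and then taking the product over all local factors, which is precisely the restricted-tensor-product argument you have spelled out in detail. Your additional care about the normalization $g(\lambda)^{-1}$ and the factorization $\Psi_{K,n}=\overline{\Psi}_{K,n}\circ\Xi_{K,n}$ is not made explicit in the paper but is implicit in its one-line reduction.
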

\begin{proof}
    This follows by applying Macdonald's version of the Satake-isomorphism [(2.7), pg. 296, \cite{Macdonald}] to each local part and taking a product over all local factors.
\end{proof}

\begin{remark}\label{inner product remark}
    The pairing $\langle -,-\rangle$ on $A_{K}$ is only partially compatible with the pairing $\langle -,-\rangle_n$ on $\mathcal{H}_{n}(K).$ The Hall-Littlewood functions $P_{\lambda^{\bullet}}$ of $K$ are mutually orthogonal with respect to $\langle-,-\rangle$ and likewise the double coset indicator functions $\mathbbm{1}_{\pi_{n}^{\lambda^{\bullet}}}$ form a mutually orthogonal basis for $\mathcal{H}_n(K)$ with respect to $\langle -, -\rangle_n.$ However, the norm of $P_{\lambda^{\bullet}}$ is 
    $$\langle P_{\lambda^{\bullet}}, P_{\lambda^{\bullet}} \rangle = b(\lambda^{\bullet})^{-1} =\prod_{\fq \in \bP_K}\prod_{i\geq 1} (1-N(\mathfrak{q})^{-1})^{-1}\cdots (1-N(\mathfrak{q})^{-m_{i}(\lambda^{(\fq)})})^{-1}$$ whereas from [pg. 298, \cite{Macdonald}] 
    $$\langle \mathbbm{1}_{\pi_{n}^{\lambda^{\bullet}}}, \mathbbm{1}_{\pi_{n}^{\lambda^{\bullet}}} \rangle = g(\lambda)^{-1}P_{\lambda^{\bullet}}[\mathfrak{X}_{K}^{(n)}] \Big|_{x_{i}^{(\fq)}=N(\fq)^{i-1}}.$$ These do not typically agree.
\end{remark}

\begin{defn}\label{defn generalization of Macdonald map}
    Let $n \geq 1$. Define $\mathcal{C}_{K,n}:= C(G_n^{+}(K),H_{n}(K))$ be the algebra of $\mathbb{Q}^{\text{ab}}$-valued continuous  $H_{n}(K)$-biinvariant functions on $G_n^{+}(K).$ The product on $\mathcal{C}_{K,n}$ is given by convolution 
    $$f*g(x):= \int_{G_{n}(K)^{+}} f(xy^{-1})g(y) d\mu_{K,n} $$ which is well defined as both f and g have support on $G_n^{+}(K)$ meaning that this integral will always be over a compact region. Note that $\mathcal{H}_{n}(K) \subset \mathcal{C}_{K,n}.$ We define a map 
    $$\widehat{\Psi}_{K,n}: \mathcal{A}_{K} \rightarrow \mathcal{C}_{K,n}$$ extending Macdonald's map $\Psi_{K,n}: A_{K} \rightarrow \mathcal{H}_{n}(K)$ as follows:

    $$\widehat{\Psi}_{K,n} \left( \sum_{\lambda^{\bullet}} c_{\lambda^{\bullet}} ~g(\lambda^{\bullet})^{-1} P_{\lambda^{\bullet}}[\fX_K] \right):= \sum_{\lambda^{\bullet} \in \mathbb{Y}_{K}^{(n)}} c_{\lambda^{\bullet}} \mathbbm{1}_{\pi_{n}^{\lambda^{\bullet}}} \in \mathcal{C}_{K,n}.$$
\end{defn}

\begin{prop}\label{extension of Macdonald map prop}
    The map $\widehat{\Psi}_{K,n}: \mathcal{A}_{K} \rightarrow \mathcal{C}_{K,n}$ is an algebra homomorphism and induces an isomorphism $\mathcal{A}_{K,n} \cong \mathcal{C}_{K,n}.$
\end{prop}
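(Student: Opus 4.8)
The plan is to realize $\widehat{\Psi}_{K,n}$ as the continuous extension of Macdonald's map $\Psi_{K,n}$ (Theorem \ref{Satake iso}) to the graded completions on both sides, and then to deduce the homomorphism property component by component from that of $\Psi_{K,n}$ on the dense subalgebra $A_K$. First I would set up the gradings. By Definition \ref{ring of arith sym functions defn} the ring $\mathcal{A}_K$ is the $\mathcal{J}_{K}^{+}$-graded completion $\prod_{\mathfrak{a}}(A_K)_{\mathfrak{a}}$ of $A_K$, where $(A_K)_{\mathfrak{a}}$ is the (finite-dimensional) span of the homogeneous elements of norm $\mathfrak{a}$, where $(A_K)_{\mathfrak{a}}(A_K)_{\mathfrak{b}}\subseteq(A_K)_{\mathfrak{a}\mathfrak{b}}$, and where multiplication extends to $\mathcal{A}_K$ by the Cauchy-product formula of Definition \ref{ring of arith sym functions defn}; in the Hall-Littlewood basis $g(\lambda^{\bullet})^{-1}P_{\lambda^{\bullet}}[\fX_K]$ is homogeneous of norm $\|\lambda^{\bullet}\|:=\prod_{\fq}\fq^{|\lambda^{(\fq)}|}$. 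On the Hecke side, assign $\mathbbm{1}_{\pi_n^{\lambda^{\bullet}}}$ the same norm $\|\lambda^{\bullet}\|$; this is intrinsic, since $g\mapsto(\det g)$ is a monoid homomorphism $G_n^{+}(K)\to\mathcal{J}_{K}^{+}$ that is constant on $H_n(K)$-double cosets (because $\det H_n(K)=\widehat{\sO}_K^{\times}$, so $(\det h)=(1)$ for $h\in H_n(K)$) and takes the value $\|\lambda^{\bullet}\|$ on $H_n(K)\pi_n^{\lambda^{\bullet}}H_n(K)$. Since $\mathbb{Q}^{\text{ab}}$ is discrete and the double cosets are open, a function in $\mathcal{C}_{K,n}$ is nothing but an arbitrary assignment of a scalar to each double coset; hence $\mathcal{C}_{K,n}=\prod_{\mathfrak{a}}(\mathcal{H}_n(K))_{\mathfrak{a}}$ is the $\|\cdot\|$-graded completion of $\mathcal{H}_n(K)$, and the map $\widehat{\Psi}_{K,n}$ of Definition \ref{defn generalization of Macdonald map} is exactly the component-wise extension of the graded linear map $\Psi_{K,n}$.

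Next I would check that convolution respects this grading: if $\|\lambda^{\bullet}\|=\mathfrak{a}$ and $\|\mu^{\bullet}\|=\mathfrak{b}$ then $\mathbbm{1}_{\pi_n^{\lambda^{\bullet}}}\ast\mathbbm{1}_{\pi_n^{\mu^{\bullet}}}\in(\mathcal{H}_n(K))_{\mathfrak{a}\mathfrak{b}}$. Indeed $\mathbbm{1}_{A}\ast\mathbbm{1}_{B}$, with $A=H_n(K)\pi_n^{\lambda^{\bullet}}H_n(K)$ and $B=H_n(K)\pi_n^{\mu^{\bullet}}H_n(K)$, is supported on $A\cdot B$, and for $g\in A$, $h\in B$ one has $(\det(gh))=\mathfrak{a}\mathfrak{b}$, so $gh$ lies in a double coset of norm $\mathfrak{a}\mathfrak{b}$; moreover $A\cdot B\subseteq M_n(\widehat{\sO}_K)$ is compact and meets only finitely many double cosets, so $\mathbbm{1}_{A}\ast\mathbbm{1}_{B}$ is a finite integer combination of the $\mathbbm{1}_{\pi_n^{\nu^{\bullet}}}$ and genuinely lies in $\mathcal{H}_n(K)$. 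Consequently convolution extends component-wise to $\mathcal{C}_{K,n}$: for $\varphi=\sum c_{\lambda^{\bullet}}\mathbbm{1}_{\pi_n^{\lambda^{\bullet}}}$ and $\psi=\sum d_{\mu^{\bullet}}\mathbbm{1}_{\pi_n^{\mu^{\bullet}}}$ in $\mathcal{C}_{K,n}$, the $\mathfrak{d}$-component of $\varphi\ast\psi$ is the finite sum $\sum_{\mathfrak{a}\mathfrak{b}=\mathfrak{d}}\sum_{\|\lambda^{\bullet}\|=\mathfrak{a},\,\|\mu^{\bullet}\|=\mathfrak{b}}c_{\lambda^{\bullet}}d_{\mu^{\bullet}}\,\mathbbm{1}_{\pi_n^{\lambda^{\bullet}}}\ast\mathbbm{1}_{\pi_n^{\mu^{\bullet}}}$, and one checks this agrees with the integral definition of $\ast$ in Definition \ref{defn generalization of Macdonald map} (for fixed $x$ the integrand is supported where $y$ and $xy^{-1}$ both lie in the compact set $M_n(\widehat{\sO}_K)$, so the integral reduces to this finite sum).

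The homomorphism property now follows by comparing graded components. Write $F=\sum_{\mathfrak{a}}F_{\mathfrak{a}}$, $G=\sum_{\mathfrak{b}}G_{\mathfrak{b}}\in\mathcal{A}_K$ with $F_{\mathfrak{a}},G_{\mathfrak{b}}\in A_K$. The $\mathfrak{d}$-component of $FG$ is $\sum_{\mathfrak{a}\mathfrak{b}=\mathfrak{d}}F_{\mathfrak{a}}G_{\mathfrak{b}}$, a finite sum of products in $A_K$; applying $\widehat{\Psi}_{K,n}$ component-wise and using $\widehat{\Psi}_{K,n}|_{A_K}=\Psi_{K,n}$ together with the fact that $\Psi_{K,n}$ is an algebra map (Theorem \ref{Satake iso}), the $\mathfrak{d}$-component of $\widehat{\Psi}_{K,n}(FG)$ equals $\sum_{\mathfrak{a}\mathfrak{b}=\mathfrak{d}}\Psi_{K,n}(F_{\mathfrak{a}})\ast\Psi_{K,n}(G_{\mathfrak{b}})$, which by the previous paragraph is precisely the $\mathfrak{d}$-component of $\widehat{\Psi}_{K,n}(F)\ast\widehat{\Psi}_{K,n}(G)$. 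Hence $\widehat{\Psi}_{K,n}$ is an algebra homomorphism. For the induced isomorphism: $\widehat{\Psi}_{K,n}$ is surjective since its image contains every $\mathbbm{1}_{\pi_n^{\lambda^{\bullet}}}$ and is closed under the graded completion, and since the $\mathbbm{1}_{\pi_n^{\lambda^{\bullet}}}$ have pairwise disjoint supports, $\widehat{\Psi}_{K,n}\big(\sum c_{\lambda^{\bullet}}g(\lambda^{\bullet})^{-1}P_{\lambda^{\bullet}}\big)=0$ iff $c_{\lambda^{\bullet}}=0$ for every $\lambda^{\bullet}$ with $\ell(\lambda^{\bullet})\le n$. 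Because the specialization $\Xi_{K,n}$ kills $P_{\lambda^{(\fq)}}(x_1,\dots,x_n;N(\fq)^{-1})$ exactly when some $\ell(\lambda^{(\fq)})>n$ and leaves the remaining Hall-Littlewood polynomials linearly independent, this kernel is precisely $\ker\Xi_{K,n}$; hence $\widehat{\Psi}_{K,n}$ descends to an algebra isomorphism $\mathcal{A}_{K,n}=\mathcal{A}_K/\ker\Xi_{K,n}\cong\mathcal{C}_{K,n}$.

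The main obstacle is really just the gradedness of convolution in the second paragraph, together with the observation that $\mathcal{C}_{K,n}$ is, with the discrete topology in play, literally the $\|\cdot\|$-completion of $\mathcal{H}_n(K)$ equipped with a well-defined component-wise product. Once the determinant computation pins down that $H_n(K)\pi_n^{\lambda^{\bullet}}H_n(K)\cdot H_n(K)\pi_n^{\mu^{\bullet}}H_n(K)$ meets only double cosets of norm $\|\lambda^{\bullet}\|\,\|\mu^{\bullet}\|$, everything else is a formal consequence of Macdonald's Theorem \ref{Satake iso}.
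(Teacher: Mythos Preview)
Your proof is correct and follows essentially the same approach as the paper, which simply cites Theorem~\ref{Satake iso}; you have filled in the details of why the graded-completion extension of a graded algebra homomorphism remains an algebra homomorphism and why its kernel is exactly $\ker\Xi_{K,n}$. The paper treats all of this as routine, so your argument is a faithful and careful elaboration rather than a different route.
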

\begin{proof}
    This follows from Theorem \ref{Satake iso}.
\end{proof}

We will write $\varphi_{K,n}:\mathcal{A}_{K,n} \rightarrow \mathcal{C}_{K,n}$ for the above isomorphism $\mathcal{A}_{K,n} \cong \mathcal{C}_{K,n}.$ We now define elements of the algebras $\mathcal{C}_{K,n}$ defined using the Artin symmetric functions.

\begin{defn}
    Given a finite-dimensional complex Galois representation $\rho: \Gal(L/K) \rightarrow \GL(V)$ and any $ n \geq 1$ define $F^{(n)}_{\rho,L/K} \in \mathcal{C}_{K,n}$ by 
    $$F^{(n)}_{\rho,L/K}:= \widehat{\Psi}_{K,n}\left( \sL(\fX_K,\rho;L/K) \right).$$ For $ x\in G_{n}(K)^{+}$ define $\type(x) = \lambda^{\bullet} \in \Par_{K}^{(n)}$ to be the unique double coset index with $ x \in H_{n}(K) \pi_{n}^{\lambda^{\bullet}}H_{n}(K).$
\end{defn}

\begin{cor}
    $$F^{(n)}_{\rho,L/K}(x) = \Tr_{\widetilde{\mathbb{H}}_{\type(x)}(W_{\rho}) }(\sigma_{K})$$
\end{cor}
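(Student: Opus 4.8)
The plan is to unwind the definition of $F^{(n)}_{\rho,L/K}$ through $\widehat{\Psi}_{K,n}$ and compare with the Hall-Littlewood expansion of $\sL(\fX_K,\rho;L/K)$ established in Theorem \ref{main thm HL expansion}. First I would recall that by Theorem \ref{main thm HL expansion} we have
$$\sL(\fX_K,\rho;L/K) = \sum_{\mu^{\bullet} \in \mathbb{Y}_K}  \Tr_{\widetilde{\mathbb{H}}_{\mu^{\bullet}}(W_{\rho})}(\sigma_{K})\, g(\mu^{\bullet})^{-1} P_{\mu^{\bullet}}[\mathfrak{X}_K],$$
which is exactly of the form $\sum_{\mu^{\bullet}} c_{\mu^{\bullet}}\, g(\mu^{\bullet})^{-1} P_{\mu^{\bullet}}[\fX_K]$ with $c_{\mu^{\bullet}} = \Tr_{\widetilde{\mathbb{H}}_{\mu^{\bullet}}(W_{\rho})}(\sigma_{K})$, so that Definition \ref{defn generalization of Macdonald map} applies directly.

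Next I would apply $\widehat{\Psi}_{K,n}$ to this expansion. By the defining formula for $\widehat{\Psi}_{K,n}$, only the terms with $\ell(\mu^{\bullet}) \leq n$ survive, and we get
$$F^{(n)}_{\rho,L/K} = \widehat{\Psi}_{K,n}\left(\sL(\fX_K,\rho;L/K)\right) = \sum_{\mu^{\bullet} \in \mathbb{Y}_{K}^{(n)}} \Tr_{\widetilde{\mathbb{H}}_{\mu^{\bullet}}(W_{\rho})}(\sigma_{K})\, \mathbbm{1}_{\pi_{n}^{\mu^{\bullet}}}.$$
Then for a fixed $x \in G_n^{+}(K)$, I would evaluate both sides at $x$. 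Since the double cosets $H_n(K)\pi_n^{\mu^{\bullet}}H_n(K)$ for $\mu^{\bullet} \in \mathbb{Y}_K^{(n)}$ partition $G_n^{+}(K)$ (as noted in the remark following the definition of $\mathbbm{1}_{\pi_n^{\lambda^{\bullet}}}$), exactly one indicator function $\mathbbm{1}_{\pi_n^{\mu^{\bullet}}}$ is nonzero at $x$, namely the one with $\mu^{\bullet} = \type(x)$, and it equals $1$ there. Hence $F^{(n)}_{\rho,L/K}(x) = \Tr_{\widetilde{\mathbb{H}}_{\type(x)}(W_{\rho})}(\sigma_{K})$, which is the claim.

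I expect there to be essentially no obstacle here: the corollary is a direct bookkeeping consequence of Theorem \ref{main thm HL expansion} and the definitions of $\widehat{\Psi}_{K,n}$ and $\type$. The only minor point worth stating carefully is that $\type(x)$ automatically lies in $\mathbb{Y}_K^{(n)}$, so the restriction imposed by $\widehat{\Psi}_{K,n}$ does not lose the relevant term. One could write the whole proof in a single line as ``Apply $\widehat{\Psi}_{K,n}$ to the expansion in Theorem \ref{main thm HL expansion} and evaluate at $x$,'' but I would spell out the two displayed equations above so the reader can see the matching of coefficients explicitly.
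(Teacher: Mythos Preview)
Your proposal is correct and follows exactly the same approach as the paper, which simply cites Theorem \ref{main thm HL expansion} and Definition \ref{defn generalization of Macdonald map}; you have merely spelled out the one-line argument in detail.
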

\begin{proof}
    This follows directly from Theorem \ref{main thm HL expansion} and Definition \ref{defn generalization of Macdonald map}
\end{proof}

\begin{remark}
    Note that that functions $F^{(n)}_{\rho,L/K}(x)$ are stable as $n \rightarrow \infty$ in the sense that if we embed $G_n(K) \rightarrow G_{n+1}(K)$ in the natural way then for $x \in G_n(K)$ 
    $$F^{(n)}_{\rho,L/K}(x) = F^{(n+1)}_{\rho,L/K}(x).$$ As such we may consider the limit 
    $$\widetilde{F}_{\rho,L/K}:= \lim_{n \rightarrow \infty} F^{(n)}_{\rho,L/K}$$ as a continuous function on the topological group 
    $$G_{\infty}(K):= \lim_{n \rightarrow \infty} G_{n}(K) = \GL_{\infty}(\mathbb{A}_K^{f}).$$ The function is biinvariant with respect to the topological group 
    $$H_{\infty}(K):= \lim_{n \rightarrow \infty} H_{n}(K) = \GL_{\infty}(\widehat{\mathcal{O}}_{K}).$$ Since these groups are \textbf{not} locally compact there is no obvious choice of biinvariant measure $\mu_{K,\infty}$ and so the process for defining a spherical Hecke algebra structure on $H_{\infty}(K) \backslash G_{\infty}(K) / H_{\infty}(K)$ is non-obvious. However, we may at least formally define a spherical Hecke algebra structure on $H_{\infty}(K) \backslash G_{\infty}^{+}(K) / H_{\infty}(K)$ where $G_{\infty}^{+}(K) \subset G_{\infty}(K)$ consists of all matrices with entries in $\widehat{\mathcal{O}}_{K}$ by the \textit{inverse limit}
    $$\mathcal{H}_{\infty}(K):= \varprojlim \mathcal{H}_{n}(K).$$ Here we simply transfer the usual inverse limit structure on symmetric polynomials to the spherical Hecke algebra setting via the Satake isomorphism as in Theorem \ref{Satake iso}. However, it would be much more useful and interesting to determine a $H_{\infty}(K)$-biinvariant measure $\mu_{K,\infty}$ on $G_{\infty}^{+}(K)$ which yields a convolution-algebra structure agreeing with the algebra structure on $\mathcal{H}_{\infty}(K).$   
\end{remark}

\begin{example}
    Let $\rho = \mathbbm{1}$ be the trivial representation of the trivial group $\Gal(K/K).$ Then for all $n \geq 1$
    $$F^{(n)}_{\mathbbm{1},K/K} = 1$$ i.e. the constant function on $G_{n}(K)^{+}.$ To see this we have that 
    \begin{align*}
        \sL(\fX_K,\mathbbm{1};K/K)&= \zeta_{K}[\fX_K]\\
        &= \Exp\left[\sum_{\fq \in \mathbb{P}_{K}} \fX^{(\fq)}\right]\\
        &= \prod_{\fq \in \mathbb{P}_K}\Exp[\fX^{(\fq)}]\\
        &= \prod_{\fq \in \mathbb{P}_K} \left( \sum_{\lambda^{(\fq)}} N(\fq)^{-n(\lambda^{(\fq)})} P_{\lambda^{(\fq)}}[\fX^{(\fq)};N(\fq)^{-1}]  \right)\\
        &= \sum_{\lambda^{\bullet} \in \mathbb{Y}_{K}} g(\lambda^{\bullet})^{-1} P_{\lambda^{\bullet}}[\fX_K].\\
    \end{align*}
We could instead, of course, use Theorem \ref{main thm HL expansion} to see this. From here we now find that for any $n\geq 1$, 
\begin{align*}
    F^{(n)}_{\mathbbm{1},K/K}&= \widehat{\Psi}_{K,n}\left(\sL(\fX_K,\mathbbm{1};K/K) \right)\\
    &= \widehat{\Psi}_{K,n}\left( \sum_{\lambda^{\bullet} \in \mathbb{Y}_{K}} g(\lambda^{\bullet})^{-1} P_{\lambda^{\bullet}}[\fX_K] \right)\\
    &=  \sum_{\lambda^{\bullet} \in \mathbb{Y}_{K}^{(n)}} \mathbbm{1}_{\pi_n^{\lambda^{\bullet}}}\\
    &= 1.\\
\end{align*}
    
\end{example}

We require the following definition:

\begin{defn}
    Given a finite extension of number fields $L/K$ and $n \geq 1$ define the homomorphism $R^{(n)}_{L/K}: \mathcal{C}_{L,n} \rightarrow \mathcal{C}_{K,n}$ as 
    $$R^{(n)}_{L/K}:= \varphi_{K,n} \mathcal{N}_{L/K}^{(n)} \varphi_{L,n}^{-1}.$$
\end{defn}

As a simple corollary of the already established properties of the $\sL(\fX_K,\rho;L/K)$, we obtain the following result for the functions $F^{(n)}_{\rho,L/K}.$

\begin{cor}\label{properties of sph hecke elements}
Suppose $E/L/K$ is a tower of finite Galois extensions,  $\rho,\gamma$ are two representations of $\Gal(L/K)$, and $\rho_0$ is a representation of $\Gal(E/L).$ Let $n \geq 1$. The following properties hold:
    \begin{itemize}
        \item $$F^{(n)}_{\rho,L/K}*F^{(n)}_{\gamma,L/K}= F^{(n)}_{\rho \oplus \gamma,L/K}$$
        \item $$F^{(n)}_{\rho,L/K} = F^{(n)}_{\iota_{\Gal(L/K)}^{\Gal(E/K)}(\rho),E/K}$$ 
        \item $$R_{L/K}^{(n)}\left(F^{(n)}_{\rho_0,E/L}\right)= F^{(n)}_{\Ind_{\Gal(E/L)}^{\Gal(E/K)}(\rho_0), E/K}.$$ 
    \end{itemize}
\end{cor}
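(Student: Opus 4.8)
The plan is to transport the three algebraic identities already established for the Artin symmetric functions in Proposition \ref{Artin sym function properties} through the isomorphisms $\varphi_{K,n}$ and $\widehat{\Psi}_{K,n}$. Recall that $F^{(n)}_{\rho,L/K} = \widehat{\Psi}_{K,n}(\sL(\fX_K,\rho;L/K))$ and that $\widehat{\Psi}_{K,n}$ factors through $\Xi_{K,n}$ followed by the isomorphism $\varphi_{K,n}\colon \mathcal{A}_{K,n} \to \mathcal{C}_{K,n}$, which is an algebra homomorphism by Proposition \ref{extension of Macdonald map prop}. So the first step is simply to note that for any $F \in \mathcal{A}_K$ we have $\widehat{\Psi}_{K,n}(F) = \varphi_{K,n}(\Xi_{K,n}(F))$, and that both $\Xi_{K,n}$ and $\varphi_{K,n}$ are algebra homomorphisms.

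For the first bullet, apply $\widehat{\Psi}_{K,n}$ to the identity $\sL(\fX_K,\rho;L/K)\sL(\fX_K,\gamma;L/K) = \sL(\fX_K,\rho\oplus\gamma;L/K)$ from Proposition \ref{Artin sym function properties}; since $\widehat{\Psi}_{K,n}$ is an algebra homomorphism and the product in $\mathcal{C}_{K,n}$ is convolution, the left side becomes $F^{(n)}_{\rho,L/K} * F^{(n)}_{\gamma,L/K}$ and the right side is $F^{(n)}_{\rho\oplus\gamma,L/K}$. The second bullet is immediate: $\widehat{\Psi}_{K,n}$ applied to the equality $\sL(\fX_K,\rho;L/K) = \sL(\fX_K,\iota_{\Gal(L/K)}^{\Gal(E/K)}(\rho);E/K)$ directly gives $F^{(n)}_{\rho,L/K} = F^{(n)}_{\iota_{\Gal(L/K)}^{\Gal(E/K)}(\rho),E/K}$, noting both sides live in $\mathcal{C}_{K,n}$.

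The third bullet is the one requiring a small amount of care, because it involves the base-change maps. Here I would start from $\mathcal{N}_{L/K}(\sL(\fX_L,\rho_0;E/L)) = \sL(\fX_K,\Ind_{\Gal(E/L)}^{\Gal(E/K)}(\rho_0);E/K)$. Applying $\Xi_{K,n}$ and using that $\Xi_{K,n}\circ \mathcal{N}_{L/K} = \mathcal{N}_{L/K}^{(n)}\circ \Xi_{L,n}$ (which follows since both norm maps are given by the same substitution $\fX^{(\fp)} \mapsto p_{f(\fp|\fq)}[\fX^{(\fq)}]$, truncated to $n$ variables), we get $\mathcal{N}_{L/K}^{(n)}(\Xi_{L,n}(\sL(\fX_L,\rho_0;E/L))) = \Xi_{K,n}(\sL(\fX_K,\Ind(\rho_0);E/K))$ in $\mathcal{A}_{K,n}$. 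Now apply $\varphi_{K,n}$ to both sides; on the right this yields $F^{(n)}_{\Ind_{\Gal(E/L)}^{\Gal(E/K)}(\rho_0),E/K}$, and on the left, using $R^{(n)}_{L/K} = \varphi_{K,n}\circ \mathcal{N}_{L/K}^{(n)}\circ \varphi_{L,n}^{-1}$, we rewrite $\varphi_{K,n}\circ \mathcal{N}_{L/K}^{(n)} = R^{(n)}_{L/K}\circ \varphi_{L,n}$, giving $R^{(n)}_{L/K}(\varphi_{L,n}(\Xi_{L,n}(\sL(\fX_L,\rho_0;E/L)))) = R^{(n)}_{L/K}(F^{(n)}_{\rho_0,E/L})$, as desired.

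The main (and essentially only) obstacle is verifying the compatibility $\Xi_{K,n}\circ \mathcal{N}_{L/K} = \mathcal{N}_{L/K}^{(n)}\circ \Xi_{L,n}$ on the nose — i.e. that restricting to $n$ variables commutes with the norm substitution. This is routine: the norm map is defined as a graded algebra homomorphism by $\fX^{(\fp)} \mapsto p_{f(\fp|\fq)}[\fX^{(\fq)}]$, and specializing $\fX^{(\fq)}$ to $n$ variables and then applying $p_{f(\fp|\fq)}$ gives the same thing as applying $p_{f(\fp|\fq)}$ first and then specializing, since power sums are compatible with variable restriction. Everything else is a formal diagram chase through algebra homomorphisms, so the proof is short; I would simply invoke Proposition \ref{Artin sym function properties}, Proposition \ref{extension of Macdonald map prop}, and this compatibility.
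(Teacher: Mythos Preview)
Your proposal is correct and follows exactly the approach of the paper, which simply cites Proposition \ref{Artin sym function properties} and Proposition \ref{extension of Macdonald map prop}. You have merely unpacked the diagram chase in more detail than the paper does, including the routine compatibility $\Xi_{K,n}\circ \mathcal{N}_{L/K} = \mathcal{N}_{L/K}^{(n)}\circ \Xi_{L,n}$ needed for the third bullet.
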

\begin{proof}
    This follows from Proposition \ref{Artin sym function properties} and Proposition \ref{extension of Macdonald map prop}.
\end{proof}

\subsection{Dirichlet Series}

Now we turn our attention to defining Dirichlet series from the Artin symmetric functions. First, we work formally, avoiding analytic concerns regarding the variable $s$, and then later directly discuss analytic properties. 

\begin{defn}
    The \textbf{\textit{ring of formal Dirichlet}} $\mathcal{D}(s)$ series over the field $\mathbb{Q}^{\text{ab}}$ is the algebra of all formal series of the form 
    $$\sum_{n\in \mathbb{N}} \frac{a_n}{n^s}$$ with $a_n \in \mathbb{Q}^{\text{ab}}$ for all $n \geq 1$ and with product given by 
    $$\left( \sum_{n} \frac{a_n}{n^s} \right) \left( \sum_{m} \frac{b_m}{m^s}\right)= \sum_{N} \left( \sum_{nm = N} a_nb_m \right) N^{-s}.$$
\end{defn} 

We obtain Dirichlet series from the algebra $\mathcal{C}_{K,n}$ using the Mellin transform.

\begin{defn}
    For $n \geq 1$ \textit{\textbf{formal Mellin transform}} is the map 
    $$\zeta: \mathcal{C}_{K,n} \rightarrow \mathcal{D}(s)$$ given by 
    $$\zeta(F):= \int_{G_{n}(K)^{+}} F(x) ||x||^{s} d\mu_{K,n}$$ where 
    $||x|| := \prod_{\fq \in \mathbb{P}_{K}}N(\fq)^{-\nu_{\fq}(\det(x))}$  is the finite adelic norm of $\det(x) \in 
    (\mathbb{A}^{f}_{K})^{*}.$ Here $\nu_{\fq}(u) \in \mathbb{Z}$ is the $\fq$-adic valuation of $u \in (\mathbb{A}^{f}_{K})^{*}.$
\end{defn}

\begin{remark}
    Note that $||x||$ for $x \in G_{n}^{+}(K)$ is always $N^{-1}$ for some whole number $N$ and so if we set $G_{n}(K;N):= \{x \in G_{n}(K) ~~|~~ ||x||= N^{-1} \}$ then
    
    $$\zeta(F) = \int_{G_{n}(K)^{+}} F(x) ||x||^{s} d\mu_{K,n} = \sum_{N \in \mathbb{N}} N^{-s} \int_{G_{n}(K;N)} F(x) d\mu_{K,n}$$ indeed gives a well-defined element of $\mathcal{D}(s).$ Importantly, since the measure of each double coset of $H_{n}(K)\backslash G_{n}^{+}(K)/H_{n}(K)$ is a rational number and $F$ takes values in $\mathbb{Q}^{\text{ab}}$, the values 
    $\int_{G_{n}(K;N)} F(x) d\mu_{K,n}$ are also in $\mathbb{Q}^{\text{ab}}.$
\end{remark}

Applying the Mellin transform to the functions $F^{(n)}_{\rho,L/K}$ gives the following:

\begin{defn}
    Given a finite-dimensional complex representation $\rho$ of $\Gal(L/K)$ and any $ n \geq 1$ define
    $$L^{(n)}(s,\rho;L/K):= \zeta(F^{(n)}_{\rho,L/K}).$$
\end{defn}

\begin{example}
    We will see soon that each $L^{(n)}(s,\rho;L/K):= \zeta(F^{(n)}_{\rho,L/K})$ factors as a product of ordinary Artin L-functions and as such will actually define a meromorphic function. As a preview of this we know that for all $n \geq 1,$ 
    $F^{(n)}_{\mathbbm{1},K/K} = 1$ so by Macdonald [(4.6), pg. 301, \cite{Macdonald}]
    $$L^{(n)}(s,\mathbbm{1};L/K) = \int_{G_{n}(K)^{+}} ||x||^s d \mu_{K,n} = \zeta_{K}(s)\cdots \zeta_{K}(s-n+1)$$ where 
    $\zeta_{K}(s):= \prod_{\fq \in \mathbb{P}_{K}} (1-N(\fq)^{-s})^{-1}$ is the Dedekind zeta function of $K.$
\end{example}

We will now give an expansion for the series $L^{(n)}(s,\rho;L/K)$ using the expansion of the Artin symmetric functions into the arithmetic Hall-Littlewood basis of $A_K.$ However, we will first require the below definitions.

\begin{defn}
For $\lambda^{\bullet} \in \mathbb{Y}_{K}^{(n)}$ define the scalar $\kappa^{(n)}(\lambda^{\bullet})$ as 
    $$\kappa^{(n)}(\lambda^{\bullet}) := g(\lambda^{\bullet})^{-1} \prod_{\fq \in \mathbb{P}_{K}}\left( N(\fq)^{\sum_{i \geq 1} (n-i)\lambda_{i}^{(\fq)}} \frac{v_{n}(N(\fq)^{-1})}{v^{(n)}_{\lambda^{(\fq)}}(N(\fq)^{-1})} \right)$$
    where 
    $$v_{m}(t):= \prod_{i =1}^{m} \left( \frac{1-t^i}{1-t} \right),$$  
    $$v^{(n)}_{\lambda}(t):= \prod_{i \geq 0} v_{m_{i}(\lambda)}(t)$$ and we consider each $\ell(\lambda) \leq n $ partition as a vector in $\mathbb{Z}^n$ by appending sufficiently many zeros on its right side. We also write 
    $$||\lambda^{\bullet}||:= \prod_{\fq \in \mathbb{P}_K} N(\fq)^{|\lambda^{(\fq)}|}.$$
\end{defn}

\begin{remark}\label{kappa simplification remark}
    It follows directly from a result of Macdonald [(2.9), pg. 298, \cite{Macdonald}] that 
    $$\kappa^{(n)}(\lambda^{\bullet}) = \langle \mathbbm{1}_{\pi_{n}^{\lambda^{\bullet}}}, \mathbbm{1}_{\pi_{n}^{\lambda^{\bullet}}} \rangle = \mu_{K,n}(H_{n}(K)\mathbbm{1}_{\pi_{n}^{\lambda^{\bullet}}}H_{n}(K))$$ and further that in fact
    $$\kappa^{(n)}(\lambda^{\bullet}) = g(\lambda)^{-1} P_{\lambda^{\bullet}}[\fX_{K}^{(n)}]\Big|_{x_i^{(\fq)} = N(\fq)^{i-1}}.$$ 
    Also $$\kappa^{(n)}(\lambda^{\bullet}) =  \prod_{\fq \in \mathbb{P}_{K}}\left( N(\fq)^{\sum_{i \geq 1} (n-2i+1)\lambda_{i}^{(\fq)}} \frac{v_{n}(N(\fq)^{-1})}{v^{(n)}_{\lambda^{(\fq)}}(N(\fq)^{-1})} \right)$$ which we can further simplify to 
    $$\kappa^{(n)}(\lambda^{\bullet}) = ||\lambda^{\bullet}||^{n-1}g(\lambda^{\bullet})^{-2} \prod_{\fq \in \mathbb{P}_{K}} \frac{v_{n}(N(\fq)^{-1})}{v^{(n)}_{\lambda^{(\fq)}}(N(\fq)^{-1})}.$$
\end{remark}

Recall that if $\rho: \Gal(L/K) \rightarrow \GL(V)$ is a finite-dimensional complex Galois representation then we write $W_{\rho} = \bigotimes_{\fq \in \mathbb{P}_{K}} V_{\fq}$ where $V_{\fq}$ is the representation of $G_{L/K}$ which acts trivially in each component except for acting via $\rho|_{D(\fq)}$ on $V^{I(\fq)}$ in the $\fq$ component. We write $\sigma_{K} = (\sigma_{\fq})_{\fq \in \mathbb{P}_K} \in G_{L/K}$ where $\sigma_{\fq} \in D(\fq)/I(\fq)$ are arbitrary choices of Frobenius elements.

\begin{thm}\label{shifted product of L functions expansion}
    $$L^{(n)}(s,\rho;L/K) = \prod_{i=1}^{n} L(s-i+1,\rho;L/K) = \sum_{\lambda^{\bullet}\in \mathbb{Y}_{K}^{(n)}} \frac{\kappa^{(n)}(\lambda^{\bullet})}{||\lambda^{\bullet}||^{s}} \Tr_{\widetilde{\mathbb{H}}_{\lambda^{\bullet}}(W_{\rho})}(\sigma_{K})$$
\end{thm}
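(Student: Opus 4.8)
The plan is to combine the arithmetic Hall--Littlewood expansion of $\sL(\fX_K,\rho;L/K)$ from Theorem~\ref{main thm HL expansion} with the explicit Mellin transform of the basis elements $\mathbbm{1}_{\pi_n^{\lambda^{\bullet}}}$, and then separately verify that the resulting Dirichlet series factors as the claimed product of shifted Artin $L$-functions. First I would apply $\widehat{\Psi}_{K,n}$ to the expansion
$$\sL(\fX_K,\rho;L/K) = \sum_{\lambda^{\bullet} \in \mathbb{Y}_K} \Tr_{\widetilde{\mathbb{H}}_{\lambda^{\bullet}}(W_{\rho})}(\sigma_K)\, g(\lambda^{\bullet})^{-1} P_{\lambda^{\bullet}}[\fX_K]$$
to get $F^{(n)}_{\rho,L/K} = \sum_{\lambda^{\bullet} \in \mathbb{Y}_K^{(n)}} \Tr_{\widetilde{\mathbb{H}}_{\lambda^{\bullet}}(W_{\rho})}(\sigma_K)\, \mathbbm{1}_{\pi_n^{\lambda^{\bullet}}}$, where the sum truncates to $\ell(\lambda^{\bullet}) \leq n$ by definition of $\widehat{\Psi}_{K,n}$. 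Then I apply the formal Mellin transform $\zeta$ term by term. Since $\mathbbm{1}_{\pi_n^{\lambda^{\bullet}}}$ is supported on the single double coset $H_n(K)\pi_n^{\lambda^{\bullet}}H_n(K)$, on which $||x|| = \prod_{\fq} N(\fq)^{-|\lambda^{(\fq)}|} = ||\lambda^{\bullet}||^{-1}$, we have $\zeta(\mathbbm{1}_{\pi_n^{\lambda^{\bullet}}}) = \mu_{K,n}\big(H_n(K)\pi_n^{\lambda^{\bullet}}H_n(K)\big)\,||\lambda^{\bullet}||^{-s} = \kappa^{(n)}(\lambda^{\bullet})\, ||\lambda^{\bullet}||^{-s}$, using the identification of $\kappa^{(n)}(\lambda^{\bullet})$ with the double coset volume from Remark~\ref{kappa simplification remark}. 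Summing gives precisely the right-hand side of the theorem.

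Next I would establish the product factorization $L^{(n)}(s,\rho;L/K) = \prod_{i=1}^n L(s-i+1,\rho;L/K)$. The cleanest route is to use the monomial or Schur expansion instead: by Theorem~\ref{Satake iso} (Macdonald's Satake map) the Mellin transform is an algebra homomorphism intertwining convolution with Dirichlet multiplication, and by Macdonald [(4.6), pg.~301, \cite{Macdonald}] we have $\zeta(\widehat{\Psi}_{K,n}(\zeta_K[\fX_K])) = \zeta_K(s)\cdots\zeta_K(s-n+1)$, i.e.\ the Mellin transform sends $h_m[\fX^{(\fq)}]$ to the appropriate local Euler factor. More directly: using Proposition~\ref{Cauchy identity for Artin} with the monomial/complete-homogeneous dual pair, the local Euler factor of $\sL(\fX_K,\rho;L/K)$ at $\fq$ in the variables $x_i^{(\fq)}$ is $\prod_i \prod_j (1-x_i^{(\fq)}\alpha_j^{(\rho,\fq)})^{-1}$; specializing $x_i^{(\fq)} \mapsto N(\fq)^{-s+i-1}$ for $i=1,\dots,n$ (which is what the Mellin transform does on each double coset, tracking $||x||^s$) yields $\prod_{i=1}^n \prod_j (1 - \alpha_j^{(\rho,\fq)} N(\fq)^{-s+i-1})^{-1} = \prod_{i=1}^n \det(\Id_{V^{I(\fq)}} - N(\fq)^{-(s-i+1)}\rho(\sigma_{\fq})|_{V^{I(\fq)}})^{-1}$, and taking the product over all $\fq$ gives $\prod_{i=1}^n L(s-i+1,\rho;L/K)$. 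One must check that the ramified primes cause no trouble: this is automatic since $f_{\fq}$ is built from $V^{I(\fq)}$ exactly as in the definition of the Artin $L$-function itself.

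The main obstacle is making the specialization argument in the previous paragraph rigorous at the level of the formal Mellin transform: one needs to verify that $\zeta \circ \widehat{\Psi}_{K,n}$ really does act on $P_{\lambda^{\bullet}}$-coefficients by the evaluation $x_i^{(\fq)} = N(\fq)^{i-1}$ together with the $||\lambda^{\bullet}||^{-s}$ weighting, i.e.\ that $\zeta(\widehat{\Psi}_{K,n}(g(\lambda^{\bullet})^{-1}P_{\lambda^{\bullet}}[\fX_K])) = g(\lambda^{\bullet})^{-1}P_{\lambda^{\bullet}}[\fX_K^{(n)}]\big|_{x_i^{(\fq)}=N(\fq)^{i-1}}\,||\lambda^{\bullet}||^{-s}$, which is exactly Remark~\ref{kappa simplification remark} combined with the fact that $\det(\pi_n^{\lambda^{\bullet}})$ has $\fq$-valuation $|\lambda^{(\fq)}|$. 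Once this ``evaluation = double-coset-volume'' dictionary is in place, both the middle expression and the product on the left follow from the same computation: the middle one by reading off coefficients directly, the product by summing the geometric-series Euler factors. I would present the double-coset-volume computation first (citing Macdonald), then derive the middle expression, and finally recover the Euler product by resumming.
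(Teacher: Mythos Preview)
Your proposal is correct and follows essentially the same approach as the paper: both arguments recognize that the composite $\zeta \circ \widehat{\Psi}_{K,n}$ acts as the specialization $x_i^{(\fq)} \mapsto N(\fq)^{i-1-s}$ (the paper packages this as the map $\tau^{(n)}$), and then apply this specialization once to the Euler-product form of $\sL(\fX_K,\rho;L/K)$ to obtain $\prod_{i=1}^n L(s-i+1,\rho;L/K)$ and once to the Hall--Littlewood expansion from Theorem~\ref{main thm HL expansion} to obtain the sum over $\lambda^{\bullet}\in\mathbb{Y}_K^{(n)}$. The only difference is presentational order; the ``evaluation = double-coset-volume'' step you flag as the main obstacle is exactly what the paper extracts from Remark~\ref{kappa simplification remark} and Macdonald's formula for the coset volumes.
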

\begin{proof}
    We will prove this result by computing $L^{(n)}(s,\rho;L/K)$ in two different ways. First, by our original definition (Definition \ref{main def for Artin sym})  
    $$\sL(\fX_K,\rho;L/K)= \prod_{\mathfrak{q}\in \bP_K}\prod_{i \geq 1} \det(\Id_{V^{I(\mathfrak{q})}}- x_i^{(\mathfrak{q})}\rho(\sigma_{\mathfrak{q}})|_{V^{I(\mathfrak{q})}})^{-1}$$ so that 
    $$\sL(\fX_K^{(n)},\rho;L/K)= \prod_{\mathfrak{q}\in \bP_K}\prod_{i =1}^{n} \det(\Id_{V^{I(\mathfrak{q})}}- x_i^{(\mathfrak{q})}\rho(\sigma_{\mathfrak{q}})|_{V^{I(\mathfrak{q})}})^{-1}.$$ Since  $$\mu_{K,n}(H_{n}(K)\mathbbm{1}_{\pi_{n}^{\lambda^{\bullet}}}H_{n}(K)) = g(\lambda^{\bullet})^{-1}P_{\lambda^{\bullet}}[\mathfrak{X}_{K}^{(n)}] \Big|_{x_{i}^{(\fq)}=N(\fq)^{i-1}}$$  from  Remark \ref{inner product remark} and $\varphi_{K,n}(g(\lambda)^{-1}P_{\lambda^{\bullet}}[\mathfrak{X}_{K}^{(n)}]) = \mathbbm{1}_{\pi_n^{\lambda^{\bullet}}}$ for $\lambda^{\bullet} \in \Par_{K}^{(n)}$ it follows that the map 
    $$\tau^{(n)}:= \zeta \circ \varphi_{K,n} \circ \Xi_{K,n}: \mathcal{A}_{K} \rightarrow \mathcal{D}(s)$$ has the effect of truncating and then specializing variables:
    $$\tau^{(n)}(F[\fX_{K}]) = F[\mathfrak{X}_{K}^{(n)}] \Big|_{x_{i}^{(\fq)} = N(\fq)^{i-1-s}}.$$ Therefore, 
\begin{align*}
    L^{(n)}(s,\rho;L/K)&= \tau^{(n)}(\sL(\fX_K,\rho;L/K))\\ 
    &= \sL(\fX_K^{(n)},\rho;L/K) \Big|_{x_{i}^{(\fq)}=N(\fq)^{i-1-s}}\\
    &= \prod_{\mathfrak{q}\in \bP_K}\prod_{i =1}^{n} \det(\Id_{V^{I(\mathfrak{q})}}- x_i^{(\mathfrak{q})}\rho(\sigma_{\mathfrak{q}})|_{V^{I(\mathfrak{q})}})^{-1} \Big|_{x_{i}^{(\fq)}=N(\fq)^{i-1-s}} \\
    &= \prod_{\mathfrak{q}\in \bP_K}\prod_{i =1}^{n} \det(\Id_{V^{I(\mathfrak{q})}}- N(\fq)^{i-1-s}\rho(\sigma_{\mathfrak{q}})|_{V^{I(\mathfrak{q})}})^{-1}\\
    &= \prod_{i =1}^{n}\prod_{\mathfrak{q}\in \bP_K}\det(\Id_{V^{I(\mathfrak{q})}}- N(\fq)^{i-1-s}\rho(\sigma_{\mathfrak{q}})|_{V^{I(\mathfrak{q})}})^{-1}\\
    &= \prod_{i =1}^{n} L(s-i+1,\rho;L/K).\\
\end{align*}
 Next we recall Theorem \ref{main thm HL expansion} :
 $$\sL(\fX_K,\rho;L/K) = \sum_{\lambda^{\bullet} \in \mathbb{Y}_K}  \Tr_{\widetilde{\mathbb{H}}_{\lambda^{\bullet}}(W_{\rho})}(\sigma_{K})  g(\lambda^{\bullet})^{-1} P_{\lambda^{\bullet}}[\mathfrak{X}_K]$$ so by applying $\tau^{(n)}$ to the right side we see 
\begin{align*}
    L^{(n)}(s,\rho;L/K)&=  \sum_{\lambda^{\bullet} \in \mathbb{Y}_K^{(n)}}  \Tr_{\widetilde{\mathbb{H}}_{\lambda^{\bullet}}(W_{\rho})}(\sigma_{K})  g(\lambda^{\bullet})^{-1} \tau^{(n)}\left(P_{\lambda^{\bullet}}[\mathfrak{X}_K]\right)\\
    &= \sum_{\lambda^{\bullet} \in \mathbb{Y}_K^{(n)}}  \Tr_{\widetilde{\mathbb{H}}_{\lambda^{\bullet}}(W_{\rho})}(\sigma_{K}) g(\lambda^{\bullet})^{-1}  \tau^{(n)}\left(P_{\lambda^{\bullet}}[\fX_{K}^{(n)}]\Big|_{x_{i}^{(\fq)} = N(\fq)^{i-1-s}}\right)\\
    &= \sum_{\lambda^{\bullet} \in \mathbb{Y}_K^{(n)}}  \Tr_{\widetilde{\mathbb{H}}_{\lambda^{\bullet}}(W_{\rho})}(\sigma_{K})  g(\lambda^{\bullet})^{-1} P_{\lambda^{\bullet}}[\fX_{K}^{(n)}]\Big|_{x_{i}^{(\fq)} = N(\fq)^{i-1-s}}\\
    &= \sum_{\lambda^{\bullet} \in \mathbb{Y}_K^{(n)}}  \Tr_{\widetilde{\mathbb{H}}_{\lambda^{\bullet}}(W_{\rho})}(\sigma_{K})  ||\lambda^{\bullet}||^{-s} \kappa^{(n)}(\lambda^{\bullet}).\\
\end{align*}
Now comparing our two calculations gives the result.
\end{proof}

\subsection{Stable Product Identity for Artin L-Functions}

In this section we will expand on Theorem \ref{shifted product of L functions expansion} by taking a limit of shifts of the same Artin L-function. We prove in Theorem \ref{main thm 2} that this limit defines an analytic function on the half-plane $\Re(s) > 1$ and give an expansion of the Dirichlet coefficients in terms of the trace values $\Tr_{\widetilde{\mathbb{H}}_{\lambda^{\bullet}}(W_{\rho})}(\sigma_{K}).$ We will first need to prove multiple preliminary results.

\begin{lem}\label{cancelling terms lemma}
For a partition $\lambda$ and $n \geq 0$
    $$\frac{v_{\ell(\lambda)+n}(t)}{v^{(\ell(\lambda)+n)}_{\lambda*0^n}(t)} = \frac{v_{\ell(\lambda)}(t)}{\prod_{i \geq 1} v_{m_{i}(\lambda)}(t)} \prod_{i=1}^{\ell(\lambda)}\left( \frac{1-t^{n+i}}{1-t^{i}} \right)$$
\end{lem}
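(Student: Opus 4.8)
The plan is to unravel both sides into plain products of $(1-t^i)$ factors and check equality by reindexing. The only genuinely substantive point is getting the $i=0$ term in the definition of $v^{(N)}_\mu(t)$ right, so I would start there.

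First I would record that $\lambda * 0^n$, regarded (per the convention in the definition of $v^{(N)}$) as a vector in $\mathbb{Z}^{\ell(\lambda)+n}$, has multiplicities $m_0 = n$ and $m_i = m_i(\lambda)$ for all $i \geq 1$. Hence
$$v^{(\ell(\lambda)+n)}_{\lambda*0^n}(t) = v_{m_0}(t)\prod_{i\geq 1}v_{m_i(\lambda)}(t) = v_n(t)\prod_{i\geq 1}v_{m_i(\lambda)}(t).$$
Substituting this into the left-hand side, the factor $\prod_{i\geq 1}v_{m_i(\lambda)}(t)$ matches the corresponding factor on the right-hand side, so the claim reduces to the purely one-variable identity
$$\frac{v_{\ell(\lambda)+n}(t)}{v_n(t)} = v_{\ell(\lambda)}(t)\prod_{i=1}^{\ell(\lambda)}\frac{1-t^{n+i}}{1-t^{i}}.$$

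To finish I would use $v_m(t) = (1-t)^{-m}\prod_{i=1}^{m}(1-t^i)$. Writing $\ell := \ell(\lambda)$, the left-hand side becomes
$$\frac{(1-t)^{n}\prod_{i=1}^{\ell+n}(1-t^i)}{(1-t)^{\ell+n}\prod_{i=1}^{n}(1-t^i)} = (1-t)^{-\ell}\prod_{i=n+1}^{n+\ell}(1-t^i),$$
while the right-hand side becomes
$$(1-t)^{-\ell}\prod_{i=1}^{\ell}(1-t^i)\cdot\frac{\prod_{i=1}^{\ell}(1-t^{n+i})}{\prod_{i=1}^{\ell}(1-t^{i})} = (1-t)^{-\ell}\prod_{i=1}^{\ell}(1-t^{n+i}),$$
and the substitution $j = n+i$ shows $\prod_{i=1}^{\ell}(1-t^{n+i}) = \prod_{j=n+1}^{n+\ell}(1-t^j)$, so the two sides agree.

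The main obstacle, such as it is, is purely notational: one must be careful that $v^{(N)}_\mu$ ranges over $i \geq 0$, so that padding $\lambda$ with $n$ zeros contributes exactly the factor $v_n(t)$ in the denominator and nothing more; once that is pinned down the rest is an elementary telescoping of geometric-type products. I would present it as the short computation above rather than invoking any external result.
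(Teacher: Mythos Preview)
Your proof is correct and is exactly the approach the paper takes: the paper's proof reads in full ``This follows by directly expanding out the LHS and cancelling out terms,'' and what you have written is precisely that expansion carried out explicitly. Your emphasis on the $i=0$ factor $v_n(t)$ coming from the $n$ appended zeros is the one point worth making explicit, and you have it right.
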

\begin{proof}
    This follows by directly expanding out the LHS and cancelling out terms.
\end{proof}

\begin{prop}\label{monotone increasing prop}
    Let $0 \leq t < 1$ and $\lambda$ be a  partition. The sequence 
    $\left( \frac{v_{\ell(\lambda)+n}(t)}{v^{(n)}_{\lambda}(t)} \right)_{n \geq 1}$ is  monotone increasing and 
    $$\lim_{n \rightarrow \infty} \frac{v_{\ell(\lambda)+n}(t)}{v^{(n)}_{\lambda}(t)} = \prod_{i \geq 1}(1-t)^{-1}\cdots (1-t^{m_i(\lambda)})^{-1}$$ 
\end{prop}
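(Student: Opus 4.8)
The plan is to reduce the whole statement to Lemma \ref{cancelling terms lemma} and then read off both assertions from the elementary behaviour of the factors $1-t^{n+i}$. First I would fix the notation: in this proposition the superscript in $v^{(n)}_{\lambda}$ records the number of appended zeros, so $v^{(n)}_{\lambda}(t)=v^{(\ell(\lambda)+n)}_{\lambda*0^n}(t)$ and Lemma \ref{cancelling terms lemma} applies directly, giving
$$\frac{v_{\ell(\lambda)+n}(t)}{v^{(n)}_{\lambda}(t)} = C_{\lambda}(t)\prod_{i=1}^{\ell(\lambda)}\frac{1-t^{n+i}}{1-t^{i}}, \qquad C_{\lambda}(t):=\frac{v_{\ell(\lambda)}(t)}{\prod_{i\ge 1}v_{m_i(\lambda)}(t)}.$$
The only feature of $C_{\lambda}(t)$ that matters is that it is independent of $n$ and strictly positive: for $0\le t<1$ each factor $\frac{1-t^{j}}{1-t}=1+t+\cdots+t^{j-1}$ of every $v_m(t)$ is positive, hence each $v_m(t)>0$ and $C_{\lambda}(t)>0$.

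For monotonicity I would argue that, for each fixed $i\ge 1$, the sequence $n\mapsto 1-t^{n+i}$ is positive and non-decreasing because $0\le t<1$ makes $t^{n+i}$ non-negative and non-increasing in $n$. A finite product of positive non-decreasing sequences is non-decreasing, and multiplying through by the positive constants $C_{\lambda}(t)$ and $\prod_{i=1}^{\ell(\lambda)}(1-t^{i})^{-1}$ preserves this. Hence $\big(v_{\ell(\lambda)+n}(t)/v^{(n)}_{\lambda}(t)\big)_{n\ge 1}$ is monotone non-decreasing (strictly increasing unless $t=0$ or $\lambda=\emptyset$, where it is constant).

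For the limit I would let $n\to\infty$, so $t^{n+i}\to 0$ and $\prod_{i=1}^{\ell(\lambda)}\frac{1-t^{n+i}}{1-t^{i}}\to\prod_{i=1}^{\ell(\lambda)}\frac{1}{1-t^{i}}$, giving
$$\lim_{n\to\infty}\frac{v_{\ell(\lambda)+n}(t)}{v^{(n)}_{\lambda}(t)} = \frac{v_{\ell(\lambda)}(t)}{\prod_{i\ge 1}v_{m_i(\lambda)}(t)}\prod_{i=1}^{\ell(\lambda)}\frac{1}{1-t^{i}}.$$
Then I would simplify: since $v_{\ell(\lambda)}(t)\prod_{i=1}^{\ell(\lambda)}(1-t^{i})^{-1}=(1-t)^{-\ell(\lambda)}$, and since $\sum_{i\ge 1}m_i(\lambda)=\ell(\lambda)$ gives $\prod_{i\ge 1}v_{m_i(\lambda)}(t)=(1-t)^{-\ell(\lambda)}\prod_{i\ge 1}(1-t)(1-t^{2})\cdots(1-t^{m_i(\lambda)})$, the factors $(1-t)^{-\ell(\lambda)}$ cancel and the limit becomes $\prod_{i\ge 1}\big((1-t)\cdots(1-t^{m_i(\lambda)})\big)^{-1}$, as claimed. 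There is no analytic obstacle here beyond $t^{n}\to 0$; the only thing to be careful about is reconciling the superscript conventions for $v^{(n)}_{\lambda}$ between this proposition and Lemma \ref{cancelling terms lemma}, after which everything is a short manipulation.
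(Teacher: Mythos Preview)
Your proof is correct and follows essentially the same route as the paper: apply Lemma \ref{cancelling terms lemma}, isolate the only $n$-dependent factor $\prod_{i=1}^{\ell(\lambda)}(1-t^{n+i})$, observe it increases monotonically to $1$ for $0\le t<1$, and simplify the constant to the desired limit. The paper performs the constant simplification before the monotonicity remark while you defer it to the end, and you make the reconciliation of the superscript conventions on $v^{(n)}_{\lambda}$ explicit, but otherwise the arguments are identical.
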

\begin{proof}
    Lemma \ref{cancelling terms lemma} gives that 
    $$\frac{v_{\ell(\lambda)+n}(t)}{v^{(\ell(\lambda)+n)}_{\lambda*0^n}(t)} = \frac{v_{\ell(\lambda)}(t)}{\prod_{i \geq 1} v_{m_{i}(\lambda)}(t)} \prod_{i=1}^{\ell(\lambda)}\left( \frac{1-t^{n+i}}{1-t^{i}} \right)$$ which after noticing that 
    $$\frac{v_{\ell(\lambda)}(t)}{\prod_{i \geq 1} v_{m_{i}(\lambda)}(t)}\prod_{i=1}^{\ell(\lambda)}\left( \frac{1}{1-t^{i}} \right) = \prod_{i \geq 1}(1-t)^{-1}\cdots (1-t^{m_i(\lambda)})^{-1}$$ allows us to write 
    $$\frac{v_{\ell(\lambda)+n}(t)}{v^{(\ell(\lambda)+n)}_{\lambda*0^n}(t)} = \left( \prod_{i \geq 1}(1-t)^{-1}\cdots (1-t^{m_i(\lambda)})^{-1} \right) \prod_{i=1}^{\ell(\lambda)}\left( 1-t^{n+i} \right).$$ When $0 \leq t < 1$ the sequence 
    $$\prod_{i=1}^{\ell(\lambda)}\left( 1-t^{n+i} \right)$$ is monotonically increasing to $1$ so we obtain the result.
\end{proof}

\begin{defn}
    For $\lambda^{\bullet} \in \mathbb{Y}_K$ define 
    $$\gamma(\lambda^{\bullet}):= \frac{1}{g(\lambda^{\bullet})^{2}b(\lambda^{\bullet})}.$$ 
\end{defn}

We will require the next proposition. 

\begin{prop}
Let $\lambda^{\bullet} \in \mathbb{Y}_{K}.$ The sequence $\left( \frac{\kappa^{(n)}(\lambda^{\bullet})}{||\lambda^{\bullet}||^{n-1}} \right)_{n \geq 1}$ is monotone increasing with
    $$ \lim_{n \rightarrow \infty} \frac{\kappa^{(n)}(\lambda^{\bullet})}{||\lambda^{\bullet}||^{n-1}} =\gamma(\lambda^{\bullet}).$$
\end{prop}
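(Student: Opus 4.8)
The plan is to reduce everything to the local factors using the closed formula for $\kappa^{(n)}$ recorded in Remark \ref{kappa simplification remark},
\[
\frac{\kappa^{(n)}(\lambda^{\bullet})}{||\lambda^{\bullet}||^{n-1}} = g(\lambda^{\bullet})^{-2} \prod_{\fq \in \mathbb{P}_K} \frac{v_n(N(\fq)^{-1})}{v^{(n)}_{\lambda^{(\fq)}}(N(\fq)^{-1})},
\]
and then to analyze each factor $\dfrac{v_n(N(\fq)^{-1})}{v^{(n)}_{\lambda^{(\fq)}}(N(\fq)^{-1})}$ separately. First I would note that this is in fact a \emph{finite} product: since $\lambda^{\bullet} \in \mathbb{Y}_K$ we have $\lambda^{(\fq)} = \emptyset$ for all but finitely many $\fq$, and for such $\fq$ the partition padded to length $n$ has $n$ zeros, so $v^{(n)}_{\emptyset}(t) = v_n(t)$ and the corresponding term is $1$ identically in $n$. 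Thus only finitely many primes contribute, and it remains to understand, for each $\fq$ with $\lambda^{(\fq)} \neq \emptyset$, the behavior in $n$ of the corresponding term.

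For such a fixed $\fq$, write $\ell = \ell(\lambda^{(\fq)})$ and $t = N(\fq)^{-1} \in (0,1)$. Viewing $\lambda^{(\fq)}$ as a vector in $\mathbb{Z}^n$ the multiplicity of $0$ is $n-\ell$, hence $v^{(n)}_{\lambda^{(\fq)}}(t) = v_{n-\ell}(t)\prod_{i\geq 1} v_{m_i(\lambda^{(\fq)})}(t)$. Substituting $m = n - \ell$, the term becomes $\dfrac{v_{\ell+m}(t)}{v_m(t)\prod_{i\geq 1} v_{m_i(\lambda^{(\fq)})}(t)} = \dfrac{v_{\ell+m}(t)}{v^{(\ell+m)}_{\lambda^{(\fq)}*0^m}(t)}$, which is exactly the quantity appearing in Lemma \ref{cancelling terms lemma} and Proposition \ref{monotone increasing prop}. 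By Lemma \ref{cancelling terms lemma} it equals $\big(\prod_{i\geq 1}(1-t)^{-1}\cdots(1-t^{m_i(\lambda^{(\fq)})})^{-1}\big)\prod_{j=1}^{\ell}(1-t^{m+j})$; since $t \in (0,1)$, each trailing factor $1 - t^{m+j}$ lies in $(0,1)$ and increases to $1$ as $n$ grows, so this term is monotone increasing in $n$ and tends to $\prod_{i\geq 1}(1-t)^{-1}\cdots(1-t^{m_i(\lambda^{(\fq)})})^{-1} = b_{\lambda^{(\fq)}}(N(\fq)^{-1})^{-1}$.

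Finally I would assemble the pieces. In the range $n \geq \ell(\lambda^{\bullet}) = \max_{\fq} \ell(\lambda^{(\fq)})$, where $\kappa^{(n)}(\lambda^{\bullet})$ is defined, each local term is a positive sequence monotone increasing in $n$, so their finite product is monotone increasing, and multiplying by the positive constant $g(\lambda^{\bullet})^{-2}$ preserves this; hence $\big(\kappa^{(n)}(\lambda^{\bullet})/||\lambda^{\bullet}||^{n-1}\big)_n$ is monotone increasing. Passing to the limit term by term in the finite product and using $b(\lambda^{\bullet}) = \prod_{\fq} b_{\lambda^{(\fq)}}(N(\fq)^{-1})$ gives
\[
\lim_{n\to\infty} \frac{\kappa^{(n)}(\lambda^{\bullet})}{||\lambda^{\bullet}||^{n-1}} = g(\lambda^{\bullet})^{-2}\prod_{\fq\in\mathbb{P}_K} b_{\lambda^{(\fq)}}(N(\fq)^{-1})^{-1} = \frac{1}{g(\lambda^{\bullet})^2\, b(\lambda^{\bullet})} = \gamma(\lambda^{\bullet}).
\]
The only mild obstacle is the bookkeeping needed to reconcile the index convention in $v^{(n)}_{\lambda^{(\fq)}}$ (partition padded to length $n$) with that of Proposition \ref{monotone increasing prop} (numerator index $\ell(\lambda)+n$); the shift $m = n - \ell(\lambda^{(\fq)})$ resolves this, and everything else is a direct application of the already established lemmas.
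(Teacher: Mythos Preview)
Your proposal is correct and follows essentially the same approach as the paper: start from the closed formula in Remark \ref{kappa simplification remark}, observe that only finitely many primes contribute, and apply Proposition \ref{monotone increasing prop} (via Lemma \ref{cancelling terms lemma}) to each local factor to obtain monotonicity and the limit $b_{\lambda^{(\fq)}}(N(\fq)^{-1})^{-1}$. Your version is slightly more explicit about the index shift $m = n - \ell(\lambda^{(\fq)})$ and about why the empty-partition factors are identically $1$, but otherwise it is the same argument.
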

\begin{proof}
    From Remark \ref{kappa simplification remark} we see 
    $$\frac{\kappa^{(n)}(\lambda^{\bullet})}{||\lambda^{\bullet}||^{n-1}} = g(\lambda^{\bullet})^{-2} \prod_{\fq \in \mathbb{P}_{K}} \frac{v_{n}(N(\fq)^{-1})}{v^{(n)}_{\lambda^{(\fq)}}(N(\fq)^{-1})}$$ so we may use Proposition \ref{monotone increasing prop} to see that each sequence 
    $$ \frac{v_{n}(N(\fq)^{-1})}{v^{(n)}_{\lambda^{(\fq)}}(N(\fq)^{-1})}$$ is monotonically increasing to 
    $$\prod_{i \geq 1}(1-N(\fq)^{-1})^{-1}\cdots (1-N(\fq)^{-m_i(\lambda^{(\fq)})})^{-1} = \frac{1}{b_{\lambda^{(\fq)}}(N(\fq)^{-1})}.$$ Since $\lambda^{\bullet} \in \mathbb{Y}_{K}$ there are only finitely many non-trivial terms in the product of sequences 
    $$\prod_{\fq \in \mathbb{P}_{K}} \frac{v_{n}(N(\fq)^{-1})}{v^{(n)}_{\lambda^{(\fq)}}(N(\fq)^{-1})}$$ and thus we conclude that the sequence 
    $$\prod_{\fq \in \mathbb{P}_{K}} \frac{v_{n}(N(\fq)^{-1})}{v^{(n)}_{\lambda^{(\fq)}}(N(\fq)^{-1})} $$ is monotonically increasing and $$ \prod_{\fq \in \mathbb{Y}_K} b_{\lambda^{(\fq)}}(N(\fq)^{-1})^{-1} = b(\lambda^{\bullet})^{-1}$$ as $n \rightarrow \infty.$ 
\end{proof}

\begin{remark}\label{principal specialization}
    Note that $$\gamma(\lambda^{\bullet}) = \frac{1}{g(\lambda^{\bullet})}P_{\lambda^{\bullet}}[\fX_K]|_{x_i^{(\fq)} = N(\fq)^{-(i-1)}}.$$ This means that the linear map $\psi: A_K \rightarrow \mathbb{Q}^{\text{ab}}$ defined by $\psi(P_{\lambda^{\bullet}}):= g(\lambda^{\bullet})\gamma(\lambda^{\bullet})$ is an algebra homomorphism. This is an arithmetic version of the usual \textit{principal specialization} on symmetric functions given by $f[\fX] \rightarrow f\left[\frac{1}{1-t}\right] = f(1,t,t^2,\ldots).$
\end{remark}

The following is the last analytic proposition we will need to prove Theorem \ref{main thm 2}. This result will be necessary for a dominated convergence argument.

\begin{lem}\label{convergence lem}
    For all $ \epsilon > 0,$ 
    $$\sum_{\lambda^{\bullet} \in \mathbb{Y}_{K}} \frac{\gamma(\lambda^{\bullet})}{||\lambda^{\bullet}||^{1+\epsilon}} < \infty.$$ 
\end{lem}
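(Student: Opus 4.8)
The plan is to reduce the multi-variable sum over $\mathbb{Y}_K$ to an Euler product over primes $\fq \in \mathbb{P}_K$, using the fact that $\gamma(\lambda^\bullet)$ and $||\lambda^\bullet||$ both factor as products over $\fq$. Indeed, $\gamma(\lambda^\bullet) = \prod_\fq \frac{1}{N(\fq)^{2n(\lambda^{(\fq)})} b_{\lambda^{(\fq)}}(N(\fq)^{-1})}$ and $||\lambda^\bullet|| = \prod_\fq N(\fq)^{|\lambda^{(\fq)}|}$, so since a tuple $\lambda^\bullet \in \mathbb{Y}_K$ is exactly an almost-everywhere-empty choice of partition at each prime,
$$\sum_{\lambda^\bullet \in \mathbb{Y}_K} \frac{\gamma(\lambda^\bullet)}{||\lambda^\bullet||^{1+\epsilon}} = \prod_{\fq \in \mathbb{P}_K} \left( \sum_{\lambda \in \Par} \frac{1}{N(\fq)^{2n(\lambda)} b_\lambda(N(\fq)^{-1}) N(\fq)^{(1+\epsilon)|\lambda|}} \right).$$
The empty partition contributes $1$ to each local factor, which is what makes the infinite product potentially convergent. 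So the first step is to justify this factorization (it is the standard ``sum over finitely-supported tuples = product of local sums'' identity, valid as soon as each local sum converges and the product converges).

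The second step is to bound each local factor. Writing $t = N(\fq)^{-1} \le 2^{-1}$, I want to show $\sum_{\lambda \in \Par} \frac{t^{2n(\lambda) + (1+\epsilon)|\lambda|}}{b_\lambda(t)} = 1 + O(t^{1+\epsilon})$ with a uniform implied constant. Since $b_\lambda(t) = \prod_{i\ge 1}(1-t)\cdots(1-t^{m_i(\lambda)}) \ge \prod_{k\ge 1}(1-t^k) =: (t;t)_\infty \ge (1/2;1/2)_\infty > 0$ for all $\lambda$ when $t \le 1/2$, we get $b_\lambda(t)^{-1} \le C_0$ for an absolute constant $C_0$. Hence the local factor is at most $C_0 \sum_{\lambda \ne \emptyset} t^{(1+\epsilon)|\lambda|} + 1 \le 1 + C_0 \sum_{m \ge 1} p(m) t^{(1+\epsilon)m}$ where $p(m) = \#\{\lambda : |\lambda| = m\}$. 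Actually it is cleaner to recognize the local sum directly: by Remark \ref{principal specialization} and the $t$-analogue machinery, $\sum_\lambda \frac{t^{2n(\lambda)+c|\lambda|}}{b_\lambda(t)} = \sum_\lambda Q_\lambda[\fX;t]|_{x_i = t^{i-1}} \cdot (\text{normalization})$ — but I think the crude bound is safer and sufficient: since $p(m) \le 2^{m}$ say (or any subexponential bound suffices), $\sum_{m\ge 1} p(m) t^{(1+\epsilon)m} \le \sum_{m \ge 1} (2t^{1+\epsilon})^m$, which for $t \le 1/2$ and $\epsilon > 0$ is $\le \frac{2t^{1+\epsilon}}{1 - 2t^{1+\epsilon}}$, bounded by $C_1 t^{1+\epsilon} = C_1 N(\fq)^{-(1+\epsilon)}$ once $N(\fq)$ is large enough; for the finitely many small primes one just notes the local factor is finite.

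The third step assembles the Euler product bound: $\sum_{\lambda^\bullet} \frac{\gamma(\lambda^\bullet)}{||\lambda^\bullet||^{1+\epsilon}} \le \prod_\fq (1 + C N(\fq)^{-(1+\epsilon)})$, and this infinite product converges iff $\sum_\fq N(\fq)^{-(1+\epsilon)} < \infty$, which is the convergence of the Dedekind zeta function $\zeta_K(1+\epsilon)$ in the region $\Re(s) > 1$ — a classical fact. Taking logs, $\log \prod_\fq(1 + CN(\fq)^{-(1+\epsilon)}) \le C \sum_\fq N(\fq)^{-(1+\epsilon)} \le C(\zeta_K(1+\epsilon) - 1) < \infty$, so the product is finite and hence so is the original sum.

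The main obstacle is making the local bound genuinely uniform in $\fq$ — i.e. controlling $\sum_{\lambda\ne\emptyset} \frac{t^{2n(\lambda)+(1+\epsilon)|\lambda|}}{b_\lambda(t)}$ by $C\cdot t^{1+\epsilon}$ with $C$ not depending on $t \in (0,1/2]$. The $b_\lambda(t)^{-1}$ factor is the delicate part: it is bounded below away from $0$ uniformly (by $(t;t)_\infty \ge (1/2;1/2)_\infty$), so $b_\lambda(t)^{-1}$ is uniformly bounded above, which handles it. After that, everything reduces to the elementary estimate $\sum_{m\ge1} p(m) x^m = O(x)$ as $x\to 0^+$ (true since $p(m)$ grows subexponentially, so the series has radius of convergence $1$ and vanishes to first order at $x=0$), applied with $x = t^{1+\epsilon}$. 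One should double-check the edge case where $N(\fq)$ is small (say $N(\fq) = 2$) so that $t^{1+\epsilon}$ is not small: there are only finitely many such primes, each contributing a finite local factor, so they do not affect convergence. I would write the argument to first dispatch these finitely many primes and then run the uniform estimate for $N(\fq) \ge N_0$.
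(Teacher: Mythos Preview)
Your overall strategy---factor the sum as an Euler product over $\fq\in\mathbb{P}_K$, bound each local factor by $1+O(N(\fq)^{-(1+\epsilon)})$, and compare the resulting product to $\zeta_K(1+\epsilon)$---is exactly the paper's approach. However, there is a genuine gap in your local estimate.

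The claim ``$b_\lambda(t) \ge (t;t)_\infty$ for all $\lambda$'' is false. The product $b_\lambda(t)=\prod_{i\ge 1}(1-t)\cdots(1-t^{m_i(\lambda)})$ can contain \emph{repeated} factors: for instance $\lambda=(2,1)$ gives $b_\lambda(t)=(1-t)^2$, and at $t=1/2$ one has $(1-1/2)^2=0.25 < (1/2;1/2)_\infty\approx 0.2887$. More generally, a partition with $d$ distinct parts picks up the factor $(1-t)$ exactly $d$ times, so $b_\lambda(t)^{-1}$ is \emph{not} bounded uniformly in $\lambda$: it grows like $(1-t)^{-\ell(\lambda)}$ in the worst case. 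Since you also discard the factor $t^{2n(\lambda)}$, your bound $\sum_{|\lambda|=m} b_\lambda(t)^{-1}\le C_0\, p(m)$ does not hold, and the subsequent geometric-series estimate collapses.

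The paper closes this gap not by bounding $b_\lambda(t)^{-1}$ termwise but by keeping the factor $t^{2n(\lambda)}$ and invoking the exact identity (Lemma~\ref{sum of gamma lemma})
\[
\sum_{|\lambda|=m}\frac{t^{2n(\lambda)}}{b_\lambda(t)}=\frac{1}{(1-t)(1-t^2)\cdots(1-t^m)},
\]
which for $t=N(\fq)^{-1}\le 1/2$ is bounded by $2^m$. The local factor then becomes $\sum_{m\ge 0}(2N(\fq)^{-(1+\epsilon)})^m=(1-2N(\fq)^{-(1+\epsilon)})^{-1}$, and the paper finishes by splitting off the finitely many primes with $N(\fq)^{\epsilon/2}\le 2$ and comparing the tail to $\zeta_K(1+\epsilon/2)$. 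If you replace your uniform bound on $b_\lambda(t)^{-1}$ with this identity, your argument goes through and coincides with the paper's.
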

\begin{proof}
By using Lemma \ref{sum of gamma lemma} and some direct calculation:
    \begin{align*}
        \sum_{\lambda^{\bullet} \in \mathbb{Y}_{K}} \frac{\gamma(\lambda^{\bullet})}{||\lambda^{\bullet}||^{1+\epsilon}} &= \sum_{\mathfrak{a} \in \mathcal{J}_K^{+}} \frac{1}{N(\mathfrak{a})^{1+\epsilon}} \sum_{\substack{\lambda^{\bullet} \in \mathbb{Y}_{K} \\ \mathfrak{a} = \prod_{\fq}\fq^{|\lambda^{(\fq)}|} }} \gamma(\lambda^{\bullet})\\
        &= \sum_{\mathfrak{a} \in \mathcal{J}_K^{+}} \frac{1}{N(\mathfrak{a})^{1+\epsilon}} \prod_{\fq^{\alpha}\mid \mid \mathfrak{a}}\left( \sum_{|\lambda| = \alpha} \gamma(\lambda) \right)\\
        &= \sum_{\mathfrak{a} \in \mathcal{J}_K^{+}} \frac{1}{N(\mathfrak{a})^{1+\epsilon}} \prod_{\fq^{\alpha}\mid \mid \mathfrak{a}}\left( \sum_{|\lambda| = \alpha} \frac{N(\fq)^{-2n(\lambda)}}{b_{\lambda}(N(\fq)^{-1})} \right)\\
        &= \sum_{\mathfrak{a} \in \mathcal{J}_K^{+}} \frac{1}{N(\mathfrak{a})^{1+\epsilon}} \prod_{\fq^{\alpha}\mid \mid \mathfrak{a}}(1-N(\fq)^{-1})^{-1}\cdots (1-N(\fq)^{-\alpha})^{-1}\\
        & \leq \sum_{\mathfrak{a} \in \mathcal{J}_K^{+}} \frac{1}{N(\mathfrak{a})^{1+\epsilon}} \prod_{\fq^{\alpha}\mid \mid \mathfrak{a}} 2^{\alpha} \\
        &= \prod_{\fq \in \mathbb{P}_K} \left( 1-\frac{2}{N(\fq)^{1+\epsilon}} \right)^{-1} \\
        &= \prod_{2 \geq N(\fq)^{\epsilon/2}} \left( 1-\frac{2}{N(\fq)^{1+\epsilon}} \right)^{-1} \prod_{2 < N(\fq)^{\epsilon/2}} \left( 1-\frac{2}{N(\fq)^{1+\epsilon}} \right)^{-1}\\
        & \leq \prod_{2 \geq N(\fq)^{\epsilon/2}} \left( 1-\frac{2}{N(\fq)^{1+\epsilon}} \right)^{-1} \prod_{2 < N(\fq)^{\epsilon/2}} \left( 1-\frac{1}{N(\fq)^{1+\frac{\epsilon}{2}}} \right)^{-1} \\
        &= \prod_{2 \geq N(\fq)^{\epsilon/2}} \left( 1-\frac{2}{N(\fq)^{1+\epsilon}} \right)^{-1} \zeta_{K}(1+\frac{\epsilon}{2}) < \infty .\\
    \end{align*}
    
\end{proof}

Now we show a simple combinatorial lemma we will require in Theorem \ref{main thm 2}. This lemma may be inferred directly from [Example 1, pg. 213, \cite{Macdonald}],[Example 1, pg. 243, \cite{Macdonald}]  but we include a more detailed proof for the sake of completeness.

\begin{lem}\label{sum of gamma lemma}
    \begin{equation}\label{reciprocal b sum expansion}
        (1-t)^{-1}\cdots (1-t^m)^{-1} = \sum_{|\lambda| = m} \frac{t^{2n(\lambda)}}{b_{\lambda}(t)}.
    \end{equation}
\end{lem}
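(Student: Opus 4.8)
The plan is to recognize \eqref{reciprocal b sum expansion} as the principal specialization of a Hall--Littlewood identity. Recall from Macdonald's book that for the single-variable (rank one) specialization $\fX \mapsto \frac{1}{1-t}$, i.e. $x_i = t^{i-1}$, one has the well-known identity
$$P_\lambda\!\left[\tfrac{1}{1-t};t\right] = \frac{t^{n(\lambda)}}{(1-t)(1-t^2)\cdots(1-t^{\ell(\lambda)})} \cdot \frac{\prod_{i\ge 1}\prod_{j=1}^{m_i(\lambda)}(1-t^j)}{\prod_{i\ge 1}\prod_{j=1}^{m_i(\lambda)}(1-t^j)}$$
— more usefully, the clean statement $\sum_{\lambda} P_\lambda[\fX;t]\,Q_\lambda[\fY;t] = \Exp[(1-t)\fX\fY]$ from Lemma \ref{t cauchy identity}, specialized appropriately. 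First I would specialize the $t$-Cauchy identity \ref{t cauchy identity} by setting $\fY \mapsto \frac{1}{1-t}$ (a single geometric progression of variables) and $\fX$ a single variable $x$. Since $Q_\lambda\left[\frac{1}{1-t};t\right]$ vanishes unless $\ell(\lambda)\le 1$ — no: rather, $P_\lambda[x;t]$ in one variable $x$ vanishes unless $\ell(\lambda)\le 1$, so this collapses too much. Instead the right move is to specialize $\fX \mapsto \frac{1}{1-t}$ on the $P$-side and keep $\fY$ a single variable $y$; then $P_\lambda\left[\frac{1}{1-t};t\right] = \frac{t^{n(\lambda)}}{b_\lambda(t)}\cdot(\text{something})$ and $Q_\lambda[y;t]$ vanishes unless $\ell(\lambda)\le 1$, which again over-collapses. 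So the cleanest route: apply $\Exp[(1-t)\fX\fY] = \sum_\lambda P_\lambda[\fX;t]Q_\lambda[\fY;t]$ with \emph{both} $\fX\mapsto \frac{1}{1-t}$ and $\fY \mapsto u$ a single variable, extract the coefficient of $u^m$.

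More carefully, I would proceed as follows. Set $\fY = u$ (one variable). Then $Q_\lambda[u;t] = 0$ unless $\lambda = (r)$ is a single row, and $Q_{(r)}[u;t] = (1-t)u^r$. Meanwhile $\Exp[(1-t)\fX u] = \prod_{i\ge 1}(1-t\, u x_i)(1-ux_i)^{-1}$ after using $\Exp[(1-t)\fX u] = \Exp[\fX u]\Exp[-t\fX u]^{-1}$... this still doesn't directly isolate $b_\lambda$. The genuinely correct identity to invoke is the principal specialization formula for $Q_\lambda$ (Macdonald, Ch. III, (2.11)--(2.12) region, or Example 1 on pg.~213):
$$\sum_{|\lambda|=m} \frac{t^{2n(\lambda)}}{b_\lambda(t)} = h_m\!\left[\tfrac{1}{(1-t)(1-t^2)\cdots}\right]\Big|_{\text{suitably truncated}}.$$
The key step I would actually carry out: use the known generating function $\sum_{\lambda} \frac{t^{2n(\lambda)}}{b_\lambda(t)} z^{|\lambda|} = \prod_{i\ge 1}(1-zt^{i-1})^{-1}$, equivalently $\prod_{i\ge 0}(1-zt^i)^{-1} = \sum_{m\ge 0} z^m \prod_{j=1}^m (1-t^j)^{-1}$ (the standard $q$-binomial / Euler identity $\sum_m \frac{z^m}{(t;t)_m} = \frac{1}{(z;t)_\infty}$), and separately establish $\sum_{\lambda}\frac{t^{2n(\lambda)}}{b_\lambda(t)}z^{|\lambda|} = \prod_{i\ge 0}(1-zt^i)^{-1}$ by relating $\frac{t^{2n(\lambda)}}{b_\lambda(t)}$ to a product over columns/rows of $\lambda$. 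Comparing coefficients of $z^m$ on both sides gives exactly \eqref{reciprocal b sum expansion}.

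So concretely the steps are: (1) recall Euler's identity $\sum_{m\ge 0}\frac{z^m}{(1-t)(1-t^2)\cdots(1-t^m)} = \prod_{i\ge 0}(1-zt^i)^{-1}$, proved by the standard recursion $f(z) = f(zt)/(1-z)$; (2) show $\sum_{\lambda}\frac{t^{2n(\lambda)}}{b_\lambda(t)}z^{|\lambda|} = \prod_{i\ge 0}(1-zt^i)^{-1}$ — here I would stratify partitions $\lambda$ of $m$ by their largest part or by $m_1(\lambda)$, or alternatively cite that $\frac{t^{2n(\lambda)}}{b_\lambda(t)}$ is the principal specialization $P_\lambda\left[\frac{1}{1-t};t\right]\cdot(\text{normalization})$ combined with the rank-one Cauchy identity; (3) equate coefficients of $z^m$. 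The main obstacle is step (2): proving that the left side of \eqref{reciprocal b sum expansion}, summed against $z^m$, telescopes to the infinite product. The slick way is to note that $\sum_{|\lambda|=m} P_\lambda\left[\frac{1}{1-t};t\right] Q_{(m)}[\cdot]$-type manipulations from Lemma \ref{t cauchy identity} do the job, using that $P_\lambda\left[\frac{1}{1-t};t\right] = \frac{t^{n(\lambda)}}{b_\lambda(t)} \cdot \frac{(1-t)(1-t^2)\cdots}{\cdots}$; alternatively, a direct induction on $m$ splitting off the parts equal to the smallest part. I expect the bookkeeping in matching $t^{2n(\lambda)}/b_\lambda(t)$ against the $q$-series coefficient to be the only real work, and it is entirely routine once the generating-function framing is set up; hence the remark in the statement that it "may be inferred directly from" Macdonald's examples.
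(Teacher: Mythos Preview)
Your generating-function framing is valid, and step (1) (Euler's identity) is correct. But step (2) \emph{is} the entire content of the lemma, and you have not actually proved it --- you only list several possible attacks without executing any. In particular, ``stratify by largest part'' or ``induction splitting off the smallest part'' does not interact cleanly with $t^{2n(\lambda)}/b_\lambda(t)$, since neither $n(\lambda)$ nor $b_\lambda(t)$ decomposes simply under those operations; the attempted formulas you write for $P_\lambda\!\left[\tfrac{1}{1-t};t\right]$ are also left incomplete.

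The paper's proof supplies exactly the missing ingredient you gesture at when you mention the principal specialization of $P_\lambda$. Specialize the $t$-Cauchy identity (Lemma~\ref{t cauchy identity}) at $\fX = \tfrac{1}{1-t}$: the left side becomes $\Exp[\fY]$, so
\[
h_m[\fY] \;=\; \sum_{|\lambda|=m} P_\lambda\!\left[\tfrac{1}{1-t};t\right] b_\lambda(t)\, P_\lambda[\fY;t].
\]
Now compare with Macdonald's identity $h_m[\fY] = \sum_{|\lambda|=m} t^{n(\lambda)} P_\lambda[\fY;t]$ [Example~1, p.~243, \cite{Macdonald}] to read off $P_\lambda\!\left[\tfrac{1}{1-t};t\right] = t^{n(\lambda)}/b_\lambda(t)$. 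Finally specialize $\fY = \tfrac{1}{1-t}$ in that same Macdonald identity: the left side is $h_m\!\left[\tfrac{1}{1-t}\right] = \prod_{j=1}^m (1-t^j)^{-1}$, and the right side is $\sum_{|\lambda|=m} t^{n(\lambda)} \cdot t^{n(\lambda)}/b_\lambda(t)$, which is exactly \eqref{reciprocal b sum expansion}. The key input you did not isolate is the closed-form Hall--Littlewood expansion of $h_m$; with it in hand, no generating-function bookkeeping or induction is needed at all.
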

\begin{proof}
First we show:
$$P_{\lambda}\left[ \frac{1}{1-t} ;t \right] = \frac{t^{n(\lambda)}}{b_{\lambda}(t)}.$$
Using the Cauchy identity for Hall-Littlewood functions (Lemma \ref{t cauchy identity}),
$$\Exp[(1-t)XY] = \sum_{\lambda} P_{\lambda}[X]Q_{\lambda}[Y],$$ shows that if we specialize  
$$X \rightarrow \frac{1}{1-t} $$ then we see
$$\Exp[Y] = \sum_{\lambda} P_{\lambda}\left[ \frac{1}{1-t} \right]Q_{\lambda}[Y].$$
This implies that for all $m \geq 0$
$$h_{m}[Y] = \sum_{|\lambda| = m} P_{\lambda}\left[ \frac{1}{1-t} \right]b_{\lambda}(t) P_{\lambda}[Y].$$
On the other hand, as Macdonald shows [Example 1, pg. 243, \cite{Macdonald}],
$$h_{m}[Y] = \sum_{|\lambda| = m} t^{n(\lambda)} P_{\lambda}[Y].$$ By comparing coefficients 
$$t^{n(\lambda)} = P_{\lambda}\left[ \frac{1}{1-t} \right]b_{\lambda}(t)$$

Lastly, since $$h_{m}\left[ \frac{1}{1-t} \right] = (1-t)^{-1}\cdots (1-t^m)^{-1}$$ we find
$$(1-t)^{-1}\cdots (1-t^m)^{-1} = h_{m}\left[ \frac{1}{1-t} \right] = \sum_{|\lambda| = m} t^{n(\lambda)} P_{\lambda}\left[ \frac{1}{1-t} \right] = \sum_{|\lambda| = m} t^{n(\lambda)} \frac{t^{n(\lambda)}}{b_{\lambda}(t)} = \sum_{|\lambda|=m} \frac{t^{2n(\lambda)}}{b_{\lambda}(t)}.$$

\end{proof}

The next lemma shows that the trace values 
$\Tr_{\widetilde{\mathbb{H}}_{\lambda}(V;m)}(g)$ are easy to compute when $V$ is $1$-dimensional.

\begin{lem}\label{HL functors for 1-dim}
    Let $\chi$ be a ($1$-dimensional) character of a finite group $G$ and $\lambda$ a partition. Then 
    $$\Tr_{\widetilde{\mathbb{H}}_{\lambda}(\chi;m)}(g) = \chi^{|\lambda|}(g).$$
\end{lem}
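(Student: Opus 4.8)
The plan is to exploit the fact that for a one-dimensional representation $\chi$, the only Schur functor that acts non-trivially is $\mathbb{S}_{(k)}$ (the $k$-th symmetric power), since $\mathbb{S}_\mu(\chi) = 0$ whenever $\mu$ has more than one row. Concretely, if $\chi$ is $1$-dimensional and $g \in G$ acts by the scalar $\chi(g)$, then $\mathbb{S}_\mu(\chi)$ is one-dimensional when $\mu = (k)$ is a single row (with $g$ acting by $\chi(g)^k$) and is zero otherwise. This is just the statement that $s_\mu(x) = x^k$ if $\mu=(k)$ and $s_\mu(x) = 0$ in one variable when $\ell(\mu) > 1$, combined with Lemma \ref{schur functor lemma}.

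The cleanest route is via Lemma \ref{modified HL lemma}: with $W = \chi$ one-dimensional, $g$ acts on $W$ with the single eigenvalue $\alpha_1 = \chi(g)$, so
$$\Tr_{\widetilde{\mathbb{H}}_{\lambda}(\chi;m)}(g) = \widetilde{H}_\lambda(\alpha_1;m) = \widetilde{H}_\lambda(\chi(g);m),$$
and it remains to evaluate the symmetric function $\widetilde{H}_\lambda$ in a single variable. Using the Schur expansion $\widetilde{H}_\lambda[\fX;t] = \sum_\mu \widetilde{K}_{\mu,\lambda}(t) s_\mu[\fX]$ and specializing to one variable kills every term except $\mu = (|\lambda|)$ (the only single-row partition of the right size), so $\widetilde{H}_\lambda(x;t) = \widetilde{K}_{(|\lambda|),\lambda}(t)\, x^{|\lambda|}$. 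Then one checks $\widetilde{K}_{(|\lambda|),\lambda}(t) = 1$: by the cocharge formula \eqref{cocharge formula}, $\widetilde{K}_{(|\lambda|),\lambda}(t) = \sum_{T} t^{\mathrm{cocharge}(T)}$ over SSYT $T$ of shape $\lambda$ and content $(|\lambda|)$, but there is a unique such tableau — the one filled entirely with $1$'s — and its cocharge is $0$, giving $\widetilde{K}_{(|\lambda|),\lambda}(t) = 1$. Hence $\widetilde{H}_\lambda(x;t) = x^{|\lambda|}$ identically in $t$, and substituting $x = \chi(g)$, $t = m$ yields $\Tr_{\widetilde{\mathbb{H}}_{\lambda}(\chi;m)}(g) = \chi(g)^{|\lambda|} = \chi^{|\lambda|}(g)$.

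Alternatively, and perhaps even more directly, one can argue from the definition of the modified Hall-Littlewood functor: $\widetilde{\mathbb{H}}_{\lambda}(\chi;m) = \bigoplus_{\mathrm{content}(T) = \lambda} \mathbb{S}_{\sh(T)}(\chi)^{\oplus m^{\mathrm{cocharge}(T)}}$, and since $\mathbb{S}_{\sh(T)}(\chi) = 0$ unless $\sh(T)$ is a single row, the only surviving term comes from the unique $T$ of shape $(|\lambda|)$ and content $\lambda$, which has cocharge $0$ and contributes a single copy of $\mathbb{S}_{(|\lambda|)}(\chi)$, on which $g$ acts by $\chi(g)^{|\lambda|}$. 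I do not anticipate a serious obstacle here — the only point requiring a moment's care is confirming that the tableau of shape $(|\lambda|)$ and content $\lambda$ has cocharge zero and is the unique tableau contributing, which is a standard fact about the cocharge statistic (a single-row tableau, being weakly increasing, is its own charge-maximizing reading word). I would present the proof in the short form using Lemma \ref{modified HL lemma} together with the one-variable specialization of \eqref{cocharge formula}.
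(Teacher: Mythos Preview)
Your proposal is correct and follows essentially the same approach as the paper: the paper also observes that $\mathbb{S}_{\mu}(\chi)=0$ for $\ell(\mu)>1$, invokes the cocharge formula to get $\widetilde{K}_{(|\lambda|),\lambda}(t)=1$, and concludes $\widetilde{\mathbb{H}}_{\lambda}(\chi;m)=\mathbb{S}_{(|\lambda|)}(\chi)=\chi^{|\lambda|}$. Your two variants (via Lemma~\ref{modified HL lemma} on the symmetric-function side, and directly from the functor definition) are just two phrasings of this same argument.
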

\begin{proof}

Using the Lascoux-Sch\"{u}tzenberger cocharge formula \ref{cocharge formula} we know that 
$$\langle s_{(|\lambda|)}, \widetilde{H}_{\lambda} \rangle_{H} = 1.$$ Further, for any partition $\mu$ with $\ell(\mu) > 1$, $\mathbb{S}_{\mu}(\chi) = 0.$ Thus 
$$\widetilde{\mathbb{H}}_{\lambda}(\chi;m) =  \mathbb{S}_{(|\lambda|)}(\chi) \otimes t^{0}.$$ Now
$\mathbb{S}_{(|\lambda|)}(\chi) = \Sym^{|\lambda|}(\chi) = \chi^{\otimes |\lambda|} = \chi^{|\lambda|}$ and therefore, $\widetilde{\mathbb{H}}_{\lambda}(\chi;m)$ is the character of $G$ given by $g \rightarrow \chi^{|\lambda|}(g)$ and so the result follows.
\end{proof}

Now we are finally ready to prove the main result of this section.

\begin{thm}\label{main thm 2}
The following is an equality of analytic functions on the half plane $\Re(s) > 1$:
    $$\prod_{j \geq 0} L(s+j, \rho ;L/K) = \sum_{\lambda^{\bullet} \in \Par_K} \frac{\gamma(\lambda^{\bullet})}{||\lambda^{\bullet}||^{s}} \Tr_{\widetilde{\mathbb{H}}_{\lambda^{\bullet}}(W_{\rho})}(\sigma_{K}).$$
\end{thm}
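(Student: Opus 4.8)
The plan is to obtain this identity as a renormalized limit of Theorem~\ref{shifted product of L functions expansion} as $n \to \infty$. Recall that that theorem gives, for each $n \geq 1$,
\begin{equation*}
\prod_{i=1}^{n} L(s-i+1,\rho;L/K) = \sum_{\lambda^{\bullet}\in \mathbb{Y}_{K}^{(n)}} \frac{\kappa^{(n)}(\lambda^{\bullet})}{||\lambda^{\bullet}||^{s}} \Tr_{\widetilde{\mathbb{H}}_{\lambda^{\bullet}}(W_{\rho})}(\sigma_{K}).
\end{equation*}
First I would substitute $s \mapsto s + n - 1$, which turns the left-hand side into $\prod_{j=0}^{n-1} L(s+j,\rho;L/K)$ and rescales each term on the right by $||\lambda^{\bullet}||^{-(n-1)}$, producing the coefficient $\kappa^{(n)}(\lambda^{\bullet})/||\lambda^{\bullet}||^{n-1}$. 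By Proposition~\ref{monotone increasing prop} (and the monotone-limit proposition that follows it), this coefficient increases monotonically to $\gamma(\lambda^{\bullet})$ as $n \to \infty$, and $\mathbb{Y}_K^{(n)} \uparrow \mathbb{Y}_K$. So termwise the right-hand side converges to the claimed series; the work is to justify the interchange of limit and sum, and to check the resulting series actually converges on $\Re(s) > 1$.

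For the analytic justification I would argue on a fixed half-plane $\Re(s) \geq 1 + \epsilon$ for arbitrary $\epsilon > 0$. The key bound is $|\Tr_{\widetilde{\mathbb{H}}_{\lambda^{\bullet}}(W_{\rho})}(\sigma_K)| \leq \dim \widetilde{\mathbb{H}}_{\lambda^{\bullet}}(W_{\rho})$, and this dimension can be controlled: using Lemma~\ref{HL into Schur functor lemma} and the fact that $\Tr$ is dominated by the value at the identity, one gets $|\Tr_{\widetilde{\mathbb{H}}_{\lambda^{\bullet}}(W_{\rho})}(\sigma_K)| \leq \widetilde{H}_{\lambda^{\bullet}}[\,\underline{1};\,N(\fq)\,]$ evaluated with $\dim V$ ones in each slot, which is a polynomial-in-$N(\fq)$ quantity of controlled degree. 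Combined with $\gamma(\lambda^{\bullet}) \leq 1$ and Lemma~\ref{convergence lem}, which gives $\sum_{\lambda^{\bullet}} \gamma(\lambda^{\bullet})||\lambda^{\bullet}||^{-(1+\epsilon)} < \infty$, one obtains an absolutely convergent dominating series (possibly after absorbing the polynomial growth of the trace into a slightly smaller $\epsilon$, i.e. bounding $(\dim V)$-dependent factors against $N(\fq)^{\delta}$ for small $\delta$ and reducing $\epsilon$ accordingly). This makes the right-hand side of the claimed identity an absolutely — hence locally uniformly — convergent Dirichlet-type series on $\Re(s) > 1$, so it defines an analytic function there. The monotone convergence of coefficients plus this uniform domination then licenses passing to the limit term by term (dominated convergence on the counting measure on $\mathbb{Y}_K$).

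For the left-hand side I would note that each $L(s+j,\rho;L/K)$ is, on $\Re(s) > 1$, given by an absolutely convergent Euler product bounded by $\zeta_K(\Re(s)+j)^{\dim V}$, so the infinite product $\prod_{j \geq 0} L(s+j,\rho;L/K)$ converges absolutely and locally uniformly on $\Re(s) > 1$ (since $\sum_j (\zeta_K(\Re(s)+j)^{\dim V} - 1) < \infty$, as $\zeta_K(\sigma) - 1 = O(2^{-\sigma})$ for large $\sigma$), hence is analytic there, and $\prod_{j=0}^{n-1} L(s+j,\rho;L/K) \to \prod_{j\geq 0} L(s+j,\rho;L/K)$ uniformly on compacta. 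Putting the two sides together: for each fixed $s$ with $\Re(s) > 1$, the shifted Theorem~\ref{shifted product of L functions expansion} gives an equality of numbers for every $n$; letting $n \to \infty$, the left side tends to $\prod_{j\geq 0}L(s+j,\rho;L/K)$ and the right side tends to $\sum_{\lambda^{\bullet}} \gamma(\lambda^{\bullet})||\lambda^{\bullet}||^{-s}\Tr_{\widetilde{\mathbb{H}}_{\lambda^{\bullet}}(W_{\rho})}(\sigma_K)$, yielding the identity. The main obstacle I anticipate is the uniform domination step: one must ensure the trace values $\Tr_{\widetilde{\mathbb{H}}_{\lambda^{\bullet}}(W_{\rho})}(\sigma_K)$ do not grow fast enough to destroy convergence, which requires a careful dimension estimate for the modified Hall-Littlewood functors in terms of $N(\fq)$ and $\dim V$ and then trading a bit of the convergence margin $\epsilon$ from Lemma~\ref{convergence lem} against that growth — everything else is bookkeeping with monotone/dominated convergence.
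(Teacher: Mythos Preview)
Your overall architecture is exactly the paper's: shift $s\mapsto s+n-1$ in Theorem~\ref{shifted product of L functions expansion}, use the monotone convergence $\kappa^{(n)}(\lambda^{\bullet})/||\lambda^{\bullet}||^{n-1}\uparrow\gamma(\lambda^{\bullet})$, and verify both sides converge locally uniformly on $\Re(s)>1$. Your treatment of the left-hand side is fine and essentially matches the paper.

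The genuine gap is your domination argument on the right-hand side. You propose bounding $|\Tr_{\widetilde{\mathbb{H}}_{\lambda^{\bullet}}(W_\rho)}(\sigma_K)|$ by the dimension $\prod_{\fq}\widetilde{H}_{\lambda^{(\fq)}}(1^{\dim V};N(\fq))$, then absorbing this into a smaller exponent via ``$(\dim V)$-dependent factors against $N(\fq)^{\delta}$'' and invoking Lemma~\ref{convergence lem}. This does not work as stated: the top power of $N(\fq)$ in $\widetilde{H}_{\lambda^{(\fq)}}(1^d;N(\fq))$ is $n(\lambda^{(\fq)})$ (the maximal cocharge), so $\dim\widetilde{\mathbb{H}}_{\lambda^{\bullet}}(W_\rho)$ grows like $g(\lambda^{\bullet})=\prod_{\fq}N(\fq)^{n(\lambda^{(\fq)})}$. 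Since $n(\lambda)/|\lambda|$ is unbounded (e.g.\ $\lambda=(1^k)$ gives $\binom{k}{2}/k$), there is no uniform bound $\dim\le C\,||\lambda^{\bullet}||^{\delta}$, and you cannot reduce to Lemma~\ref{convergence lem} by trading $\epsilon$ for $\delta$. (Also, $\gamma(\lambda^{\bullet})\le 1$ is not quite true: for $\lambda^{(\fq)}=(1)$ one gets $\gamma=(1-N(\fq)^{-1})^{-1}>1$.)

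The paper sidesteps this entirely. It first treats $\dim\rho=1$, where by Lemma~\ref{HL functors for 1-dim} the trace is either a root of unity or $0$, so $|\Tr|\le 1$ and Lemma~\ref{convergence lem} applies directly to give the dominating series. It then bootstraps to arbitrary $\rho$ via Brauer's Theorem~\ref{Brauer's thm}: writing $\rho=\sum_i a_i\Ind\chi_i$ gives $\prod_{j\ge 0}L(s+j,\rho;L/K)=\prod_i\bigl(\prod_{j\ge 0}L(s+j,\chi_i;L/K_i)\bigr)^{a_i}$, each factor already handled, and the matching on the series side comes from the fact that the principal specialization $\psi$ of Remark~\ref{principal specialization} is an algebra homomorphism applied to the identities of Theorem~\ref{main thm HL expansion} and Proposition~\ref{Artin sym function properties}. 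Your direct route is salvageable---the $g(\lambda^{\bullet})^{-2}$ in $\gamma$ does beat the $g(\lambda^{\bullet})$ in $\dim$, and indeed the dimension-bounded series is exactly the positive-coefficient series for $\widetilde{\zeta}_K(s)^{\dim V}$, whose convergence for $\Re(s)>1$ can be checked from its Euler product---but that is a different argument from the one you sketched, and you would have to make it explicitly.
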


\begin{proof}
We begin by proving this statement for $\dim(\rho) = 1.$ To that end we will write $\rho = \chi$ for a group character $\chi$ of $\Gal(L/K)$ and let $\mathfrak{b}(\chi) \in \mathcal{J}_{K}^{+}$ be the ideal given as the product over all $\mathfrak{q} \in \mathbb{P}_K$ such that $\chi|_{I(\mathfrak{q})}$ is nontrivial. We will show that $L^{(n)}(s+n-1,\chi;L/K)$ converges uniformly in compact regions to both the LHS and RHS separately. This will, therefore, show that both LHS and RHS agree in the region $\Re(s) > 1.$

For $n \geq 1$ and $ \Re(s) > 1$ set 
$$h^{(n)}(s):= L^{(n)}(s+n-1,\chi;L/K) = \prod_{j =0}^{n-1} L(s+j,\chi;L/K).$$ It is a straightforward Euler product computation to verify that for all $\Re(s) > 1$
$$\log h^{(n)}(s) = \sum_{(\fq^m,\mathfrak{b}(\chi)) = 1} \frac{1}{m} \frac{\chi(\sigma_{\fq})^m}{N(\fq^m)^s} \frac{1-N(\fq)^{-n}}{1-N(\fq)^{-1}}$$ where we sum over all prime power ideals $\fq^m$ in $\mathcal{J}_{K}^{+}$ which are coprime to $\mathfrak{b}(\chi).$ It suffices to show that for all $\epsilon > 0$
$$\log h^{(n)}(s) \rightarrow \sum_{(\fq^m,\mathfrak{b}(\chi)) = 1} \frac{1}{m} \frac{\chi(\sigma_{\fq})^m}{N(\fq^m)^s} \frac{1}{1-N(\fq)^{-1}}$$ uniformly for $\Re(s) \geq 1 + \epsilon.$ Note that since $|\chi(\sigma_{\fq})^m| = 1$ and $\frac{1}{1-N(\fq)^{-1}} \leq 2,$ the series 
$$\sum_{(\fq^m,\mathfrak{b}(\chi)) = 1} \frac{1}{m} \frac{\chi(\sigma_{\fq})^m}{N(\fq^m)^s} \frac{1}{1-N(\fq)^{-1}}$$ converges absolutely for any $\Re(s) > 1$ and so defines an analytic analytic function in that region. Now we may compute directly. Let $\Re(s) \geq 1+ \epsilon:$
\begin{align*}
   & \left| \sum_{(\fq^m,\mathfrak{b}(\chi)) = 1} \frac{1}{m} \frac{\chi(\sigma_{\fq})^m}{N(\fq^m)^s} \frac{1}{1-N(\fq)^{-1}} - \sum_{(\fq^m,\mathfrak{b}(\chi)) = 1} \frac{1}{m} \frac{\chi(\sigma_{\fq})^m}{N(\fq^m)^s} \frac{1-N(\fq)^{-n}}{1-N(\fq)^{-1}}  \right| \\
   & = \left| \sum_{(\fq^m,\mathfrak{b}(\chi)) = 1} \frac{1}{m} \frac{\chi(\sigma_{\fq})^m}{N(\fq^m)^s} \frac{N(\fq)^{-n}}{1-N(\fq)^{-1}}  \right|\\
   & \leq 2 \left| \sum_{(\fq^m,\mathfrak{b}(\chi)) = 1} \frac{1}{m} \frac{N(\fq)^{-n}}{N(\fq^m)^{1+\epsilon}} \right| \\
   & \leq 2^{1-n} \left| \sum_{(\fq^m,\mathfrak{b}(\chi)) = 1} \frac{1}{m} \frac{1}{N(\fq^m)^{1+\epsilon}} \right| \\
   & \leq \log \zeta_{K}(1+\epsilon) 2^{1-n}.\\
\end{align*}
Since $\lim_{n \rightarrow \infty} \log \zeta_{K}(1+\epsilon) 2^{1-n} = 0$ we conclude that as $n \rightarrow \infty $, $\prod_{j =0}^{n-1} L(s+j,\chi;L/K)$ converges uniformly in compact regions to an analytic function on $\Re(s) > 1.$

It suffices to show that for all $\epsilon > 0$ and $\Re(s) \geq 1 + \epsilon$ the sequence of functions given for $n \geq 1$ as
$$\sum_{\lambda^{\bullet} \in \Par_K^{(n)}} \frac{\kappa^{(n)}(\lambda^{\bullet})}{||\lambda^{\bullet}||^{s+n-1}} \Tr_{\widetilde{\mathbb{H}}_{\lambda^{\bullet}}(W_{\chi})}(\sigma_{K})$$ converges uniformly as $ n \rightarrow \infty$ to 
$$\sum_{\lambda^{\bullet} \in \Par_K} \frac{\gamma(\lambda^{\bullet})}{||\lambda^{\bullet}||^{s}} \Tr_{\widetilde{\mathbb{H}}_{\lambda^{\bullet}}(W_{\chi})}(\sigma_{K}) .$$ Now using Lemma \ref{HL functors for 1-dim}, we know for $\rho = \chi$ that if the set of all $\fq$ with $\lambda^{(\fq)} \neq \emptyset $ is coprime with $\mathfrak{b}(\chi)$ then $\Tr_{\widetilde{\mathbb{H}}_{\lambda^{\bullet}}(W_{\chi})}(\sigma_{K}) = \prod_{\fq} \chi(\sigma_{\fq})^{|\lambda^{(\fq)}|}$ and otherwise $\Tr_{\widetilde{\mathbb{H}}_{\lambda^{\bullet}}(W_{\chi})}(\sigma_{K}) = 0.$
Therefore, we can rewrite the sequence in the form 
$$\sum_{\lambda^{\bullet} \in \mathbb{Y}_K} \mathbbm{1}\left(\ell(\lambda^{\bullet}) \leq n \right) \varphi_{\mathfrak{b}(\chi)}(\lambda^{\bullet}) \prod_{\fq} \chi(\sigma_{\fq})^{|\lambda^{(\fq)}|} \frac{\kappa^{(n)}(\lambda^{\bullet})}{||\lambda^{\bullet}||^{n-1}}||\lambda^{\bullet}||^{-s}$$ where for $\mathfrak{a} \in \mathcal{J}_{K}^{+} $, $\varphi_{\mathfrak{a}}(\lambda^{\bullet})$ is either 0 if some $\lambda^{(\fq)} \neq \emptyset $ has $(\mathfrak{a},\fq) \neq 1$ and $1$ otherwise. 
Then for any $\Re(s) \geq 1 + \epsilon$
\begin{align*}
    &\left| \sum_{\lambda^{\bullet} \in \Par_K} \frac{\gamma(\lambda^{\bullet})}{||\lambda^{\bullet}||^{s}} \varphi_{\mathfrak{b}(\chi)}(\lambda^{\bullet}) \prod_{\fq} \chi(\sigma_{\fq})^{|\lambda^{(\fq)}|}  -     \sum_{\lambda^{\bullet} \in \mathbb{Y}_K} \mathbbm{1}\left(\ell(\lambda^{\bullet}) \leq n \right) \varphi_{\mathfrak{b}(\chi)}(\lambda^{\bullet}) \prod_{\fq} \chi(\sigma_{\fq})^{|\lambda^{(\fq)}|} \frac{\kappa^{(n)}(\lambda^{\bullet})}{||\lambda^{\bullet}||^{n-1}}||\lambda^{\bullet}||^{-s}\right| \\
    &= \left| \sum_{\lambda^{\bullet} \in \Par_K} \frac{\varphi_{\mathfrak{b}(\chi)}(\lambda^{\bullet}) \prod_{\fq} \chi(\sigma_{\fq})^{|\lambda^{(\fq)}|}}{||\lambda^{\bullet}||^{s}} \left( \gamma(\lambda^{\bullet}) - \frac{\kappa^{(n)}(\lambda^{\bullet})}{||\lambda^{\bullet}||^{n-1}}\mathbbm{1}\left(\ell(\lambda^{\bullet}) \leq n \right)  \right)  \right| \\
    & \leq \sum_{\lambda^{\bullet} \in \Par_K} \frac{\varphi_{\mathfrak{b}(\chi)}(\lambda^{\bullet})}{||\lambda^{\bullet}||^{1+\epsilon}} \left| \gamma(\lambda^{\bullet}) - \frac{\kappa^{(n)}(\lambda^{\bullet})}{||\lambda^{\bullet}||^{n-1}}\mathbbm{1}\left(\ell(\lambda^{\bullet}) \leq n \right)  \right|\\
    & = \sum_{\lambda^{\bullet} \in \Par_K} \frac{\varphi_{\mathfrak{b}(\chi)}(\lambda^{\bullet})}{||\lambda^{\bullet}||^{1+\epsilon}} \left( \gamma(\lambda^{\bullet}) - \frac{\kappa^{(n)}(\lambda^{\bullet})}{||\lambda^{\bullet}||^{n-1}}\mathbbm{1}\left(\ell(\lambda^{\bullet}) \leq n \right) \right)\\
    &= \sum_{\lambda^{\bullet} \in \Par_K} \frac{\varphi_{\mathfrak{b}(\chi)}(\lambda^{\bullet})}{||\lambda^{\bullet}||^{1+\epsilon}} \gamma(\lambda^{\bullet}) - \sum_{\lambda^{\bullet} \in \Par_K} \frac{\varphi_{\mathfrak{b}(\chi)}(\lambda^{\bullet})}{||\lambda^{\bullet}||^{1+\epsilon}} \frac{\kappa^{(n)}(\lambda^{\bullet})}{||\lambda^{\bullet}||^{n-1}}\mathbbm{1}\left(\ell(\lambda^{\bullet}) \leq n \right) \\
\end{align*}
Here we have used the fact that $\gamma(\lambda^{\bullet}) 
\geq \frac{\kappa^{(n)}(\lambda^{\bullet})}{||\lambda^{\bullet}||^{n-1}}\mathbbm{1}\left(\ell(\lambda^{\bullet}) \leq n \right)$ and that the above series both converge for all $n \geq 1.$

We now argue that the sequence of series for $n \geq 1$ given by 
$$\sum_{\lambda^{\bullet} \in \Par_K} \frac{\varphi_{\mathfrak{b}(\chi)}(\lambda^{\bullet})}{||\lambda^{\bullet}||^{1+\epsilon}}  \frac{\kappa^{(n)}(\lambda^{\bullet})}{||\lambda^{\bullet}||^{n-1}}\mathbbm{1}\left(\ell(\lambda^{\bullet}) \leq n \right) $$ converges to 
$$\sum_{\lambda^{\bullet} \in \Par_K} \frac{\varphi_{\mathfrak{b}(\chi)}(\lambda^{\bullet})}{||\lambda^{\bullet}||^{1+\epsilon}}\gamma(\lambda^{\bullet}).$$ First, we know that the sequence of functions $g_{n,\epsilon}:\mathbb{Y}_{K} \rightarrow \mathbb{C}$ given by 
$$g_{n,\epsilon}(\lambda^{\bullet}):=  \frac{\varphi_{\mathfrak{b}(\chi)}(\lambda^{\bullet})}{||\lambda^{\bullet}||^{1+\epsilon}}  \frac{\kappa^{(n)}(\lambda^{\bullet})}{||\lambda^{\bullet}||^{n-1}}\mathbbm{1}\left(\ell(\lambda^{\bullet}) \leq n \right)$$ converges weakly increasingly point-wise to the function on $\mathbb{Y}_{K}$ given by 
$$\widehat{g}_{\epsilon}(\lambda^{\bullet}):= \frac{\varphi_{\mathfrak{b}(\chi)}(\lambda^{\bullet})}{||\lambda^{\bullet}||^{1+\epsilon}}  \gamma(\lambda^{\bullet}).$$ Further, we know by Lemma \ref{convergence lem} that the series 
$$\sum_{\lambda^{\bullet} \in \Par_K} \frac{\varphi_{\mathfrak{b}(\chi)}(\lambda^{\bullet})}{||\lambda^{\bullet}||^{1+\epsilon}}\gamma(\lambda^{\bullet})$$ converges. Therefore, by the dominated convergence theorem 
$$\lim_{n \rightarrow \infty} \sum_{\lambda^{\bullet} \in \mathbb{Y}_K} g_{n,\epsilon}(\lambda^{\bullet}) = \sum_{\lambda^{\bullet} \in \mathbb{Y}_K} \lim_{n \rightarrow \infty} g_{n,\epsilon}(\lambda^{\bullet})$$ which means 
$$\lim_{n \rightarrow \infty} \sum_{\lambda^{\bullet} \in \Par_K} \frac{\varphi_{\mathfrak{b}(\chi)}(\lambda^{\bullet})}{||\lambda^{\bullet}||^{1+\epsilon}}  \frac{\kappa^{(n)}(\lambda^{\bullet})}{||\lambda^{\bullet}||^{n-1}}\mathbbm{1}\left(\ell(\lambda^{\bullet}) \leq n \right) = \sum_{\lambda^{\bullet} \in \Par_K} \frac{\varphi_{\mathfrak{b}(\chi)}(\lambda^{\bullet})}{||\lambda^{\bullet}||^{1+\epsilon}}\gamma(\lambda^{\bullet}).$$ Thus in the region $\Re(s) \geq 1 + \epsilon$ we have that 
$$\lim_{n \rightarrow \infty} \sum_{\lambda^{\bullet} \in \Par_K^{(n)}} \frac{\kappa^{(n)}(\lambda^{\bullet})}{||\lambda^{\bullet}||^{s+n-1}} \Tr_{\widetilde{\mathbb{H}}_{\lambda^{\bullet}}(W_{\chi})}(\sigma_{K}) =  \sum_{\lambda^{\bullet} \in \Par_K} \frac{\gamma(\lambda^{\bullet})}{||\lambda^{\bullet}||^{s}} \Tr_{\widetilde{\mathbb{H}}_{\lambda^{\bullet}}(W_{\chi})}(\sigma_{K})$$ uniformly. Therefore, for all $\epsilon > 0$ as $n \rightarrow \infty,$ 
$L^{(n)}(s+n-1,\chi;L/K)$ converges uniformly on $\Re(s) \geq 1+\epsilon$ to 
$$\sum_{\lambda^{\bullet} \in \Par_K} \frac{\gamma(\lambda^{\bullet})}{||\lambda^{\bullet}||^{s}} \Tr_{\widetilde{\mathbb{H}}_{\lambda^{\bullet}}(W_{\chi})}(\sigma_{K})$$ as desired.  

Now suppose $\rho$ is arbitrary. Using Brauer's theorem \ref{Brauer's thm} we may write $$L(s,\rho;L/K) = \prod_{i=1}^{r} L(s,\Ind(\chi_i);L/K)^{a_i}$$ where $\chi_{i}:\Gal(L/K_i) \rightarrow \mathbb{C}^{*}$ are $1$-dimensional characters, $L/K_i/K$ are some intermediate extensions, $\Ind(\chi_i)$ is shorthand for $\Ind_{\Gal(L/K_i)}^{\Gal(L/K)}(\chi)$, and $a_i \in \mathbb{Z}.$ For all $n \geq 0$ we have that 
$$\prod_{j \geq 0}^{n} L(s+j,\rho;L/K) = \prod_{i=1}^{r}\left( \prod_{j \geq 0}^{n} L(s+j,\Ind(\chi_i);L/K) \right)^{a_i}.$$ But now we have that 
$$\prod_{j \geq 0}^{n} L(s+j,\Ind(\chi_i);L/K) = \prod_{j \geq 0}^{n} L(s+j,\chi_i;L/K_i)$$ so we may apply our prior computation to obtain that for all $1\leq i \leq r,$
$$\lim_{n \rightarrow \infty} \prod_{j \geq 0}^{n} L(s+j,\Ind(\chi_i);L/K) = \prod_{j \geq 0} L(s+j,\Ind(\chi_i);L/K)$$ converges uniformly in compact subsets of $\Re(s) > 1$ to an analytic function. But then we know that 
$$\lim_{n \rightarrow \infty} \prod_{j \geq 0}^{n} L(s+j,\rho;L/K) = \prod_{j \geq 0} L(s+j,\rho;L/K)$$ must also converge uniformly in compact subsets of $\Re(s) > 1$ to an analytic function. Similarly, we know that each of the series for $1\leq i \leq r$
$$\sum_{\lambda^{\bullet} \in \Par_{K_i}} \frac{\gamma(\lambda^{\bullet})}{||\lambda^{\bullet}||^{s}} \Tr_{\widetilde{\mathbb{H}}_{\lambda^{\bullet}}(W_{\chi_i})}(\sigma_{K_i})$$ defines an analytic function on $\Re(s) > 1$ agreeing with $\prod_{j \geq 0} L(s+j,\chi_i;L/K_i).$ By using Theorem \ref{main thm HL expansion}, Proposition \ref{Artin sym function properties}, and the algebra homomorphism $\psi$ from Remark \ref{principal specialization} we see that for all $1 \leq i \leq r$
$$\sum_{\lambda^{\bullet} \in \Par_{K_i}} \frac{\gamma(\lambda^{\bullet})}{||\lambda^{\bullet}||^{s}} \Tr_{\widetilde{\mathbb{H}}_{\lambda^{\bullet}}(W_{\chi_i})}(\sigma_{K_i}) = \sum_{\lambda^{\bullet} \in \Par_{K}} \frac{\gamma(\lambda^{\bullet})}{||\lambda^{\bullet}||^{s}} \Tr_{\widetilde{\mathbb{H}}_{\lambda^{\bullet}}(W_{\Ind(\chi_i)})}(\sigma_{K}).$$ Now we have that 
$$\prod_{j \geq 0} L(s+j,\rho;L/K) = \prod_{i=1}^{r} \left( \sum_{\lambda^{\bullet} \in \Par_{K}} \frac{\gamma(\lambda^{\bullet})}{||\lambda^{\bullet}||^{s}} \Tr_{\widetilde{\mathbb{H}}_{\lambda^{\bullet}}(W_{\Ind(\chi_i)})}(\sigma_{K}) \right)^{a_i}$$ and so the RHS must yield a Dirichlet series which converges uniformly for $\Re(s) > 1$ to an analytic function. But again applying Theorem \ref{main thm HL expansion} and the algebra homomorphism $\psi$ from Remark \ref{principal specialization} we see that the coefficients must be given as 
$$\sum_{\lambda^{\bullet} \in \Par_K} \frac{\gamma(\lambda^{\bullet})}{||\lambda^{\bullet}||^{s}} \Tr_{\widetilde{\mathbb{H}}_{\lambda^{\bullet}}(W_{\rho})}(\sigma_{K}).$$

\end{proof}

\begin{remark}
    The coefficients 
    $$\gamma(\lambda^{\bullet}) = \prod_{\fq \in \mathbb{P}_{K}} \left( N(\fq)^{-2n(\lambda^{(\fq)})} \prod_{i \geq 1}  (1-N(\fq)^{-1})^{-1}\cdots (1-N(\fq)^{-m_{i}(\lambda^{(\fq)})})^{-1} \right)$$ do \textit{not} depend on the representation $\rho.$ We may rewrite the expression in Theorem \ref{main thm 2} as 
    $$\prod_{j \geq 0} L(s+j, \rho ;L/K) = \sum_{n \in \mathbb{N}} \frac{\widetilde{\vartheta}_{\rho;L/K}(n)}{n^s} $$ where
    $$\widetilde{\vartheta}_{\rho;L/K}(n):= \sum_{\substack{ \lambda^{\bullet} \in \mathbb{Y}_{K}\\||\lambda^{\bullet}|| = n}} \gamma(\lambda^{\bullet}) \Tr_{\widetilde{\mathbb{H}}_{\lambda^{\bullet}}(W_{\rho})}(\sigma_{K})$$ giving the Dirichlet coefficients $\widetilde{\vartheta}_{\rho;L/K}$ of the series $\prod_{j \geq 0} L(s+j, \rho ;L/K).$ It is also easy to obtain the Euler product for $\prod_{j \geq 0} L(s+j, \rho ;L/K):$
    $$\prod_{j \geq 0} L(s+j, \rho ;L/K) =  \prod_{\mathfrak{q}\in \bP_K}\prod_{j \geq 0} \det  \left(\Id_{V^{I(\mathfrak{q})}}- N(\fq)^{-s}\frac{\rho(\sigma_{\mathfrak{q}})|_{V^{I(\mathfrak{q})}}}{N(\fq)^{j}} \right)^{-1}$$ from which we conclude that the coefficients $\widetilde{\vartheta}_{\rho;L/K}$ are multiplicative. 
\end{remark}

We finish this section by looking at the most basic example of Theorem \ref{main thm 2}. Consider the situation where we have the following:
    \begin{itemize}
        \item $L=K=\mathbb{Q}$
        \item $\Gal(L/K) = 1$
        \item $\rho = \mathbbm{1}$
        \item $L(s,\rho;L/K) = \zeta(s)$
        \item $\sigma_{K} = (1,p)_{p \in \mathbb{P}_{\mathbb{Q}}}$
        \item $W_{\rho} = \mathbbm{1}$
    \end{itemize}
    For any $\lambda^{\bullet} \in \mathbb{Y}_{\mathbb{Q}}$ we then have 
    $$\Tr_{\widetilde{\mathbb{H}}_{\lambda^{\bullet}}(W_{\rho})}(\sigma_{K}) = 1.$$
    
    Then Theorem \ref{main thm 2} in this situation reads as 
    $$\prod_{j \geq 0} \zeta(s+j) = \sum_{\lambda^{\bullet} \in \mathbb{Y}_{\mathbb{Q}}} \frac{\gamma(\lambda^{\bullet})}{||\lambda^{\bullet}||^{s}}.$$ Further simplifying gives 
    $$\prod_{j \geq 0} \zeta(s+j) = \sum_{\lambda^{\bullet} \in \mathbb{Y}_{\mathbb{Q}}} \frac{b(\lambda^{\bullet})^{-1} g(\lambda^{\bullet})^{-2}}{||\lambda^{\bullet}||^{s}}.$$
    Using the standard result 
    $$\prod_{j \geq 0}(1-xt^j)^{-1} = \Exp \left[\frac{x}{1-t} \right] = \sum_{ m \geq 0} \frac{x^{m}}{(1-t)\cdots (1-t^m)}$$ and a straightforward computation with the Euler product form of $\prod_{j \geq 0} \zeta(s+j)$ gives 
    $$\prod_{j \geq 0} \zeta(s+j) = \sum_{\substack{m_{p} \geq 0 \\ m_{p} = 0 \\ \text{almost all p}}} \left( \prod_{p}p^{m_p} \right)^{-s} \prod_{p} (1-p^{-1})^{-1}\cdots (1-p^{-m_p})^{-1}.$$ At first glance the two expansions of $\prod_{j \geq 1} \zeta(s+j)$ we have derived appear different. 
    These two expansions agree only if for all sequences $(m_p)_{p}$ with $m_p \geq 0$ and $m_p = 0$ for almost every $p$ we have 
    $$\prod_{p}(1-p^{-1})^{-1}\cdots (1-p^{-m_p})^{-1} = \sum_{\substack{\lambda^{\bullet}\in \mathbb{P}_{\mathbb{Q}}\\ |\lambda^{(p)}| = m_p}} \frac{g(\lambda^{\bullet})^{-2}}{b(\lambda^{\bullet}) }.$$ By using unique factorization this is equivalent to the statement that for any prime $p$ and $m \geq 0$ 
    $$(1-p^{-1})^{-1}\cdots (1-p^{-m})^{-1} = \sum_{|\lambda| = m} \frac{p^{-2n(\lambda)}}{b_{\lambda}(p^{-1})}$$ but this follows from Lemma \ref{sum of gamma lemma} for $t = p^{-1}.$

\begin{remark}
    We may also rewrite the Dirichlet series coefficients of $\prod_{j \geq 0} \zeta(s+j)$ in a more number-theoretic form as 
    $$\prod_{j \geq 0} \zeta(s+j) = \sum_{n \in \mathbb{N}} \frac{\prod_{p^a||n}(1-p^{-1})^{-1}\cdots (1-p^{-a})^{-1}}{n^s}.$$ Similarly, for any Dirichlet character $\chi$ 
    $$\prod_{j \geq 0} L(s+j,\chi) = \sum_{n \in \mathbb{N}} \frac{\chi(n)}{n^s} \prod_{p^a||n}(1-p^{-1})^{-1}\cdots (1-p^{-a})^{-1}$$ The arithmetic function 
    $\vartheta(n):= \prod_{p^a||n}(1-p^{-1})^{-1}\cdots (1-p^{-a})^{-1}$ is multiplicative. In fact, it is a straightforward exercise to show that $\vartheta$ is the \textit{unique} arithmetic function such that 
    \begin{itemize}
        \item $\vartheta(1) = 1$
        \item $\vartheta(n) = \sum_{d|n} \frac{\vartheta(d)}{d}.$
    \end{itemize} 
\end{remark}

\subsection{Analytic Continuation of Infinite Shifted Artin L-Function Product}

In this final section, we will show that the functions $\prod_{j \geq 0} L(s+j, \rho ;L/K)$ extend to meromorphic functions on $\mathbb{C}.$ These functions are non-abelian generalizations of the higher Dirichlet L-functions defined by Momotani \cite{momotani_06}. However, Momotani completes their Dirichlet series by considering the infinite places of the number field K which we will not do in this paper. In general, the issue of extending the results of this paper to account for the infinite places will be dealt with in future work.

\begin{defn}
    For any finite dimensional complex Galois representation $\rho$ and $ \Re(s) > 1$ define 
    $$\widetilde{L}(s,\rho;L/K):= \prod_{j \geq 0} L(s+j,\rho;L/K).$$
\end{defn}

Theorem \ref{main thm 2} guarantees that $\widetilde{L}(s,\rho;L/K)$ defines an analytic function in the region $ \Re(s) > 1$.

\begin{lem}\label{functional equation}
    For $\Re(s) > 1,$
    \begin{equation}\label{functional eq}
        \widetilde{L}(s,\rho;L/K) = L(s,\rho;L/K) \widetilde{L}(s+1,\rho;L/K).
    \end{equation}
    
\end{lem}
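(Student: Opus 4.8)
The plan is to read off the identity directly from the definition $\widetilde{L}(s,\rho;L/K):=\prod_{j\geq 0} L(s+j,\rho;L/K)$, the only point requiring care being that the infinite product genuinely converges in the relevant region so that the $j=0$ factor may be legitimately split off. First I would recall that Theorem \ref{main thm 2} (and its proof) establishes that $\prod_{j\geq 0} L(s+j,\rho;L/K)$ converges uniformly on compact subsets of the half-plane $\Re(s)>1$ to an analytic function; in particular, for $\Re(s)>1$ the partial products $\prod_{j=0}^{N} L(s+j,\rho;L/K)$ converge as $N\to\infty$ to $\widetilde{L}(s,\rho;L/K)$, and since $L(s,\rho;L/K)$ is a fixed nonzero finite quantity at such $s$ (it is given by an absolutely convergent Euler product for $\Re(s)>1$), we may factor it out of the limit.

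Next I would carry out the reindexing. For $\Re(s)>1$ we have $\Re(s+1)>1$ as well, so $\widetilde{L}(s+1,\rho;L/K)=\prod_{j\geq 0} L(s+1+j,\rho;L/K)$ is defined and convergent by the same theorem. Then
\begin{align*}
\widetilde{L}(s,\rho;L/K) &= \prod_{j\geq 0} L(s+j,\rho;L/K)\\
&= L(s,\rho;L/K)\prod_{j\geq 1} L(s+j,\rho;L/K)\\
&= L(s,\rho;L/K)\prod_{k\geq 0} L(s+1+k,\rho;L/K)\\
&= L(s,\rho;L/K)\,\widetilde{L}(s+1,\rho;L/K),
\end{align*}
where the second equality uses convergence of the product to peel off the $j=0$ term and the third is the substitution $k=j-1$.

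Honestly there is no real obstacle here: the content of the lemma is entirely bookkeeping about an infinite product, and all analytic input (convergence of $\prod_{j\geq 0} L(s+j,\rho;L/K)$ on $\Re(s)>1$) has already been supplied by Theorem \ref{main thm 2}. If one wanted to be maximally self-contained one could instead argue directly from the Euler product form
$$\widetilde{L}(s,\rho;L/K)=\prod_{\fq\in\bP_K}\prod_{j\geq 0}\det\!\left(\Id_{V^{I(\fq)}}-N(\fq)^{-s-j}\rho(\sigma_{\fq})|_{V^{I(\fq)}}\right)^{-1},$$
noting that factoring out the $j=0$ term in the inner product over $j$ yields exactly the Euler product of $L(s,\rho;L/K)$ times that of $\widetilde{L}(s+1,\rho;L/K)$; the absolute convergence of all three Euler products for $\Re(s)>1$ justifies the rearrangement. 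Either route is a two-line computation, so the write-up is short.
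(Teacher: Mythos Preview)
Your proposal is correct and matches the paper's proof essentially line for line: the paper simply writes the four-step display peeling off the $j=0$ factor and reindexing, exactly as you do. Your additional remarks on convergence (via Theorem~\ref{main thm 2}) and the alternative Euler-product justification are fine elaborations but not needed beyond what the paper gives.
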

\begin{proof}
    \begin{align*}
        \widetilde{L}(s,\rho;L/K) &= \prod_{j \geq 0} L(s+j,\rho;L/K)\\
        &= L(s,\rho;L/K)\prod_{j \geq 1} L(s+j,\rho;L/K)\\
        &= L(s,\rho;L/K)\prod_{j \geq 0} L(s+j+1,\rho;L/K)\\
        &= L(s,\rho;L/K) \widetilde{L}(s+1,\rho;L/K).\\
    \end{align*}
\end{proof}

We recall the following important result about Artin L-functions.

\begin{thm}[Brauer]\label{mero thm for artin L}
    For any $\rho: \Gal(L/K) \rightarrow \GL(V)$ the function defined on $\Re(s) >1$ defined by $L(s,\rho;L/K)$ can extended to a meromorphic function on $\mathbb{C}.$
\end{thm}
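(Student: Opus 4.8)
The plan is to reduce the statement — which is essentially Brauer's original argument — from the general case to the one-dimensional case via Brauer's induction theorem, and then to invoke the classical meromorphic continuation of abelian $L$-functions. First I would apply Theorem \ref{Brauer's thm} to the representation $\rho$ of $G = \Gal(L/K)$, obtaining intermediate fields $K \subseteq K_i \subseteq L$, integers $a_i \in \mathbb{Z}$, and one-dimensional characters $\chi_i \colon \Gal(L/K_i) \to \mathbb{C}^{*}$ with $\rho = \sum_{i=1}^{r} a_i \Ind_{\Gal(L/K_i)}^{G} \chi_i$ in the representation ring of $G$.

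Next, I would use the two classical algebraic properties of Artin $L$-functions — additivity under direct sums and invariance under induction (the global Euler-product counterparts of the identities recorded in Lemma \ref{Artin's lemma}) — to turn this into the identity
\[
    L(s,\rho;L/K) \;=\; \prod_{i=1}^{r} L\bigl(s,\chi_i;L/K_i\bigr)^{a_i}
\]
of Dirichlet series valid for $\Re(s) > 1$. Here one clears the negative exponents by noting that the representation-ring identity lifts to an isomorphism of genuine representations after adding the induced characters with $a_i < 0$ to both sides. Each factor $L(s,\chi_i;L/K_i)$ is a one-dimensional Artin $L$-function, hence by class field theory a Hecke $L$-function attached to a ray-class character of $K_i$; by the work of Hecke (equivalently Tate's thesis) every such $L$-function continues meromorphically to all of $\mathbb{C}$ — entire when $\chi_i \neq \mathbbm{1}$, and equal to the Dedekind zeta function $\zeta_{K_i}(s)$ (holomorphic except for a simple pole at $s=1$) when $\chi_i = \mathbbm{1}$.

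Finally, the meromorphic functions on $\mathbb{C}$ form a field, so a finite product of nonzero integer powers of them is again meromorphic; since each $L(s,\chi_i;L/K_i)$ has Dirichlet series with constant term $1$, it is not identically zero, and hence the right-hand side above defines a meromorphic function on $\mathbb{C}$ extending $L(s,\rho;L/K)$. The hard part of this circle of ideas is not the meromorphy but the holomorphy: negative exponents $a_i$ may introduce spurious poles, and proving that these always cancel (i.e.\ that $L(s,\rho;L/K)$ is entire for nontrivial irreducible $\rho$) is exactly the content of Artin's Conjecture \ref{Artin's Conjecture}, which is not addressed here.
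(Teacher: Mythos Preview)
Your proposal is correct and sketches exactly the classical argument due to Brauer: reduce via Brauer's induction theorem to one-dimensional characters, identify those with Hecke $L$-functions via class field theory, and invoke Hecke's meromorphic continuation. The paper itself does not give a proof but simply refers the reader to Cogdell's survey \cite{Cogdell_07}, where one finds precisely this argument; so your write-up is in fact more detailed than what appears in the paper.
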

\begin{proof}
    See \cite{Cogdell_07}.
\end{proof}

\begin{thm}\label{Analytic Continuation of Shifted Artin L-Function Product}
    The analytic function $\widetilde{L}(s,\rho;L/K)$ defined on $\Re(s) > 1$ may be extended to a meromorphic function on $\mathbb{C}.$
\end{thm}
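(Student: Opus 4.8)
The plan is to bootstrap the functional equation of Lemma \ref{functional equation} leftward across the complex plane, using Brauer's theorem (Theorem \ref{mero thm for artin L}) to supply meromorphy of the single-variable factor $L(s,\rho;L/K)$. By Theorem \ref{main thm 2} we already know that $\widetilde{L}(s,\rho;L/K)$ is holomorphic on the half-plane $\Re(s) > 1$. The functional equation, written in the form
\[
\widetilde{L}(s,\rho;L/K) = L(s,\rho;L/K)\,\widetilde{L}(s+1,\rho;L/K),
\]
expresses the value at $s$ in terms of the value at $s+1$, so each application of it extends the domain of definition one unit to the left.

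Concretely, I would prove by induction on $N \geq 0$ that $\widetilde{L}(s,\rho;L/K)$ admits a meromorphic continuation to the half-plane $\Re(s) > 1 - N$. The base case $N = 0$ is Theorem \ref{main thm 2}. For the inductive step, suppose $\widetilde{L}(s,\rho;L/K)$ has been continued meromorphically to $\Re(s) > 1-N$, and define a function $G$ on $\Re(s) > -N$ by
\[
G(s) := L(s,\rho;L/K)\,\widetilde{L}(s+1,\rho;L/K).
\]
The first factor is meromorphic on all of $\mathbb{C}$ by Theorem \ref{mero thm for artin L}; the second is meromorphic on $\Re(s+1) > 1-N$, i.e.\ on $\Re(s) > -N$, by the inductive hypothesis. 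Hence $G$ is meromorphic on $\Re(s) > -N$. On the overlap $\Re(s) > 1-N$ --- which, since $N \geq 0$, contains the region $\Re(s) > 1$ where Lemma \ref{functional equation} holds literally --- the two meromorphic functions $G$ and $\widetilde{L}(\,\cdot\,,\rho;L/K)$ agree on the non-empty open set $\Re(s) > 1$, hence agree on the connected region $\Re(s) > 1-N$ by the identity theorem. Thus $G$ furnishes the required extension to $\Re(s) > -N = 1-(N+1)$, completing the induction. Since $\mathbb{C} = \bigcup_{N \geq 0}\{\,\Re(s) > 1-N\,\}$ and the successive extensions are mutually consistent, they glue to a single meromorphic function on all of $\mathbb{C}$.

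There is essentially no serious obstacle in this argument; the only point requiring care is the consistency of the successively defined continuations, which is handled by the identity theorem once one notes that every overlap region is connected and meets the half-plane $\Re(s) > 1$ on which Lemma \ref{functional equation} is valid. I would close with the remark that running the same bootstrap with ``holomorphic'' in place of ``meromorphic'' yields the stated consequence of Artin's conjecture: if $\rho$ is non-trivial and irreducible and Artin's conjecture holds, then every shift $L(s+j,\rho;L/K)$ is entire and $\widetilde{L}(s,\rho;L/K)$ is holomorphic and non-vanishing on $\Re(s) > 1$ (by the convergent Euler product there), so the induction shows $\widetilde{L}(s,\rho;L/K)$ is entire.
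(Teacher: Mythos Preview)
Your proposal is correct and follows essentially the same approach as the paper: both arguments use Lemma \ref{functional equation} together with Brauer's Theorem \ref{mero thm for artin L} to extend $\widetilde{L}(s,\rho;L/K)$ leftward one unit at a time. The paper phrases this as a piecewise definition on overlapping vertical strips, while you organize it as an induction on half-planes $\Re(s) > 1-N$ with an appeal to the identity theorem; these are cosmetic variants of the same bootstrap.
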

\begin{proof}
    Define the function $f(s)$ by 
    $$f(s):= \begin{cases}
        \widetilde{L}(s,\rho;L/K) & ; \Re(s) > 1 \\
        L(s,\rho;L/K)\cdots L(s+ \lceil  \frac{n+1}{2}   \rceil,\rho;L/K) \widetilde{L}(s+\lceil  \frac{n+1}{2}   \rceil +1,\rho;L/K)&; \frac{-n-1}{2} < \Re(s) < \frac{-n+1}{2} < \frac{3}{2} \\
    \end{cases}.$$
    It is straightforward to check that $f(s)$ is well-defined using Theorem \ref{mero thm for artin L} since the defined values of $f(s)$ on each vertical strip agree whenever those strips overlap (using the functional equation in Lemma \ref{functional equation}). 
    Therefore, $f(s)$ is meromorphic on $\mathbb{C}.$ But $f(s)$ agrees with $\widetilde{L}(s,\rho;L/K)$ on $\Re(s) > 1$ so $f(s)$ gives a meromorphic continuation of $\widetilde{L}(s,\rho;L/K)$ to $\mathbb{C}.$
\end{proof}

We observe the following:

\begin{cor}
    Assuming the validity of Artin's conjecture \ref{Artin's Conjecture}, if $\rho$ is non-trivial and irreducible, then $\widetilde{L}(s,\rho;L/K)$ is entire. 
\end{cor}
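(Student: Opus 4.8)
The plan is to use the meromorphic continuation established in Theorem \ref{Analytic Continuation of Shifted Artin L-Function Product} together with the functional equation of Lemma \ref{functional equation} and then rule out poles by a case analysis on vertical strips, invoking Artin's conjecture to control the individual factors $L(s+j,\rho;L/K)$. Recall that $\widetilde{L}(s,\rho;L/K)$ is already analytic for $\Re(s) > 1$ by Theorem \ref{main thm 2}, so the content is entirely about the region $\Re(s) \leq 1$, where $f(s)$ is defined piecewise by the formula in the proof of Theorem \ref{Analytic Continuation of Shifted Artin L-Function Product}.

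First I would fix a point $s_0$ with $\Re(s_0) \leq 1$ and choose the integer $n \geq -1$ with $\frac{-n-1}{2} < \Re(s_0) < \frac{-n+1}{2}$, so that on a neighborhood of $s_0$ we have
$$\widetilde{L}(s,\rho;L/K) = L(s,\rho;L/K)\cdots L\bigl(s+\lceil \tfrac{n+1}{2}\rceil,\rho;L/K\bigr)\,\widetilde{L}\bigl(s+\lceil \tfrac{n+1}{2}\rceil+1,\rho;L/K\bigr).$$
The final factor $\widetilde{L}(s+\lceil \frac{n+1}{2}\rceil+1,\rho;L/K)$ has its argument with real part exceeding $1$ near $s_0$, hence it is analytic and, being an infinite product of the nonvanishing Euler products $L(s+j,\rho;L/K)$ on $\Re > 1$ (which converge absolutely there), it is also nonvanishing at $s_0$; in particular it contributes no pole. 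Each of the finitely many remaining factors $L(s+k,\rho;L/K)$ for $0 \leq k \leq \lceil \frac{n+1}{2}\rceil$ is, under Artin's conjecture \ref{Artin's Conjecture}, an entire function since $\rho$ is nontrivial and irreducible (the induced and inflated representations appearing in a Brauer decomposition are irrelevant here — we are applying the conjecture directly to $\rho$). A finite product of entire functions is entire, so $\widetilde{L}(s,\rho;L/K)$ is analytic at $s_0$. Since $s_0$ was an arbitrary point with $\Re(s_0) \leq 1$ and analyticity for $\Re(s) > 1$ is already known, $\widetilde{L}(s,\rho;L/K)$ extends to an entire function on $\mathbb{C}$.

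The one point requiring a little care — and the step I expect to be the only real obstacle — is verifying that the trailing factor $\widetilde{L}(\,\cdot\,,\rho;L/K)$ is nonvanishing in its region of absolute convergence $\Re > 1$, since a zero there would survive into $f$ as an ordinary zero (harmless for entireness) but one must be sure it is not secretly hiding a pole of one of the other factors in a way that the case split overlooks. This is handled by noting that on $\Re(s) > 1$ the product $\prod_{j\geq 0} L(s+j,\rho;L/K)$ converges absolutely (by Theorem \ref{main thm 2}, or directly from the Euler product estimate $\log\zeta_K(1+\epsilon)2^{1-n} \to 0$ in the proof of Theorem \ref{main thm 2}), and each factor is a convergent Euler product with no zeros in that half-plane; an absolutely convergent product of nonvanishing holomorphic functions is nonvanishing. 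With that in hand the argument closes cleanly, and no further input beyond Theorems \ref{Analytic Continuation of Shifted Artin L-Function Product}, \ref{main thm 2}, Lemma \ref{functional equation}, and Artin's conjecture is needed.
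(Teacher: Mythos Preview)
Your argument is correct and follows essentially the same route as the paper: use the functional equation (equivalently, the piecewise definition from Theorem \ref{Analytic Continuation of Shifted Artin L-Function Product}) to write $\widetilde{L}$ near any $s_0$ as a finite product of shifted $L$-factors times a trailing $\widetilde{L}$-factor with argument in $\Re>1$, then invoke Artin's conjecture on the finitely many $L$-factors. Your extra paragraph about nonvanishing of the trailing factor is unnecessary for entireness (a product of holomorphic functions is holomorphic regardless of zeros), but it does no harm.
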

\begin{proof}
    This is immediate using induction on the functional equation in Lemma \ref{functional eq} and the fact that $\widetilde{L}(s,\rho;L/K)$ has no poles in the region $\Re(s) >1$. The only possible poles of $\widetilde{L}(s,\rho;L/K)$ would have to come from $L(s,\rho;L/K)$ but assuming Artin's conjecture there are none.
\end{proof}

Lastly, we will spend the remainder of this section deriving an easy consequence of Theorem \ref{Analytic Continuation of Shifted Artin L-Function Product}. The Dedekind zeta function $\zeta_K$ has a simple pole at $s =1$ with residue given by the \textbf{\textit{analytic class number formula}}: 
    $$\mathfrak{d}_{K}:= \lim_{s \rightarrow 1}(s-1) \zeta_{K}(s) = \frac{2^{r_1}(2\pi)^{r_2}\mathrm{Reg}_{K}h_{K}}{\omega_{K}\sqrt{N(\mathscr{D}_{K})}}$$ where 
    $r_1+2r_2 = \dim_{\mathbb{Q}} K $ is the number of real and complex embeddings of $K,$ $\mathrm{Reg}_{K}$ is the regulator of $K$, $h_{K} = |\mathrm{Cl}_K|$ is the class number of $K,$ $\omega_{K}$ is the number of roots of unity in $K,$ and $\mathscr{D}_{K}$ is the discriminant ideal of $K/\mathbb{Q}.$ For $K = \mathbb{Q}$ we have that $r_1 = 1$, $r_2 = 0,$ $\mathrm{Reg}_{\mathbb{Q}} = 1$, $h_{\mathbb{Q}} =1$, $\omega_{\mathbb{Q}}$, $\mathscr{D}_{\mathbb{Q}} =1$ so $\zeta_{\mathbb{Q}}(s) = \zeta(s)$ has residue $1$ at $s =1.$ 
    Write $\widetilde{\zeta}_K(s):= \prod_{j \geq 0} \zeta_K(s+j).$ Using Theorem \ref{Analytic Continuation of Shifted Artin L-Function Product}, $\widetilde{\zeta}_K(s)$ extends to a meromorphic function on $\mathbb{C}$ with a simple pole at $s = 1.$ The residue at $s=1$ is given by 
    $$\widetilde{\mathfrak{d}}_{K}:= \lim_{s \rightarrow 1}(s-1)\widetilde{\zeta}_K(s) = \mathfrak{d}_{K} \prod_{j \geq 2} \zeta_{K}(j).$$ 

    We can find another interpretation for the value $\widetilde{\mathfrak{d}}_{K}.$ Consider the function $g_{K}(s)$ given by 
    $$g_{K}(s):= \prod_{m \geq 1}\zeta_{K}(ms).$$ It is straightforward to show that $g_{K}(s)$ defines an analytic function in the half plane $\Re(s)>1$ and extends to a meromorphic function in a neighborhood of $1$ (in fact $\Re(s) > 0$). It is also a good exercise to check that the Dirichlet coefficients of $g_{K}(s)$ are given by 
    $$g_{K}(s) = \sum_{n \in \mathbb{N}} \frac{a_{K}(n)}{n^s}$$ where $a_{K}(n)$ is the number of $\mathcal{O}_{K}$ modules with order $n$ up to isomorphism. On one hand we may calculate the residue of $g_{K}(s)$ at $ s= 1$ directly as 
    $$\lim_{s \rightarrow 1} (s-1)g_{K}(s) = \lim_{s \rightarrow \infty} (s-1) \zeta_{K}(s) \prod_{m \geq 2} \zeta_{K}(ms) = \mathfrak{d}_{K} \prod_{m \geq 2} \zeta_{K}(m) = \widetilde{\mathfrak{d}}_K.$$
    By applying the Tauberian theorem, on the other hand, we see that 
    $$\sum_{n \leq x} a_{K}(n) = \widetilde{\mathfrak{d}}_K x + o(x)$$ as $x \rightarrow \infty.$ Equivalently, 
    $$\widetilde{\mathfrak{d}}_K = \lim_{x \rightarrow \infty} \frac{1}{x} \sum_{n \leq x} a_{K}(n).$$ Therefore, the residue of $ \widetilde{\zeta}_{K}(s)$ at $s=1$ is the asymptotic average of the number of finite $\mathcal{O}_K$ modules. For $K= \mathbb{Q},$  $\widetilde{\mathfrak{d}}_{\mathbb{Q}} = 2.294856591 \ldots .$

    By applying the Tauberian theorem directly to $\widetilde{\zeta}_{K}$ we see that 
    $$\widetilde{\mathfrak{d}}_{K} = \lim_{x \rightarrow \infty} \frac{1}{x} \sum_{n \leq x} \vartheta_{K}(n)$$ 
    where 
    $$\vartheta_{K}(n):= \sum_{||\lambda^{\bullet}||=n } \gamma(\lambda^{\bullet}).$$ Therefore, the arithmetic functions $a_K, \vartheta_{K}$ have the same average order but are not equal. This can be seen from considering the Euler factors of both $\widetilde{\zeta}_K(s)$ and $g_{K}(s)$. For instance if $K= \mathbb{Q}$ then the Euler factor of $\widetilde{\zeta}(s)$ at a prime p is
    $$\prod_{j \geq 0}(1-p^{-s-j})^{-1} = \sum_{j \geq 0} \frac{p^{-js}}{(1-p^{-1})\cdots (1-p^{-j})} $$ and for $g_{\mathbb{Q}}(s)$ the Euler factor of p is 
    $$\prod_{m \geq 1}(1-p^{-ms})^{-1} = \sum_{j \geq 0} \#\{ |\lambda| = j \} ~ p^{-js} $$ where $\#\{ |\lambda| = j \}$ is the ordinary partition counting function. After this calculation the fact that $\vartheta$ and $a_{\mathbb{Q}}$ have the same average order at first seems surprising but becomes more reasonable after applying the identity \eqref{reciprocal b sum expansion}:

    $$(1-p^{-1})^{-1}\cdots (1-p^{-j})^{-1} = \sum_{|\lambda| = j} \frac{p^{-2n(\lambda)}}{b_{\lambda}(p^{-1})} = \sum_{|\lambda| = j} \frac{p^{-2n(\lambda)}}{\prod_{i \geq 1}(1-p^{-1})\cdots (1-p^{-m_i(\lambda)})}.$$ However, it would be interesting to perform a detailed analysis of the asymptotic behavior of the sequences $\vartheta_K.$
    
\printbibliography

\end{document}